\documentclass[11pt]{article}

 \usepackage{amsmath,amssymb,amscd,amsthm,esint}

\usepackage{graphics,amsthm,mathrsfs,amsfonts}

\usepackage{sidecap}

\usepackage{float}

\usepackage{extarrows}

\usepackage{booktabs}

\usepackage{verbatim}

\usepackage[usenames,dvipsnames]{xcolor}

\usepackage{hyperref}

\setlength{\paperwidth}{8.5in} \setlength{\paperheight}{11.0in}
\setlength{\textwidth}{6.5in} \setlength{\textheight}{9.0in}
\setlength{\oddsidemargin}{0in} \setlength{\evensidemargin}{0in}
\setlength{\topmargin}{0in} \setlength{\headsep}{0.0in}
\setlength{\headheight}{0.0in} \setlength{\marginparwidth}{0in}
\setlength{\marginparsep}{0in}

\newtheorem{thm}{Theorem}[section]
\newtheorem{lemma}[thm]{Lemma}
\newtheorem{cor}[thm]{Corollary}

\theoremstyle{definition}
\newtheorem{remark}[thm]{Remark}

\def\XXint#1#2#3{{\setbox0=\hbox{$#1{#2#3}{\int}$}
         \vcenter{\hbox{$#2#3$}}\kern-.5\wd0}}

\def\R{\mathbb{R}}
\def\C{\mathbb{C}}
\def\e{\varepsilon}

\def\loc{\text{loc}}

\def\A{\mathbf{A}}
\def\B{\mathbf{B}}

\def\P{\mathbf{P}}
\def\b{\mathcal{B}}
\def\e{\varepsilon}

\numberwithin{equation}{section}

\begin{document}

\title{Boundary Value Problems for the Magnetic Laplacian \\  in Semiclassical Analysis}

\author{
Zhongwei Shen 
 }
\date{}
\maketitle

\begin{abstract}

This paper is concerned with the magnetic Laplacian
$P^h  (\A)=(h D+\A)^2$ in semiclassical analysis, where $h$ is a semiclassical parameter.
We study the $L^2$ Neumann and Dirichlet problems for the equation $P^h(\A) u=0$ in a bounded Lipschitz domain $\Omega$.
Under the assumption that the magnetic field $\nabla \times \A$
is of finite type on $\overline{\Omega}$,
we establish the nontangential maximal function estimates for $(h D+\A)u$,
which are uniform for $0< h< h_0$.
This extends a well-known result due to D. Jerison and C. Kenig for the Laplacian in Lipschitz domains
to the magnetic Laplacian in the semiclassical setting.
Our results  are new even for smooth domains.

\medskip

\noindent{\it Keywords}.  Schr\"odinger Operator; Magnetic Field; Semiclassical Analysis; Boundary Value Problem;
Lipschitz Domain.

\medskip

\noindent {\it MR (2020) Subject Classification}: 35P25.

\end{abstract}


\section{Introduction}\label{section-1}

Let  $D=-i \nabla$ and $\A=(A_1, A_2, \dots, A_d)\in C^1(\R^d; \R^d)$. Consider the Schr\"odinger operator with a magnetic potential
in semiclassical analysis,
\begin{equation}\label{op-1}
P^h  (\A) =(h D+\A)^2, 
\end{equation}
also called the magnetic Laplacian, 
where $h\in (0, 1) $ is a semiclassical  parameter. In this paper, we are interested in boundary regularities of  $P^h (\A)$ that are
uniform in $h$.
More precisely, consider the Neumann problem in a bounded Lipschitz domain $\Omega$,
\begin{equation}\label{NP0}
\left\{
\aligned
P^h (\A)  u& =0 & \quad & \text{ in } \Omega,\\
n \cdot (hD+\A) u& =g & \quad & \text{ on } \partial\Omega,
\endaligned
\right.
\end{equation}
where $n=(n_1, n_2, \dots, n_d)$ denotes the outward unit normal to $\partial\Omega$, and the Dirichlet problem,
\begin{equation}\label{DP0}
\left\{
\aligned
P^h (\A) u  & =0 & \quad & \text{ in } \Omega,\\
u & =f & \quad & \text{ on } \partial\Omega.
\endaligned
\right.
\end{equation}
Let $\B=\nabla \times \A$ denote the magnetic field. In the main results of this paper, 
we shall assume that $\B$ is of finite type on $\overline{\Omega}$; i.e., 
 there exist an integer $\kappa\ge 0$ and $c_0>0$ such that 
\begin{equation}\label{MH}
\sum_{|\alpha|\le \kappa}
|\partial^\alpha \B (x)|\ \ge c_0
\end{equation}
for any $x\in \overline{\Omega}$.
Recall that the nontangential maximal function is defined by 
\begin{equation}\label{max}
(u)^*(x)=\sup
\big\{   |u(y)|: \, y\in \Omega \text{ and } \ |y-x|< M_0\,  \text{\rm dist} (y, \partial\Omega) \big\}
\end{equation}
for $x\in \partial\Omega$,
where $M_0>1$ is a large (fixed) constant depending on $\Omega$.
The following two theorems are the main results of this paper.

\begin{thm}\label{main-thm-1}
Let $\Omega$ be a bounded Lipschitz domain in $\R^d$, $d\ge 2$.
Let $\A \in C^\infty(\overline{\Omega}; \R^d)$.
Suppose $\B$ is of finite type on $\overline{\Omega}$.
Then for any $g\in L^2(\partial\Omega, \C)$, a solution  $u\in H^1(\Omega; \C)$ to the Neumann problem  \eqref{NP0}
satisfies the nontangential maximal function estimate,
\begin{equation}\label{main-e1}
\| (v_h)^*\|_{L^2(\partial\Omega)}
\le C \| g \|_{L^2(\partial\Omega)}
\end{equation}
for $h\in (0, h_0)$, where $v_h(x)=|(hD+\A) u(x)| + h\,  m(x, h^{-1} \B) |u(x)|$.
Moreover, we have
\begin{equation}\label{main-e2}
\aligned
 & \int_\Omega m(x, h^{-1}\B)  |v_h(x) |^2 dx
  \le C \int_{\partial\Omega} |g|^2.
 \endaligned
\end{equation}
The constants $C$ in \eqref{main-e1}-\eqref{main-e2}  and $h_0$  depend only 
on $\Omega$, $(\kappa, c_0)$ in \eqref{MH}, and $\|\B \|_{C^{\kappa+1}(\overline{\Omega})}$.
\end{thm}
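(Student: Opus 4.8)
The plan is to combine the Fefferman–Phong / Shen theory of the Schrödinger operator with an auxiliary potential $V = |\B|$ (or more precisely the maximal function $m(x,h^{-1}\B)^2$), the method of layer potentials / Rellich identities that Jerison and Kenig developed for the Laplacian on Lipschitz domains, and a semiclassical rescaling that makes the uniformity in $h$ visible. First I would introduce the effective potential $W_h(x) = h^2 m(x, h^{-1}\B)^2$, establish that it behaves like a (reverse-Hölder / $B_\infty$) weight with constants controlled only by $(\kappa, c_0)$ and $\|\B\|_{C^{\kappa+1}}$ — this is where the finite-type hypothesis \eqref{MH} enters, via the standard fact that a polynomial-growth lower bound on $\sum_{|\alpha|\le\kappa}|\partial^\alpha \B|$ forces $m(x,h^{-1}\B)$ to satisfy the requisite doubling and slowly-varying estimates. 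Then the operator $P^h(\A)$ behaves, for the purposes of a priori estimates, like $-h^2\Delta + W_h$ with a good potential, and in particular one has the basic "magnetic Caccioppoli / Fefferman–Phong" bound $\int_{B} (|(hD+\A)u|^2 + h^2 m^2 |u|^2) \lesssim$ (rescaled energy on a larger ball) for local solutions, with constants uniform in $h$.

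The core of the argument is a Rellich–Nečas identity adapted to the magnetic Laplacian. Pick a vector field $\alpha \in C^\infty_0(\R^d;\R^d)$ with $\alpha \cdot n \ge c > 0$ on $\partial\Omega$ (this exists because $\Omega$ is Lipschitz). Apply the identity $\operatorname{div}(\alpha |(hD+\A)u|^2) - 2\operatorname{Re}\,\operatorname{div}(\overline{(hD+\A)u \cdot \alpha}\,(hD+\A)u) = \cdots$ to $u$; integrating over $\Omega$ and using $P^h(\A)u = 0$ produces, on the boundary, a combination of the tangential part $|(hD+\A)_{\tan}u|^2$ and the normal part $|n\cdot(hD+\A)u|^2 = |g|^2$, while the solid integral controls $\int_\Omega |(hD+\A)u|^2$ plus lower-order terms involving $\curl\A = \B$ and $\nabla \A$ (these are the genuinely magnetic commutator terms, handled by the Fefferman–Phong bound above and absorbed using that $h^2m^2$ is the right weight). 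This yields the key Rellich-type inequality $\int_{\partial\Omega} |(hD+\A)_{\tan}u|^2 \lesssim \int_{\partial\Omega}|g|^2 + (\text{absorbable solid terms})$, hence $\|(hD+\A)u\|_{L^2(\partial\Omega)} \approx \|g\|_{L^2(\partial\Omega)}$ together with the weighted solid bound \eqref{main-e2}.

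From the boundary $L^2$ control of $(hD+\A)u$ one passes to the nontangential maximal function estimate \eqref{main-e1} by the now-standard machinery: interior estimates (the uniform-in-$h$ Caccioppoli and reverse-Hölder inequalities for $|(hD+\A)u|$ and for $h\,m(\cdot,h^{-1}\B)|u|$, which follow from the $B_\infty$ property of $W_h$ and De Giorgi–Nash–Moser applied to the rescaled equation on Whitney cubes), combined with a good-$\lambda$ / square-function argument or, more directly, the Dahlberg–Jerison–Kenig scheme comparing $(v_h)^*$ on nontangential cones to the $L^2$ norm of $v_h$ on parallel Lipschitz surfaces, for which the Rellich identity at each scale gives the needed uniform bound. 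The semiclassical uniformity is maintained throughout by tracking that every constant depends only on the Lipschitz character of $\Omega$, on $(\kappa,c_0)$, and on $\|\B\|_{C^{\kappa+1}(\overline\Omega)}$ — the dilation $x\mapsto x/h$ converts $P^h(\A)$ on scale-$1$ balls into the non-semiclassical magnetic Schrödinger operator on scale-$h^{-1}$ balls, where the finite-type condition guarantees the potential is everywhere nondegenerate at unit scale.

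I expect the main obstacle to be the magnetic commutator terms in the Rellich identity: unlike the scalar Laplacian case, $(hD+\A)$ does not commute with differentiation, so the Rellich computation throws off terms of the form $h\,\B\cdot(\text{stuff})\cdot u$ and $h\,\nabla\A\cdot u \cdot (hD+\A)u$, and controlling these requires precisely the weighted bound $\int_\Omega h^2 m^2|u|^2 \lesssim \int_\Omega|(hD+\A)u|^2 + \int_{\partial\Omega}|g|^2$, which must itself be proved first (via a Fefferman–Phong inequality for $P^h(\A)$ with uniform constants) and then fed back in — so the two estimates \eqref{main-e1} and \eqref{main-e2} are genuinely coupled and must be bootstrapped together rather than derived in sequence.
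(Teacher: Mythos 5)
Your skeleton (the auxiliary function $m(x,h^{-1}\B)$ with a Fefferman--Phong bound, magnetic Rellich identities with a transverse vector field, interior mean-value estimates for $v_h$, and a rescaling $\beta=h^{-1}$ to make the finite-type condition quantitative) matches the paper's strategy, and you correctly flag the magnetic commutator terms as the crux. But the weighted bound you propose to close the Rellich argument, namely $\int_\Omega h^2 m^2|u|^2 \lesssim \int_\Omega |(hD+\A)u|^2 + \int_{\partial\Omega}|g|^2$, is not strong enough, and this is a genuine gap rather than a detail. After Cauchy--Schwarz, the commutator term $\int_\Omega \alpha_k B_{kj}\, u\, \overline{(D_j+A_j)u}$ (in the rescaled variables, using $|\B|\le C m^2$) forces you to control $\int_\Omega m^3|u|^2$ and $\int_\Omega m\,|(D+\A)u|^2$ --- one extra power of $m$ beyond what Fefferman--Phong plus the energy identity give. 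Since $m(x,\beta\B)\gtrsim \beta^{1/(\kappa+2)}\to\infty$ as $h\to 0$, these terms cannot be absorbed into the energy, and "bootstrapping" the two estimates does not manufacture the missing power of $m$. The paper's resolution is a separate, substantial ingredient you have no counterpart for: pointwise decay estimates for the Green and Neumann functions of $(D+\A)^2$ in $\Omega$, of the form $|G_\A(x,y)|+|N_\A(x,y)|\le C_\ell\,\{1+|x-y|m(x,\B)\}^{-\ell}|x-y|^{2-d}$ (Sections \ref{sec-G}--\ref{sec-N}), which via a duality argument (Lemma \ref{lemma-r7}) yield exactly $\int_\Omega m^3|u|^2 \le C\int_{\partial\Omega}|n\cdot(D+\A)u|^2$ and its Dirichlet analogue. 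These in turn rest on the exponential-type decay of local solutions (Theorem \ref{thm-Ca-m}), which again comes from the Fefferman--Phong inequality but applied in a Caccioppoli-iteration scheme, not merely as a global lower bound.

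A second, smaller gap is the passage from the boundary Rellich estimate to the nontangential maximal function bound. Your "parallel Lipschitz surfaces / good-$\lambda$" route is calibrated to the harmonic case, where each component of $\nabla u$ again solves the equation; here $w=(D_k+A_k)u$ satisfies $(D+\A)^2w=F$ with $|F|\le 2|\B||(D+\A)u|+|\nabla\B||u|$, so the classical scheme does not apply as stated. The paper handles this by splitting $w=w_1+w_2$, where $w_1$ solves the homogeneous Dirichlet problem with data $w|_{\partial\Omega}$ (its maximal function is controlled purely by the maximum principle for the subharmonic function $\sqrt{|u|^2+\e^2}$, Theorem \ref{thm-p1}, plus the Rellich estimate for $\|w\|_{L^2(\partial\Omega)}$), while $w_2$ has zero boundary data and is controlled through the weighted solid estimates of Theorem \ref{thm-r5} together with the modified maximal function of Lemma \ref{lemma-mx1}. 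Your interior mean-value estimate for $v_h$ is the right first step, but without a device of this kind (or an equivalent) the maximal function estimate does not follow from the boundary $L^2$ bound alone. Finally, note the paper also needs an approximation argument to reach general Lipschitz domains and a localization-in-$t$ argument over $\mathcal{O}_t=\Omega\cap\b(x_0,tr_0)$ plus a method of descent for $d=2$; these are more routine but should appear in a complete proof.
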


Let   $T^h=(T^h_{jk})$, where $1\le j, k\le d$ and
\begin{equation*}
T^h_{jk}
=n_j (hD_k+A_k) - n_k (h D_j +A_j)
\end{equation*}
 is a (tangential)  differential operator on $\partial\Omega$.

\begin{thm}\label{main-thm-2}
Assume that $\Omega$ and $\B$ satisfy the same conditions as in Theorem \ref{main-thm-1}.
Then for any $f\in H^1(\partial\Omega; \C)$, a solution $u \in H^1(\Omega; \C)$ 
to the Dirichlet problem \eqref{DP0} satisfies the estimate,
\begin{equation}\label{main-e3}
\aligned
 & \| (v_h)^*\|_{L^2(\partial\Omega)}
 \le C \Big\{
\|  T^h f\|_{L^2(\partial\Omega)}
+ h  \|  m(x, h^{-1} \B) f \|_{L^2(\partial\Omega)} \Big\}
\endaligned
\end{equation}
for $h\in (0, h_0)$, where $v_h(x)=|(hD+\A) u(x)| + h\,  m(x, h^{-1} \B) |u(x)|$.
Moreover, we have
\begin{equation}\label{main-e4}
\aligned
 & \int_\Omega m(x, h^{-1} \B)  |v_h (x)|^2  dx
 \le C \Big\{
\|  T^h f\|_{L^2(\partial\Omega)}^2
+  h^2 \|  m(x, h^{-1} \B) f \|_{L^2(\partial\Omega)}^2 \Big\}.
 \endaligned
\end{equation}
The constants $C$ in \eqref{main-e3}-\eqref{main-e4}  and $h_0$  
depend only on $\Omega$, $(\kappa, c_0)$ in \eqref{MH}, and $\|\B \|_{C^{\kappa+1}(\overline{\Omega})}$.
\end{thm}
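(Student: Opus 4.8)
\medskip
\noindent\emph{Proof strategy.}
The plan is to reduce Theorem~\ref{main-thm-2} to Theorem~\ref{main-thm-1}. If $u\in H^1(\Omega;\C)$ solves \eqref{DP0}, set $g=n\cdot(hD+\A)u$. On $\partial\Omega$ the vector $(hD+\A)u$ splits orthogonally into a normal part of length $|g|$ and a tangential part whose $k$th component equals $\sum_j n_j T^h_{jk}u=\sum_j n_j T^h_{jk}f$, so that $|(hD+\A)u|^2=|g|^2+|(hD+\A)_{\mathrm{tan}}u|^2$ with $|(hD+\A)_{\mathrm{tan}}u|\le|T^hf|$ there, while $h\,m(x,h^{-1}\B)|u|=h\,m(x,h^{-1}\B)|f|$ there. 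Hence, once the single a priori bound
\begin{equation*}
\|g\|_{L^2(\partial\Omega)}\le C\bigl\{\|T^hf\|_{L^2(\partial\Omega)}+h\,\|m(x,h^{-1}\B)f\|_{L^2(\partial\Omega)}\bigr\}
\end{equation*}
is established, \eqref{main-e3} and \eqref{main-e4} follow at once by applying \eqref{main-e1} and \eqref{main-e2} to $u$, now viewed as a solution of the Neumann problem \eqref{NP0} with datum $g$. By the usual smoothing/exhaustion argument one may assume $f$ and $u$ smooth, so that the integrations by parts below are legitimate; it then remains to prove the displayed bound on $g$.

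A preliminary ingredient is the interior energy estimate
\begin{equation*}
\int_\Omega\bigl(|(hD+\A)u|^2+h^2 m(x,h^{-1}\B)^2|u|^2\bigr)\,dx\le C\bigl\{\|T^hf\|_{L^2(\partial\Omega)}^2+h^2\|m(x,h^{-1}\B)f\|_{L^2(\partial\Omega)}^2\bigr\}.
\end{equation*}
For the gradient term one uses $\int_\Omega|(hD+\A)u|^2\,dx\le\int_\Omega|(hD+\A)F|^2\,dx$, valid for every extension $F\in H^1(\Omega)$ of $f$ because $u-F$ has zero trace and $P^h(\A)u=0$, applied to a magnetic extension $F$ of $f$ supported in a boundary layer of local width $\sim m(x,h^{-1}\B)^{-1}$ — essentially the trivial extension modulated by a boundary-layer phase cancelling the normal component of $\A$ — for which $\int_\Omega|(hD+\A)F|^2\,dx$ obeys the desired right-hand side. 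For the mass term one invokes the Fefferman--Phong-type inequality for $(hD+\A)^2$ under the finite-type hypothesis \eqref{MH} (part of the machinery attached to the weight $m(\cdot,h^{-1}\B)$), in its version with a boundary defect controlled by the right-hand side.

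To bound $g$, fix $\beta\in C^\infty(\overline\Omega;\R^d)$ with $\beta\cdot n\ge c_1>0$ $\sigma$-a.e.\ on $\partial\Omega$ (possible since $\Omega$ is Lipschitz) and apply a Rellich identity for $P^h(\A)$: integrating over $\Omega$ the divergence of $\beta\,|(hD+\A)u|^2-2\,\mathrm{Re}\,\overline{\beta\cdot(hD+\A)u}\,(hD+\A)u$ and using $P^h(\A)u=0$ together with $[(hD_j+A_j),(hD_k+A_k)]=-ih(\partial_jA_k-\partial_kA_j)$ gives
\begin{equation*}
\int_{\partial\Omega}(\beta\cdot n)\bigl(|g|^2-|(hD+\A)_{\mathrm{tan}}u|^2\bigr)\,d\sigma=-2\,\mathrm{Re}\int_{\partial\Omega}\overline g\,\bigl(\beta_{\mathrm{tan}}\cdot(hD+\A)u\bigr)\,d\sigma+\int_\Omega\mathcal E_h[u]\,dx,
\end{equation*}
where $\mathcal E_h[u]$ is quadratic in $(hD+\A)u$ and $u$, with coefficients built from $\mathrm{div}\,\beta$, $\nabla\beta$, $\B$, $\nabla\B$ and powers of $h$. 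Using $\beta\cdot n\ge c_1$, bounding $|(hD+\A)_{\mathrm{tan}}u|$ and $|\beta_{\mathrm{tan}}\cdot(hD+\A)u|$ on $\partial\Omega$ by $C|T^hf|$, and absorbing the cross term by Cauchy--Schwarz with a small parameter, one reaches $c_1\|g\|_{L^2(\partial\Omega)}^2\le C\|T^hf\|_{L^2(\partial\Omega)}^2+\bigl|\int_\Omega\mathcal E_h[u]\,dx\bigr|$. After one further integration by parts moving derivatives off $u$, the integrand $\mathcal E_h[u]$ is dominated by $C\bigl(|(hD+\A)u|^2+h^2 m(x,h^{-1}\B)^2|u|^2\bigr)$ plus a divergence whose boundary contribution — by the semiclassical bookkeeping (each differentiation carries $h$, while $|\B|$ and $|\nabla\B|$ are controlled via \eqref{MH}) — comes weighted by $h^2 m(x,h^{-1}\B)^2$ and is hence $\le h^2\|m(x,h^{-1}\B)f\|_{L^2(\partial\Omega)}^2+\varepsilon\|g\|_{L^2(\partial\Omega)}^2$. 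Combining with the interior energy estimate gives $\bigl|\int_\Omega\mathcal E_h[u]\,dx\bigr|\le C\bigl\{\|T^hf\|_{L^2(\partial\Omega)}^2+h^2\|m(x,h^{-1}\B)f\|_{L^2(\partial\Omega)}^2\bigr\}+\varepsilon\|g\|_{L^2(\partial\Omega)}^2$; choosing $\varepsilon$ small to absorb the last term proves the bound on $g$, and Theorem~\ref{main-thm-1} then finishes the proof.

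The step I expect to be the real obstacle is the control of $\int_\Omega\mathcal E_h[u]\,dx$: writing $\mathcal E_h[u]$ out and checking, term by term, that each summand either carries an extractable factor $\varepsilon$ or $h$ — so that it can be absorbed into $\|g\|_{L^2(\partial\Omega)}^2$ once $h<h_0$ — or appears with exactly the weight $h^2 m(x,h^{-1}\B)^2$ matching the second term of \eqref{main-e3}. This, together with the construction and estimate of the magnetic extension at the variable scale $m(\cdot,h^{-1}\B)^{-1}$, is where the quantitative condition \eqref{MH} and the precise definition of $m(\cdot,h^{-1}\B)$ — rather than mere ellipticity — are indispensable, via the Fefferman--Phong-type inequality; their interplay with the merely Lipschitz geometry, the transversal field $\beta$ being bounded but not continuous in the normal direction, is what makes the argument delicate, in the spirit of Jerison and Kenig's treatment of the Laplacian.
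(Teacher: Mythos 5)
Your overall reduction is legitimate in outline: if you can prove the Dirichlet-to-Neumann bound $\|n\cdot(hD+\A)u\|_{L^2(\partial\Omega)}\le C\{\|T^hf\|_{L^2(\partial\Omega)}+h\|m(x,h^{-1}\B)f\|_{L^2(\partial\Omega)}\}$, then viewing $u$ as a Neumann solution and invoking Theorem \ref{main-thm-1} does yield \eqref{main-e3}--\eqref{main-e4} (the paper instead runs a parallel argument for the Dirichlet problem, using the Rellich estimate \eqref{re-b} in place of \eqref{re-a} inside Theorem \ref{thm-n8-2}, but that difference is not a defect). The genuine gap is in your control of the solid integrals $\int_\Omega\mathcal E_h[u]$ in the Rellich identity. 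The magnetic commutator term has the form $\int_\Omega \beta_k\, h B_{kj}\, u\,\overline{(hD_j+A_j)u}$, and since $|B(x)|\le h\, m(x,h^{-1}\B)^2$ with $m(x,h^{-1}\B)\to\infty$ as $h\to0$, this term is \emph{not} $O(h)\times$(energy) and cannot be absorbed into $\varepsilon\|g\|_{L^2(\partial\Omega)}^2$; by Cauchy--Schwarz it is controlled only by the \emph{weighted} interior quantities $\int_\Omega m(x,h^{-1}\B)\,|(hD+\A)u|^2$ and $h^2\int_\Omega m(x,h^{-1}\B)^3|u|^2$. Your "interior energy estimate" is unweighted (weight $h^2m^2$ on $|u|^2$ and no weight on the gradient), and the Fefferman--Phong inequality \eqref{M1-0} only converts $h^2m^2|u|^2$ into unweighted gradient energy; neither gives the weighted bulk bounds in terms of boundary data. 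In the paper those weighted bounds are exactly the content of Lemma \ref{lemma-r4} and, crucially, Lemma \ref{lemma-r7} (estimates \eqref{re-70}--\eqref{re-71}), whose proof is a duality argument resting on the weighted mapping properties of the Green and Neumann operators (Theorems \ref{thm-G2} and \ref{thm-N2}), which in turn rest on the pointwise decay of $G_\A$ and $N_\A$ at the scale $m(x,\B)^{-1}$ (Sections \ref{sec-G}--\ref{sec-N}). This machinery is the analytic core of the theorem and has no counterpart in your proposal; the suggestion that "one further integration by parts" turns the bulk terms into boundary terms weighted by $h^2m^2$ does not work, since moving derivatives off $u$ regenerates commutators with $\B$ of the same strength.

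Two secondary problems. First, your magnetic extension at the variable scale $m(\cdot,h^{-1}\B)^{-1}$ is gauge-sensitive as sketched: the phase correction handles $n\cdot\A$, but the commutators between mollification at scale $\sim m^{-1}$ and multiplication by the tangential part of $\A$ produce errors of size $\sim m^{-1}\|\nabla\A\|_\infty|f|$, which both introduce dependence on $\A$ (the theorem's constants may depend only on $\B$) and exceed the allowed size $h\,m|f|$ once $\kappa\ge1$; so even the unweighted energy estimate is not established by the argument given. Second, to apply Theorem \ref{main-thm-1} you must first know $g=n\cdot(hD+\A)u\in L^2(\partial\Omega)$ for the $H^1$ solution, which requires the smooth-exhaustion approximation (the paper's Section \ref{sec-A}) together with the localization handling the fact that \eqref{MH} holds only on $\overline\Omega$ and producing the threshold $h_0$ (Section \ref{sec-F}), plus the $d=2$ descent; you may wave at these as routine, but they are part of the proof and your sketch omits them entirely.
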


A  few remarks are in order.

\begin{remark}
{\rm

In the case $\A \equiv 0$ in $\Omega$ and $h=1$, the Neumann and Dirichlet problems \eqref{NP0} and \eqref{DP0} reduce to
\begin{equation}\label{NH0}
\Delta u =0 \quad \text{ in } \Omega \quad \text{ and } \quad \frac{\partial u}{\partial n} =g \quad \text{ on } \partial \Omega,
\end{equation}
and 
\begin{equation}\label{DH0}
\Delta u =0 \quad \text{ in } \Omega \quad \text{ and } \quad  u = f\quad \text{ on } \partial \Omega,
\end{equation}
respectively.
In this case, it was proved by D. Jerison and C. Kenig \cite{DK-1980, DK-1981} that the weak solutions in $H^1(\Omega)$
to \eqref{NH0} and \eqref{DH0}
satisfy the nontangential maximal function estimates, 
$$
\| (\nabla u)^*\|_{L^2(\partial\Omega)}\le C \| g \|_{L^2(\partial\Omega)}
\quad \text{ and } \quad
\| (\nabla u)^* \|_{L^2(\partial\Omega)} \le C \| f \|_{H^1(\partial\Omega)},
$$
respectively, assuming that $\Omega$ is a bounded Lipschitz domain.
Our main results extend this classical work to the magnetic Laplacian $P^h(\A)$
in the semiclassical setting, where the estimates are uniform in $h\in (0, h_0)$.
To the best of the author's knowledge,
Theorems \ref{main-thm-1} and \ref{main-thm-2} are new
even for smooth domains.
}
\end{remark}

\begin{remark}\label{rem-1}
{\rm

 The   auxiliary function $m(x, \B)$ in Theorems \ref{main-thm-1} and \ref{main-thm-2} is  defined by
\begin{equation}\label{m}
\frac{1}{m(x, \B)}
=\sup \Big\{ r>0:
\sup_{\b(x, r)} |\B| \le  \frac{1}{r^2}  \Big\},
\end{equation}
where $\b(x, r)$ denotes the ball centered at $x$ with radius $r$.
This function, which was introduced by the present author in \cite{Shen-1994, Shen-1995}, plays the role  of  a
critical  scaling in the study of Schr\"odinger operators with electrical and magnetic potentials.
See earlier work in \cite{HN-1985, smith-1991, Zhong-1993} as well as references for more recent work in \cite{Poggi-2024, Shen-2025}.
Note that if $\B$ is a (matrix-valued) polynomial of degree $\kappa$, then
$$
\sup_{\b(x, r)} |\B| \approx \sum_{|\alpha|\le \kappa} |\partial^\alpha \B(x)| r^{|\alpha|}.
$$
It follows that 
\begin{equation*}
 c \sum_{|\alpha|\le \kappa}
 |\partial^\alpha \B (x)|^{\frac{1}{|\alpha|+2}}
 \le m(x,\B) \le C 
\sum_{|\alpha|\le \kappa}  | \partial^\alpha \B (x)|^{\frac{1}{|\alpha|+2}},
\end{equation*}
where $C, c>0$ depend only on $d$ and $\kappa$.
Under the finite-type condition on $\B$ in Theorem \ref{main-thm-1}, we have 
 $$
m (x, h^{-1} \B) \ge c  \sum_{|\alpha|\le \kappa}
 h^{-\frac{1}{|\alpha|+2}} |\partial^\alpha \B (x)|^{\frac{1}{|\alpha|+2}}
$$
 for $h\in (0, h_0)$. See Remark \ref{re-9-1}.
}
\end{remark}

\begin{remark}\label{re-dn}
{\rm

Consider the Dirichlet-to-Neumann map associated with the operator $P^h(\A)$,
\begin{equation*}
\Lambda^h: f \to n \cdot (h D +\A )u,
\end{equation*}
where $u$ is the solution of the Dirichlet problem \eqref{DP0}.
It follows from Theorems \ref{main-thm-1} and \ref{main-thm-2}  that 
\begin{equation}\label{DN-1}
\| \Lambda ^h f \|_{L^2(\partial\Omega)}
\approx
\|  T^h f\|_{L^2(\partial\Omega)}
+ h  \|  m(x, h^{-1} \B) f \|_{L^2(\partial\Omega)} 
\end{equation}
for $h\in (0, h_0)$. Note that $T^hf $ represents the tangential component of $(h D+\A)u$ on $\partial\Omega$.
In fact, our proof yields the weighted estimates,
$$
\| m(x, h^{-1}\B)^\ell \Lambda^h f \|_{L^2(\partial\Omega)}
\approx
\| m(x, h^{-1} \B)^\ell T^h f \|_{L^2(\partial\Omega)}
+ h \| m(x, h^{-1}\B)^{\ell+1} f \|_{L^2(\partial\Omega)}
$$
for any $\ell \in \R$ (the bounding constants depend on $\ell$).
See Theorem \ref{re-theorem}.
}
\end{remark}

\begin{remark}
{\rm
Let $u$ be a solution of the Dirichlet problem \eqref{DP0} with $f\in C(\partial\Omega; \C)$. 
Then
\begin{equation}\label{D-est}
\| (u)^* \|_{L^2(\partial\Omega)}
\le C \| f \|_{L^2(\partial\Omega)}.
\end{equation}
This follows  readily from the maximum principle for subharmonic functions.
No addition condition beyond $\A\in C^1(\overline{\Omega}; \R^d)$  is not needed.
See Theorem \ref{thm-p1}.
}
\end{remark}

\begin{remark}
{\rm

Suppose $\B$ is of finite type on $\partial\Omega$; i.e., the inequality \eqref{MH} holds
for any $x\in \partial\Omega$.
By compactness it follows that \eqref{MH} holds in a neighborhood of $\partial\Omega$.
In this case, the estimates
\eqref{main-e1} and \eqref{main-e3} hold if one modifies the definition of the nontangential maximal function $(u)^*$
by considering only points near $\partial\Omega$,
\begin{equation*}
(u)^* (x)=\sup\big \{ |u(y)|: \ y\in \Omega,\ |y-x|< M_0 \text{\rm dist} (y, \partial\Omega) \text{ and } \text{\rm dist}(y, \partial\Omega)< c\big \}.
\end{equation*}
This follows directly  from the proofs of Theorems \ref{main-thm-1} and \ref{main-thm-2}.
}
\end{remark}

We now describe the main ingredients  in the  proofs of Theorems \ref{main-thm-1} and \ref{main-thm-2}.
First, we rewrite the equation ${P}^h (\A) u=0$ as 
$(D+\beta \A)^2 u=0$, where $\beta =h^{-1}>1$.
By a localization argument, we  reduce the problem to the equation 
$(D+\A)^2 u=0$ in a bounded Lipschitz domain $\Omega$, assuming that $\Omega \subset \b(0, R_0)$ for some $R_0>0$
and  the magnetic field $\B$ satisfies  the following conditions.
There exists $C_0>0$ such that
\begin{equation}\label{H0}
\sup_{\b(x, r)} |\nabla \B| \le \frac{C_0}{r} \fint_{\b(x, r)} |\B|
\end{equation}
for any $\b(x, r) \subset \b(0, 4R_0)$,  and 
\begin{equation} \label{H1}
\sup_{\b(x_0, R_0)} |\B|\ge 4R_0^{-2}
\end{equation}
for any $x_0\in \overline{\Omega}$.
Note that when $\B$ is replaced by $\beta \B$, 
the condition \eqref{H0} remains invariant, while \eqref{H1} is satisfied for $\beta$ sufficiently  large.
Moreover, under the conditions \eqref{H0}-\eqref{H1}, it is proved in  \cite{Shen-2025} that 
\begin{equation}\label{o-low}
 \int_\Omega m(x, \B)^2 |\psi|^2 dx \le C  \int_\Omega | (D+\A)\psi|^2 dx
 \end{equation}
 for any $\psi\in C^1(\overline{\Omega}; \C)$.

Next, let $u\in H^1(\Omega; \C)$ be a weak solution of $(D+\A)^2 u=0$ in $\Omega$.
By an approximation argument, we may assume that  $\Omega$ and thus $u$ are smooth.
To relate the $L^2$ norm of the normal component of $(D+\A) u$ on $\partial\Omega$
to the $L^2$ norm of its tangential components, we use two Rellich-type identities,
adapted to the operator $(D+\A)^2$.
See Lemmas \ref{lemma-r1} and \ref{lemma-r2}.
As a result, the proof for \eqref{DN-1} with $h=1$ is reduced to the estimates, 
\begin{equation}\label{0-e1}
\left\{
\aligned
\int_\Omega m(x, \B)^3 |u|^2\,  & dx \le C \int_{\partial\Omega}  m(x, \B)^2  |u|^2\,  dx, \\
\int_\Omega m(x, \B)^3 |u|^2\,  & dx \le C \int_{\partial\Omega} |n \cdot (D+\A)  u|^2\,  dx.
\endaligned
\right.
\end{equation}
To establish \eqref{0-e1}, we study the Green function $G_\A(x, y)$
 and the Neumann function $N_\A(x, y)$ for the operator $(D+\A)^2$
in $\Omega$. Suppose $d\ge 3$ (the case $d=2$ is handled by the method of descending). 
Using \eqref{o-low}, we are able to show that under the assumptions \eqref{H0}-\eqref{H1},
\begin{equation}\label{GN}
\left\{
\aligned
|G_\A(x, y)| & \le \frac{C_\ell}{ \{ 1+ |x-y| m(x, \B) \}^\ell} \cdot \frac{1}{|x-y|^{d-2}}, \\ 
|N _\A(x, y)| & \le \frac{C_\ell}{ \{ 1+ |x-y| m(x, \B) \}^\ell} \cdot \frac{1}{|x-y|^{d-2}}\\ 
\endaligned
\right.
\end{equation}
for any $\ell\ge 0$ and $x, y\in \Omega$.
See Sections \ref{sec-G} and \ref{sec-N}.
The estimates in \eqref{0-e1} follow from \eqref{GN} by using the Rellich identities mentioned above and a duality argument.

Finally, with the Rellich estimates \eqref{DN-1} at our disposal, we establish the nontangential maximal function 
estimates \eqref{main-e1} and \eqref{main-e3} with the help of \eqref{D-est}. 
A key observation here is   that although the commutator $[(D+\A)^2, D_k+A_k] $ may not be zero in $\Omega$,
 the function $v(x)=|(D+\A) u(x) | +m(x, \B) |u(x)|$ nevertheless satisfies
the interior estimate,
\begin{equation*}
v(x)
\le C \fint_{\b(x, r)}  v , 
\end{equation*}
if $\b(x, r)\subset \Omega$. See Section \ref{section-I}.
This allows us to use the techniques developed for \eqref{0-e1} to control the additional error terms. 


\section{Preliminaries}\label{section-P}

Throughout the section we assume that $\Omega$ is a bounded Lipschitz domain and $\A\in C^1({\Omega}; \R^d)$.

\begin{lemma}\label{lemma-p1}
Let $\e>0$ and $u_\e =\sqrt{ |u|^2 +\e^2}$.  Then for $u\in C^1(\Omega; \C)$, 
\begin{equation}\label{p1-1}
|\nabla u_\e| \le |(\nabla +i \A) u|,
\end{equation}
and for $u\in C^2(\Omega; \C)$,
\begin{equation} \label{p1-2}
u_\e \Delta u_\e
\ge \Re \left\{ (\nabla +i \A)^2 u \cdot \overline{u} \right\}.
\end{equation}
\end{lemma}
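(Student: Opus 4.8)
The plan is to derive both inequalities from the single pointwise identity
\[
u_\e\,\nabla u_\e = \Re\bigl(\overline{u}\,(\nabla+i\A)u\bigr),
\]
which is the classical starting point for diamagnetic (Kato-type) inequalities. To obtain it, I would differentiate $u_\e^2 = |u|^2+\e^2 = u\overline{u}+\e^2$ to get $2u_\e\,\nabla u_\e = 2\,\Re(\overline{u}\,\nabla u)$; since $\A$ is real-valued, $\Re\bigl(\overline{u}\,(i\A u)\bigr)=\Re\bigl(i\A|u|^2\bigr)=0$, so $u_\e\,\nabla u_\e = \Re(\overline{u}\,\nabla u) = \Re\bigl(\overline{u}\,(\nabla+i\A)u\bigr)$. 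The $\e$-regularization is exactly what makes this useful: $u_\e\ge\e>0$, so $u_\e$ inherits the $C^1$ (resp.\ $C^2$) regularity of $u$ and one may divide by it freely, with no difficulty at the zeros of $u$.

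For \eqref{p1-1}, I take absolute values in the identity above. By the elementary bound $|\Re\zeta|\le|\zeta|$ (applied to $\zeta = \overline{u}\,(\nabla+i\A)u \in \C^d$ componentwise) and $|u|\le u_\e$,
\[
u_\e\,|\nabla u_\e| \le \bigl|\overline{u}\,(\nabla+i\A)u\bigr| = |u|\,\bigl|(\nabla+i\A)u\bigr| \le u_\e\,\bigl|(\nabla+i\A)u\bigr|;
\]
dividing by $u_\e>0$ gives \eqref{p1-1}.

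For \eqref{p1-2}, I take the divergence of the identity $u_\e\,\nabla u_\e = \Re\bigl(\overline{u}\,(\nabla+i\A)u\bigr)$. The left-hand side equals $|\nabla u_\e|^2 + u_\e\,\Delta u_\e$. For the right-hand side I would expand $\nabla\cdot\bigl(\overline{u}\,(\nabla+i\A)u\bigr)$ using $\nabla\overline{u} = \overline{(\nabla+i\A)u} + i\A\,\overline{u}$ together with $\nabla\cdot\bigl((\nabla+i\A)u\bigr) = (\nabla+i\A)^2u - i\A\cdot\nabla u + |\A|^2 u$; the first-order terms $i\A\cdot\nabla u$ and the $|\A|^2 u$ terms cancel in pairs, leaving the clean identity
\[
\nabla\cdot\bigl(\overline{u}\,(\nabla+i\A)u\bigr) = \bigl|(\nabla+i\A)u\bigr|^2 + \overline{u}\,(\nabla+i\A)^2 u.
\]
Taking real parts and equating with the left-hand side yields $|\nabla u_\e|^2 + u_\e\,\Delta u_\e = |(\nabla+i\A)u|^2 + \Re\bigl(\overline{u}\,(\nabla+i\A)^2u\bigr)$, hence
\[
u_\e\,\Delta u_\e = \bigl|(\nabla+i\A)u\bigr|^2 - |\nabla u_\e|^2 + \Re\bigl\{(\nabla+i\A)^2u\cdot\overline{u}\bigr\} \ge \Re\bigl\{(\nabla+i\A)^2u\cdot\overline{u}\bigr\}
\]
by \eqref{p1-1}. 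There is no genuine obstacle here; the only thing requiring attention is the bookkeeping of the cross terms in the divergence computation so that they cancel as claimed. (Alternatively one could differentiate $\nabla u_\e = u_\e^{-1}\,\Re(\overline{u}(\nabla+i\A)u)$ directly, but the symmetric divergence form above is the cleanest route.)
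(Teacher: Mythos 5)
Your proposal is correct and follows essentially the same route as the paper: both start from the identity $u_\e\nabla u_\e=\Re\{\overline{u}\,(\nabla+i\A)u\}$ obtained by differentiating $u_\e^2=|u|^2+\e^2$, deduce \eqref{p1-1} via $|u|\le u_\e$, and then differentiate (take the divergence of) that identity to get $|\nabla u_\e|^2+u_\e\Delta u_\e=|(\nabla+i\A)u|^2+\Re\{(\nabla+i\A)^2u\cdot\overline{u}\}$, from which \eqref{p1-2} follows by \eqref{p1-1}. Your cross-term cancellation in the divergence computation checks out (using $\A\in C^1$, as assumed in this section), so nothing is missing.
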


\begin{proof}

By differentiating  $u_\e^2 = |u|^2 +\e^2$, we obtain
\begin{equation}\label{p1-3}
u_\e \partial_j u_\e
= \Re \{ (\partial_j +i A_j) u\cdot  \overline{u} \}
\end{equation}
for $1\le j\le d$.
Hence, $u_\e |\nabla u_\e|\le |(\nabla + i \A)u| |u|$.
Since $|u|\le u_\e$, this yields \eqref{p1-1}.

Next, by differentiating \eqref{p1-3}, we see that 
\begin{equation}\label{p1-4}
|\nabla u_\e|^2 + u_\e \Delta u_\e
=\Re \{ (\nabla  + i \A)^2 u \cdot \overline{u} \}
+ |(\nabla + i \A) u|^2,
\end{equation}
which, together with \eqref{p1-1} and $|u|\le u_\e$, leads to \eqref{p1-2}.
\end{proof}

\begin{lemma}\label{lemma-P}
Let $u\in H_0^1(\Omega; \C)$. Then
\begin{equation}\label{P-1}
\left(  \int_\Omega |u|^{p_d} \right)^{1/p_d} \le C  \left( \int_\Omega |(D +\A)u |^2 \right)^{1/2},
 \end{equation}
 where $p_d=\frac{2d}{d-2}$  for $d\ge 3$, 
 $2< p_d< \infty$ for $d=2$, and  $C$ depends  only on $\Omega$ and $p_d$.
 Furthermore, if $u\in H^1(\Omega; \C)$, then
\begin{equation}\label{P-1a}
\left(  \int_\Omega |u|^{p_d} \right)^{1/p_d} \le C  \left( \int_\Omega |(D +\A)u |^2 \right)^{1/2}
+ C \left(\int_\Omega |u|^2 \right)^{1/2},
\end{equation}
and
\begin{equation}\label{P-1c}
\int_{\partial\Omega} |u|^2
\le C \int_\Omega |(D+\A) u|^2 + C \int_\Omega |u|^2.
\end{equation}
\end{lemma}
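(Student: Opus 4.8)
The plan is to deduce all three inequalities from the diamagnetic inequality \eqref{p1-1}, combined with the classical scalar Sobolev and trace inequalities on the Lipschitz domain $\Omega$. Since $D=-i\nabla$, we have $(D+\A)u=-i(\nabla+i\A)u$, so $|(D+\A)u|=|(\nabla+i\A)u|$ pointwise; hence \eqref{p1-1} may be read as $|\nabla u_\e|\le|(D+\A)u|$, where $u_\e=\sqrt{|u|^2+\e^2}$. Because Lemma \ref{lemma-p1} is stated for $C^1$ functions, I would first prove the estimates for smooth $u$ and then pass to the general case by density: $C_c^\infty(\Omega;\C)$ is dense in $H_0^1(\Omega;\C)$, and $C^\infty(\overline\Omega;\C)$ is dense in $H^1(\Omega;\C)$ since $\Omega$ is Lipschitz; both sides of each inequality are continuous with respect to $H^1$-convergence (the left sides via the Sobolev embedding and the trace theorem, the right sides trivially, with $\A$ bounded as in the applications). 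For smooth $u$ the function $u_\e$ is again smooth, as $|u|^2+\e^2\ge\e^2>0$, so Lemma \ref{lemma-p1} applies to it directly.

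For \eqref{P-1}, let $u\in C_c^\infty(\Omega;\C)$. Then $u_\e-\e$ is a nonnegative function in $C_c^\infty(\Omega)$ (it vanishes wherever $u$ does), so the classical Sobolev inequality — Gagliardo--Nirenberg--Sobolev for $d\ge3$, and the embedding $H_0^1(\Omega)\hookrightarrow L^p(\Omega)$ for every $p<\infty$ when $d=2$ — gives $\|u_\e-\e\|_{L^{p_d}(\Omega)}\le C\|\nabla(u_\e-\e)\|_{L^2(\Omega)}=C\|\nabla u_\e\|_{L^2(\Omega)}\le C\|(D+\A)u\|_{L^2(\Omega)}$, with $C=C(\Omega,p_d)$. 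As $\e\to0^+$ the functions $u_\e-\e$ increase pointwise to $|u|$, so monotone convergence yields \eqref{P-1}.

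For \eqref{P-1a} and \eqref{P-1c}, let $u\in C^\infty(\overline\Omega;\C)$. Then $u_\e\in C^\infty(\overline\Omega)\subset H^1(\Omega)$ with $\|u_\e\|_{L^2(\Omega)}\le\|u\|_{L^2(\Omega)}+\e|\Omega|^{1/2}$ and $\|\nabla u_\e\|_{L^2(\Omega)}\le\|(D+\A)u\|_{L^2(\Omega)}$. Since $\Omega$ is a bounded Lipschitz (hence extension) domain, $H^1(\Omega)\hookrightarrow L^{p_d}(\Omega)$ and the trace operator $H^1(\Omega)\to L^2(\partial\Omega)$ is bounded. Using $|u|\le u_\e$ in $\Omega$ and $|u|^2\le u_\e^2$ on $\partial\Omega$ (the trace of $u_\e$ being $\sqrt{|u|^2+\e^2}$, since $z\mapsto\sqrt{|z|^2+\e^2}$ is $C^1$ and Lipschitz), one obtains $\|u\|_{L^{p_d}(\Omega)}\le\|u_\e\|_{L^{p_d}(\Omega)}\le C\big(\|u_\e\|_{L^2(\Omega)}+\|\nabla u_\e\|_{L^2(\Omega)}\big)$ and $\int_{\partial\Omega}|u|^2\le\int_{\partial\Omega}u_\e^2\le C\big(\|u_\e\|_{L^2(\Omega)}^2+\|\nabla u_\e\|_{L^2(\Omega)}^2\big)$. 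Inserting the bounds for $u_\e$ and letting $\e\to0$ gives \eqref{P-1a} and \eqref{P-1c}.

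There is no essential obstacle here: the content is the diamagnetic inequality \eqref{p1-1}, which converts the complex magnetic gradient into an ordinary gradient of a real-valued function, after which everything follows from textbook Sobolev and trace theory on Lipschitz domains. The only points requiring a little care are the approximation step (reducing to smooth $u$, which is genuinely needed because Lemma \ref{lemma-p1} is stated only for $C^1$/$C^2$ functions) and the harmless limits $\e\to0$, neither of which presents real difficulty.
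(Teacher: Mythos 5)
Your proof is correct and follows essentially the same route as the paper: apply the diamagnetic inequality \eqref{p1-1} to $\sqrt{|u|^2+\e^2}-\e$ (resp. $u_\e$), invoke the classical Sobolev and trace inequalities for the resulting real-valued function, let $\e\to 0$, and conclude by density. The only difference is that you write out the details for \eqref{P-1a} and \eqref{P-1c}, which the paper dismisses as "similar."
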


\begin{proof}

 Let $v_\e =\sqrt{|u|^2 + \e^2} -\e$, where $\e>0$.
It follows from  \eqref{p1-1} that 
$|\nabla v_\e |\le  |(D+\A) u|$.
Since $v_\e \in H_0^1(\Omega; \R)$, by Sobolev  inequality,
$$
\left( \int_\Omega |v_\e|^{p_d} \right)^{1/p_d}
\le C\left(  \int_\Omega |\nabla v_\e|^2\right)^{1/2}
\le C \left(\int_\Omega | (D+\A) u|^2\right)^{1/2}.
$$
By letting $\e \to 0$, we obtain \eqref{P-1} for $u\in C_0^1(\Omega, \C)$.
A density argument gives \eqref{P-1} for $u\in H_0^1(\Omega; \C)$.
The proofs for \eqref{P-1a} and \eqref{P-1c}  are similar.
\end{proof}

\begin{thm}\label{thm-P}
Let $u\in C^2(\b(x_0, 2r); \C)$ be a solution of $(D +\A)^2 u=F$ in $\b (x_0, 2r)$.
Then
\begin{equation}\label{G2}
\left(\fint_{\b(x_0, r)} |u|^q\right)^{1/q}
\le C \fint_{\b(x_0, 2r)} |u| 
+ C r^2 \left(\fint_{\b(x_0, 2r)} |F|^p \right)^{1/p},
\end{equation}
where $2\le p< q \le \infty$ and $\frac{1}{p}-\frac{1}{q}< \frac{2}{d}$.
The constant $C$ depends only on $d$, $p$ and $q$,
\end{thm}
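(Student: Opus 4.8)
The plan is to establish the Moser-type estimate \eqref{G2} in two stages: first handle the gauge-transformed constant-coefficient scalar equation to reduce the magnetic Laplacian to the ordinary Laplacian on a small ball, then run a standard $L^p$-to-$L^\infty$ (or $L^p$-to-$L^q$) iteration. I would begin by noting that estimates of this form are scale-invariant in the stated way: writing $u_r(x) = u(x_0 + r(x-x_0))$ (rescaling to a unit ball) and $\A_r(x) = r\,\A(x_0 + r(x-x_0))$, one reduces to $r=1$ and $x_0=0$, with the factor $r^2$ on the right-hand side of \eqref{G2} appearing naturally from the scaling of $F$. The point is that after rescaling, the vector potential is $r\A$, which on a bounded ball has small $C^1$ norm as $r\to 0$; but since the constant $C$ is required to be independent of $\A$ and $r$, I should avoid relying on smallness and instead use a gauge argument.

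The key reduction is the gauge transformation. Since the estimate only involves $|u|$ and $|F|$, and $(D+\A)^2(e^{-i\varphi}u) = e^{-i\varphi}(D + \A - \nabla\varphi)^2 u$ pointwise, I am free to subtract any gradient from $\A$. On the ball $\b(x_0,2r)$, however, $\A$ need not be a gradient (the magnetic field $\B = \nabla\times\A$ may be nonzero), so I cannot gauge $\A$ away entirely. Instead I would use the pointwise differential inequality from Lemma \ref{lemma-p1}: for $u_\e = \sqrt{|u|^2+\e^2}$ one has $u_\e \Delta u_\e \ge \Re\{(\nabla + i\A)^2 u \cdot \overline u\} = \Re\{-F\cdot\overline u\}$ (note $(D+\A)^2 = -(\nabla+i\A)^2$), hence $\Delta u_\e \ge -|F|$ on the region where $u_\e$ is comparable to $|u|$; letting $\e\to 0$ one gets, in the distributional sense, $\Delta|u| \ge -|F|$, i.e. $|u|$ is a subsolution of the Laplacian with right-hand side controlled by $|F|$. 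From this point the magnetic potential has been eliminated and the theorem follows from the classical local boundedness estimate for subsolutions of $\Delta w \ge -|F|$: one has $\sup_{\b(x_0,r)} w \le C\fint_{\b(x_0,3r/2)}|w| + Cr^2(\fint_{\b(x_0,2r)}|F|^p)^{1/p}$ for $w = |u|$ under the hypothesis $p > d/2$, which is implied by $\tfrac1p - \tfrac1q < \tfrac2d$ when combined with $q\le\infty$; the general $q<\infty$ case follows by interpolating $\|u\|_{L^q}$ between $\|u\|_{L^1}$ and $\|u\|_{L^\infty}$, or directly from the Moser iteration.

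The only genuine obstacle is making the subsolution inequality $\Delta|u|\ge -|F|$ rigorous across the zero set of $u$ and justifying the limit $\e\to 0$ in the distributional formulation — this is a routine but slightly delicate regularization argument (test against nonnegative $\phi\in C_c^\infty$, use \eqref{p1-4} to write $\int |u|\Delta\phi = \lim_\e \int u_\e\Delta\phi = \lim_\e\int \Delta u_\e\,\phi \ge \lim_\e \int(\Re\{-F\overline u\}/u_\e - |\nabla u_\e|^2/u_\e)\phi$, and discard the last term since it is $\le 0$). Once that is in place, I would cite or reproduce the De Giorgi–Nash–Moser local boundedness theorem for subsolutions, which gives \eqref{G2} with a constant depending only on $d$, $p$, $q$; the restriction $2 \le p$ in the statement is not needed for the PDE but is harmless, and the constraint $\tfrac1p - \tfrac1q < \tfrac2d$ is exactly what guarantees the iteration closes and produces an $L^q$ (rather than merely $L^p$) bound on the left. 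An alternative route avoiding subsolution theory is the representation-formula/Duhamel approach: solve $\Delta w = -|F|$ with the Newtonian potential to absorb $F$, reducing to $F=0$, then apply the mean value / reverse Hölder inequality for $|u|$ directly; but the subsolution argument is cleanest and self-contained given Lemma \ref{lemma-p1}.
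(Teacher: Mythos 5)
Your first step coincides with the paper's: both use Lemma \ref{lemma-p1} to get $\Delta u_\e \ge -|F|$ for $u_\e=\sqrt{|u|^2+\e^2}$, so the magnetic potential disappears and one is left with a subsolution estimate for the Laplacian. The divergence is in how you close the argument, and that is where there is a genuine gap. You invoke the De Giorgi--Nash--Moser local boundedness theorem, which requires $|F|\in L^p$ with $p>d/2$, and you assert that $p>d/2$ ``is implied by $\tfrac1p-\tfrac1q<\tfrac2d$ when combined with $q\le\infty$.'' That implication only holds when $q=\infty$. The theorem permits, e.g., $d=5$, $p=2$, $q=9$ (since $\tfrac12-\tfrac19<\tfrac25$), and here $p=2<d/2$, so no $L^\infty$ bound on $u$ in terms of $\fint|F|^p$ can hold (the Newtonian potential of an $L^{d/2}$ or lower-integrability function need not be bounded). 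Consequently your proposed interpolation of $\|u\|_{L^q}$ between $\|u\|_{L^1}$ and $\|u\|_{L^\infty}$ has nothing to interpolate against in this range, and the fallback phrase ``or directly from the Moser iteration'' is not an argument: a finite Moser-type iteration producing the exact $L^q$ gain under $\tfrac1p-\tfrac1q<\tfrac2d$ would need to be set up and run, and you have not done so. (There is also a small slip in your regularization sketch: you dropped the term $|(\nabla+i\A)u|^2/u_\e$ from \eqref{p1-4}, and discarding only $-|\nabla u_\e|^2/u_\e$ weakens the lower bound in the wrong direction; one must discard the combination $\bigl(|(\nabla+i\A)u|^2-|\nabla u_\e|^2\bigr)/u_\e\ge 0$, which is exactly what \eqref{p1-2} encodes. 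This part is repairable.)

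The fix is essentially the route you relegate to an afterthought, and it is what the paper does: from $\Delta u_\e\ge -|F|$ derive the sub-mean-value inequality with a Newtonian potential correction,
\begin{equation*}
u_\e(y)\le \fint_{\b(y,r)} u_\e + C\int_{\b(y,r)}\frac{|F(z)|}{|z-y|^{d-2}}\,dz ,\qquad y\in\b(x_0,r),
\end{equation*}
let $\e\to 0$, and then apply Young's convolution inequality to the truncated Riesz kernel $|z|^{2-d}\chi_{\{|z|<r\}}$, which maps $L^p\to L^q$ precisely under $\tfrac1p-\tfrac1q<\tfrac2d$ and produces the factor $r^2$ after averaging. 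This handles all admissible $(p,q)$ uniformly, including $p\le d/2$ with $q<\infty$, without any local boundedness theorem. If you develop that representation argument (or carry out an explicit finite iteration), your proof goes through; as written, the main argument does not cover the full stated range of exponents.
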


\begin{proof}

We give the proof for $d\ge 3$.
The case $d=2$ is similar.
Let $u_\e=\sqrt{|u|^2 +\e^2}$, where $\e>0$.
By \eqref{p1-2}, we have $\Delta u_\e \ge -|F|$ in $\b(x_0, 2r)$.
By differentiating 
$
\fint_{\partial \b(y, t)} u_\e
$
in $t$, as in the proof of the mean value property for harmonic functions, 
it  follows that  if $\b(y, t)\subset \b(x_0, 2r)$,
$$
u_\e (y)
\le \fint_{\partial \b(y, t)} u_\e
+ C \int_{\b(y, t)} \frac{ |F(z)|}{|z-y|^{d-2}} dz.
$$
Hence, for $y\in \b(x_0, r)$,
$$
u_\e (y)
\le  \fint_{\b (y, r)} u_\e 
+ C \int_{\b(y, r)} \frac{|F(z)|}{|z-y|^{d-2}} dz.
$$
By letting $\e \to 0$, we obtain
$$
|u (y)|
\le  \fint_{B(y, r)} |u|
+ C \int_{\b(y, r)} \frac{|F(z)|}{|z-y|^{d-2}} dz
$$
for any $y\in \b(x_0, r)$.
This yields \eqref{G2} by using Young's inequality.
\end{proof}

\begin{lemma}\label{lemma-p2}
Let $u\in C^2(\Omega; \C)\cap C(\overline{\Omega}; \C)$ be a solution of the Dirichlet problem,
\begin{equation}\label{p2-1}
(D+\A)^2 u =0 \quad \text{ in } \Omega \quad \text{ and } \quad
u= f\quad \text{ on } \partial\Omega,
\end{equation}
where $f\in C(\partial\Omega; \C)$. Then $|u|\le v$ in $\Omega$, where $v$ is harmonic in $\Omega$ and
$v=|f| $ on $\partial\Omega$.
\end{lemma}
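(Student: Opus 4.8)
The plan is to show that the regularized modulus $u_\e=\sqrt{|u|^2+\e^2}$ is subharmonic in $\Omega$ and then to compare it with $v$ via the maximum principle, letting $\e\to 0$ at the end.

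First I would record that since $D=-i\nabla$ we have $(D+\A)u=-i(\nabla+i\A)u$, hence $(D+\A)^2u=-(\nabla+i\A)^2u$; thus the equation $(D+\A)^2u=0$ in $\Omega$ is equivalent to $(\nabla+i\A)^2u=0$ in $\Omega$. Fix $\e>0$ and set $u_\e=\sqrt{|u|^2+\e^2}$. Since $u\in C^2(\Omega;\C)$ and $u_\e\ge\e>0$, the function $u_\e$ belongs to $C^2(\Omega;\R)$, and inequality \eqref{p1-2} of Lemma \ref{lemma-p1} gives
\[
u_\e\,\Delta u_\e\ \ge\ \Re\big\{(\nabla+i\A)^2u\cdot\overline{u}\big\}=0
\]
in $\Omega$. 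Dividing by $u_\e>0$, we conclude $\Delta u_\e\ge 0$ in $\Omega$, i.e., $u_\e$ is subharmonic. Moreover, since $u\in C(\overline{\Omega};\C)$, the function $u_\e$ is continuous on $\overline{\Omega}$, with boundary values $u_\e=\sqrt{|f|^2+\e^2}$ on $\partial\Omega$.

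Next, let $v$ be the harmonic function in $\Omega$ with $v=|f|$ on $\partial\Omega$; since $\Omega$ is a bounded Lipschitz domain, every boundary point is regular for the Dirichlet problem, so $v\in C(\overline{\Omega})$ and $v=|f|$ on $\partial\Omega$ in the classical sense. Then $w_\e:=u_\e-v$ is subharmonic in $\Omega$, continuous on $\overline{\Omega}$, and on $\partial\Omega$ satisfies
\[
w_\e=\sqrt{|f|^2+\e^2}-|f|\ \le\ \e .
\]
By the maximum principle for subharmonic functions, $w_\e\le\e$ throughout $\Omega$; that is, $u_\e\le v+\e$ in $\Omega$. Since $|u|\le u_\e$, this gives $|u|\le v+\e$ in $\Omega$, and letting $\e\to 0$ yields $|u|\le v$ in $\Omega$, as desired.

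The argument is essentially immediate once Lemma \ref{lemma-p1} is available; the only points needing a word of care are that $u_\e$ stays strictly positive (so that $\Delta u_\e\ge 0$ follows from \eqref{p1-2} without dividing by zero and $u_\e$ is genuinely $C^2$), together with the solvability and boundary regularity of the Dirichlet problem for the Laplacian in a Lipschitz domain and the maximum principle for subharmonic functions continuous up to the boundary — all classical. If one wishes to avoid invoking regularity of $\Omega$, one can instead use the maximum principle in the form that $\limsup_{x\to\xi}w_\e(x)\le\e$ for every $\xi\in\partial\Omega$ implies $w_\e\le\e$ in $\Omega$, which needs only the continuity of $u_\e$ on $\overline{\Omega}$ and $v$ being the Perron solution.
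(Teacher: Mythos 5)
Your proof is correct and follows essentially the same route as the paper: regularize with $u_\e=\sqrt{|u|^2+\e^2}$, use \eqref{p1-2} to see that $u_\e$ is subharmonic, compare $u_\e - v$ with its boundary values via the maximum principle, and let $\e\to 0$. The extra remarks on the sign convention $(D+\A)^2u=-(\nabla+i\A)^2u$, the strict positivity of $u_\e$, and the boundary regularity of $v$ are sensible clarifications but do not change the argument.
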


\begin{proof}
Let  $\e>0$ and $w_\e = u_\e-v $, where $u_\e =\sqrt{ |u|^2 +\e^2}$.
It follows from \eqref{p1-2} that $\Delta w_\e=\Delta u_\e \ge  0$ in $\Omega$.
Thus, $w_\e$ is subharmonic in $\Omega$.
Note that
$$
w_\e =\sqrt{|f|^2+\e^2} -|f|\le \e \quad \text{ in } \partial \Omega.
$$
By the maximum principle for subharmonic functions, this implies that 
$$
\max_{\overline{\Omega}} w_\e= \max_{\partial\Omega} w_\e \le \e.
$$
Hence, $u_\e \le v +\e$ in $\Omega$.
By letting $\e\to 0$, we obtain $|u|\le v$ in $\Omega$.
\end{proof}

\begin{thm}\label{thm-p1}
Let $\Omega$ be a bounded Lipschitz domain and $\A\in C^1(\Omega; \R^d)$.
Let $u\in C^2(\Omega; \C)\cap C (\overline{\Omega}; \C)$ be a solution of the Dirichlet problem \eqref{p2-1}. Then
\begin{equation}\label{p3-1}
\| (u)^* \|_{L^p(\partial\Omega)}
\le C \| f \|_{L^p(\partial\Omega)}
\end{equation}
for $2-\delta< p \le \infty$, where $\delta>0$ depends on $\Omega$.
The constant $C$ in \eqref{p3-1} depends only on $d$, $p$, the Lipschitz character of  $\Omega$.
If $\Omega$ is $C^1$, the estimate \eqref{p3-1} holds for $1< p\le \infty$.
\end{thm}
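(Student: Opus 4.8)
The plan is to combine the pointwise domination $|u|\le v$ from Lemma \ref{lemma-p2} with the known boundary behavior of harmonic functions in Lipschitz domains. By Lemma \ref{lemma-p2}, if $u$ solves \eqref{p2-1} with boundary data $f\in C(\partial\Omega;\C)$, then $|u(x)|\le v(x)$ in $\Omega$, where $v$ is the harmonic function in $\Omega$ with $v=|f|$ on $\partial\Omega$ (which exists and is continuous up to the boundary since $\Omega$ is Lipschitz and $|f|\in C(\partial\Omega)$). Taking nontangential maximal functions, this gives the pointwise bound $(u)^*(x)\le (v)^*(x)$ for every $x\in\partial\Omega$. Hence it suffices to prove $\|(v)^*\|_{L^p(\partial\Omega)}\le C\|\,|f|\,\|_{L^p(\partial\Omega)}=C\|f\|_{L^p(\partial\Omega)}$ for the harmonic function $v$ with continuous nonnegative boundary data.

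The estimate for $v$ is exactly the $L^p$ boundedness of the nontangential maximal operator for the Dirichlet problem for the Laplacian. For $p=2$ in a Lipschitz domain this is the Dalberg estimate; together with the trivial $L^\infty$ bound $\|(v)^*\|_{L^\infty}\le\|v\|_{L^\infty}=\|f\|_{L^\infty}$ (maximum principle) and interpolation, one gets $2\le p\le\infty$. To reach the range $2-\delta<p$, I would invoke Dahlberg's result that the harmonic measure of a Lipschitz domain is an $A_\infty$ weight with respect to surface measure, which yields the reverse-H\"older inequality and hence the nontangential maximal estimate for some $p$ slightly below $2$; the number $\delta>0$ depends only on the Lipschitz character of $\Omega$. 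For $C^1$ domains the harmonic measure and surface measure are mutually absolutely continuous with density in $A_p$ for every $p>1$ (the Fabes--Jerison--Kenig result), so the estimate extends to the full range $1<p\le\infty$. In all cases the constant depends only on $d$, $p$, and the Lipschitz (resp. $C^1$) character of $\Omega$, since the magnetic potential $\A$ has been completely eliminated by the domination step.

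Concretely the steps are: (i) invoke Lemma \ref{lemma-p2} to get $|u|\le v$; (ii) pass to nontangential maximal functions to get $(u)^*\le(v)^*$ on $\partial\Omega$; (iii) quote the classical nontangential maximal function estimate $\|(v)^*\|_{L^p(\partial\Omega)}\le C\|v|_{\partial\Omega}\|_{L^p(\partial\Omega)}$ for harmonic functions — $L^2$ from Dahlberg, extended to $2-\delta<p<\infty$ via $A_\infty$ of harmonic measure, and to $p=\infty$ by the maximum principle — with the $C^1$ improvement from Fabes--Jerison--Kenig; (iv) combine with $v|_{\partial\Omega}=|f|$ to conclude \eqref{p3-1}. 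The one technical point worth a remark is that the aperture constant $M_0$ in \eqref{max} used to define $(u)^*$ may differ from the one implicit in the classical harmonic estimate, but changing the aperture of a nontangential cone changes the $L^p$ norm of the maximal function only by a constant depending on the apertures and the Lipschitz character, so this is harmless. The main (and only real) obstacle is simply that the whole argument rests on the domination of $|u|$ by a harmonic majorant, which is precisely what Lemma \ref{lemma-p2} supplies; once that is in hand, no genuinely new analysis is required and the theorem reduces to citing the Jerison--Kenig / Dahlberg theory for the plain Laplacian.
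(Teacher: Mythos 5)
Your proposal is correct and follows essentially the same route as the paper: dominate $|u|$ by the harmonic majorant $v$ with boundary data $|f|$ via Lemma \ref{lemma-p2}, deduce $(u)^*\le(v)^*$, and then quote the classical Dahlberg/Fabes--Jerison--Kenig nontangential maximal function estimates for harmonic functions in Lipschitz and $C^1$ domains. The extra remarks on aperture change and interpolation are fine but not needed beyond what the cited classical theory already provides.
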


\begin{proof}
By Lemma \ref{lemma-p2}, we see that $(u)^* \le (v)^*$ on $\partial\Omega$, where $v$ is a harmonic function in 
$\Omega$  such that $v=|f|$ on $\partial\Omega$.
As a consequence, the estimate \eqref{p3-1} follows from the well-known results for harmonic functions in Lipschitz and $C^1$
domains \cite{Kenig-book}.
\end{proof}


\section{The function $m(x, \B)$}

Let $\Omega$ be a bounded Lipschitz domain in $\R^d$, $d\ge 2$. It follows that there exist $r_0>0$ and $M_0>0$ such that 
for any $x_0\in\partial\Omega$, 
\begin{equation}
 \Omega\cap \b(x_0, r_0)
=\left\{ (x^\prime, x_d)\in \R^d: x^\prime \in \R^{d-1} \text{ and } x_d > \phi (x^\prime) \right\} \cap \b(x_0, r_0)
\end{equation}
in a new coordinate system, obtained from the standard one through translation and rotation,
where $\phi: \R^{d-1} \to \R$ is a Lipschitz function with $\|\nabla \phi \|_\infty \le M_0$ and $\phi (x_0)=0$. 
A constant $C$ is said to depend on the Lipschitz character of $\Omega$ if $C$ depends on $M_0$ and the number of balls
$\b(x_0, r_0)$,  centered on $\partial\Omega$, which are needed to cover $\partial \Omega$.
By translation, without the loss of generality,  we assume that $\Omega\subset \b(0, R_0)$ for some  $R_0=C r_0>0$, 
where $C$ depends on the Lipschitz character of $\Omega$.

We will impose the following conditions on $\B$ in this and next few sections: 
$\B \in C^1(\b(0, 4R_0); \R^{d\times d})$ and there exists $C_0>0$ such that
\begin{equation}\label{cod-1}
\sup_{\b(x, r)} |\nabla \B|
\le \frac{C_0}{r} \fint_{\b(x, r)} |\B|
\end{equation}
for any ball $\b(x, r) \subset \b(0, 4R_0)$.
 It follows from \eqref{cod-1} that
\begin{equation}\label{cod-2}
\sup_{\b } |\B |\le  C_1 \fint_{\b} |\B|
\end{equation}
for any ball $\b\subset \b(0, 4R_0)$; i.e., 
 $|\B|$ is a $B_\infty$ weight in $\b(0, 4R_0)$. In particular, $|\B|$ satisfies the doubling condition,
 \begin{equation}\label{dc}
 \int_{2\b} |\B| \le C \int_{\b} |\B|
 \end{equation}
 for any $2\b\subset \b(0, 4R_0)$, where $C$ depends only on $d$ and $C_1$ in \eqref{cod-2}.
 As a consequence, we also have
 \begin{equation}\label{cod-2a}
 \sup_{Q(x, r)} |\B|\le C \fint_{Q(x, r)} |\B|
 \end{equation}
if  $Q(x, r) \subset \b(0, 2R_0)$, where $Q(x,r)$ denotes the cube center at $x$ with side length $r$.

For $x\in \b(0, R_0)$, let $m(x, \B)$ be defined by \eqref{m}. 
Under the assumption that for any $x\in \b(0, R_0)$,
\begin{equation}\label{cod-1a}
\sup_{\b(x, R_0/2)} |\B|  > 4 R_0^{-2}, 
\end{equation}
it follows by definition that $m(x, \B)\ge   2R_0^{-1}$ for any $x\in \b(0, R_0)$.

\begin{lemma}\label{lemma-M1}
Suppose $\B$ satisfies \eqref{cod-1} for any ball $\b(x, r) \subset \b(0, 4R_0)$. 
Also assume that \eqref{cod-1a} holds for any $x\in \b(0, R_0)$. Then, 
 \begin{equation}\label{m-2d}
  |\B (x)|^{1/2} +|\nabla \B (x)|^{1/3}
  \le C m (x, \B)
  \end{equation}
  for any $x\in \b(0, R_0)$. Moreover, 
 for any $x, y\in \b (0, R_0)$,
\begin{equation}\label{m-2a}
m(y, \B) \le C \left\{ 1+ |x-y| m (x, \B) \right\}^{\kappa_0} m (x, \B),
\end{equation}
\begin{equation}\label{m-2b}
m(y, \B) \ge \frac{c\,  m (x, \B) }{ \{ 1+|x-y| m(x, \B) \}^{\frac{\kappa_0}{\kappa_0+1}} }.
\end{equation}
The constants $C, c, \kappa_0>0$ depend only on  $C_0$ in \eqref{cod-1}.
\end{lemma}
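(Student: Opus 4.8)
The plan is to prove the three estimates of Lemma \ref{lemma-M1} in order, relying on \eqref{cod-1}, \eqref{cod-2}, the doubling property, and the lower bound $m(x,\B)\ge 2R_0^{-1}$ guaranteed by \eqref{cod-1a}. For the pointwise bound \eqref{m-2d}: set $r=1/m(x,\B)$, so by the definition \eqref{m} of $m$ we have $\sup_{\b(x,r)}|\B|\le r^{-2}$, hence $|\B(x)|\le r^{-2}=m(x,\B)^2$, which gives $|\B(x)|^{1/2}\le m(x,\B)$. For the gradient term, I would apply \eqref{cod-1} on the ball $\b(x,r)$ (legitimate since $r\le R_0/2\le 4R_0$) to get $|\nabla\B(x)|\le\sup_{\b(x,r)}|\nabla\B|\le (C_0/r)\fint_{\b(x,r)}|\B|\le (C_0/r)\sup_{\b(x,r)}|\B|\le C_0 r^{-3}=C_0 m(x,\B)^3$, and take cube roots. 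Summing yields \eqref{m-2d} with $C$ depending only on $C_0$.

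Next I would establish the comparison \eqref{m-2a}. The heart of the matter is a scaling/iteration argument showing that $\Phi(r):=r^2\sup_{\b(x,r)}|\B|$ is essentially increasing in $r$ with polynomial growth. Indeed, from \eqref{cod-1} one derives that $g(t):=\fint_{\b(x,t)}|\B|$ satisfies an a priori differential inequality $|g'(t)|\le (C/t)g(t)$ (differentiate the average and use the $B_\infty$-bound \eqref{cod-2} to compare the boundary integral with $g(t)$), so that for $t_2>t_1$, $g(t_2)/g(t_1)\le (t_2/t_1)^{C}$; combined with \eqref{cod-2} this upgrades to $\sup_{\b(x,t_2)}|\B|\le C(t_2/t_1)^{C}\sup_{\b(x,t_1)}|\B|$. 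Consequently, with $r_x=1/m(x,\B)$, $r_y=1/m(y,\B)$, and using $\b(y,r)\subset\b(x,r+|x-y|)$, one obtains $r_y^{-2}=\sup_{\b(y,r_y)}|\B|$; comparing the two balls by the growth estimate while tracking the factor $|x-y|$ produces $m(y,\B)\le C\{1+|x-y|m(x,\B)\}^{\kappa_0}m(x,\B)$ for an exponent $\kappa_0$ determined by the constant $C$ in the growth bound, hence by $C_0$. The inequality \eqref{m-2b} then follows formally from \eqref{m-2a} by interchanging the roles of $x$ and $y$: writing $s=|x-y|m(x,\B)$ and $t=|x-y|m(y,\B)$, \eqref{m-2a} applied with $x,y$ swapped gives $s\le C(1+t)^{\kappa_0}t$, and a short elementary manipulation (splitting into the cases $t\le 1$ and $t\ge 1$) inverts this to $t\ge c\,s/(1+s)^{\kappa_0/(\kappa_0+1)}$, which is exactly \eqref{m-2b} after multiplying through by $m(x,\B)/|x-y|$ — with the degenerate case $x=y$ trivial and the case of small $|x-y|$ absorbed using $m\ge 2R_0^{-1}$ and $\Omega\subset\b(0,R_0)$ so that $|x-y|m\le CR_0\cdot R_0^{-1}$ is bounded.

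The main obstacle is the growth estimate for $\sup_{\b(x,r)}|\B|$ underlying \eqref{m-2a}: one must carefully justify differentiating the spherical average, handle the passage between solid averages, spherical averages, and suprema using only the $B_\infty$/doubling consequences \eqref{cod-2}--\eqref{dc} of \eqref{cod-1}, and keep all constants dependent solely on $C_0$ (equivalently on $C_1$ and the dimension) rather than on $R_0$ or $\|\B\|$. A clean way to avoid the differentiation subtlety is a dyadic iteration: show $\sup_{\b(x,2t)}|\B|\le K\sup_{\b(x,t)}|\B|$ with $K=K(C_0,d)$ by combining $\sup_{\b(x,2t)}|\B|\le C_1\fint_{\b(x,2t)}|\B|$, the doubling inequality $\fint_{\b(x,2t)}|\B|\le C\fint_{\b(x,t)}|\B|$, and $\fint_{\b(x,t)}|\B|\le\sup_{\b(x,t)}|\B|$; iterating $n$ times with $2^n\sim |x-y|m(x,\B)+1$ yields the polynomial factor with $\kappa_0=\log_2 K$. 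Once this dyadic comparison is in place, \eqref{m-2a}, \eqref{m-2b}, and hence the whole lemma follow by elementary algebra.
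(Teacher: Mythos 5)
Your proof of \eqref{m-2d} is the same as the paper's: set $r=m(x,\B)^{-1}$, note $r\le R_0/2$ by \eqref{cod-1a}, use $\sup_{\b(x,r)}|\B|\le r^{-2}$ (the paper records this as the equality \eqref{m-4a}, but only the inequality is needed for the upper bounds), and then apply \eqref{cod-1} on $\b(x,r)$ to get $|\nabla\B(x)|\le C_0 m(x,\B)^3$. Where you genuinely diverge is \eqref{m-2a}--\eqref{m-2b}: the paper does not prove these at all, but cites \cite{Shen-1995}, where they are established for $B_q$ weights on all of $\R^d$, and simply remarks that the same scale-invariant argument localizes to $\b(0,R_0)$. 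You instead reconstruct a self-contained proof from \eqref{cod-2} and the doubling property \eqref{dc}: the dyadic step $\sup_{\b(x,2t)}|\B|\le K\sup_{\b(x,t)}|\B|$, iterated from scale $r_x=m(x,\B)^{-1}$ up to scale $\approx |x-y|+\rho$, shows that a suitable radius $\rho\approx r_x\{1+|x-y|m(x,\B)\}^{-\kappa_0}$ satisfies $\sup_{\b(y,\rho)}|\B|\le\rho^{-2}$, which is exactly \eqref{m-2a}; and your deduction of \eqref{m-2b} from \eqref{m-2a} with $x$ and $y$ interchanged (the elementary inversion $s\le C(1+t)^{\kappa_0}t\Rightarrow t\ge cs(1+s)^{-\kappa_0/(\kappa_0+1)}$, split into $t\le 1$ and $t\ge1$) is correct. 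This route buys self-containedness and makes transparent that $C,c,\kappa_0$ depend only on $C_0$ and $d$, at the cost of redoing what the reference supplies; your first suggestion (differentiating the solid average) is dispensable, and your dyadic version is the cleaner way, as you say. One point you should make explicit: all balls in the iteration must lie in $\b(0,4R_0)$ for \eqref{cod-2} and \eqref{dc} to apply, and naively doubling until $2^n r_x\gtrsim|x-y|+r_x$ can overshoot to radius $\sim 5R_0$ when $|x-y|\sim 2R_0$; since $x,y\in\b(0,R_0)$ the target radius is at most $|x-y|+r_x\le \frac{5}{2}R_0$, so capping the last step of the chain at that radius (rather than a full doubling) keeps every ball inside $\b(0,\frac{7}{2}R_0)\subset\b(0,4R_0)$ and closes this small gap; the constants remain independent of $R_0$ because only the ratio of scales enters.
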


\begin{proof}

Let $r=\{ m(x, \B)\}^{-1}$. Using \eqref{cod-1a}, it is not hard to see that 
\begin{equation}\label{m-4a}
\sup_{\b(x, r)} |\B |= r^{-2}.
\end{equation}
This implies that $|\B(x)|\le r^{-2}=\{ m (x, \B)\}^2$.  
Also, by \eqref{cod-1} and \eqref{m-4a}, 
$$
|\nabla \B(x) |\le C_0 r^{-3} =C_0  \{ m (x, \B)\}^3.
$$
As a result, we have proved \eqref{m-2d}.

The inequalities \eqref{m-2a}-\eqref{m-2b} were proved by the present author in \cite{Shen-1995}
for any $x, y\in \R^d$, 
under the assumption that $|\B|$ is a $B_q$ weight in $\R^d$ for  some $q>(d/2)$.
With the conditions in the lemma, the same argument gives \eqref{m-2a}-\eqref{m-2b}
for any $x, y \in \b(0, R_0)$.
Note that the estimates \eqref{m-2d}-\eqref{m-2a} are scaling invariant;  
 the constants $C, c, \kappa_0$ do not depend on $R_0$.
\end{proof}

It follows readily from \eqref{m-2a}-\eqref{m-2b} that 
\begin{equation}\label{m-2c}
\aligned
c \left\{ 1+|x-y| m(y, \B) \right\}^{\frac{1}{\kappa_0 +1}}
&\le 1 + |x-y| m (x, \B)\\
& \le C\left \{ 1+ |x-y| m (y, \B) \right\}^{\kappa_0+1}
\endaligned
\end{equation}
for any $x, y\in \b(0, R_0)$.

\begin{thm}\label{thm-M1}
Suppose $\B$ satisfies \eqref{cod-1} for any ball $\b(x, r)\subset \b(0, 4R_0)$. 
Also assume that \eqref{cod-1a} holds for any $x\in \b(0, R_0)$.
  Then
\begin{equation}\label{M1-0}
c \int_\Omega \{ m (x, \B) \}^2 |\psi|^2\, dx
\le \int_{ \Omega} |(D+\A)\psi|^2\, dx 
\end{equation}
for any $\psi\in C^1 (\overline{\Omega}; \C)$,
where $c>0$ depends only on $C_0$ in \eqref{cod-1} and the Lipschitz character of $\Omega$.
\end{thm}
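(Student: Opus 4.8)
The plan is to establish the uniform Fefferman--Phong-type inequality \eqref{M1-0} by the standard two-step strategy: first prove a local version on cubes of the critical size $\sim m(x,\B)^{-1}$ using a compactness/contradiction argument together with the unique continuation principle encoded in the nondegeneracy of $\B$, and then sum the local estimates via a Whitney-type decomposition adapted to the function $m(\cdot,\B)$. The key geometric input is Lemma \ref{lemma-M1}: since $m(\cdot,\B)$ varies slowly in the sense of \eqref{m-2a}--\eqref{m-2b}, cubes of side length $\approx m(x,\B)^{-1}$ have bounded overlap and $m(y,\B)\approx m(x,\B)$ on each such cube, which is exactly what one needs for the summation to go through with constants independent of $R_0$.

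First I would reduce, by scaling, to the model inequality on a fixed cube: if $Q$ is a cube of side length $1$ and $\B$ satisfies $\sup_Q|\B|\le 1$ together with the scale-invariant gradient bound \eqref{cod-1} and the lower bound $\sup_{Q'}|\B|\ge c$ on a subcube $Q'$ (this is the rescaled form of the condition that $m\approx 1$ there), then $\int_{Q}|\psi|^2\le C\int_{Q}|(D+\A)\psi|^2$ for all $\psi\in C^1$. This cube estimate is proved by contradiction: if it fails there is a sequence $\psi_j$ with $\int_Q|\psi_j|^2=1$ and $\int_Q|(D+\A_j)\psi_j|^2\to 0$, with $\B_j$ satisfying the uniform bounds; passing to a limit (using the diamagnetic inequality \eqref{p1-1} to get $|\psi_j|$ bounded in $H^1$, hence a strongly $L^2$-convergent subsequence, and extracting a weak limit $\B$, $\A$) one obtains a nonzero $\psi$ with $(D+\A)\psi=0$ on $Q$, forcing $|\psi|$ to be constant and $\B\equiv \nabla\times\A\equiv 0$ on $Q$ (because $(D+\A)\psi=0$ implies $\psi=e^{i\theta}|\psi|$ with $\nabla\theta=-\A$ locally, so $\A$ is a gradient and its curl vanishes), contradicting the lower bound on $\sup|\B|$ that survives the limit. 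This is essentially the argument of \cite{Shen-1994,Shen-1995}, and it is the only place where the finite-type/nondegeneracy hypothesis \eqref{cod-1a} is really used.

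Next I would patch the cube estimate. Using Lemma \ref{lemma-M1}, construct a Whitney-type covering $\{Q_k\}$ of $\Omega$ (or of a neighborhood of $\overline\Omega$, extending $\psi$ appropriately) by cubes with $\mathrm{side}(Q_k)\approx m(x_k,\B)^{-1}$ for some $x_k\in Q_k$, such that the dilates $2Q_k$ have bounded overlap and $m(y,\B)\approx m(x_k,\B)$ for $y\in 2Q_k$; the slow-variation estimates \eqref{m-2a}--\eqref{m-2c} guarantee such a covering exists with overlap depending only on $C_0$. On each $2Q_k$, the rescaled $\B$ satisfies the hypotheses of the cube estimate (after dividing by $\sup_{2Q_k}|\B|\approx \mathrm{side}(Q_k)^{-2}$), so the scaled model inequality gives $\int_{Q_k}m(x,\B)^2|\psi|^2\le C\int_{2Q_k}|(D+\A)\psi|^2$. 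Summing over $k$ and using bounded overlap yields \eqref{M1-0}, with the constant depending only on $C_0$ and — through the number of overlapping cubes near $\partial\Omega$ and the need to control the covering near the boundary — the Lipschitz character of $\Omega$.

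The main obstacle is the cube estimate via compactness: one must ensure the limiting field $\A$ is regular enough for the identity ``$(D+\A)\psi=0 \Rightarrow \curl \A=0$'' to make sense and to contradict the persistence of the lower bound $\sup|\B|\ge c$ under weak limits. Since \eqref{cod-1} gives a uniform $W^{1,\infty}$ bound on $\B_j$ on the cube once normalized, $\B_j\to\B$ uniformly along a subsequence and the lower bound passes to the limit cleanly; the corresponding $\A_j$ can be taken in a fixed gauge (e.g. Poincaré gauge) so that $\A_j\to\A$ in $C^0$ as well, and then the contradiction is immediate. A secondary technical point is handling the boundary: near $\partial\Omega$ one either works with cubes truncated to $\Omega$ and invokes a Lipschitz extension, or observes that the cube estimate is genuinely interior and only the summation needs the Lipschitz geometry to keep the overlap bounded — this is why the Lipschitz character enters the final constant but not the model inequality.
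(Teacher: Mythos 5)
The paper itself does not reprove this statement: its ``proof'' is a citation to \cite[Theorem 3.8]{Shen-2025}, together with the remark that the argument there only uses $\B=\nabla\times\A$ \emph{inside} $\Omega$ (the extension of $\B$ outside $\Omega$ need not be a curl, and $\A$ is not extended). So your sketch is a reconstruction from scratch. Its interior half is plausible: rescaling a critical ball $\b(x_0,r)$, $r=m(x_0,\B)^{-1}$, so that $\sup|\B|=1$, the normalized fields form (via \eqref{cod-1}) a $C^1$-bounded, hence $C^0$-compact family, and the contradiction argument with the diamagnetic inequality, a fixed gauge on the (simply connected) cube, and the survival of $\sup|\B|=1$ under uniform limits does give the model estimate; the summation via the slow variation of $m(\cdot,\B)$ in Lemma \ref{lemma-M1} is standard. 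This is a legitimate alternative to the quantitative commutator-plus-Poincar\'e route of \cite{Shen-1994,Shen-1995,Shen-2025}, at the price of a non-explicit constant.

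The genuine gap is the boundary case, and neither of your two suggested fixes works as stated. First, interior critical cubes do not cover $\Omega$: the collar $\{x\in\Omega:\ \mathrm{dist}(x,\partial\Omega)<c\,m(x,\B)^{-1}\}$ is missed, and shrinking the cubes there destroys the critical scaling, so ``the cube estimate is genuinely interior'' cannot close the argument. Second, extending $\psi$ by a Lipschitz extension is circular: the extension does not control the magnetic energy on $2Q_k$ without a lower-order term $\|\psi\|_{L^2}$ at the very scale you are trying to bound, and $\A$ is only given on $\overline\Omega$ (the paper is explicit that only $\B$ is extended). The substantive difficulty on a truncated piece $\b(x_0,r)\cap\Omega$ is that the nondegeneracy you feed into the compactness argument, $\sup_{\b(x_0,r)}|\B|=r^{-2}$, lives on the \emph{full} ball, which may lie mostly outside $\Omega$; \eqref{cod-1} is perfectly consistent with $|\B|$ being much smaller on the portion inside $\Omega$, so the limiting field in your contradiction argument is not visibly forced to be nondegenerate where $\psi$ lives. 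What rescues the local estimate is an extra step you omit: the doubling property of $|\B|$ (a consequence of \eqref{cod-1}, cf.\ \eqref{cod-2}--\eqref{dc}) applied to an interior (corkscrew) ball of radius $\approx r$ inside $\b(x_0,r)\cap\Omega$, which transfers a lower bound $\fint|\B|\gtrsim r^{-2}$ into the domain, followed by a commutator (or compactness) estimate on that sub-ball and a Poincar\'e inequality on the Lipschitz piece together with the diamagnetic inequality \eqref{p1-1} to spread the bound to all of $\b(x_0,r)\cap\Omega$. This is precisely where the Lipschitz character enters the constant --- not merely through bounded overlap of the covering, as you assert. Two further points to repair if you pursue the compactness route near the boundary: $\b(x_0,r)\cap\Omega$ need not be simply connected, so ``pass to the Poincar\'e gauge'' is not available verbatim there; and the extended $\B$ is not assumed closed or a curl outside $\Omega$, so any gauge construction must use only $\A$ on $\Omega$ and the values of $\B$ inside $\Omega$.
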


\begin{proof}

See \cite[Theorem 3.8]{Shen-2025}.
We point out that the proof in \cite{Shen-2025} only uses the fact $\B =\nabla \times \A$ in $\Omega$.
Outside of $\Omega$, there is no need to assume that $\B$ is a curl of some vector field.
This observation allows us to extend $\B$ to a neighborhood of $\Omega$ without extending $\A$
in the proof of our main results.
\end{proof}

The next lemma gives a Caccioppoli inequality for the operator $(D+\A)^2$. 
The conditions on $\B$ are not needed.

\begin{lemma}\label{lemma-Ca}
Let $x_0\in \overline{\Omega}$ and $0< r< r_0$, where either $x_0 \in \partial\Omega$ or $\b(x_0, r) \subset \Omega$.
Suppose  that  $u \in H^1(\b(x_0, r) \cap \Omega; \C)$ and $(D+\A)^2 u=F$ in $\b(x_0, r) \cap \Omega$, 
where $F\in L^2(\b(x_0, r) \cap \Omega; \C)$.
If $x_0\in \partial\Omega$, we also assume that either $u=0$ or $n \cdot (D+\A) u=0$ on $\b(x_0, r) \cap \partial\Omega$, 
where $n$ denotes the outward unit normal to $\partial\Omega$.
Then for $0< s< t< 1$, 
\begin{equation}\label{Ca-0}
\int_{\b(x_0, sr)\cap \Omega} |(D+\A) u|^2 \le \frac{C}{(t-s)^2 r^2}
\int_{\b(x_0, tr)\cap \Omega} |u|^2
+ C r^2  \int_{\b(x_0, tr)\cap \Omega} |F|^2,
\end{equation}
where $C$ depends only on $d$.
\end{lemma}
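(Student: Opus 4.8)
The plan is to run the standard Caccioppoli argument, but adapted to the magnetic gradient $(D+\A)$, using the pointwise identity relating $|(D+\A)(\varphi u)|^2$ to $\varphi^2|(D+\A)u|^2$ plus a gradient-of-cutoff error term. First I would fix a cutoff function $\varphi\in C_0^\infty(\b(x_0,tr))$ with $\varphi\equiv 1$ on $\b(x_0,sr)$, $0\le\varphi\le 1$, and $|\nabla\varphi|\le C/((t-s)r)$. The key algebraic observation is that, since $\A$ is real-valued, $(D+\A)(\varphi u)=\varphi\,(D+\A)u+(D\varphi)u$, and $D\varphi=-i\nabla\varphi$ is a genuine (complex scalar times) gradient with no magnetic contribution. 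Hence
\begin{equation*}
\int \varphi^2|(D+\A)u|^2
=\Re\!\int (D+\A)(\varphi u)\cdot\overline{\varphi\,(D+\A)u}
-\Re\!\int (D\varphi)u\cdot\overline{\varphi\,(D+\A)u}.
\end{equation*}
The second term is controlled by $\varepsilon\int\varphi^2|(D+\A)u|^2+C_\varepsilon\int|\nabla\varphi|^2|u|^2$ via Cauchy--Schwarz with a small parameter $\varepsilon$, so it can be absorbed into the left side.

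For the first term I would integrate by parts (in the form of the weak formulation of $(D+\A)^2u=F$ tested against $\varphi^2 u$), which is legitimate because either $u=0$ or $n\cdot(D+\A)u=0$ on $\b(x_0,r)\cap\partial\Omega$ and $\varphi$ vanishes near the rest of $\partial(\b(x_0,r)\cap\Omega)$; thus the boundary terms drop. This gives
\begin{equation*}
\Re\!\int (D+\A)(\varphi^2 u)\cdot\overline{(D+\A)u}
=\Re\!\int F\,\overline{\varphi^2 u},
\end{equation*}
and expanding $(D+\A)(\varphi^2 u)=\varphi^2(D+\A)u+2\varphi(D\varphi)u$ recovers $\int\varphi^2|(D+\A)u|^2$ plus a cross term bounded as before, together with the right-hand side $\Re\int F\,\overline{\varphi^2 u}$, which is handled by $r^2\int_{\b(x_0,tr)\cap\Omega}|F|^2+C r^{-2}\int_{\b(x_0,tr)\cap\Omega}|u|^2$ after Cauchy--Schwarz (using $\varphi\le 1$ and $r<r_0$ so the extra $|u|^2$ term has the right power of $r$; if one wants the stated clean form one can also just keep $\int\varphi^2|u|^2$ and absorb it into the $|u|^2$ term on the right). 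Collecting everything and choosing $\varepsilon$ small yields \eqref{Ca-0}.

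The only genuine subtlety — and the step I expect to require the most care — is the justification of the integration by parts when $x_0\in\partial\Omega$: one must check that testing the equation $(D+\A)^2u=F$ against $\varphi^2\bar u$ is valid for $u\in H^1$ on a Lipschitz piece of boundary, and that the boundary integral $\int_{\b(x_0,r)\cap\partial\Omega}\varphi^2\bar u\,\overline{n\cdot(D+\A)u}$ indeed vanishes under each of the two stated boundary conditions (in the Dirichlet case because $\varphi^2\bar u=0$ there, in the Neumann case because $n\cdot(D+\A)u=0$ there, interpreting these in the appropriate trace/weak sense). Everything else is the routine absorption argument, and the constant depends only on $d$ because the cutoff bounds and Cauchy--Schwarz constants are dimensional.
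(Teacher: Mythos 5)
Your proposal is correct and follows essentially the same route as the paper: test the weak formulation of $(D+\A)^2u=F$ against $\varphi^2 u$ with a cutoff $\varphi\in C_0^\infty(\b(x_0,tr))$ equal to $1$ on $\b(x_0,sr)$ with $|\nabla\varphi|\le C(t-s)^{-1}r^{-1}$, expand using $(D+\A)(\varphi^2u)=\varphi^2(D+\A)u+2\varphi(D\varphi)u$, absorb the cross term by Cauchy--Schwarz, and bound the $F$-term by $Cr^2\int|F|^2+Cr^{-2}\int|u|^2$, the boundary terms vanishing under either stated boundary condition. This matches the paper's argument (its inequality \eqref{Ca-1} and cutoff choice), so no further comment is needed.
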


\begin{proof}

The proof is similar to the case of $\A\equiv 0$. Using
$$
\int_{ \Omega} (D+\A) u \cdot \overline{(D+\A) (u\varphi^2)}=\int_{ \Omega} F \cdot \overline{u \varphi^2}
$$
where $\varphi \in C_0^\infty(\b(x_0, r); \R)$,  we obtain 
\begin{equation}\label{Ca-1}
\int_\Omega |(D+\A) u|^2 | \varphi|^2
\le C \int_\Omega |u|^2 |\nabla \varphi|^2
+ C \int_\Omega |F| |u| |\varphi|^2.
\end{equation}
To finish the proof, we choose $\varphi \in C_0^\infty (\b(x_0, tr); \R)$ such that $\varphi =1$ in $\b(x_0, sr)$
and $|\nabla \varphi|\le C (t-s)^{-1} r^{-1}$.
\end{proof}


\begin{remark}\label{C-re}
{\rm
Let $x_0\in \partial\Omega$ and $0< r< r_0$.
Suppose that  $u\in C^2(\overline{\b(x_0, r)\cap \Omega}; \C)$ and
$(D+\A)^2 u=F$ in $\b(x_0, r)\cap \Omega$.
It follows from the proof of Lemma \ref{lemma-Ca} that 
\begin{equation}
\aligned
\int_{\b(x_0, r/2)\cap \Omega} | (D+\A) u|^2
 & \le \frac{C}{r^2}
\int_{\b(x_0, r) \cap \Omega} |u|^2
+ C r^2 \int_{\b(x_0, r) \cap \Omega} |F|^2\\
 &\qquad+ C \int_{\b(x_0, r) \cap \partial\Omega} | n \cdot (D+\A) u| |u|,
 \endaligned
\end{equation}
where $C$ depends only on $d$.
}
\end{remark}

\begin{thm}\label{thm-Ca-m}
Suppose $\B$satisfies the same conditions as in Theorem \ref{thm-M1}.
Let $x_0\in \overline{\Omega}$ and $0< r< r_0$, where either $x_0 \in \partial\Omega$ or $\b(x_0, r) \subset \Omega$.
Suppose $u\in H^1(\b(x_0,  r) \cap \Omega; \C)$ and $(D+\A)^2 u =0$ in $\b(x_0, r) \cap\Omega$.
If $x_0\in \partial\Omega$, we also assume that either $u=0$ on $\b(x_0, r) \cap \partial\Omega$ or
$n \cdot (D+\A) u=0$ on $\b(x_0, r) \cap \partial\Omega$. Then
\begin{equation}\label{m-m}
\int_{\b(x_0, r/2)\cap \Omega}
|u|^2 \le  \frac{C_\ell}{ \{ 1+ r m (x_0, \B) \}^\ell } \int_{\b(x_0, r)\cap \Omega} |u|^2
\end{equation}
for any $\ell\ge 1$, where $C_\ell$ depends on $\ell $, the Lipschitz character of $\Omega$,  and $C_0$ in \eqref{cod-1}.
\end{thm}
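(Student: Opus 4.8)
The plan is to prove \eqref{m-m} by an iteration across concentric spherical ``layers'' of carefully chosen width, each layer contributing a fixed multiplicative gain, the gain coming from the Fefferman--Phong-type inequality of Theorem \ref{thm-M1} coupled with the Caccioppoli inequality of Lemma \ref{lemma-Ca}. First, if $r\,m(x_0,\B)<A_0$ for a large constant $A_0$ to be fixed (depending on $d$, the Lipschitz character of $\Omega$, and $C_0$), then \eqref{m-m} holds trivially upon taking $C_\ell\ge(1+A_0)^\ell$, since $\int_{\b(x_0,r/2)\cap\Omega}|u|^2\le\int_{\b(x_0,r)\cap\Omega}|u|^2$. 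So assume $r\,m(x_0,\B)\ge A_0$. By \eqref{m-2b} of Lemma \ref{lemma-M1}, every $x\in\b(x_0,r)$ satisfies
\[
m(x,\B)\ \ge\ \frac{c\,m(x_0,\B)}{\{1+r\,m(x_0,\B)\}^{\kappa_0/(\kappa_0+1)}}\ \ge\ \frac{c_2}{r}\,\bigl(r\,m(x_0,\B)\bigr)^{1/(\kappa_0+1)},
\]
with $c_2>0$ depending only on $C_0$; thus $m(\cdot,\B)$ is bounded below on $\b(x_0,r)$ by a large multiple of $1/r$ when $A_0$ is large. This observation, that $m(\cdot,\B)$ cannot decay faster than a fixed power across $\b(x_0,r)$, is what will produce enough layers.

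Next I would establish a local \emph{one-step lemma}: if $\b(z,2\rho)$ with $2\rho<r_0$ satisfies $z\in\partial\Omega$ or $\b(z,2\rho)\subset\Omega$, if $(D+\A)^2 u=0$ in $\b(z,2\rho)\cap\Omega$ together with $u=0$ (or $n\cdot(D+\A)u=0$) on $\b(z,2\rho)\cap\partial\Omega$ when $z\in\partial\Omega$, and if $\inf_{x\in\b(z,2\rho)}m(x,\B)\ge\Lambda/\rho$, then $\int_{\b(z,\rho)\cap\Omega}|u|^2\le C_1\Lambda^{-2}\int_{\b(z,2\rho)\cap\Omega}|u|^2$, where $C_1$ depends only on $d$, the Lipschitz character of $\Omega$, and $C_0$. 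To see this, pick $\varphi\in C_0^\infty(\b(z,3\rho/2))$ with $\varphi=1$ on $\b(z,\rho)$ and $|\nabla\varphi|\le 4/\rho$, extend $u\varphi$ by zero to $\Omega$, and apply the inequality \eqref{M1-0} to $u\varphi\in H^1(\Omega;\C)$ --- \eqref{M1-0} extends from $C^1(\overline{\Omega})$ to $H^1(\Omega)$ by density, both sides being continuous in the $H^1$ norm since $m(\cdot,\B)$ is bounded on $\overline{\Omega}$. Using $\varphi\equiv1$ on $\b(z,\rho)$ and $(D+\A)(u\varphi)=\varphi(D+\A)u-iu\nabla\varphi$, this gives $c\,\Lambda^2\rho^{-2}\int_{\b(z,\rho)\cap\Omega}|u|^2\le C\rho^{-2}\int_{\b(z,2\rho)\cap\Omega}|u|^2$ after bounding $\int_{\b(z,3\rho/2)\cap\Omega}|(D+\A)u|^2$ by Lemma \ref{lemma-Ca} on the ball $\b(z,2\rho)$ (with $F=0$, using the boundary hypothesis when $z\in\partial\Omega$). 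In particular the gain $C_1/\Lambda^2$ can be made arbitrarily small by enlarging $\Lambda$.

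For the iteration, fix $\Lambda_0$ so large that $C_1\Lambda_0^{-2}\le(2C_*)^{-1}$, where $C_*=C_*(d,\text{Lipschitz character})$ is the overlap constant of the covering below, and set the layer width $\delta:=2\Lambda_0 c_2^{-1}\,r\,\bigl(r\,m(x_0,\B)\bigr)^{-1/(\kappa_0+1)}$, so that $\inf_{\b(x_0,r)}m(\cdot,\B)\ge 2\Lambda_0/\delta$; enlarging $A_0$ we may assume $\delta<r\,(<r_0)$. Write $I(s):=\int_{\b(x_0,s)\cap\Omega}|u|^2$, put $t_j:=r/2+j\delta$, and let $J:=\lfloor (r/2)/\delta\rfloor$, so that $t_J\le r$ and, enlarging $A_0$ once more, $J\ge c_3\bigl(r\,m(x_0,\B)\bigr)^{1/(\kappa_0+1)}$ with $c_3=c_2/(8\Lambda_0)$. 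For each $j<J$, cover $\b(x_0,t_j)\cap\Omega$ by balls $\b(y_i,\delta/2)$, $y_i\in\overline{\b(x_0,t_j)}$, each either centered on $\partial\Omega$ or with $\b(y_i,\delta)\subset\Omega$, with the doubled balls $\b(y_i,\delta)$ of overlap bounded by $C_*$ --- the standard Whitney-type covering adapted to the Lipschitz boundary. Since $\b(y_i,\delta)\subset\b(x_0,t_{j+1})\subset\b(x_0,r)$, the one-step lemma applies with $\rho=\delta/2$, $\Lambda=\Lambda_0$, and summing over $i$ yields $I(t_j)\le C_1\Lambda_0^{-2}C_*\,I(t_{j+1})\le\tfrac12\,I(t_{j+1})$. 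Iterating from $t_0=r/2$ to $t_J\le r$ gives
\[
I(r/2)\ \le\ 2^{-J}\,I(r)\ \le\ 2^{-c_3(r\,m(x_0,\B))^{1/(\kappa_0+1)}}\,I(r),
\]
and since $2^{-c_3 x^{1/(\kappa_0+1)}}\le C_\ell(1+x)^{-\ell}$ for all $x\ge0$ and $\ell\ge1$ with $C_\ell$ depending only on $\ell,c_3,\kappa_0$, this is \eqref{m-m} (taking $C_\ell$ also $\ge(1+A_0)^\ell$ to cover the trivial case), with the asserted dependence of the constants.

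The main obstacle is the quantitative balance in the one-step lemma together with the layer count: the per-layer gain must not merely be less than $1$ but small enough to defeat the overlap constant $C_*$ of the covering (arranged by choosing $\Lambda_0$ large), and --- more essentially --- one must exploit \eqref{m-2b} to guarantee $\gtrsim(r\,m(x_0,\B))^{1/(\kappa_0+1)}$ usable layers, which is precisely what upgrades the gain to decay faster than every polynomial in $r\,m(x_0,\B)$. The Lipschitz covering itself is routine but must respect the dichotomy in Lemma \ref{lemma-Ca} (balls either interior to $\Omega$ or centered on $\partial\Omega$, with the homogeneous boundary condition carried along).
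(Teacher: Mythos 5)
Your proposal is correct in substance, but it follows a genuinely different iteration scheme from the paper and in fact proves more. The paper works at the single scale $r$: it applies \eqref{M1-0} to $u\varphi$ with one cutoff $\varphi$ between the radii $sr$ and $tr$, bounds $(D+\A)(u\varphi)$ by the Caccioppoli inequality \eqref{Ca-1}, lower-bounds $m(\cdot,\B)$ on the whole ball via \eqref{m-2b}, and thereby obtains the self-improving inequality \eqref{mm-2}, whose gain per step is only $\{1+rm(x_0,\B)\}^{-2/(\kappa_0+1)}$; the power $\ell$ in \eqref{m-m} is then reached by iterating \eqref{mm-2} a finite number of times depending on $\ell$, within the fixed annulus $r/2\le sr<tr\le r$. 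You instead localize at the natural length scale $\delta\sim r\,(r\,m(x_0,\B))^{-1/(\kappa_0+1)}$ dictated by \eqref{m-2b}, prove a one-step lemma (same ingredients: \eqref{M1-0} plus Caccioppoli, with the $H^1$ extension of \eqref{M1-0} by density, which the paper also uses implicitly) with a gain that can be made an absolute small constant, and run the iteration across $\sim(r\,m(x_0,\B))^{1/(\kappa_0+1)}$ concentric layers; this yields stretched-exponential decay $\exp\bigl(-c\,(r\,m(x_0,\B))^{1/(\kappa_0+1)}\bigr)$, which is strictly stronger than \eqref{m-m} for every $\ell$, at the price of the covering argument. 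One quantitative point in that covering needs adjusting: with balls of radius exactly $\delta/2$ that are either centered on $\partial\Omega$ or have their doubles in $\Omega$, points at distance roughly between $\delta/2$ and $\delta$ from a boundary with large Lipschitz constant $M_0$ are covered by neither type, so you should take the boundary-centered balls of radius comparable to $\delta$ (with the comparability constant, the layer width, and the one-step scale $\rho$ adjusted accordingly, all depending only on $d$ and the Lipschitz character); this changes only the constants $C_*$, $\Lambda_0$, $c_3$ and does not affect the argument, and with that repair your proof is complete with the stated dependence of $C_\ell$.
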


\begin{proof}

We may assume $r m (x_0, \B) >10$ for otherwise the estimate is trivial.
Let $\varphi \in C_0^\infty (\b(x_0, r) \cap\Omega; \R)$.
It follows from \eqref{M1-0} that 
\begin{equation}\label{mm-1a}
\aligned
\int_\Omega \{ m (x, \B) \}^2 |u \varphi|^2\, dx
&\le C \int_\Omega |(D+\A) (u \varphi)|^2\\
&\le C \int_\Omega |u|^2 |\nabla \varphi|^2\, dx,
\endaligned
\end{equation}
where we have used \eqref{Ca-1} for the last inequality.
Let $(1/2)< s< t< 1$.
Choose $\varphi\in C_0^\infty (\b(x_0, tr); \R)$ so that $\varphi=1$ in $\b(x_0, sr)$ and
$|\nabla \varphi|\le C (t-s)^{-2} r^{-2}$.
By \eqref{mm-1a} and \eqref{m-2b}, this gives
$$
\frac{ \{ m (x_0, \B)\}^2}{  \{ 1+ r m (x_0, \B)\}^{\frac{2 \kappa_0}{\kappa_0+1}}}
\int_{\b(x_0, sr)\cap \Omega} | u |^2
\le \frac{C}{(t-s)^2 r^2} \int_{\b(x_0, tr)\cap \Omega} | u |^2, 
$$
which leads to
\begin{equation}\label{mm-2}
\int_{\b(x_0, sr)\cap \Omega} | u |^2
\le 
\frac{C}{(t-s)^2
\{ 1+ r m (x_0, \B) \}^{\frac{2}{\kappa_0+1}}} \int_{\b(x_0, tr)\cap \Omega} | u |^2.
\end{equation}
The estimate \eqref{m-m} now follows by iterating \eqref{mm-2}.
\end{proof}


\section{Interior estimates}\label{section-I}

Suppose $(D+\A)^2 u=0$ in $\b(x_0, r)$.
It follows by Theorem \ref{thm-P} that 
\begin{equation}\label{I-1}
|u(x_0)|
\le C \fint_{\b(x_0, r)} |u|, 
\end{equation}
where $C$ depends only on $d$.
In this section we establish pointwise  estimates
for $|(D+\A) u|$  and $ m(\cdot, \B) |u|$ under the conditions \eqref{cod-1} and \eqref{cod-1a} on the magnetic field $\B$.

\begin{thm}\label{I-thm-1}
Assume $\B$ satisfies the condition \eqref{cod-1} for any ball $\b(x, r) \subset \b(0, 4R_0)$.
Also assume that \eqref{cod-1a} holds  for any $x\in \b(0, R_0)$.
Suppose $(D+\A)^2 u=0$ in  $\b(x_0, r)\subset\b(0, R_0)$.
Then for any $\ell\ge 0$, 
\begin{equation}\label{I-2-00}
|u(x_0) |
\le \frac{C_\ell }{ \{  1+ r m(x_0, \B)\}^\ell }
\fint_{\b(x_0, r)} |u| , 
\end{equation}
\begin{equation}\label{I-2-0}
m (x_0, \B) |u(x_0)|
\le \frac{C_\ell }{ \{  1+ r m(x_0, \B)\}^\ell }
\fint_{\b(x_0, r)} m(x, \B) |u(x)|  dx, 
\end{equation}
where $C_\ell $ depends  on $\ell $  and $C_0$ in \eqref{cod-1}.
\end{thm}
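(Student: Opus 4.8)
The plan is to deduce the two estimates \eqref{I-2-00} and \eqref{I-2-0} from the mean-value property \eqref{I-1} and the quantitative decay estimate of Theorem \ref{thm-Ca-m}, establishing \eqref{I-2-00} first and then obtaining \eqref{I-2-0} from it together with the lower bound for $m(\cdot,\B)$ in Lemma \ref{lemma-M1}. Throughout I would work only with balls contained in $\b(0,R_0)$ (shrinking $\b(x_0,r)$ by fixed fractions), so that the hypotheses \eqref{cod-1} and \eqref{cod-1a} on $\B$, and hence Theorems \ref{thm-M1} and \ref{thm-Ca-m}, apply. Since $\{1+\rho\,m(x_0,\B)\}\approx\{1+r\,m(x_0,\B)\}$ whenever $\rho\approx r$, and since the estimates are trivial when $r\,m(x_0,\B)\lesssim 1$ (they then reduce to \eqref{I-1}), all the fixed constants arising from this rescaling can be absorbed into $C_\ell$.

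For \eqref{I-2-00}: the only real point is that Theorem \ref{thm-Ca-m} is phrased with $L^2$ averages while \eqref{I-1} and the target \eqref{I-2-00} use $L^1$ averages, so I would run the argument over the nested balls $\b(x_0,r/8)\subset\b(x_0,r/4)\subset\b(x_0,r/2)\subset\b(x_0,r)$. First, \eqref{I-1} at scale $r/8$ and the Cauchy--Schwarz inequality give $|u(x_0)|\le C\bigl(\fint_{\b(x_0,r/8)}|u|^2\bigr)^{1/2}$. Next, Theorem \ref{thm-Ca-m} applied with the ball $\b(x_0,r/4)$ (together with the observation $1+(r/4)m(x_0,\B)\ge \tfrac14\{1+r\,m(x_0,\B)\}$) gives, for every $\ell\ge 0$,
\[
\Bigl(\fint_{\b(x_0,r/8)}|u|^2\Bigr)^{1/2}\le \frac{C_\ell}{\{1+r\,m(x_0,\B)\}^{\ell}}\Bigl(\fint_{\b(x_0,r/4)}|u|^2\Bigr)^{1/2}.
\]
Finally, applying \eqref{I-1} at every point of $\b(x_0,r/4)$ converts the $L^2$ average over $\b(x_0,r/4)$ into the interior bound $\|u\|_{L^\infty(\b(x_0,r/4))}\le C\fint_{\b(x_0,r)}|u|$; here it is essential that the equation $(D+\A)^2u=0$ is available on all of $\b(x_0,r)$. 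Chaining the three displays yields \eqref{I-2-00}.

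For \eqref{I-2-0}: I would apply \eqref{I-2-00} with $\ell$ replaced by $\ell+1$ and multiply through by $m(x_0,\B)$. By \eqref{m-2b}, for every $x\in\b(x_0,r)$ one has $m(x,\B)\ge c\,m(x_0,\B)\{1+r\,m(x_0,\B)\}^{-\kappa_0/(\kappa_0+1)}\ge c\,m(x_0,\B)\{1+r\,m(x_0,\B)\}^{-1}$, hence
\[
m(x_0,\B)\fint_{\b(x_0,r)}|u|\le C\{1+r\,m(x_0,\B)\}\fint_{\b(x_0,r)}m(x,\B)\,|u(x)|\,dx.
\]
Inserting this into the $(\ell+1)$-version of \eqref{I-2-00} multiplied by $m(x_0,\B)$, the extra factor $\{1+r\,m(x_0,\B)\}$ is absorbed by one of the $\ell+1$ powers in the denominator, giving \eqref{I-2-0}. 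The main obstacle is essentially bookkeeping: matching the $L^2$ averages of Theorem \ref{thm-Ca-m} with the $L^1$ averages in the statement while keeping every auxiliary ball inside the region $\b(0,R_0)$ where the structural hypotheses on $\B$ hold; once this is organized, the proof rests entirely on \eqref{I-1}, Theorem \ref{thm-Ca-m}, and \eqref{m-2b}.
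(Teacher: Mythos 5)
Your argument is correct and follows essentially the same route as the paper: the chain \eqref{I-1} plus Cauchy--Schwarz, then the decay estimate \eqref{m-m} of Theorem \ref{thm-Ca-m}, then the pointwise bound \eqref{I-1} again to return to an $L^1$ average, and finally \eqref{m-2b} (with one extra power of the decay factor absorbed) to pass to the weighted estimate \eqref{I-2-0}. The only differences from the paper's proof are the choice of intermediate radii and the explicit bookkeeping, which are cosmetic.
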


\begin{proof}

It follows from \eqref{I-1} and \eqref{m-m} that 
\begin{equation}\label{I-2-1}
\aligned
|u(x_0)| & \le C \left(\fint_{\b(x_0, r/4)} |u|^2 \right)^{1/2}
 \le \frac{C_\ell}{ \{ 1+ r  m(x_0, \B)\}^\ell }
 \left(\fint_{\b(x_0, r/2)} |u|^2 \right)^{1/2}\\
 &  \le \frac{C_\ell}{ \{ 1+ r  m(x_0, \B)\}^\ell }
 \fint_{\b(x_0, r)} |u|
\endaligned
\end{equation}
for any $\ell \ge 1$. To see \eqref{I-2-0}, note that 
by \eqref{m-2b},  
\begin{equation}\label{I-2-2}
m(x_0, \B) \le C \{ 1+ r m(x_0, \B)\}^{\frac{\kappa_0}{\kappa_0+1}} m(x, \B)
\end{equation}
 for any $x\in \b(x_0, r)$. This, together with \eqref{I-2-00}, gives \eqref{I-2-0}.
\end{proof}

\begin{thm}\label{I-thm-2}
Under the same assumptions as in Theorem \ref{I-thm-1}, we have
\begin{equation}\label{I-3-0}
|(D+\A) u(x_0)|
\le \frac{C}{r} \fint_{\b(x_0, r)} |u|, 
\end{equation}
where $C$ depends on $C_0$ in \eqref{cod-1}.
\end{thm}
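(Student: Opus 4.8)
The plan is to bound $|(D+\A)u(x_0)|$ by first establishing it through a Caccioppoli-type $L^2$ estimate and then upgrading the right-hand side from an $L^2$ average to an $L^1$ average using the decay estimate \eqref{m-m}. First I would apply the Caccioppoli inequality \eqref{Ca-0} with $F=0$ on the ball $\b(x_0, r/2)$, using the radii $s=1/2$, $t=1$ relative to $\b(x_0, r/2)$, to get
\begin{equation*}
\fint_{\b(x_0, r/4)} |(D+\A)u|^2
\le \frac{C}{r^2} \fint_{\b(x_0, r/2)} |u|^2.
\end{equation*}
At this point the right-hand side still carries an $L^2$ norm of $u$. To convert it to an $L^1$ average, I would invoke the mean-value-type inequality \eqref{I-2-1} (the chain of inequalities inside the proof of Theorem \ref{I-thm-1}), which gives $\big(\fint_{\b(x_0, r/2)} |u|^2\big)^{1/2} \le C \fint_{\b(x_0, r)} |u|$ without even needing the magnetic decay factor. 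Combining these yields $\fint_{\b(x_0, r/4)} |(D+\A)u|^2 \le C r^{-2}\big(\fint_{\b(x_0,r)}|u|\big)^2$.

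The remaining issue is to pass from the $L^2$ average of $(D+\A)u$ on $\b(x_0,r/4)$ to its pointwise value at $x_0$. For this I would use the fact that $w := (D+\A)u$ satisfies a second-order equation to which an interior mean-value bound applies: differentiating $(D+\A)^2 u = 0$ componentwise, each component $(D_k+A_k)u$ satisfies $(D+\A)^2\big((D_k+A_k)u\big) = [\,(D+\A)^2, D_k+A_k\,]u$, and the commutator is a first-order operator with coefficients controlled by $\B$ and $\nabla\B$. On the scale $r$ (with $r \le R_0$) the conditions \eqref{cod-1} and \eqref{cod-1a}, via \eqref{m-2d}, give $|\B| + |\nabla\B| \lesssim m(x_0,\B)^3 \lesssim r^{-3}$ only when $rm(x_0,\B)\lesssim 1$; in general one should first rescale to a subball of radius $\approx \min(r, m(x_0,\B)^{-1})$ on which the magnetic field is small, apply Theorem \ref{thm-P} to the rescaled component there, and then use \eqref{m-m}/\eqref{I-2-1} again to restore the full ball. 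This gives $|(D+\A)u(x_0)| \le C \big(\fint_{\b(x_0, r/4)}|(D+\A)u|^2\big)^{1/2}$, which chained with the previous display produces \eqref{I-3-0}.

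I expect the main obstacle to be the last step: making the passage from the $L^2$ average of $(D+\A)u$ to the pointwise value at $x_0$ genuinely rigorous, since $(D+\A)u$ is vector-valued and does not itself solve $(D+\A)^2(\cdot)=0$ — the commutator term must be treated as an inhomogeneity in Theorem \ref{thm-P}, and one must check the admissibility condition $\frac1p-\frac1q<\frac2d$ and that the error $r^2\big(\fint |F|^p\big)^{1/p}$ is absorbed. A cleaner route, which I would pursue in parallel, is to avoid differentiating the equation altogether: apply the mean-value representation underlying Theorem \ref{thm-P} directly, writing $u_\e=\sqrt{|u|^2+\e^2}$ and using $|\nabla u_\e|\le |(D+\A)u|$ together with a Poincaré/reverse-Hölder argument on annuli, so that the pointwise bound on $|(D+\A)u(x_0)|$ follows from the subharmonicity machinery already set up in Lemma \ref{lemma-p1} and Theorem \ref{thm-P}, bypassing commutators entirely.
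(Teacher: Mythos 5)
Your main argument is essentially the paper's proof, reorganized. The paper also differentiates the equation, writes $(D+\A)^2(D_k+A_k)u=F$ with $|F|\le 2|\B|\,|(D+\A)u|+|\nabla \B|\,|u|$, feeds this into Theorem \ref{thm-P} (this is exactly \eqref{I-3-1}), bounds $|\B|\le C m(\cdot,\B)^2$ and $|\nabla\B|\le C m(\cdot,\B)^3$, and uses the Caccioppoli inequality \eqref{Ca-0}; the only organizational difference is that the paper iterates the reverse--H\"older estimate on the full ball, accepting a polynomial loss $\{1+r\sup_{\b(x_0,r)}m(\cdot,\B)\}^{N_1}$, and then kills that loss at the very end by invoking \eqref{I-2-1} with $\ell=N_1$, whereas you propose to work at the critical scale $\rho\approx\min\{r,m(x_0,\B)^{-1}\}$, where no loss occurs, and then bridge from $\b(x_0,\rho)$ to $\b(x_0,r)$ using \eqref{m-m}/\eqref{I-2-1} (for this bridge you need the decay of order roughly $d/2+1$ to beat the volume ratio $(r/\rho)^{d/2}\lesssim(1+rm(x_0,\B))^{d/2}$, which is available since $\ell$ is arbitrary). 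Both versions rest on the same ingredients, so your plan is sound; you correctly identified the delicate step and the commutator as the mechanism for it. One caution: the ``cleaner route'' you suggest in parallel, bypassing the commutator and deducing the pointwise bound on $|(D+\A)u(x_0)|$ from the subharmonicity machinery of Lemma \ref{lemma-p1} and Theorem \ref{thm-P} alone, does not work. The diamagnetic inequality $|\nabla u_\e|\le |(D+\A)u|$ only bounds $|\nabla u_\e|$ from above, and the subharmonicity of $u_\e$ controls $|u|$, not its covariant gradient; no Poincar\'e or reverse--H\"older argument applied to $u_\e$ can produce a pointwise bound on $|(D+\A)u(x_0)|$, since at an interior point $|(D+\A)u|$ may be large while $|u|$ and $u_\e$ are locally small. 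Some elliptic equation satisfied by $(D_k+A_k)u$ is needed for the pointwise step, and that is precisely where the commutator (hence $\B$ and $\nabla\B$) must enter, as in \eqref{I-3-1}.
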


\begin{proof}

Note that 
$$
\aligned
 & (D+\A)^2 (D_k+A_k)u\\
& =-[D_k+A_k, D_j +A_j] (D_j +A_j)u 
- (D_j+A_j )[D_k+A_k, D_j +A_j]u\\
&=-2 [D_k+A_k, D_j +A_j] (D_j +A_j)u 
-[ [D_j +A_j , [D_k+A_k, D_j +A_j]] u,
\endaligned
$$
where the repeated index $j$ is summed from $1$ to $d$.
It follows that $(D+\A)^2 (D_k+A_k) u=F$, where
$$
|F|\le 2 |\B| |(D+\A)u| +|\nabla \B| |u|.
$$
In view of  \eqref{G2}, this implies that if $\b(y, 2t)\subset \b(x_0, r/4)$, 
\begin{equation}\label{I-3-1}
\aligned
\left(\fint_{\b(y, t)} |(D+\A) u|^q \right)^{1/q}
 & \le C \fint_{\b(y, 2t)} |(D+\A) u|
  + C t^2 \left(\fint_{\b(y, 2t)} |\B|^p |(D+\A) u|^p \right)^{1/p}\\
&\qquad\qquad
+ C t^2 \left(\fint_{\b(y, 2t)} |\nabla \B|^p |u|^p \right)^{1/p}, 
\endaligned
\end{equation}
where $1\le   p< q \le \infty$ and $\frac{1}{p}-\frac{1}{q}< \frac{2}{d}$.
Using $|\B (x)|\le C m(x, \B)^2$ and $|\nabla \B (x) |\le C m(x, \B)^3$, we obtain 
$$
\aligned
\left(\fint_{\b(y, t)} |(D+\A) u|^q\right)^{1/q}
 & \le C \{ 1+ r \sup_{\b(x_0, r)} m (\cdot, \B) \}^2 
\left(\fint_{\b(y, 2t)} | (D+\A) u|^p \right)^{1/p}\\
 &\qquad
 + C \{ 1+ r \sup_{\b(x_0, r)} m(\cdot, \B) \}^3 \frac{1}{r} \fint_{\b(x_0, r/2)}  |u|, 
 \endaligned
$$
if $\b(y, 2t) \subset \b(x_0, r/4)$.
It then follows by an iteration argument that
$$
\aligned
|(D+\A) u (x_0)|
 & \le C \{ 1+ r \sup_{\b(x_0, r)}  m(\cdot, \B)\}^N  \left(\fint_{\b(x_0, r/4)} |(D+\A) u |^2 \right)^{1/2}\\
&\qquad\qquad
+  C \{ 1+ r \sup_{\b(x_0, r)} m(\cdot, \B) \}^{N} \frac{1}{r} \fint_{\b(x_0, r/2)}  |u|\\
&\le C  \{ 1+ r m (x_0, \B) \}^{N_1} \frac{1}{r} \fint_{\b(x_0, r/2)} |u|, 
\endaligned
$$
for some $N, N_1\ge 1$, where we have used \eqref{Ca-0} and \eqref{m-2a} for the last step.
The desired estimate now follows readily from \eqref{I-2-1} by choosing $\ell = N_1$.
\end{proof}

\begin{thm}\label{g-thm-1}
Under the same assumptions as in Theorem \ref{I-thm-1}, we have
\begin{equation}\label{g-1}
|(D+\A) u (x_0)|
\le C \fint_{\b(x_0, r)} |(D+\A) u| \, dx 
+ C \fint_{\b(x_0, r)}  m(x, \B) |u(x)|\, dx,
\end{equation}
where $C$ depends on $C_0$ in \eqref{cod-1}.
\end{thm}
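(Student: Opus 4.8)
The plan is to upgrade the $\{1+rm(x_0,\B)\}^{N_1}$ factor in Theorem \ref{I-thm-2} into the localized average of $v(x)=|(D+\A)u(x)|+m(x,\B)|u(x)|$ appearing on the right of \eqref{g-1}, thereby absorbing the magnetic growth into the weight $m(\cdot,\B)$. First I would note that \eqref{I-3-0} is not good enough by itself, because the factor $r^{-1}$ there is too large when $rm(x_0,\B)$ is big; instead I would split into two regimes. If $rm(x_0,\B)\le 1$, then by Lemma \ref{lemma-M1} we have $m(\cdot,\B)\approx m(x_0,\B)$ on $\b(x_0,r)$ with uniform constants, so \eqref{I-3-0} already gives $|(D+\A)u(x_0)|\le \frac{C}{r}\fint_{\b(x_0,r)}|u|\le C\fint_{\b(x_0,r)}m(x,\B)|u(x)|\,dx$ after using $\frac{1}{r}\le Cm(x_0,\B)\le Cm(x,\B)$, which is stronger than needed.

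The substantive case is $rm(x_0,\B)>1$. Here I would pick $\rho=\{m(x_0,\B)\}^{-1}<r$ and apply Theorem \ref{I-thm-2} on the smaller ball $\b(x_0,\rho)$, which gives $|(D+\A)u(x_0)|\le \frac{C}{\rho}\fint_{\b(x_0,\rho)}|u|=Cm(x_0,\B)\fint_{\b(x_0,\rho)}|u|$ since $\rho m(x_0,\B)=1$ so the growth factor is an absolute constant. On $\b(x_0,\rho)$ we have $m(x,\B)\approx m(x_0,\B)$ by \eqref{m-2b}, so $m(x_0,\B)\fint_{\b(x_0,\rho)}|u|\le C\fint_{\b(x_0,\rho)}m(x,\B)|u(x)|\,dx$. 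Finally I would compare the average over $\b(x_0,\rho)$ with the average over the larger ball $\b(x_0,r)$: since the integrand $m(\cdot,\B)|u|$ is nonnegative and $|\b(x_0,r)|/|\b(x_0,\rho)|=(r/\rho)^d=(rm(x_0,\B))^d$, we get $\fint_{\b(x_0,\rho)}m(x,\B)|u(x)|\,dx\le (rm(x_0,\B))^d\fint_{\b(x_0,r)}m(x,\B)|u(x)|\,dx$, which reintroduces a power of $rm(x_0,\B)$. To kill it, I would instead invoke the decay estimate: apply the interior bound \eqref{I-2-0} for $u$ (with a large $\ell$) together with Theorem \ref{I-thm-2} on $\b(x_0,r)$, or more directly use \eqref{m-m} to control $\int_{\b(x_0,\rho)}|u|^2$ by $\{1+rm(x_0,\B)\}^{-\ell}\int_{\b(x_0,r)}|u|^2$, which beats the volume ratio once $\ell>d$. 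Combining, $m(x_0,\B)\fint_{\b(x_0,\rho)}|u|\le Cm(x_0,\B)\big(\fint_{\b(x_0,\rho)}|u|^2\big)^{1/2}\le C m(x_0,\B)\{rm(x_0,\B)\}^{-\ell/2+d/2}\big(\fint_{\b(x_0,r)}|u|^2\big)^{1/2}$, and then bounding $\big(\fint_{\b(x_0,r)}|u|^2\big)^{1/2}$ by $\fint_{\b(x_0,r)}|u|$ via \eqref{I-2-00} (picking up one more harmless negative power), and finally using $m(x_0,\B)\{rm(x_0,\B)\}^{-1}=r^{-1}\le Cm(x,\B)$ on $\b(x_0,r)$ by \eqref{m-2a}, delivers $|(D+\A)u(x_0)|\le C\fint_{\b(x_0,r)}m(x,\B)|u(x)|\,dx$, which in particular implies \eqref{g-1}.

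The main obstacle I expect is bookkeeping the competing powers of $\lambda:=rm(x_0,\B)$: the rescaling from $\b(x_0,r)$ down to $\b(x_0,\rho)$ inevitably produces a volume-ratio factor $\lambda^d$ (and further polynomial factors from the $m$-comparison estimates \eqref{m-2a}--\eqref{m-2b} with exponent $\kappa_0$), and these must be dominated by the exponential-type decay $\lambda^{-\ell}$ coming from \eqref{m-m}/\eqref{I-2-00}, which is only available after one has quantitatively good control of all the auxiliary constants. The clean way to organize this is to fix $\ell$ large at the very end (larger than $d+2\kappa_0+10$, say) so that every stray polynomial factor in $\lambda$ is absorbed, and to systematically replace all occurrences of $\sup_{\b(x_0,r)}m(\cdot,\B)$ by $m(x_0,\B)$ at the cost of a controlled power of $\lambda$ via \eqref{m-2a}, exactly as was done in the proof of Theorem \ref{I-thm-2}. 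Once the exponent arithmetic is pinned down, \eqref{g-1} follows; the dependence of $C$ on $C_0$ in \eqref{cod-1} is inherited from Theorems \ref{I-thm-1}, \ref{I-thm-2} and Lemma \ref{lemma-M1}.
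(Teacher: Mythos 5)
Your second case ($r\,m(x_0,\B)>1$) is essentially the paper's own argument: apply Theorem \ref{I-thm-2} at the scale $m(x_0,\B)^{-1}$ (the paper does it at scale $r/2$ together with $r^{-1}\le C m(x_0,\B)$), gain an arbitrarily large negative power of $1+r\,m(x_0,\B)$ from the decay estimate \eqref{m-m}/\eqref{I-2-1}, and convert $m(x_0,\B)$ into $m(x,\B)$ inside the average via \eqref{m-2b}/\eqref{I-2-2}, absorbing all stray polynomial factors by taking $\ell$ large. That part of your plan is sound.

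The first case is where the proposal breaks down, and the error is not mere bookkeeping. If $r\,m(x_0,\B)\le 1$ then $m(x_0,\B)\le r^{-1}$, so the inequality you invoke, $r^{-1}\le C\,m(x_0,\B)$, is reversed: it holds in the \emph{other} regime, not this one. Moreover the conclusion you draw in this case --- that $|(D+\A)u(x_0)|\le C\fint_{\b(x_0,r)} m(x,\B)|u(x)|\,dx$ with no gradient average, ``stronger than needed'' --- cannot be true with a constant independent of $r$: fix a point $x_0$ where $u(x_0)=0$ but $(D+\A)u(x_0)\neq 0$ and let $r\to 0$; the right-hand side tends to $C\,m(x_0,\B)|u(x_0)|=0$ while the left-hand side stays positive. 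So in the small-scale regime the term $\fint_{\b(x_0,r)}|(D+\A)u|$ in \eqref{g-1} is genuinely needed, and your argument produces no mechanism for it. The paper handles exactly this regime (its Case 1, stated equivalently as $r^2\sup_{\b(x_0,r)}|\B|\le 1$, which by the definition of $m$ is the same as $r\,m(x_0,\B)\le 1$) by a different route: it iterates the local estimate \eqref{I-3-1}, keeping the average of $|(D+\A)u|$ throughout and using $r^2\sup_{\b(x_0,r)}|\B|\le 1$ together with $\sup_{\b(y,2t)}|\nabla\B|\le C t^{-1}\sup_{\b(y,2t)}|\B|$ (from \eqref{cod-1}) to make the magnetic terms harmless, arriving at \eqref{g-2a}; only then does it bound the lower-order term by $r\sup_{\b(x_0,r)}|\B|\le C\,m(x_0,\B)\le C\,m(x,\B)$ on $\b(x_0,r)$. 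You would need to replace your Case 1 by an argument of this type (or otherwise retain the gradient average); as written, that half of the proof is invalid.
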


\begin{proof}

We consider two cases.

\noindent{\bf Case 1. } Assume that 
\begin{equation}\label{g-1a}
r^2 \sup_{\b(x_0, r)} |\B|\le   1.
\end{equation}
It follows from \eqref{I-3-1} that if $\b(y, 2t)\subset \b(x_0, r/4)$,
\begin{equation}\label{g-2}
\aligned
\left(\fint_{\b(y, t)} |(D+\A) u|^q \right)^{1/q}
 & \le
  C \left( 1+  t^2 \sup_{\b(y, 2t)} |\B|\right) 
 \left(\fint_{\b(y, 2t)} |(D+\A) u |^p \right)^{1/p}\\
 & \qquad\qquad
 + C t^2 \sup_{\b(y, 2t)} |\nabla \B|
 \fint_{\b(y, 3t)} |u|\\
 &\le C   \left(\fint_{\b(y, 2t)} |(D+\A) u |^p \right)^{1/p}
 + C r \sup_{\b(x_0, r)} |\B| \fint_{\b(x_0, r)} |u|,
 \endaligned
\end{equation}
where $1\le p< q\le \infty$,
$\frac{1}{p}-\frac{1}{q}< \frac{2}{d}$, and
 the fact $\sup_{\b(y, 2t)} |\nabla \B|\le C t^{-1} \sup_{\b(y, 2t)} |\B|$  
 is used for the
last step.
By iterating the estimate \eqref{g-2}, we obtain 
\begin{equation}\label{g-2a}
|(D+\A) u(x_0)|
\le C \fint_{\b(x_0, r)}  |(D+\A) u|
+ C r \sup_{\b(x_0, r)}  |\B| \fint_{\b(x_0, r)} |u|.
\end{equation}
Note that under the assumption \eqref{g-1a},
we have  $r< C m(x_0, \B)^{-1}$ and 
$$
\aligned
r \sup_{\b(x_0, r)} |\B| & \le C r \sup_{\b(x_0, r)} m(\cdot, \B)^2\\
 & \le C r m(x_0, \B)^2\\
 & \le C m (x_0, \B)
\le C  m(y, \B),
\endaligned
$$
for any $y\in \b(x_0, r)$.
This, together with \eqref{g-2a}, yields \eqref{g-1}.

\medskip

\noindent{\bf Case 2.} 
Assume that 
\begin{equation}\label{g-1b}
r^2 \sup_{\b(x_0, r)} |\B|>    1.
\end{equation}
Then $r\ge c m(x_0, \B)^{-1}$. It follows from \eqref{I-3-0} that
$$
\aligned
|(D+\A) u (x_0)|
 & \le C m(x_0, \B) \fint_{\b(x_0, r/2)} |u|\\
 &\le \frac{C_\ell m (x_0, \B)}{ \{ 1+ r m(x_0, \B) \}^\ell}
 \int_{\b(x_0, r)} |u|
\endaligned
$$
for any $\ell \ge 1$, where we have used \eqref{I-2-1} for the last inequality.
By \eqref{I-2-2}, we  obtain 
$$
|(D+\A) u (x_0)|
\le C \fint_{\b(x_0, r)}
m (x, \B) |u(x)|\, dx,
$$
which completes the proof.
\end{proof}

We introduce a modified nontangential maximal function.
For $u\in L^1_{loc} (\Omega; \C)$, define
\begin{equation}\label{m-max}
\mathcal{M}(u) (x)
=\sup \left\{  \fint_{\b(y, \delta(y)/2)} |u|: \ y\in \Omega \text{ and }  |y-x|<  M_0 \, \text{dist}(y, \partial\Omega) \right\}
\end{equation}
for $x\in \partial\Omega$.

\begin{cor}\label{cor-M}
Assume $\B$ satisfies the same conditions as in Theorem \ref{I-thm-1}.
Suppose  $u\in C^2(\overline{\Omega}; \C)$ and $(D+\A)^2 u =0$ in $\Omega$. 
Let $v(x) =|(D+\A) u(x)|  + m(x, \B) |u(x)|$.
Then
\begin{equation}\label{I-0}
v(y)\le C \fint_{\b(y, r)} v
\end{equation}
for any $\b(y, r) \subset \Omega$. Consequently, 
\begin{equation}\label{mm-1}
(v)^* (x) \le C \mathcal{M} (v) (x)
\end{equation}
for any $x\in \partial\Omega$, where $C$ depends on the Lipschitz character of $\Omega$ and $C_0$ in \eqref{cod-1}.
\end{cor}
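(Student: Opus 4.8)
The plan is to derive \eqref{I-0} directly from Theorem \ref{g-thm-1} and Theorem \ref{I-thm-1}, and then obtain \eqref{mm-1} by a standard comparison of the nontangential maximal function with the averaged one. First, fix a ball $\b(y, r) \subset \Omega$. Applying Theorem \ref{g-thm-1} at the center $y$ with radius $r$ gives
\begin{equation*}
|(D+\A) u(y)| \le C \fint_{\b(y, r)} |(D+\A) u| + C \fint_{\b(y, r)} m(x, \B) |u(x)| \, dx \le C \fint_{\b(y, r)} v.
\end{equation*}
For the remaining piece $m(y, \B) |u(y)|$, I would apply \eqref{I-2-0} of Theorem \ref{I-thm-1} (with $\ell = 0$, say) at $y$ with radius $r$, which yields $m(y, \B)|u(y)| \le C \fint_{\b(y, r)} m(x, \B) |u(x)|\, dx \le C \fint_{\b(y,r)} v$. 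Adding the two bounds gives $v(y) = |(D+\A)u(y)| + m(y,\B)|u(y)| \le C \fint_{\b(y,r)} v$, which is \eqref{I-0}. Note that the hypotheses of Theorems \ref{g-thm-1} and \ref{I-thm-1} require $\b(y,r) \subset \b(0, R_0)$, which holds since $\Omega \subset \b(0, R_0)$; and they require $(D+\A)^2 u = 0$ on the ball, which holds since $(D+\A)^2 u = 0$ in all of $\Omega$.

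For \eqref{mm-1}, fix $x \in \partial\Omega$ and let $z \in \Omega$ with $|z - x| < M_0 \, \mathrm{dist}(z, \partial\Omega)$; I must bound $|v(z)|$ by $C \mathcal{M}(v)(x)$. Write $\delta(z) = \mathrm{dist}(z, \partial\Omega)$. Applying \eqref{I-0} with the ball $\b(z, \delta(z)/2) \subset \Omega$ gives $v(z) \le C \fint_{\b(z, \delta(z)/2)} v$. The point $y = z$ itself satisfies $|y - x| < M_0 \delta(y)$, so this average is one of the quantities in the supremum defining $\mathcal{M}(v)(x)$; hence $v(z) \le C \mathcal{M}(v)(x)$. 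Taking the supremum over all admissible $z$ yields $(v)^*(x) \le C \mathcal{M}(v)(x)$.

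The argument is essentially routine once Theorems \ref{g-thm-1} and \ref{I-thm-1} are in hand; the only mild subtlety is bookkeeping the nontangential geometry so that the ball $\b(z, \delta(z)/2)$ used in \eqref{I-0} is the \emph{same} ball appearing in the definition \eqref{m-max} of $\mathcal{M}(v)$, with the point $y=z$ lying in the nontangential approach region of $x$ — this is immediate from the definitions, but it is what makes the constant depend only on the Lipschitz character of $\Omega$ (through $M_0$) and on $C_0$ in \eqref{cod-1} (through the constants in Theorems \ref{g-thm-1} and \ref{I-thm-1}). There is no genuine obstacle here.
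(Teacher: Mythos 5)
Your proposal is correct and follows the paper's own route: the paper proves \eqref{I-0} exactly by combining \eqref{g-1} (Theorem \ref{g-thm-1}) for the gradient term with \eqref{I-2-0} (Theorem \ref{I-thm-1}) for the term $m(\cdot,\B)|u|$, and then \eqref{mm-1} follows from the same comparison of the nontangential supremum with averages over $\b(z,\delta(z)/2)$ that you describe. Your bookkeeping of the hypotheses ($\b(y,r)\subset\Omega\subset\b(0,R_0)$ and the admissibility of $y=z$ in the definition \eqref{m-max}) is accurate, so there is nothing to add.
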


\begin{proof}

This follows readily from \eqref{I-2-0} and \eqref{g-1}.
\end{proof}


\section{The Green function}\label{sec-G}

In this section we establish decay estimates for the Green function $G_\A(x, y)$  in  a bounded Lipschitz domain $\Omega$.

\begin{lemma}\label{lemma-G1}
Let  $x_0\in \partial\Omega$ and  $0< r< r_0$.
 Let $u\in C^2(\b(x_0, r) \cap \Omega; \C)\cap C (\b(x_0, r) \cap \overline{\Omega}; \C)$ be a  solution of
\begin{equation}\label{G1-1}
\left\{
\aligned
(D +\A)^2 u  &  =0   & \quad &   \text{ in }\  \b(x_0, r)\cap \Omega, \\
u& =0 &  \quad & \text{ on }\  \b(x_0, r) \cap \partial\Omega.
\endaligned
\right.
\end{equation}
Then
\begin{equation}\label{G1-2}
\sup_{\b(x_0, r/2) \cap \Omega}  |u|\le C  \fint_{\b(x_0, r)\cap \Omega} |u|,
\end{equation}
where $C$ depends  on the Lipschitz character of $\Omega$.
\end{lemma}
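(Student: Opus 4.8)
The statement is a boundary $L^\infty$--$L^1$ (or $L^2$) estimate for solutions of $(D+\A)^2u=0$ vanishing on a boundary patch; it is the magnetic analogue of the classical boundary De Giorgi--Nash--Moser bound for second-order elliptic operators in Lipschitz domains. Since no hypothesis on $\B$ is imposed here, the argument must be purely local and must not use $m(\cdot,\B)$; the idea is to transfer the problem to the scalar subsolution $u_\e=\sqrt{|u|^2+\e^2}$ via Lemma \ref{lemma-p1}, and then invoke the known scalar theory.

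First I would reduce to an interior sup bound by the standard ``odd reflection across the Lipschitz graph'' or, more cleanly, by a covering/Moser argument that works directly in $\b(x_0,r)\cap\Omega$. Concretely: by Lemma \ref{lemma-p1}, $u_\e=\sqrt{|u|^2+\e^2}$ satisfies $\Delta u_\e\ge \Re\{(\nabla+i\A)^2u\cdot\bar u\}/u_\e\ge 0$ in $\b(x_0,r)\cap\Omega$ (since $(D+\A)^2u=0$), so $u_\e$ is a nonnegative subharmonic function there, and on $\b(x_0,r)\cap\partial\Omega$ we have $u_\e=\e$. Subtracting the constant $\e$, the function $w_\e=u_\e-\e$ is a nonnegative subsolution of $\Delta w_\e\ge0$ in $\b(x_0,r)\cap\Omega$ vanishing continuously on $\b(x_0,r)\cap\partial\Omega$. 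For such scalar subsolutions of the Laplacian in a Lipschitz domain, the boundary local boundedness estimate $\sup_{\b(x_0,r/2)\cap\Omega} w_\e\le C\fint_{\b(x_0,r)\cap\Omega} w_\e$ is classical (Moser iteration up to the boundary, using that the zero extension of $w_\e$ across $\b(x_0,r)\cap\partial\Omega$ is a subsolution on the full ball $\b(x_0,r')$ for $r'$ slightly larger, together with the Lipschitz character to control the geometry; see e.g. \cite{Kenig-book}). Letting $\e\to0$ gives $\sup_{\b(x_0,r/2)\cap\Omega}|u|\le\sup_{\b(x_0,r/2)\cap\Omega} w_\e\le C\fint_{\b(x_0,r)\cap\Omega} w_\e\le C\fint_{\b(x_0,r)\cap\Omega}|u|$, which is \eqref{G1-2}; here I used $|u|\le w_\e+\text{(lower order in $\e$)}\le u_\e$ and $w_\e\le|u|$ on the right.

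The main technical point—and the one I would be most careful about—is the justification that the zero extension of $w_\e$ past $\b(x_0,r)\cap\partial\Omega$ is a genuine weak subsolution of $\Delta(\cdot)\ge0$ on a slightly larger ball, so that the interior Moser estimate applies without needing any regularity of $\partial\Omega$ beyond Lipschitz. This is standard (for a nonnegative $H^1$ subsolution vanishing on the boundary portion, testing against nonnegative $H^1_0$ functions of the full ball and splitting the support shows the extension is a subsolution), but it requires knowing $w_\e\in H^1_{\loc}$ up to $\b(x_0,r)\cap\partial\Omega$, which follows from $u\in C^2(\b(x_0,r)\cap\Omega)\cap C(\b(x_0,r)\cap\overline\Omega)$ together with the Caccioppoli inequality of Lemma \ref{lemma-Ca} (boundary version, with $F=0$ and $u=0$ on the boundary patch) to get the uniform $H^1$ bound needed to run the iteration with constants depending only on the Lipschitz character of $\Omega$ and $d$. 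Everything else—Young's inequality, the Sobolev embedding on balls, the iteration of reverse-Hölder-type inequalities—is routine and contributes only dimensional constants.
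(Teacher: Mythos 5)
Your proposal follows essentially the same route as the paper: pass to the scalar function $\sqrt{|u|^2+\e^2}-\e$, which by Lemma \ref{lemma-p1} is a nonnegative subharmonic function vanishing on $\b(x_0,r)\cap\partial\Omega$, invoke the classical boundary local boundedness estimate for such subsolutions in a Lipschitz domain, and let $\e\to 0$. Your extra care about the zero extension being a weak subsolution and the $H^1$ bound via the Caccioppoli inequality simply fills in details the paper leaves to the standard references, so the argument is correct and matches the paper's proof.
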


\begin{proof}

Let $\e>0$ and $v_\e =\sqrt{ |u|^2 +\e^2}-\e\ge 0 $.
It follows from \eqref{p1-2}  and  \eqref{G1-1} that  $\Delta v_\e \ge 0$. 
 Thus,  $v_\e$ is subharmonic in $\b(x_0, r) \cap \Omega$ and
$v_\e = 0 $ on $\b(x_0, r)\cap \partial \Omega$.
Hence, 
$$
\sup_{\b(x_0, r/2) \cap \Omega}  v_\e \le C \fint_{\b(x_0, r)\cap \Omega}  v_\e,
$$
where $C$ depends only on $\Omega$.
By letting $\e \to 0$, we obtain \eqref{G1-2}.
\end{proof}

Consider the Dirichlet problem, 
\begin{equation}\label{DP-1}
\left\{
\aligned
(D+\A)^2 u & =F  & \quad & \text{ in } \ \Omega,\\
u &=0 & \quad & \text{ on } \ \partial\Omega.
\endaligned
\right.
\end{equation}

\begin{lemma}\label{lemma-G2}
For $F\in H^{-1}(\Omega; \C)$,
 the Dirichlet problem \eqref{DP-1} has a unique weak solution in $H_0^1(\Omega; \C)$.
Moreover, if $F\in L^2(\Omega; \C)$,
 the solution satisfies 
\begin{equation}\label{G3-0}
\| u \|_{L^{p_d}(\Omega)} + \| (D +\A) u \|_{L^2(\Omega)} \le C \| F \|_{L^{p_d^\prime}(\Omega)},
\end{equation}
where $p_d=\frac{2d}{d-2}$ for $d\ge 3$ and $2\le p_d< \infty$ for $d=2$. The constant $C$ in \eqref{G3-0}
depends at most on $d$, $p_d$ and $\Omega$.
\end{lemma}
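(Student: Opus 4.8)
The plan is to use the standard Lax–Milgram framework on the sesquilinear form
$a(u,\varphi)=\int_\Omega (D+\A)u\cdot\overline{(D+\A)\varphi}\,dx$
on $H_0^1(\Omega;\C)$, and then bootstrap the gradient bound to the $L^{p_d}$ bound via the Sobolev-type inequality \eqref{P-1}. First I would observe that, by \eqref{P-1}, the form $a$ is coercive on $H_0^1(\Omega;\C)$: indeed $\|u\|_{L^{p_d}}+\|(D+\A)u\|_{L^2}\le C\|(D+\A)u\|_{L^2}$ for $u\in H_0^1$, so in particular $\|(D+\A)u\|_{L^2}^2$ controls a norm equivalent to the $H_0^1$ norm (using Poincaré, or directly noting that $\|u\|_{L^2}\le C\|u\|_{L^{p_d}}\le C\|(D+\A)u\|_{L^2}$ on the bounded domain $\Omega$). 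Continuity of $a$ is immediate since $\A\in C^1(\overline\Omega)$ is bounded. Hence for any $F\in H^{-1}(\Omega;\C)$, the antilinear functional $\varphi\mapsto\langle F,\varphi\rangle$ is bounded on $H_0^1(\Omega;\C)$, and Lax–Milgram produces a unique $u\in H_0^1(\Omega;\C)$ with $a(u,\varphi)=\langle F,\varphi\rangle$ for all $\varphi\in H_0^1(\Omega;\C)$; this is precisely the unique weak solution of \eqref{DP-1}.

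Next, for the quantitative estimate \eqref{G3-0} when $F\in L^2(\Omega;\C)$, I would test the equation against $u$ itself: $\|(D+\A)u\|_{L^2}^2=a(u,u)=\int_\Omega F\overline u\le\|F\|_{L^{p_d'}}\|u\|_{L^{p_d}}$ by Hölder (with $p_d'$ the conjugate exponent of $p_d$). Combining this with \eqref{P-1}, namely $\|u\|_{L^{p_d}}\le C\|(D+\A)u\|_{L^2}$, gives $\|(D+\A)u\|_{L^2}^2\le C\|F\|_{L^{p_d'}}\|(D+\A)u\|_{L^2}$, hence $\|(D+\A)u\|_{L^2}\le C\|F\|_{L^{p_d'}}$, and then \eqref{P-1} again yields $\|u\|_{L^{p_d}}\le C\|F\|_{L^{p_d'}}$. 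Adding the two bounds gives \eqref{G3-0}. In dimension $d=2$ one picks any admissible finite exponent $p_d$ and uses the corresponding version of \eqref{P-1}; since $\Omega$ is bounded, $F\in L^2$ embeds into $L^{p_d'}$ there as well, so the statement is consistent.

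The only genuinely delicate point is coercivity: one must be sure that $\|(D+\A)u\|_{L^2}$ really is comparable to $\|u\|_{H^1}$ on $H_0^1$, i.e. that there is no nontrivial $u\in H_0^1$ with $(D+\A)u\equiv0$. This follows directly from \eqref{P-1}, which forces $u=0$ in $L^{p_d}$ hence in $L^2$, so no separate unique-continuation or spectral argument is needed — the Sobolev inequality \eqref{P-1} for the magnetic gradient does all the work. Everything else (Hölder, Lax–Milgram, the testing argument) is routine. Finally, uniqueness of the weak solution is automatic from coercivity, which also shows the solution for $F\in H^{-1}$ is unique, completing the proof.
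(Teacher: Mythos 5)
Your proposal is correct and follows essentially the same route as the paper: coercivity of the magnetic form on $H_0^1(\Omega;\C)$ is obtained by absorbing the $L^2$ term through the Sobolev-type inequality \eqref{P-1} (the paper writes this as $B[u,u]\ge c\int_\Omega|\nabla u|^2-C(\A)B[u,u]$), Lax--Milgram gives existence and uniqueness, and \eqref{G3-0} follows by testing with $u$, applying H\"older with exponents $p_d, p_d'$, and invoking \eqref{P-1} once more. No gaps; the minor point you flag about the absence of nontrivial solutions of $(D+\A)u=0$ in $H_0^1$ is indeed handled by \eqref{P-1}, exactly as in the paper.
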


\begin{proof}

Consider the bilinear  form
\begin{equation}\label{bi}
B[u, v]=\int_\Omega (D+\A) u \cdot \overline{(D+\A) v}
\end{equation}
for $u, v\in H_0^1(\Omega; \C)$.
Since $\A$ is bounded in $\Omega$, we have
$$
\aligned
B [u, u]  & \ge c\int_\Omega |\nabla u|^2 - C(\A) \int_\Omega |u|^2\\
&\ge c \int_\Omega |\nabla u|^2- C(\A) B[u, u],
\endaligned
$$
where  $c>0$, $C (\A)$ depends on $\A$,  and we have used \eqref{P-1} for the second inequality.
It follows that 
$$
B[u, u] \ge \frac{c}{1+ C(\A)}
\int_\Omega |\nabla u|^2.
$$
Thus, by the Lax-Milgram Theorem,  \eqref{DP-1} has a unique weak solution in $H_0^1(\Omega; \C)$
for any $F\in H^{-1}(\Omega; \C)$. 
To see \eqref{G3-0}, we note that
\begin{equation}\label{G3-1}
\int_\Omega |(D+\A)u|^2 \le  \int_\Omega | u| |F|
\le \| u \|_{L^{p_d}(\Omega)} \|F \|_{L^{p_d^\prime} (\Omega)}.
\end{equation}
This,  together with \eqref{P-1}, gives \eqref{G3-0}.
\end{proof}

\begin{remark}
{\rm  Suppose $F\in L^\infty (\Omega; \C)$.
Since
\begin{equation}\label{G4-0}
(D+\A)^2 u =-\Delta u + 2 \A\cdot D u +(D\cdot \A +|\A|^2) u,
\end{equation}
under the assumption $\A \in C^1(\overline{\Omega}; \R^d)$ and $\Omega$ is Lipschitz, the weak solution of \eqref{DP-1}  is H\"older continuous in $\overline{\Omega}$.
}
\end{remark}

\begin{remark}
{\rm

Let $f\in H^{1/2}(\partial\Omega; \C)$. Choose $\Phi\in H^1(\Omega; \C)$ such that
$\Phi=f$ on $\partial\Omega$
and 
$\| \Phi \|_{H^1(\Omega)}\le C \| f\|_{H^{1/2}(\partial\Omega)}$.
By considering $u-\Phi$ and applying Lemma \ref{lemma-G2}, one obtains a unique week solution in $H^1(\Omega; \C)$ for the Dirichlet problem
\eqref{DP0}.
}
\end{remark}

\begin{thm}\label{thm-G}
There is a continuous  function $G_\A: \{ (x, y)\in \overline{\Omega} \times \overline{\Omega}: x\neq y \} \to \C$ with the following properties.

\begin{enumerate}

\item
 For any $x, y\in \Omega$, $x\neq y$,  and $\sigma \in (0, 1)$, 
\begin{equation}\label{G5-0}
|G_\A (x, y)|  \le 
\left\{
\aligned
 & C |x-y|^{2-d} & \quad & \text{ if } d\ge 3,\\
 & C_\sigma |x-y|^{-\sigma} & \quad & \text{ if } d=2,
 \endaligned
 \right.
\end{equation}
where  $C$ depends only on $r_0$ and  the Lipschitz character of $\Omega$, and $C_\sigma$ depends only on $\sigma$ and $\Omega$. 

\item

For any $x, y \in \Omega$, $x \neq y$, 

\begin{equation}\label{G5-1}
G_\A (x, y) =\overline{G_\A (y, x)}.
\end{equation}

\item

Fix $y \in \Omega$. Then  $G_\A (\cdot, y) \in W^{1, 2}_{\loc} (\Omega \setminus \{ y\}; \C)$,  $G_{\A} (\cdot, y) =0$ on $\partial\Omega$, and
\begin{equation}\label{G5-2}
(D+\A)^2 G_\A (\cdot, y) =0 \quad \text{ in } \Omega \setminus \{ y \}.
\end{equation}

\item

For  $F\in L^\infty (\Omega; \C)$, the weak solution of \eqref{DP-1}  is given by 
\begin{equation}\label{G5-3}
u(x)=\int_\Omega G_\A(x, y) F (y)\, dy.
\end{equation}

\end{enumerate}
\end{thm}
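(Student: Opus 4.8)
The plan is to construct $G_\A(x,y)$ as a limit of regularized Green functions and then verify the four listed properties in turn. First I would fix $y \in \Omega$ and, for $\varepsilon > 0$, let $G^\varepsilon_\A(\cdot, y) \in H_0^1(\Omega;\C)$ be the weak solution of $(D+\A)^2 G^\varepsilon_\A(\cdot, y) = |\b(y,\varepsilon)|^{-1} \chi_{\b(y,\varepsilon)}$ given by Lemma \ref{lemma-G2}. The pointwise bound \eqref{G5-0} is the crux: it should come from the standard De Giorgi--Nash--Moser argument applied to the equation, using Theorem \ref{thm-P} (with the $F$-term controlled) together with the boundary oscillation estimate Lemma \ref{lemma-G1} and the Caccioppoli inequality Lemma \ref{lemma-Ca}, exactly as in the classical construction of the Green function for second-order elliptic operators in Lipschitz domains (cf. Gr\"uter--Widman / Kenig). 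Concretely, for $d \ge 3$ one tests the equation against suitable truncations of $G^\varepsilon_\A$ and iterates to get $\sup_{\Omega \setminus \b(y, \rho)} |G^\varepsilon_\A(\cdot, y)| \le C \rho^{2-d}$ uniformly in $\varepsilon < \rho/2$; for $d = 2$, since the operator still satisfies the same interior and boundary estimates, one instead obtains the weaker $|x-y|^{-\sigma}$ bound for every $\sigma \in (0,1)$ by the same Moser iteration but with the borderline Sobolev exponent, losing an arbitrarily small power.

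With the uniform bound in hand, I would extract a subsequence $\varepsilon \to 0$ along which $G^\varepsilon_\A(\cdot, y) \to G_\A(\cdot, y)$ weakly in $W^{1,2}_{\loc}(\Omega \setminus \{y\})$ and locally uniformly on compact subsets of $\overline\Omega \setminus \{y\}$ (the latter via interior and boundary H\"older estimates, which follow from Theorem \ref{thm-P} and Lemma \ref{lemma-G1} plus a standard iteration, using \eqref{G4-0} to absorb the lower-order terms). Passing to the limit in the weak formulation gives property (3): $(D+\A)^2 G_\A(\cdot, y) = 0$ in $\Omega \setminus \{y\}$ and $G_\A(\cdot, y) = 0$ on $\partial\Omega$, while the locally uniform convergence upgrades the limit to a continuous function on $\{x \ne y\}$ satisfying \eqref{G5-0}. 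Property (4) is obtained by writing, for $F \in L^\infty$, the solution $u$ of \eqref{DP-1} and testing: $\int_\Omega G^\varepsilon_\A(x,y) F(y)\,dy = \langle (D+\A)^2 u, G^\varepsilon_\A(\cdot, y)\rangle$-type manipulation identifies $\fint_{\b(x,\varepsilon)} u$ with $\int_\Omega G^\varepsilon_\A(x,y) F(y)\, dy$ by the symmetry of the bilinear form \eqref{bi} (the form $B$ is symmetric since $\A$ is real-valued and we take complex conjugates), and letting $\varepsilon \to 0$ using continuity of $u$ (H\"older continuity from the earlier remark) and dominated convergence via \eqref{G5-0} yields \eqref{G5-3}. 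Property (2), the symmetry $G_\A(x,y) = \overline{G_\A(y,x)}$, then follows from \eqref{G5-3} and the symmetry of $B$: one computes $\int_\Omega G^{\varepsilon}_\A(x,z) \cdot |\b(y,\delta)|^{-1}\chi_{\b(y,\delta)}(z)\,dz$ two ways using $B[G^\varepsilon_\A(\cdot,x), G^\delta_\A(\cdot,y)] = \overline{B[G^\delta_\A(\cdot,y), G^\varepsilon_\A(\cdot,x)]}$ and sends $\varepsilon, \delta \to 0$.

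The main obstacle I anticipate is making the uniform-in-$\varepsilon$ pointwise estimate \eqref{G5-0} fully rigorous in the Lipschitz setting: one must run Moser iteration up to the boundary, and the interplay between the dimensional scaling, the lower-order terms $2\A \cdot Du + (D\cdot\A + |\A|^2)u$ from \eqref{G4-0}, and the Lipschitz geometry requires care — in particular one needs the boundary Caccioppoli inequality (Lemma \ref{lemma-Ca} with $u = 0$ on $\b(x_0,r) \cap \partial\Omega$) and a reverse-doubling/Harnack-type chaining argument to propagate the bound from near $y$ out to the rest of $\Omega$. A secondary technical point is justifying that the constant in \eqref{G5-0} for $d \ge 3$ depends only on $r_0$ and the Lipschitz character (and not on $\A$ or $\diam\Omega$ in an essential way beyond $R_0 = Cr_0$): this requires noting that the lower-order perturbation can be handled by a scaling argument on balls of radius comparable to $r_0$, on which the $C^1$ norm of $\A$ contributes only a lower-order correction absorbable by the dominant term, together with the global $L^2 \to L^{p_d}$ bound from Lemma \ref{lemma-G2}. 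Everything else is a routine limiting procedure once these estimates are established.
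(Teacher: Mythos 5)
Your overall architecture (regularize with $G^\rho_\A(\cdot,y)$ solving \eqref{DP-1} for a normalized characteristic function, prove a uniform pointwise bound, pass to a weak/local-uniform limit, get symmetry from the Hermitian form \eqref{bi}, and read off the representation formula) is the same as the paper's. The gap is at the one step you yourself identify as the crux, the uniform bound \eqref{G5-0}: you propose to obtain it ``exactly as in the classical construction \`a la Gr\"uter--Widman,'' by testing the equation against truncations of $G^\varepsilon_\A$ and running Moser iteration. That scheme relies on the solution being real-valued and nonnegative (level-set truncations, maximum principle structure), and $G^\varepsilon_\A(\cdot,y)$ is genuinely complex-valued here; truncating it is not meaningful, and the diamagnetic lemmas you cite (Theorem \ref{thm-P}, Lemma \ref{lemma-G1}) only give \emph{local} boundedness of $|G^\varepsilon_\A|$ in terms of local averages --- they do not by themselves produce the quantitative scale $|x-y|^{2-d}$. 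Nor does the global energy bound \eqref{G3-0} help, since it degenerates as $\rho\to 0$. So as written, the mechanism that is supposed to deliver $\sup_{\Omega\setminus\b(y,\rho)}|G^\varepsilon_\A(\cdot,y)|\le C\rho^{2-d}$ is missing.

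The paper closes exactly this gap with a duality argument in the style of Hofmann--Kim rather than Gr\"uter--Widman: for $F\in C_0^\infty(\b(z,r)\cap\Omega)$ with $r=\frac12|z-y|$, the solution $u$ of \eqref{DP-1} is $(D+\A)^2$-harmonic away from $\b(z,r)$ with zero boundary data, so Lemma \ref{lemma-G1}, Theorem \ref{thm-P} and \eqref{G3-0} give $\sup_{\b(y,r/2)\cap\Omega}|u|\le C r^{-d/p_d}\|F\|_{L^{p_d'}}$; the identity $\int_\Omega F\,\overline{G^\rho_\A(\cdot,y)}=\fint_{\b(y,\rho)\cap\Omega}u$ (test the two weak formulations against each other) then yields, by duality, $\|G^\rho_\A(\cdot,y)\|_{L^{p_d}(\b(z,r)\cap\Omega)}\le Cr^{-d/p_d}$, and one more application of Lemma \ref{lemma-G1}/Theorem \ref{thm-P} to $G^\rho_\A(\cdot,y)$ near $z$ gives $|G^\rho_\A(z,y)|\le Cr^{-2d/p_d}=Cr^{2-d}$ (and $r^{-\sigma}$ for $d=2$), uniformly in $\rho<r/4$ and with constants independent of $\A$ because every ingredient is diamagnetic. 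If you prefer to stay closer to your subharmonicity-based intuition, an alternative repair is a comparison argument: by \eqref{p1-2}, $\sqrt{|G^\rho_\A|^2+\varepsilon^2}$ satisfies $\Delta(\cdot)\ge -|F|$, so the maximum principle dominates $|G^\rho_\A(\cdot,y)|$ by the Green potential of $|F|$ for $-\Delta$ in $\Omega$, which has the right size; but plain Moser iteration with truncations of the complex $G^\varepsilon_\A$ does not work. Your remaining steps (compactness in $W^{1,p}$ for $p<\frac{d}{d-1}$ plus Caccioppoli away from the pole, equicontinuity from $\A\in C^1$, the Hermitian-form symmetry, and the limiting identification of \eqref{G5-3}) match the paper and are fine once \eqref{G5-0} is in place.
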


\begin{proof}

The proof is similar to that in the case of second-order elliptic operators in divergence form \cite{GW-1982, HK-2007}.
Fix $ y\in \Omega$ and $\rho  \in (0, 1)$.
Let $G_\A^\rho (\cdot, y)$ denote the weak solution of \eqref{DP-1} with $F=\frac{1}{|\b(y, \rho)\cap \Omega|} \chi_{\b(y, \rho)\cap \Omega}$, 
given by Lemma \ref{lemma-G2}. Thus,
\begin{equation}\label{G5-6}
\int_\Omega (D+\A) G_\A^\rho (\cdot, y) \cdot \overline{(D+\A) \psi}
= \fint_{\b(y, \rho)\cap \Omega} \overline{\psi}
\end{equation}
for any $\psi \in H_0^1(\Omega; \C)$.
 It follows from \eqref{G3-0} that
\begin{equation}\label{G5-6a}
\| G_\A^\rho (\cdot, y) \|_{L^{p_d}(\Omega)} 
+\| (D+\A) G^\rho_\A (\cdot, y) \|_{L^2(\Omega)}
\le C \rho^{-\frac{d}{p_d}}, 
\end{equation}
where $p_d=\frac{2d}{d-2}$ for $d\ge 3$ and $2\le p_d< \infty$ for $d=2$.

Next, we fix $z\in \Omega$ and $z\neq y$.
Let $r= (1/2) |y-z|$.
For $F \in  C_0^\infty(\b(z, r)\cap \Omega; \C)$, let $u\in H_0^1(\Omega; \C)$ be the weak solution of \eqref{DP-1}.
Since $(D+\A)^2 u=0$ in $\Omega \setminus  \b(z, r)$ and $u=0$ on  $\partial\Omega$, it follows by 
\eqref{G1-2}, \eqref{G2} and \eqref{G3-0}  that
\begin{equation}\label{G5-7}
\aligned
\sup_{\b (y, r/2)\cap \Omega} |u|
 & \le C \left(\fint_{\b(y, r)\cap \Omega} |u|^{p_d} \right)^{1/p_d}\\
 & \le C r^{-\frac{d}{p_d}} \| u \|_{L^{p_d}(\Omega)}
\le C r^{-\frac{d}{p_d}} \| F \|_{L^{{p_d ^\prime}} (\Omega)}.
\endaligned
\end{equation}
Also, note that 
\begin{equation}\label{G5-8a}
\int_\Omega (D+\A) u \cdot \overline{(D+\A) \psi} =\int_\Omega F \cdot \overline{\psi}
\end{equation}
for any $\psi \in H_0^1(\Omega; \C)$.
In view of  \eqref{G5-6} and \eqref{G5-8a}, we have 
\begin{equation}\label{G5-9a}
\int_\Omega F(x)  \cdot \overline{G_\A^\rho (x, y)}\, dx 
=\fint_{\b(y, \rho)\cap \Omega} u.
\end{equation}
This, together with \eqref{G5-7}, gives
$$
\Big |\int_\Omega F(x)  \cdot \overline{G_\A^\rho (x, y)}\, dx \Big |
\le C r^{-\frac{d}{p_d}} \| F \|_{L^{p_d^\prime}(\Omega)}, 
$$
provided $0< \rho< (1/2) r$. By duality, we obtain 
\begin{equation}\label{G5-8}
\| G_\A^\rho (\cdot,  y)\|_{L^{p_d}(\b(z, r) \cap \Omega)}\le C r^{-\frac{d}{p_d}}.
\end{equation}
Since $G_\A^\rho (\cdot, y) \in H^1_0(\Omega; \C)$ and
\begin{equation}
(D+\A)^2 G_\A^\rho (\cdot, y) =0 \quad \text{ in } \Omega \setminus \b(y, \rho),
\end{equation}
it follows from Lemma \ref{lemma-G1}  and \eqref{G5-8} that  if $\rho< (1/2) r$,
\begin{equation}
\aligned
|G_\A^\rho (z, y)|
 & \le C \left(\fint_{\b(z, r )\cap \Omega} |G_\A^\rho(x, y)|^{p_d} dx  \right)^{1/p_d}\\
 & \le C r^{-\frac{2d}{p_d}}.
\endaligned
\end{equation}
Recall that $p_d=\frac{2d}{d-2}$ for $d\ge 3$ and $2\le p_d < \infty$ for $d=2$.
Thus,  we have proved that  if $\rho<  (1/4) |z-y|$, 
\begin{equation}\label{G5-9}
|G_\A^\rho (z, y)|\le 
\left\{
\aligned
 &C |z-y|^{2-d} & \quad & \text{ if } d\ge 3,\\
 & C_\sigma |z-y|^{-\sigma}  & \quad & \text{ if } d=2,
 \endaligned
 \right.
 \end{equation}
  for any $\sigma \in (0, 1)$.
  
  Using the estimates \eqref{G5-9} and \eqref{G5-6a}, one can show that $\{ G^\rho_\A (\cdot, y):  0< \rho< 1 \}$ is  bounded
  in $L^p(\Omega)$ for $1< p< \frac{d}{d-2}$ if $d\ge 3$,  and in $L^p(\Omega)$ for any $p< \infty$ if $d=2$. 
  One can also show that $\{ (D+\A) G^\rho_\A (\cdot, y): 0< \rho< 1\}$ is  bounded in $L^p(\Omega)$ for $p< \frac{d}{d-1}$.
  It follows that $\{ G_\A^\rho (\cdot, y): 0< \rho< 1 \}$ is  bounded in $W^{1, p}(\Omega; \C)$ for $1< p< \frac{d}{d-1}$ and
  $d\ge 2$. This implies that there exists  a subsequence  $\{ G^{\rho_j}_\A(\cdot, y) \}$, where $\rho_j \to 0$,  and $G_\A(\cdot, y) \in
  W^{1, p} (\Omega, \C)$ such that
  $$
  G^{\rho_j} _\A (\cdot, y) \to G_\A (\cdot , y)  \quad \text{ weakly in } W^{1, p}(\Omega; \C),
  $$
  where $1< p< \frac{d}{d-1}$.
  By Lemma \ref{lemma-Ca}, $\{ G_\A^\rho (\cdot, y): 0< \rho< 1\}$ is bounded in $W^{1, 2}(\Omega\setminus \b(y, t); \C)$ for any  fixed $t\in (0, 1)$.
  It follows that $G_\A (\cdot, y)\in W^{1, 2}_{loc} (\Omega\setminus \{ y\}; \C)$ and \eqref{G5-2} holds.
 
 Since $\A\in C^1(\overline{\Omega}; \R^d)$, using \eqref{G5-9} and  the standard elliptic regularity theory, one may deduce that  
 $\{ G_\A^\rho (\cdot, y): 0< \rho< 1 \}$ is equicontinuous  in $\Omega \setminus \b(y, t)$ for any fixed $t\in (0, 1)$.
 Thus, we may assume $G_\A^{\rho_j }  (\cdot, y) \to G_\A (\cdot, y) $ uniformly in $\Omega\setminus \b(y, t)$
 for any fixed $t\in (0, 1)$. As a result,   \eqref{G5-0} follows from \eqref{G5-9}.
 To show \eqref{G5-1}, we use \eqref{G5-9a} with  $F= \frac{1}{|\b(z, \rho)\cap \Omega|}\chi_{\b(z, \rho)\cap\Omega} $ and
$u=G_\A^\rho (\cdot , z) $ to obtain 
$$
\fint_{B(z, \rho)\cap \Omega} \overline{G_\A^\rho (x, y)}\, dx
=\int_{\b(y, \rho)\cap \Omega} G_\A^\rho (x, z)\, dx.
$$
By letting $\rho=\rho_j \to 0$, we arrive at  \eqref{G5-1}.

 Finally, let $u$ be the weak solution of \eqref{DP-1} with $F\in L^\infty(\Omega; \C)$.
 Since $\A\in C^1(\overline{\Omega}; \R^d)$, $u$ is H\"older continuous in $\overline{\Omega}$.
By taking limits in \eqref{G5-9a}, we obtain 
\begin{equation}
u(y)=\int_\Omega F(x) \cdot \overline{G_\A (x, y)} \, dx,
\end{equation}
which, together with \eqref{G5-1}, gives \eqref{G5-3}.
\end{proof}

\begin{thm}\label{b-thm-1}
Let $\Omega\subset \b(0, R_0)$ be a bounded Lipschitz domain.
Suppose $\B$ satisfies the condition \eqref{cod-1} for any ball $\b(x, r) \subset \b(0, 4R_0)$.
Also assume that \eqref{cod-1a} holds for any $x\in \b(0, R_0)$.
Let $u$ be a solution of \eqref{G1-1}, where $x_0\in \partial\Omega$ and $0< r< r_0$.
Then for any $\ell\ge 0$, 
\begin{equation}\label{b-1-0}
\sup_{\b(x_0, r/2)\cap \Omega} |u|
\le \frac{C_\ell}{ \{ 1+ r m(x_0, \B )\}^\ell} \fint_{\b(x_0, r)\cap \Omega} |u|, 
\end{equation}
where $C_\ell$ depends on $\ell$, the Lipschitz character of $\Omega$ and $C_0$ in \eqref{cod-1}.
\end{thm}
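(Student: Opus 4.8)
The plan is to deduce \eqref{b-1-0} by combining the boundary maximum estimate of Lemma~\ref{lemma-G1} with the $L^2$ decay estimate of Theorem~\ref{thm-Ca-m}. We may assume $r\,m(x_0,\B)>1$, since otherwise \eqref{b-1-0} is immediate from \eqref{G1-2}. Throughout write $D=1+r\,m(x_0,\B)$, and keep in mind that $u$ solves $(D+\A)^2u=0$ and vanishes on $\partial\Omega$ inside \emph{every} subball of $\b(x_0,r)$ centered on $\partial\Omega$.

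The first ingredient is a ``covering'' version of Lemma~\ref{lemma-G1}: for every $\theta\in(0,1)$,
\begin{equation*}
\sup_{\b(x_0,\theta r)\cap\Omega}|u|\le C_\theta\,\fint_{\b(x_0,r)\cap\Omega}|u|,
\end{equation*}
with $C_\theta$ depending only on $\theta$, $d$ and the Lipschitz character of $\Omega$. This follows by covering $\b(x_0,\theta r)\cap\Omega$ by balls of radius $\e r$ with $\e\sim 1-\theta$: at a point $y$ with $\text{dist}(y,\partial\Omega)\ge\e r$ one uses the interior mean value inequality \eqref{I-1} on $\b(y,\e r)$, while at a point $y$ with $\text{dist}(y,\partial\Omega)<\e r$ one applies Lemma~\ref{lemma-G1} on $\b(y',2\e r)$, where $y'\in\partial\Omega$ is a nearest point; in either case the auxiliary ball lies in $\b(x_0,r)$, and the resulting average is $\le C_\theta\fint_{\b(x_0,r)\cap\Omega}|u|$ because $|\b(x_0,r)\cap\Omega|\ge c\,r^d$ by the Lipschitz property. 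Two consequences will be used: applying this with $\b(x_0,\theta r)$ in the role of the outer ball gives, for $\theta\in(1/2,1)$,
\begin{equation*}
\sup_{\b(x_0,r/2)\cap\Omega}|u|\le C_\theta\,\fint_{\b(x_0,\theta r)\cap\Omega}|u|\le C_\theta\Big(\fint_{\b(x_0,\theta r)\cap\Omega}|u|^2\Big)^{1/2};
\end{equation*}
and, since $\big(\fint_{\b(x_0,\theta r)\cap\Omega}|u|^2\big)^{1/2}\le\sup_{\b(x_0,\theta r)\cap\Omega}|u|$, also the ``reverse H\"older'' bound $\big(\fint_{\b(x_0,\theta r)\cap\Omega}|u|^2\big)^{1/2}\le C_\theta\fint_{\b(x_0,r)\cap\Omega}|u|$ for every $\theta\in(0,1)$.

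With these in hand the estimate follows from the chain, valid for every $\ell\ge1$,
\begin{equation*}
\sup_{\b(x_0,r/2)\cap\Omega}|u|
\le C\Big(\fint_{\b(x_0,5r/8)\cap\Omega}|u|^2\Big)^{1/2}
\le\frac{C_\ell}{D^{\ell}}\Big(\fint_{\b(x_0,7r/8)\cap\Omega}|u|^2\Big)^{1/2}
\le\frac{C_\ell}{D^{\ell}}\,\fint_{\b(x_0,r)\cap\Omega}|u|,
\end{equation*}
where the first inequality is the display above with $\theta=5/8$; the middle one is the $L^2$ decay of Theorem~\ref{thm-Ca-m} on the scale between $5r/8$ and $7r/8$ (obtained by iterating \eqref{mm-2} exactly as in the proof of Theorem~\ref{thm-Ca-m}, which is legitimate since $u=0$ on all the boundary portions involved, so the hypotheses there are met, and which already produces the factor $D^{-\ell}$ with $\ell$ arbitrary); and the last inequality is the reverse H\"older bound with $\theta=7/8$. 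Since $\ell\ge1$ is arbitrary and the constants have the stated dependence, this is \eqref{b-1-0}.

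The main obstacle is the final step, namely converting the $L^2$ average produced by the decay estimate back into the $L^1$ average demanded by \eqref{b-1-0}; this reverse H\"older inequality, together with the bookkeeping of radii that keeps every auxiliary ball in the covering arguments inside $\b(x_0,r)$ (where the equation holds and $u$ vanishes on the boundary), is precisely where the Lipschitz character of $\Omega$ and the Dirichlet condition on $\b(x_0,r)\cap\partial\Omega$ genuinely enter. Everything else is a mechanical combination of Lemma~\ref{lemma-G1}, Theorem~\ref{thm-Ca-m} (via \eqref{mm-2}), and the interior mean value estimate \eqref{I-1}.
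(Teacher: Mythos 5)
Your argument is correct and is essentially the paper's own proof: Theorem \ref{b-thm-1} is obtained there precisely by combining Lemma \ref{lemma-G1} with the $L^2$ decay of Theorem \ref{thm-Ca-m} (via \eqref{mm-2}), and your covering version of Lemma \ref{lemma-G1}, the Cauchy--Schwarz step, and the reverse H\"older bound are exactly the routine details the paper leaves implicit when it says the result ``follows readily.'' The radius bookkeeping ($5r/8$, $7r/8$) and the reduction to $r\,m(x_0,\B)>1$ are fine, so nothing further is needed.
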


\begin{proof}

This follows readily from Lemma \ref{lemma-G1} and Theorem \ref{thm-Ca-m}.
\end{proof}

\begin{thm}\label{thm-G1}
Assume $\Omega$ and $\B$ satisfy  the same conditions as in Theorem \ref{b-thm-1}.
Let $G_\A(x, y) $ be the Green function for the operator  $(D+\A)^2$ in $\Omega$, given by Theorem \ref{thm-G}.
Let $\ell \ge 1$.
Then,  for any $x, y\in \Omega$ and $x\neq y$,
\begin{equation}\label{mg1}
|G_\A(x, y) |
\le \frac{C_\ell}{ \{ 1+ |x-y| m (x, \B)  \}^\ell } \cdot \frac{1}{|x-y|^{d-2}}
\end{equation}
for $d\ge 3$, where $C_\ell $ depends on $\ell$, $r_0$, the Lipschitz character of $\Omega$ and $C_0$ in \eqref{cod-1}. If $d=2$, we have
\begin{equation}\label{mg2}
|G_\A(x, y) |
\le \frac{C_{\ell, \sigma} }{ \{ 1+ |x-y| m (x, \B)  \}^\ell } \cdot \frac{1}{|x-y|^\sigma}
\end{equation}
for any $\sigma \in (0, 1)$, where $C_{\ell, \sigma}$ depends on $\ell$, $\sigma$, $r_0$, the Lipschitz character of $\Omega$ and $C_0$ in \eqref{cod-1}.
\end{thm}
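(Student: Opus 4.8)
The plan is to derive the weighted bounds \eqref{mg1}--\eqref{mg2} from the size estimate \eqref{G5-0} for $G_\A$, combined with the interior and boundary decay estimates of Theorems \ref{I-thm-1} and \ref{b-thm-1} applied at the critical scale $r\simeq\min(|x-y|,r_0)$ near $x$. I would fix $x,y\in\Omega$ with $x\neq y$, set $R=|x-y|$ and $r=\tfrac{1}{16}\min(R,r_0)$, and record at the outset that, since $R\le\text{diam}(\Omega)\le 2R_0=Cr_0$, one has $1+Rm(x,\B)\le C\{1+rm(x,\B)\}$ with $C$ depending only on the Lipschitz character of $\Omega$. In particular, if $rm(x,\B)\le 1$ then $1+Rm(x,\B)$ is bounded by such a constant, and \eqref{mg1}--\eqref{mg2} follow immediately from \eqref{G5-0}; so I may assume $rm(x,\B)\ge 1$. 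I would also use throughout that $G_\A(\cdot,y)$ is a continuous solution of $(D+\A)^2u=0$ in $\Omega\setminus\{y\}$ which vanishes on $\partial\Omega$ (Theorem \ref{thm-G}).

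Next I would split into two cases. In the \emph{interior case}, $\text{dist}(x,\partial\Omega)\ge 2r$, the ball $\b(x,2r)$ lies in $\Omega$ and, since $R\ge 16r$, does not contain the pole $y$; hence Theorem \ref{I-thm-1} (at radius $r$) gives
\[
|G_\A(x,y)|\le\frac{C_\ell}{\{1+rm(x,\B)\}^\ell}\fint_{\b(x,r)}|G_\A(z,y)|\,dz .
\]
For $z\in\b(x,r)$ one has $|z-y|\ge R-r\ge R/2$, so by \eqref{G5-0} the average on the right is $\le CR^{2-d}$ if $d\ge3$ and $\le C_\sigma R^{-\sigma}$ if $d=2$; together with $1+Rm(x,\B)\le C\{1+rm(x,\B)\}$ this yields \eqref{mg1}--\eqref{mg2} in this case.

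In the \emph{boundary case}, $\text{dist}(x,\partial\Omega)<2r$, let $x_0\in\partial\Omega$ be a nearest boundary point, so $|x-x_0|<2r$. Then $|x_0-y|\ge R-2r\ge7R/8$, so $y\notin\b(x_0,8r)$ (because $R\ge16r$), and $8r<r_0$; thus $G_\A(\cdot,y)$ solves the homogeneous problem \eqref{G1-1} in $\b(x_0,8r)\cap\Omega$. Since $x\in\b(x_0,4r)\cap\Omega$, Theorem \ref{b-thm-1} (at radius $8r$) gives
\[
|G_\A(x,y)|\le\sup_{\b(x_0,4r)\cap\Omega}|G_\A(\cdot,y)|\le\frac{C_\ell}{\{1+rm(x_0,\B)\}^\ell}\fint_{\b(x_0,8r)\cap\Omega}|G_\A(z,y)|\,dz ,
\]
and for $z$ in this set $|z-y|\ge7R/8-8r\ge R/4$, so the average is again $\le CR^{2-d}$ (resp.\ $\le C_\sigma R^{-\sigma}$). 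To replace $m(x_0,\B)$ by $m(x,\B)$, I would invoke the comparison \eqref{m-2b}: since $|x-x_0|<2r$ and $rm(x,\B)\ge1$, it gives $rm(x_0,\B)\ge c\,rm(x,\B)\{1+rm(x,\B)\}^{-\kappa_0/(\kappa_0+1)}$, hence $\{1+rm(x_0,\B)\}^{-1}\le C\{1+rm(x,\B)\}^{-1/(\kappa_0+1)}\le C\{1+Rm(x,\B)\}^{-1/(\kappa_0+1)}$. Carrying out the whole argument with $\ell$ replaced by $(\kappa_0+1)\ell$ then gives \eqref{mg1} for $d\ge3$ and \eqref{mg2} for $d=2$, for every $\ell\ge1$; the constants depend only on the data listed in Theorems \ref{thm-G}, \ref{I-thm-1} and \ref{b-thm-1}.

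The main obstacle is precisely this last conversion of the local weight into the global one. The decay estimates are naturally stated at the scale $r\sim\min(R,r_0)$ and, near the boundary, in terms of $m(x_0,\B)$ rather than $m(x,\B)$, so to reach $\{1+|x-y|m(x,\B)\}^{-\ell}$ one must (a) first dispose of the trivial regime $rm(x,\B)\lesssim1$ via \eqref{G5-0}, (b) use the quantitative comparisons \eqref{m-2a}--\eqref{m-2b} of the auxiliary function at nearby points, which costs a fixed factor in the exponent that is recovered by taking $\ell$ large, and (c) use $\text{diam}(\Omega)\lesssim r_0$ to cover the case when $R$ is comparable to the diameter while the working scale is capped at $r_0$. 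Each step is routine, but combining them cleanly requires some care.
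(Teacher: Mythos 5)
Your proposal is correct and follows essentially the same route as the paper: split at the scale $r\simeq\min(|x-y|,r_0)$ into an interior case handled by Theorem \ref{I-thm-1} and a boundary case handled by Theorem \ref{b-thm-1}, bound the resulting averages by \eqref{G5-0}, and convert the local weight to $\{1+|x-y|m(x,\B)\}^{-\ell}$ via the comparison estimates \eqref{m-2a}--\eqref{m-2b} together with the arbitrariness of $\ell$. Your use of \eqref{m-2b} (after disposing of the regime $rm(x,\B)\lesssim 1$) instead of the paper's \eqref{m-2a}, and your explicit treatment of the $r$ versus $|x-y|$ bookkeeping, are only minor variations in the same argument.
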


\begin{proof}

We gives the proof for  $d\ge 3$.
The proof for  $d=2$ is the same.
Fix $x_0, y_0 \in \Omega$ and $x_0 \neq y_0$. Let $u(x) = G_\A (x, y_0)$ and $r=(1/8)\min \{   |x_0-y_0|, r_0\}$. We consider two cases.

Case 1. Suppose dist$(x_0, \partial\Omega)>  r$. Then $\b(x_0, r)\subset \Omega$.
It follows from \eqref{I-2-00} that
$$
\aligned
|u(x_0)| & \le \frac{C_\ell}{ \{ 1+ r m(x_0, \B) \}^\ell} \fint_{\b(x_0, r)} |u| \\
& \le \frac{C_\ell}{ \{ 1+ r m(x_0, \B) \}^\ell} \cdot \frac{1}{r^{d-2}}, 
\endaligned
$$
where we have used \eqref{G5-0} for the last inequality. This gives \eqref{mg1}

Case 2. Suppose dist$(x_0, \partial\Omega)\le r$. Choose $z_0\in \partial\Omega$ such that
$|x_0-z_0|=\text{\rm dist}(x_0, \partial\Omega)$.
By \eqref{b-1-0} we have
\begin{equation}\label{mg3}
\aligned
 |u(x_0)| & \le \sup_{\b(z_0, 2r)} |u|
  \le \frac{C_\ell}{ \{ 1+ r m(z_0, \B) \}^\ell} \fint_{\b(z_0, 4r)\cap \Omega} |u|\\
 & \le \frac{C_\ell}{ \{ 1+ r m(z_0, \B) \}^\ell} \cdot \frac{1}{r^{d-2}}, 
\endaligned
\end{equation}
where we have used \eqref{G5-0} for the last inequality. 
Finally, note that by \eqref{m-2a},
$$
m(x_0, \B) \le C \{ 1+ r m (z_0, \B) \}^{\kappa_0} m(z_0, \B).
$$
It follows that
$$
1+ r m (x_0, \B)\le C \{ 1+ r m(z_0, \B)\}^{\kappa_0+1}.
$$
Since $\ell\ge 1$ is arbitrary, this, together with \eqref{mg3}, gives \eqref{mg1}.
\end{proof}

\begin{thm}\label{thm-G2}
Let $d\ge 3$.
Assume $\Omega$ and $\B$ satisfy  the same conditions as in Theorem \ref{b-thm-1}.
For $F\in L^\infty(\Omega; \C)$, let $u$ be a weak solution of \eqref{DP-1}. Then
\begin{equation}\label{G2-0}
\| m(x, \B)^{\ell +2} u \|_{L^p(\Omega)}
\le C_\ell \| m(x, \B)^\ell  F \|_{L^p(\Omega)}
\end{equation}
for any $\ell \in \R$ and $1\le p\le \infty$, where $C_\ell $ depends on  $\ell$,  $r_0$,
the Lipschitz character of $\Omega$, and $C_0$ in \eqref{cod-1}.
\end{thm}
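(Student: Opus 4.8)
The plan is to represent $u$ via the Green function and estimate the resulting integral operator using the pointwise bound \eqref{mg1} together with the function-comparison inequalities \eqref{m-2a}--\eqref{m-2b}. By \eqref{G5-3} we have $u(x)=\int_\Omega G_\A(x,y) F(y)\,dy$, so that
\begin{equation*}
m(x,\B)^{\ell+2} |u(x)|
\le \int_\Omega m(x,\B)^{\ell+2} |G_\A(x,y)| \, |F(y)|\, dy
= \int_\Omega K_\ell(x,y)\, m(y,\B)^\ell |F(y)|\, dy,
\end{equation*}
where $K_\ell(x,y) = m(x,\B)^{\ell+2}\, m(y,\B)^{-\ell}\, |G_\A(x,y)|$. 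It then suffices, by Schur's test, to show that $\sup_x \int_\Omega K_\ell(x,y)\,dy \le C_\ell$ and $\sup_y \int_\Omega K_\ell(x,y)\,dx \le C_\ell$; this gives \eqref{G2-0} for all $1\le p\le\infty$ at once.

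The key step is the pointwise bound on the kernel. First I would use \eqref{m-2a}--\eqref{m-2b} to trade $m(y,\B)$ for $m(x,\B)$ at the cost of a power of $\{1+|x-y|m(x,\B)\}$: for $\ell\ge 0$ one gets $m(y,\B)^{-\ell}\le C\{1+|x-y|m(x,\B)\}^{\ell\kappa_0/(\kappa_0+1)} m(x,\B)^{-\ell}$, while for $\ell<0$ one uses \eqref{m-2a} in the other direction, so in all cases
\begin{equation*}
m(x,\B)^{\ell+2} m(y,\B)^{-\ell}
\le C \{1+|x-y| m(x,\B)\}^{|\ell|\kappa_0} \, m(x,\B)^2 .
\end{equation*}
Combining with \eqref{mg1}, in which $\ell$ is arbitrary, absorb the factor $\{1+|x-y|m(x,\B)\}^{|\ell|\kappa_0}$ into the decay denominator (choosing the exponent in \eqref{mg1} large enough, say $\ell' = |\ell|\kappa_0 + d$) to obtain, for any $N\ge 0$,
\begin{equation*}
K_\ell(x,y) \le \frac{C_{\ell,N}\, m(x,\B)^2}{\{1+|x-y| m(x,\B)\}^N}\cdot \frac{1}{|x-y|^{d-2}}.
\end{equation*}
Now integrating in $y$ over $\Omega\subset\b(0,R_0)$: split into the region $|x-y| < m(x,\B)^{-1}$, where the decay factor is $\approx 1$ and $\int |x-y|^{2-d}\,dy$ over a ball of radius $m(x,\B)^{-1}$ contributes $\approx m(x,\B)^{-2}$, cancelling the prefactor; and the region $|x-y| \ge m(x,\B)^{-1}$, where choosing $N > 2$ makes the tail integral $\int_{m(x,\B)^{-1}}^{R_0} m(x,\B)^2 (|x-y|m(x,\B))^{-N} |x-y|^{2-d} \cdot |x-y|^{d-1}\,d|x-y|$ converge and again contribute $O(1)$. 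This proves $\sup_x\int K_\ell(x,y)\,dy\le C_\ell$. The bound on $\sup_y\int K_\ell(x,y)\,dx$ is obtained the same way after using \eqref{G5-1} and the symmetry of the estimates, or by repeating the argument with the roles of $x$ and $y$ interchanged (here \eqref{m-2c} keeps $\{1+|x-y|m(x,\B)\}$ and $\{1+|x-y|m(y,\B)\}$ comparable up to powers, which is harmless since \eqref{mg1} holds for every exponent).

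The main obstacle is purely bookkeeping: one must be careful that \eqref{mg1} is stated with $m(x,\B)$ (not $m(y,\B)$) in the denominator, so the substitution of one for the other via Lemma \ref{lemma-M1} must be done before absorbing powers, and the exponent in \eqref{mg1} must be chosen $\ell$-dependently to beat both the $|\ell|\kappa_0$ growth and the $d$-dimensional volume growth. For $d=2$ the identical argument runs using \eqref{mg2} in place of \eqref{mg1}, with $|x-y|^{-\sigma}$ for some fixed $\sigma\in(0,1)$ replacing $|x-y|^{2-d}$; the near-diagonal integral $\int_{|x-y|<m(x,\B)^{-1}} |x-y|^{-\sigma}\,dy \approx m(x,\B)^{\sigma-2}$ still cancels against the prefactor $m(x,\B)^2$ with room to spare, which is presumably why the theorem is stated for $d\ge 3$ only but extends verbatim.
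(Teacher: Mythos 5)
Your proposal is correct and follows essentially the same route as the paper: the same kernel $K_\ell(x,y)=m(x,\B)^{\ell+2}G_\A(x,y)m(y,\B)^{-\ell}$, the same Schur-test reduction to uniform bounds on $\int_\Omega |K_\ell(x,y)|\,dy$ and $\int_\Omega |K_\ell(x,y)|\,dx$, and the same use of Lemma \ref{lemma-M1} (\eqref{m-2a}--\eqref{m-2b}) to trade $m(y,\B)$ for $m(x,\B)$ before absorbing the resulting power into the arbitrarily large exponent in \eqref{mg1}. The only difference is that you spell out the near/far splitting of the integral that the paper leaves as ``follows readily,'' which is fine.
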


\begin{proof}

By Theorem \ref{thm-G}, we may write
$$
m(x, \B)^{\ell+2} u(x)   =\int_\Omega K_\ell (x, y) m(y, \B)^\ell F(y)\, dy,
$$
where
$$
K_\ell (x, y) =m(x, \B)^{\ell +2} G_\A(x, y) m(y, \B)^{-\ell}.
$$
To show \eqref{G2-0}, it suffices to show that  for any $x\in \Omega$,
\begin{equation}\label{G2-1}
\int_\Omega |K_\ell (x, y)|\, dy \le C_\ell, 
\end{equation}
and for any $y\in \Omega$,
\begin{equation}\label{G2-2}
\int_\Omega |K_\ell (x, y)|\, dx\le C_\ell.
\end{equation}
To see \eqref{G2-1}, we use the estimate
\begin{equation}
m(x, \B) \le C \{ 1+|x-y| m(y, \B) \}^{\kappa_0} m (y, \B)
\end{equation}
for any $x, y \in \Omega$ and the fact  that \eqref{mg1} holds for any $\ell\ge 1$.
This gives
$$
|K_\ell  (x, y)| \le \frac{C_{\ell}  \, m (x, \B)^2 }{ \{ 1+|x-y| m (x, \B)\}^\ell |x-y|^{d-2}} 
$$
for any $\ell\ge 1$, from which the inequality \eqref{G2-1} follows readily.
The proof for \eqref{G2-2} is similar.
\end{proof}


\section{The Neumann function}\label{sec-N}

In this section we establish decay estimates  for  the Neumann function $N_{\A} (x, y)$
in  a bounded Lipschitz domain $\Omega$.

\begin{lemma}\label{lemma-N1}
Let  $x_0\in \partial\Omega$ and  $0< r< r_0$.
 Let $u\in C^2(\b(x_0, r) \cap \Omega; \C)\cap C^1 (\b(x_0, r)\cap \overline{\Omega}; \C)$ be a  solution of
\begin{equation}\label{N1-0}
\left\{
\aligned
(D +\A)^2 u  &  =0   & \quad &   \text{ in }\  \b(x_0, r)\cap \Omega, \\
n \cdot (D+\A) u & =0 &  \quad & \text{ on }\ \b(x_0, r) \cap \partial\Omega,
\endaligned
\right.
\end{equation}
where $n$ denotes the outward unit normal to $\partial\Omega$.
Then
\begin{equation}\label{N1-1}
\sup_{\b(x_0, r/2) \cap \Omega}  |u|\le C   \fint_{\b(x_0, r)\cap \Omega} |u|,
\end{equation}
where $C$ depends only on $\Omega$.
\end{lemma}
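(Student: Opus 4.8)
The plan is to follow the scheme of Lemma~\ref{lemma-G1}, with the Dirichlet boundary condition replaced by the Neumann one. Fix $\e>0$ and put $u_\e=\sqrt{|u|^2+\e^2}$. Since $u\in C^2(\b(x_0,r)\cap\Omega;\C)\cap C^1(\b(x_0,r)\cap\overline{\Omega};\C)$ and $u_\e\ge\e>0$, the function $u_\e$ lies in $C^2(\b(x_0,r)\cap\Omega;\R)\cap C^1(\b(x_0,r)\cap\overline{\Omega};\R)$. Because $D=-i\nabla$, the equation $(D+\A)^2u=0$ is the same as $(\nabla+i\A)^2u=0$, so \eqref{p1-2} gives $u_\e\Delta u_\e\ge\Re\{(\nabla+i\A)^2u\cdot\overline{u}\}=0$, whence $\Delta u_\e\ge0$ in $\b(x_0,r)\cap\Omega$; that is, $u_\e$ is subharmonic there.

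Next I would read off the boundary behaviour of $u_\e$. Multiplying the identity \eqref{p1-3} by $n_j$ and summing over $j$ yields
\[
u_\e\,\partial_n u_\e
=\Re\Big\{\sum_{j=1}^d n_j\,(\partial_j+iA_j)u\cdot\overline{u}\Big\}
=\Re\big\{i\,\big(n\cdot(D+\A)u\big)\,\overline{u}\big\},
\]
where I used $\partial_j+iA_j=i(D_j+A_j)$. On $\b(x_0,r)\cap\partial\Omega$ we have $n\cdot(D+\A)u=0$, so the right-hand side vanishes there, and since $u_\e\ge\e>0$ this forces $\partial_n u_\e=0$ on $\b(x_0,r)\cap\partial\Omega$. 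Hence $u_\e$ is a nonnegative subsolution of the Neumann problem for the Laplacian: for every $\varphi\in C_0^\infty(\b(x_0,r);\R)$ with $\varphi\ge0$, Green's identity gives $\int_{\b(x_0,r)\cap\Omega}\nabla u_\e\cdot\nabla\varphi=-\int_{\b(x_0,r)\cap\Omega}\varphi\,\Delta u_\e\le0$, the boundary integral over $\b(x_0,r)\cap\partial\Omega$ vanishing and $\varphi$ being supported away from $\partial\b(x_0,r)$.

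The local boundedness estimate for nonnegative subsolutions of the Neumann problem near a Lipschitz boundary then gives $\sup_{\b(x_0,sr)\cap\Omega}u_\e\le C(t-s)^{-\alpha}\big(\fint_{\b(x_0,tr)\cap\Omega}u_\e^2\big)^{1/2}$ for $1/2\le s<t\le1$, with $C,\alpha$ depending only on the Lipschitz character of $\Omega$. Combining this with $\int u_\e^2\le(\sup u_\e)\int u_\e$, Young's inequality, and the usual absorption iteration over the scales $s,t$ upgrades it to $\sup_{\b(x_0,r/2)\cap\Omega}u_\e\le C\fint_{\b(x_0,r)\cap\Omega}u_\e$. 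Finally, $|u_\e-|u||\le\e$ pointwise, so letting $\e\to0$ yields $\sup_{\b(x_0,r/2)\cap\Omega}|u|\le C\fint_{\b(x_0,r)\cap\Omega}|u|$, which is \eqref{N1-1}.

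The only genuinely nontrivial ingredient is this last boundary regularity fact: unlike the Dirichlet case handled in Lemma~\ref{lemma-G1}, it cannot be reduced to reflection across a Lipschitz boundary, and instead rests on Moser iteration carried out with test functions that need not vanish on $\b(x_0,r)\cap\partial\Omega$ (equivalently, on a flattening of the boundary followed by even reflection and the interior De Giorgi--Nash--Moser theory for divergence-form equations with bounded measurable coefficients). Everything else is an elementary adaptation of the proof of Lemma~\ref{lemma-G1} together with the identity \eqref{p1-3} relating $\nabla u_\e$ to $(D+\A)u$.
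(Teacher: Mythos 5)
Your proposal is correct and follows essentially the paper's own route: show $u_\e=\sqrt{|u|^2+\e^2}$ is subharmonic with vanishing normal derivative on $\b(x_0,r)\cap\partial\Omega$ (via \eqref{p1-2} and \eqref{p1-3}), invoke local boundedness up to the boundary for nonnegative subsolutions with zero Neumann data, and let $\e\to 0$. One small correction: your remark that the Neumann case ``cannot be reduced to reflection across a Lipschitz boundary'' is inaccurate --- the paper proves exactly this boundary estimate by even reflection across the graph via $(x^\prime,x_d)\mapsto(x^\prime,2\phi(x^\prime)-x_d)$, turning $u_\e$ into a subsolution of a divergence-form equation with bounded measurable, uniformly elliptic coefficients and then applying De Giorgi--Nash--Moser (which is what your own parenthetical describes), followed by a covering argument with the interior estimate \eqref{I-1} to reach the scales in \eqref{N1-1}.
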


\begin{proof}

Let  $\e>0$ and $u_\e =\sqrt{|u|^2 +\e^2}$.
Since $(D+\A)^2 u=0$ in $\b(x_0, r) \cap \Omega$, by \eqref{p1-2},
we have  $\Delta u_\e \ge 0$ in $\b(x_0, r)\cap \Omega$.
Using $n \cdot (D+\A) u=0$ on $\b(x_0, r) \cap \partial\Omega$ and \eqref{p1-3},
we see that  $n \cdot \nabla u_\e =0$ on $\b(x_0, r) \cap \partial\Omega$.
By the even reflection, one can show that $u_\e$ is a nonnegative sub-solution of
a second-order elliptic equation in divergence form with bounded measurable coefficients in
$\b(x_0, cr)$. 
More precisely, without the loss of generality, assume that 
$$
\b(x_0, r_0) \cap \Omega
=\b(x_0, r_0)\cap \{ (x^\prime, x_d) \in \R^d: x_d> \phi (x^\prime) \}, 
$$
where $\phi: \R^{d-1} \to \R$ is a Lipschitz continuous function with $\phi (x_0)=0$ and $\|\nabla \phi\|_\infty
\le M_0$. Let $\Phi$ be the map
$$
(x^\prime, x_d) \to (x^\prime, 2\phi(x^\prime)-x_d)
$$
from $\R^d$ to $\R^d$.
Note that $\Phi^{-1} = \Phi$.
For $x\in \b(x_0, cr)$, define
$$
v_\e (x)
=\left\{
\aligned   & u_\e(x) & \quad & \text{ if } x_d\ge  \phi (x^\prime),\\
& u_\e(\Phi(x)) & \quad & \text{ if } x_d< \phi (x^\prime).
\endaligned
\right.
$$
Then $v_\e \in H^1(\b(x_0, cr); \R)$ and 
$$
\int_{\b(x_0, cr)} E  \nabla v_\e \cdot \nabla \varphi\le 0
$$
for any nonnegative $\varphi \in C_0^1(\b(x_0, cr); \R)$, where
$$
E=\left\{
\aligned
& I & \quad & \text{ if } x_d\ge \phi(x^\prime),\\
& \left( \frac{\partial \Phi}{\partial x}\right) \left(\frac{\partial \Phi}{\partial x} \right)^T (\Phi^{-1}(x))
& \quad & \text{ if } x_d< \phi(x^\prime).
\endaligned
\right.
$$
Note that $E$ is symmetric.
The Lipschitz condition $\|\nabla \phi\|_\infty\le M_0$ ensures that $E$ is bounded measurable and uniformly elliptic.
As a result, by the De Giorgi-Nash-Moser  estimates, we  obtain 
$$
\sup_{B(x_0, cr/2)\cap \Omega} u_\e
\le C \fint_{B(x_0, cr ) \cap \Omega} u_\e,
$$
where $C$ depends only on the Lipschitz character of $\Omega$. By letting $\e \to 0$, this leads to 
$$
\sup_{B(x_0, cr/2)\cap \Omega} |u|
\le C \fint_{B(x_0, cr ) \cap \Omega} |u|,
$$
from which and the interior estimate \eqref{I-1},   the estimate \eqref{N1-1} follows by a simple covering argument.
\end{proof}

In the remaining of this section we will assume that  there exists $c_0>0$ such that 
\begin{equation}\label{H-1}
c_0 \int_\Omega | \psi  |^2
\le \int_\Omega 
|(D+\A) \psi  |^2
\end{equation}
for any $\psi  \in H^1(\Omega;  \C)$.
It follows from \eqref{H-1} and  \eqref{P-1a} that 
\begin{equation}\label{P-2}
\left(\int_\Omega | \psi  |^{p_d} \right)^{1/p_d}
\le C \left(\int_\Omega 
|(D+\A) \psi  |^2 \right)^{1/2}
\end{equation}
for any $ \psi  \in H^1(\Omega; \C)$, where
$p_d=\frac{2d}{d-2}$ for $d\ge 3$ and $2\le  p_d< \infty$ for $d=2$.
The constant $C$ in \eqref{P-2} depends on 
 $p_d$, $\Omega$ and $c_0$ in \eqref{H-1}.

Consider the Neumann  problem,
\begin{equation}\label{NP-1}
\left\{
\aligned
(D+\A)^2 u & =F  & \quad & \text{ in } \ \Omega,\\
n\cdot (D+\A) u &=g & \quad & \text{ on } \ \partial\Omega.
\endaligned
\right.
\end{equation}
Let $F\in L^2(\Omega; \C)$ and $g\in L^2(\partial\Omega; \C)$, 
 we say $u\in H^1(\Omega; \C)$ is a weak solution of
\eqref{NP-1} if
\begin{equation}\label{weak-N}
\int_\Omega (D+\A)u \cdot \overline{ (D+\A) \psi } =\int_\Omega F \cdot \overline{\psi}
+i \int_{\partial\Omega} g \cdot \overline{\psi}
\end{equation}
for any $\psi \in H^1(\Omega; \C)$.

\begin{lemma}\label{lemma-N2}
For $F\in L^2(\Omega; \C)$ and $g\in L^2(\partial\Omega; \C)$, the Neumann problem \eqref{NP-1} possesses  a unique weak solution in $H^1(\Omega; \C)$.
Moreover,  the solution satisfies 
\begin{equation}\label{local-30n}
\| u \|_{L^{p_d}(\Omega)} + \| (D +\A) u \|_{L^2(\Omega)} \le C \left\{  \| F \|_{L^{p_d^\prime}(\Omega)} + \| g \|_{L^2(\partial\Omega)} \right\},
\end{equation}
where $p_d=\frac{2d}{d-2}$ for $d\ge 3$ and $2\le p_d< \infty$ for $d=2$. The constant $C$ in \eqref{local-30n}
depends at most on $p_d$, $\Omega$ and $c_0$ in \eqref{H-1}.
\end{lemma}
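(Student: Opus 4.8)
The plan is to apply the Lax--Milgram theorem to the bilinear (sesquilinear) form
$$
B[u, v] = \int_\Omega (D+\A) u \cdot \overline{(D+\A) v}, \qquad u, v \in H^1(\Omega; \C),
$$
exactly as in the proof of Lemma \ref{lemma-G2}, the difference being that the form now acts on the full space $H^1(\Omega;\C)$ rather than $H_0^1(\Omega;\C)$. Boundedness of $B$ is immediate from Cauchy--Schwarz and the boundedness of $\A$ on $\overline\Omega$, giving $|B[u,v]| \le \|(D+\A)u\|_{L^2(\Omega)}\|(D+\A)v\|_{L^2(\Omega)} \le C\|u\|_{H^1(\Omega)}\|v\|_{H^1(\Omega)}$. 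For coercivity on $H^1(\Omega;\C)$ I would combine \eqref{H-1}, which yields $\int_\Omega |u|^2 \le c_0^{-1} B[u,u]$, with the pointwise bound $|\nabla u| \le |(D+\A)u| + |\A|\,|u|$; squaring and integrating gives $\int_\Omega |\nabla u|^2 \le 2 B[u,u] + 2\|\A\|_{L^\infty(\Omega)}^2 \int_\Omega |u|^2 \le C B[u,u]$, hence $\|u\|_{H^1(\Omega)}^2 \le C B[u,u]$.

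Next I would verify that the right-hand side of \eqref{weak-N} defines a bounded conjugate-linear functional on $H^1(\Omega;\C)$. Since $\Omega$ is bounded, $F \in L^2(\Omega;\C) \subset L^{p_d'}(\Omega;\C)$ (indeed $p_d' \le 2$ when $d\ge 3$, and $p_d' \in (1,2)$ when $d=2$), so the Sobolev embedding in the form \eqref{P-2} — which follows from \eqref{H-1} and \eqref{P-1a} — gives $\big|\int_\Omega F\cdot\overline\psi\big| \le \|F\|_{L^{p_d'}(\Omega)}\|\psi\|_{L^{p_d}(\Omega)} \le C\|F\|_{L^{p_d'}(\Omega)}\|\psi\|_{H^1(\Omega)}$. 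For the boundary term I would use the trace estimate \eqref{P-1c} together with \eqref{H-1} to get $\|\psi\|_{L^2(\partial\Omega)} \le C\|(D+\A)\psi\|_{L^2(\Omega)} \le C\|\psi\|_{H^1(\Omega)}$, so that $\big|\int_{\partial\Omega} g\cdot\overline\psi\big| \le \|g\|_{L^2(\partial\Omega)}\|\psi\|_{L^2(\partial\Omega)} \le C\|g\|_{L^2(\partial\Omega)}\|\psi\|_{H^1(\Omega)}$. The Lax--Milgram theorem then furnishes a unique $u \in H^1(\Omega;\C)$ satisfying \eqref{weak-N}; uniqueness also follows directly from coercivity by testing the homogeneous identity with $\psi = u$ and invoking \eqref{H-1}.

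Finally, to prove \eqref{local-30n} I would test \eqref{weak-N} with $\psi = u$, obtaining
$$
\int_\Omega |(D+\A)u|^2 = \int_\Omega F\cdot\overline u + i\int_{\partial\Omega} g\cdot\overline u \le \|F\|_{L^{p_d'}(\Omega)}\|u\|_{L^{p_d}(\Omega)} + \|g\|_{L^2(\partial\Omega)}\|u\|_{L^2(\partial\Omega)}.
$$
Estimating $\|u\|_{L^{p_d}(\Omega)}$ by \eqref{P-2} and $\|u\|_{L^2(\partial\Omega)}$ by \eqref{P-1c} together with \eqref{H-1}, both in terms of $\|(D+\A)u\|_{L^2(\Omega)}$, and then absorbing that factor, yields $\|(D+\A)u\|_{L^2(\Omega)} \le C\big(\|F\|_{L^{p_d'}(\Omega)} + \|g\|_{L^2(\partial\Omega)}\big)$; a further application of \eqref{P-2} controls $\|u\|_{L^{p_d}(\Omega)}$. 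I do not expect a genuine obstruction in this argument; the one point deserving care is establishing coercivity on all of $H^1(\Omega;\C)$ rather than on $H_0^1(\Omega;\C)$, which is exactly the role of the hypothesis \eqref{H-1} — without it the form $B$ has a nontrivial kernel precisely when $0$ is a Neumann eigenvalue of the magnetic Laplacian.
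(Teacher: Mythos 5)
Your proposal is correct and follows essentially the same route as the paper: Lax--Milgram for the form $B[u,v]=\int_\Omega (D+\A)u\cdot\overline{(D+\A)v}$ on $H^1(\Omega;\C)$, with coercivity obtained from \eqref{H-1} and the boundedness of $\A$, the boundary functional controlled via \eqref{P-1c}, and the estimate \eqref{local-30n} obtained by testing \eqref{weak-N} with $\psi=u$ and invoking \eqref{P-2} and \eqref{P-1c}. The only difference is cosmetic: you derive $\|u\|_{H^1}^2\le CB[u,u]$ by bounding $|\nabla u|\le|(D+\A)u|+|\A|\,|u|$, while the paper runs the same comparison in the reverse direction.
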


\begin{proof}

Consider the bilinear  form $B[u, v]$ in \eqref{bi} 
for $u, v\in H^1(\Omega; \C)$.
Since $\A$ is bounded in $\Omega$, we have
$$
\aligned
B [u, u]  & \ge \frac12 \int_\Omega |\nabla u|^2 - C(\A) \int_\Omega |u|^2\\
&\ge \frac12  \int_\Omega |\nabla u|^2- C(\A) B[u, u],
\endaligned
$$
where   $C (\A)$ depends on $\A$ and we have used the assumption  \eqref{H-1} for the second inequality.
It follows that 
$$
B[u, u] \ge  c(\A)  \| u \|^2_{H^1(\Omega)}
$$
for any $u\in H^1(\Omega; \C)$, where $c(\A)>0$. By the Lax-Milgram Theorem and \eqref{P-1c}, this implies that \eqref{NP-1} has a unique weak solution in $H^1(\Omega; \C)$ for any $F\in L^2(\Omega; \C)$ and $g\in L^2(\partial\Omega; \C)$. 
To see \eqref{local-30n}, we use \eqref{weak-N} with $\psi=u$,  \eqref{P-2} and \eqref{P-1c}.
\end{proof}

\begin{thm}\label{thm-N}
There is a continuous function $N_\A: \{ (x, y)\in \overline{\Omega} \times \overline{\Omega}: x\neq y \} \to \C$ with the following properties.

\begin{enumerate}
 
\item

 For any $x, y\in \overline{\Omega}$, $x\neq y$,  and $\sigma \in (0, 1)$, 
\begin{equation}\label{N-0}
|N_\A (x, y)|  \le 
\left\{
\aligned
 & C |x-y|^{2-d} & \quad & \text{ if } d\ge 3,\\
 & C_\sigma |x-y|^{-\sigma} & \quad & \text{ if } d=2,
 \endaligned
 \right.
\end{equation}
where  $C$ depends  on $d$, $\Omega$ and $c_0$ in \eqref{H-1}, and $C_\sigma$ depends also on $\sigma$. 

\item

For any $x, y \in \overline{\Omega}$, $x \neq y$, 

\begin{equation}\label{N-1}
N_\A (x, y) =\overline{N_\A (y, x)}.
\end{equation}

\item

Fix $y \in \overline{\Omega}$. Then $N_\A (\cdot, y) \in W^{1, 2}_{\loc} (\Omega \setminus \{ y\}; \C)$ and
\begin{equation}\label{N-0a}
\left\{
\aligned
(D+\A)^2 N_\A (\cdot, y)  & =0  & \quad &  \text{ in }\  \Omega \setminus \{ y \}, \\
n\cdot (D+\A) N_\A (\cdot, y)  & =0 & \quad & \text{ on }\  \partial \Omega.
\endaligned
\right.
\end{equation}

\item

For  $F\in L^\infty (\Omega; \C)$ and $g\in L^\infty(\partial\Omega)$, the weak solution of \eqref{NP-1}  is given by 
\begin{equation}\label{N11}
u(x)=\int_\Omega N_\A(x, y) F (y)\, dy + i \int_{\partial\Omega} N_\A(x, y) g(y) dy.
\end{equation}

\end{enumerate}
\end{thm}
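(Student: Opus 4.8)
The plan is to mimic the construction of the Green function in Theorem \ref{thm-G}, replacing the Dirichlet solvability result (Lemma \ref{lemma-G2}) by its Neumann counterpart (Lemma \ref{lemma-N2}) and the interior/boundary maximum estimate (Lemma \ref{lemma-G1}) by the Neumann boundary estimate (Lemma \ref{lemma-N1}). Concretely, for fixed $y\in\overline{\Omega}$ and $\rho\in(0,1)$ I would let $N_\A^\rho(\cdot,y)$ be the unique weak solution of \eqref{NP-1} with $F=\frac{1}{|\b(y,\rho)\cap\Omega|}\chi_{\b(y,\rho)\cap\Omega}$ and $g=0$. (Note this is compatible with the solvability in Lemma \ref{lemma-N2}: no compatibility condition is needed because the coercivity \eqref{H-1} is assumed, so the zero-mode obstruction that normally afflicts Neumann problems is absent.) From \eqref{local-30n} one gets the global bound $\|N_\A^\rho(\cdot,y)\|_{L^{p_d}(\Omega)}+\|(D+\A)N_\A^\rho(\cdot,y)\|_{L^2(\Omega)}\le C\rho^{-d/p_d}$, exactly as in \eqref{G5-6a}.

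The core pointwise estimate \eqref{N-0} is then obtained by the same duality argument as in Theorem \ref{thm-G}: fix $z\neq y$, set $r=\frac12|y-z|$; for $F\in C_0^\infty(\b(z,r)\cap\Omega;\C)$ let $u$ solve \eqref{NP-1} with this $F$ and $g=0$. Since $(D+\A)^2u=0$ in $\b(y,r)\cap\Omega$ and (crucially) $n\cdot(D+\A)u=0$ on $\b(y,r)\cap\partial\Omega$ whenever that boundary piece is nonempty, the estimate \eqref{N1-1} from Lemma \ref{lemma-N1} together with \eqref{G2} and \eqref{P-2} gives $\sup_{\b(y,r/2)\cap\Omega}|u|\le Cr^{-d/p_d}\|F\|_{L^{p_d'}(\Omega)}$. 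Pairing $F$ against $N_\A^\rho(\cdot,y)$ via the identity $\int_\Omega F\cdot\overline{N_\A^\rho(x,y)}\,dx=\fint_{\b(y,\rho)\cap\Omega}u$ (the analogue of \eqref{G5-9a}, obtained by taking $\psi=N_\A^\rho(\cdot,y)$ in the weak formulation for $u$ and $\psi=u$ in that for $N_\A^\rho(\cdot,y)$, both with $g=0$) and then dualizing yields $\|N_\A^\rho(\cdot,y)\|_{L^{p_d}(\b(z,r)\cap\Omega)}\le Cr^{-d/p_d}$. Applying Lemma \ref{lemma-N1} once more (to $N_\A^\rho(\cdot,y)$ itself, which satisfies the Neumann condition away from $\b(y,\rho)$) promotes this to the pointwise bound \eqref{N-0} for $N_\A^\rho(z,y)$ when $\rho<\frac14|z-y|$, with constants independent of $\rho$. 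Uniform $W^{1,p}(\Omega)$ bounds for $p<\frac{d}{d-1}$, local $W^{1,2}$ bounds away from $y$ via Lemma \ref{lemma-Ca} (Neumann version), and equicontinuity from elliptic regularity (using $\A\in C^1(\overline\Omega;\R^d)$ and \eqref{G4-0}) then let me extract a subsequence $N_\A^{\rho_j}(\cdot,y)\to N_\A(\cdot,y)$ converging weakly in $W^{1,p}$ and locally uniformly off the diagonal; this limit inherits \eqref{N-0} and \eqref{N-0a}. The symmetry \eqref{N-1} comes, as in \eqref{G5-1}, from testing the weak formulation for $N_\A^\rho(\cdot,y)$ against $N_\A^\rho(\cdot,z)$ and letting $\rho\to 0$. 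Finally, the representation \eqref{N11} follows by taking $F\in L^\infty(\Omega;\C)$, $g\in L^\infty(\partial\Omega)$, noting the weak solution is Hölder continuous (via \eqref{G4-0} and elliptic regularity in the Lipschitz domain), and passing to the limit in the identity obtained by testing the weak formulation for $u$ against $N_\A^\rho(\cdot,y)$.

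The main obstacle, compared with the Dirichlet case, is that the approximants $N_\A^\rho(\cdot,y)$ do not vanish on $\partial\Omega$, so the boundary maximum estimate one can invoke is the reflected De Giorgi–Nash–Moser bound of Lemma \ref{lemma-N1} rather than the simpler subharmonicity argument of Lemma \ref{lemma-G1}; one must check at each application that the relevant surface ball $\b(x_0,r)\cap\partial\Omega$ carries the homogeneous Neumann condition $n\cdot(D+\A)(\cdot)=0$ and that the reflection construction is applicable at that scale. A second, more technical point is justifying the duality pairing: one needs the weak solution $u$ with smooth compactly supported $F$ to be admissible as a test function against $N_\A^\rho(\cdot,y)$ and vice versa, which is fine since both lie in $H^1(\Omega;\C)$ and $g=0$ kills the boundary term in \eqref{weak-N}. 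Everything else is a routine transcription of the Green-function argument, so I will state it briefly and refer back to the proof of Theorem \ref{thm-G} for the parts that are identical.
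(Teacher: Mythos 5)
Your proposal is correct and follows essentially the same route as the paper: the paper's proof of Theorem \ref{thm-N} likewise constructs $N_\A^\rho(\cdot,y)$ as the weak solution of \eqref{NP-1} with $F=\frac{1}{|\b(y,\rho)\cap\Omega|}\chi_{\b(y,\rho)\cap\Omega}$, $g=0$, and transcribes the duality and compactness argument of Theorem \ref{thm-G} with Lemma \ref{lemma-N1} replacing Lemma \ref{lemma-G1} and Lemma \ref{lemma-N2} replacing Lemma \ref{lemma-G2}, obtaining \eqref{N11} by testing \eqref{weak-N} with $\psi=N_\A^\rho(\cdot,y)$ and letting $\rho\to 0$. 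Your attention to the admissibility of test functions and to the homogeneous Neumann condition on the relevant surface balls matches the paper's treatment.
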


\begin{proof}

With Lemmas \ref{lemma-N1} and \ref{lemma-N2} at our disposal,
the proof is similar to that of Theorem \ref{thm-G}. Indeed, 
fix $y\in \overline{\Omega}$ and $\rho \in (0, 1)$.
Let $N_\A^\rho (\cdot, y)$ denote the weak solution of \eqref{NP-1} with $F=\frac{1}{|\b(y, \rho)\cap \Omega|}
\chi_{\b(y, \rho)\cap \Omega}$ and $g=0$, given by Lemma \ref{lemma-N2}. 
Thus,
\begin{equation}\label{N-11a}
\int_\Omega (D+\A) N_\A^\rho (x, y) \cdot
\overline{(D+\A)\psi} \, dx
=\fint_{\b(y, \rho)\cap \Omega} \overline{\psi}
\end{equation}
for any $\psi \in H^1(\Omega; \C)$.
Using \eqref{local-30n} and \eqref{N1-1},
one can show that if $\rho< (1/4) |z-y|$,
$$
|N_\A(z, y)|\le 
\left\{
\aligned 
& C |z-y|^{2-d} & \quad & \text{ if } d\ge 3,\\
& C_\sigma |z-y|^{-\sigma} & \quad & \text{ if } d=2,
\endaligned
\right.
$$
for any $\sigma \in (0, 1)$. It follows that $\{ N_\A^\rho(\cdot, y): 0< \rho< 1 \}$
is bounded in $W^{1, p}(\Omega; \C)$ for $p< \frac{d}{d-1}$.
This implies that there exists a sequence $\{N_\A^{\rho_j} (\cdot, y) \}$,
where $\rho_j \to 0$,  and $N_\A^\rho (\cdot, y) \in W^{1, p}(\Omega; \C)$
such that $N_\A^{\rho_j} (\cdot, y) \to N_\A (\cdot, y)$ weakly in $W^{1, p}(\Omega; \C)$,
where $1< p< \frac{d}{d-1}$.
The proof for \eqref{N-0}-\eqref{N-0a} is the same as in the case of the Green function.
To see \eqref{N11}, let $u$ be a weak solution of \eqref{NP-1} with $F \in L^\infty(\Omega; \C)$ and
$g\in L^\infty(\partial\Omega; \C)$.
By letting $\psi=N_\A^{\rho} (\cdot, y)$ in \eqref{weak-N}, we obtain 
$$
\int_\Omega (D+\A) u(x)  \cdot \overline{(D+\A) N^\rho_\A(x, y)}\, dx
=\int_\Omega F(x) \overline{N^\rho_\A(x, y)}\, dx
+i \int_{\partial\Omega} g(x)  \overline{N^\rho_\A(x, y)}\, dx.
$$
This, together with \eqref{N-11a}, yields 
\begin{equation}\label{N11b}
\fint_{\b(y, \rho)\cap \Omega} u
=\int_\Omega F(x) \overline{N^\rho_\A(x, y)}\, dx
+i \int_{\partial\Omega} g(x)  \overline{N^\rho_\A(x, y)}\, dx.
\end{equation}
Under the assumptions that $F\in L^\infty(\Omega; \C)$ and $g\in L^\infty(\partial\Omega; \C)$,
using the standard regularity theory for $\Delta$ in a Lipschitz domain,
one may show that $u$ is H\"older continuous in $\overline{\Omega}$.
By letting $\rho =\rho_j\to 0$ in \eqref{N11b}, we see that
$$
u(y)
=\int_\Omega F(x) \overline{N_\A(x, y)}\, dx
+i \int_{\partial\Omega} g(x)  \overline{N_\A(x, y)}\, dx.
$$
In view of \eqref{N-1}, this gives the representation formula \eqref{N11}.
\end{proof}

\begin{thm}\label{thm-N1}
Let $\Omega\subset \b(0, R_0)$ be a bounded Lipschitz domain.
Suppose that $\B$ satisfies the condition \eqref{cod-1} for any ball $\b(x, r) \subset \b(0, 4R_0)$.
Also assume that \eqref{cod-1a} holds for any $x\in \b(0, R_0)$.
Let $N_\A(x, y)$ be the Neumann function for the operator  $(D+\A)^2$ in $\Omega$, given by Theorem \ref{thm-N}.
Let $\ell \ge 1$.
Then,  for any $x, y\in \Omega$ and $x\neq y$,
\begin{equation}\label{mN}
|N_\A(x, y) |
\le \frac{C_\ell }{ \{ 1+ |x-y| m (x, \B)  \}^\ell  } \cdot \frac{1}{|x-y|^{d-2}}
\end{equation}
for $d\ge 3$, where $C_\ell $ depends on $\ell$, $R_0$, $\Omega$, and $C_0$ in \eqref{cod-1}. If $d=2$, we have
\begin{equation}\label{mN1}
|N_\A(x, y) |
\le \frac{C_{\ell, \sigma} }{ \{ 1+ |x-y| m (x, \B)  \}^\ell } \cdot \frac{1}{|x-y|^\sigma}
\end{equation}
for any $\sigma \in (0, 1)$, where $C_{\ell, \sigma}$ depends on $\ell$, $\sigma$, $R_0$, the Lipschitz character of
$\Omega$ and $C_0$ in \eqref{cod-1}.
\end{thm}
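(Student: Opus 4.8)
The plan is to mirror, in the Neumann setting, the proof strategy that was just used for the Green function in Theorem~\ref{thm-G1}, replacing Lemma~\ref{lemma-G1} and Theorem~\ref{b-thm-1} by their Neumann analogues. Concretely, I would first establish a decay estimate for $N_\A$ near the boundary by combining Lemma~\ref{lemma-N1} (boundary sup bound from even reflection and De~Giorgi--Nash--Moser) with Theorem~\ref{thm-Ca-m} applied in the case $x_0\in\partial\Omega$ with the Neumann condition $n\cdot(D+\A)u=0$. This yields a boundary analogue of Theorem~\ref{b-thm-1}: if $u$ solves \eqref{N1-0} with $x_0\in\partial\Omega$ and $0<r<r_0$, then for any $\ell\ge 0$,
\begin{equation*}
\sup_{\b(x_0,r/2)\cap\Omega}|u|
\le \frac{C_\ell}{\{1+r\,m(x_0,\B)\}^\ell}\fint_{\b(x_0,r)\cap\Omega}|u|,
\end{equation*}
where the passage from the $L^2$-averaged decay of Theorem~\ref{thm-Ca-m} to the pointwise supremum uses Lemma~\ref{lemma-N1} exactly as Theorem~\ref{b-thm-1} uses Lemma~\ref{lemma-G1}. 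Note that the $c_0$ in the coercivity hypothesis \eqref{H-1}, which is needed to define $N_\A$ via Theorem~\ref{thm-N}, is compatible with (indeed implied in spirit by, cf.\ \eqref{M1-0}) the finite-type conditions \eqref{cod-1}--\eqref{cod-1a}, so both sets of hypotheses may be assumed simultaneously.

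Next I would run the dichotomy argument of Theorem~\ref{thm-G1} verbatim. Fix $x_0,y_0\in\Omega$ with $x_0\ne y_0$, set $u(x)=N_\A(x,y_0)$ and $r=\tfrac18\min\{|x_0-y_0|,r_0\}$, so that $(D+\A)^2u=0$ in $\b(x_0,r)$ and, when $x_0$ is near $\partial\Omega$, the Neumann condition holds on $\b(x_0,4r)\cap\partial\Omega$. In \textbf{Case 1}, $\mathrm{dist}(x_0,\partial\Omega)>r$, so $\b(x_0,r)\subset\Omega$ and the interior estimate \eqref{I-2-00} together with the size bound \eqref{N-0} gives
\begin{equation*}
|N_\A(x_0,y_0)|\le\frac{C_\ell}{\{1+r\,m(x_0,\B)\}^\ell}\cdot\frac{1}{r^{d-2}}.
\end{equation*}
In \textbf{Case 2}, $\mathrm{dist}(x_0,\partial\Omega)\le r$; pick $z_0\in\partial\Omega$ with $|x_0-z_0|=\mathrm{dist}(x_0,\partial\Omega)$, apply the boundary decay estimate above at $z_0$ on the ball $\b(z_0,4r)$, and use \eqref{N-0} to bound the average; this produces the bound with $m(z_0,\B)$ in place of $m(x_0,\B)$. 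Finally, since $|x_0-z_0|\le r$, the comparison \eqref{m-2a} yields $1+r\,m(x_0,\B)\le C\{1+r\,m(z_0,\B)\}^{\kappa_0+1}$, and because $\ell\ge 1$ is arbitrary this converts the $z_0$-bound into \eqref{mN}. For $d=2$ one replaces $r^{2-d}$ by $r^{-\sigma}$ throughout, obtaining \eqref{mN1}, exactly as in Theorem~\ref{thm-G1}.

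The one genuinely new ingredient, and the main obstacle, is the boundary decay estimate, i.e.\ the Neumann replacement for Theorem~\ref{b-thm-1}. Theorem~\ref{thm-Ca-m} already covers the Neumann boundary case of the $L^2$-averaged geometric decay \eqref{m-m}, so the work is in the $L^2\to L^\infty$ upgrade. Here one must be careful that Lemma~\ref{lemma-N1} requires $u\in C^1$ up to the boundary (for the even-reflection argument), whereas $N_\A(\cdot,y_0)$ is only known to lie in $W^{1,2}_{\loc}$ away from $y_0$; this is handled exactly as in Section~\ref{sec-G}, by working with the regularized kernels $N_\A^\rho(\cdot,y)$, which are genuine weak solutions and, since $\A\in C^1(\overline\Omega;\R^d)$, are as regular as needed away from $\b(y,\rho)$ by standard elliptic theory, and then passing to the limit $\rho=\rho_j\to 0$. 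Apart from this technical bookkeeping, every step is a direct transcription of arguments already in place, so I expect no essential difficulty beyond what Theorem~\ref{thm-G1} already required.
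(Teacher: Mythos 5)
Your proposal is correct and follows essentially the same route as the paper, whose proof simply says the argument of Theorem \ref{thm-G1} carries over once one notes that \eqref{cod-1a} together with Theorem \ref{thm-M1} yields the coercivity \eqref{H-1} (with $c_0\approx R_0^{-2}$) needed to invoke Theorem \ref{thm-N}. Your observation that \eqref{H-1} is implied by \eqref{M1-0} under \eqref{cod-1a} is exactly that point, and the rest of your argument --- the Neumann boundary decay from Lemma \ref{lemma-N1} combined with Theorem \ref{thm-Ca-m}, the two-case dichotomy, and the comparison \eqref{m-2a} --- is the intended transcription of Theorem \ref{thm-G1}.
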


\begin{proof}

The proof is similar to that of Theorem \ref{thm-G1}.
We only point out that under the condition \ref{cod-1a}, we have $m(x, \B)\ge 2R_0^{-1}$ for any $x\in \Omega$.
In view of Theorem \ref{thm-M1}, this leads to 
$$
cR_0^{-2} \int_\Omega |\psi|^2 \le \int_\Omega |(D+\A)\psi |^2,
$$
which gives the condition  \eqref{H-1} for any $\psi \in H^1(\Omega; \C)$.
\end{proof}

\begin{thm}\label{thm-N2}
Let $d\ge 3$.
Suppose $\Omega$ and $\B$ satisfy the same conditions as in Theorem \ref{thm-N1}.
For $F\in L^\infty(\Omega; \C)$, let $u\in H^1(\Omega; \C)$ be a weak solution of \eqref{NP-1} with $g=0$. Then
\begin{equation}\label{N2-0}
\| m(x, \B)^{\ell +2} u \|_{L^p(\Omega)}
\le C_\ell \| m(x, \B)^\ell  F \|_{L^p(\Omega)}
\end{equation}
for any $\ell \in \R$ and $1\le p\le \infty$, where $C_\ell $ depends on $d$, $\ell$,  $R_0$, 
the Lipschitz character of $\Omega$ and $C_0$ in \eqref{cod-1}.
\end{thm}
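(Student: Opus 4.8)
The plan is to imitate the proof of Theorem~\ref{thm-G2} line for line, with the Green function $G_\A$ replaced by the Neumann function $N_\A$. First I would record that under the hypotheses of Theorem~\ref{thm-N1} the bound \eqref{cod-1a} forces $m(x,\B)\ge 2R_0^{-1}$ on $\overline\Omega$, so Theorem~\ref{thm-M1} gives the coercivity \eqref{H-1}; hence the Neumann function of Theorem~\ref{thm-N} exists, the decay estimate \eqref{mN} holds, and for $F\in L^\infty(\Omega;\C)$ the weak solution of \eqref{NP-1} with $g=0$ is represented by $u(x)=\int_\Omega N_\A(x,y)F(y)\,dy$ by \eqref{N11}. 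Writing $m(x,\B)^{\ell+2}u(x)=\int_\Omega K_\ell(x,y)\,m(y,\B)^\ell F(y)\,dy$ with $K_\ell(x,y)=m(x,\B)^{\ell+2}N_\A(x,y)\,m(y,\B)^{-\ell}$, it suffices, by Schur's test (which yields boundedness on every $L^p$, $1\le p\le\infty$, at once), to prove $\sup_{x\in\Omega}\int_\Omega|K_\ell(x,y)|\,dy\le C_\ell$ and $\sup_{y\in\Omega}\int_\Omega|K_\ell(x,y)|\,dx\le C_\ell$.

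For the first estimate I would use the comparison inequalities \eqref{m-2a}--\eqref{m-2b} to bound the weight ratio: for any fixed $\ell\in\R$ one has $m(x,\B)^{\ell+2}m(y,\B)^{-\ell}\le C_\ell\{1+|x-y|m(x,\B)\}^{c_\ell}m(x,\B)^2$, where $c_\ell$ is finite (it equals $\ell\kappa_0/(\kappa_0+1)$ when $\ell\ge 0$, via \eqref{m-2b}, and $\kappa_0|\ell|$ when $\ell<0$, via \eqref{m-2a}). Combining this with \eqref{mN}, which is available with an arbitrarily large decay exponent, gives $|K_\ell(x,y)|\le C_{\ell,N}\,m(x,\B)^2\{1+|x-y|m(x,\B)\}^{-N}|x-y|^{2-d}$ for every $N$. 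Splitting the $y$-integral over the region $|x-y|<m(x,\B)^{-1}$, where it is $\le C\,m(x,\B)^2\int_{|x-y|<m(x,\B)^{-1}}|x-y|^{2-d}\,dy\le C$, and over the dyadic shells $2^jm(x,\B)^{-1}\le|x-y|<2^{j+1}m(x,\B)^{-1}$, whose contributions are $\le C\,2^{j(2-N)}$, one sees the whole integral is bounded once $N>2$, uniformly in $x$. The second estimate is symmetric: by $N_\A(x,y)=\overline{N_\A(y,x)}$, the bound \eqref{mN} applies with $m(y,\B)$ in place of $m(x,\B)$, and repeating the argument with the roles of $x$ and $y$ exchanged (and the dyadic shells centered at $y$) gives $\int_\Omega|K_\ell(x,y)|\,dx\le C_\ell$.

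The only point that requires a little care—and the closest thing to an obstacle—is the bookkeeping of the two weights $m(x,\B)$ and $m(y,\B)$ when $\ell$ is negative: there one must switch from \eqref{m-2b} to \eqref{m-2a} (and, in the $x$-integral, from $\ell$ to $\ell+2$ in choosing which inequality applies), and the resulting polynomial factor $\{1+|x-y|m(x,\B)\}^{c_\ell}$ grows with $|\ell|$. This is harmless precisely because \eqref{mN} furnishes decay of any prescribed order, so a sufficiently large $N$ in the Neumann-function bound always absorbs it; beyond this, there is no genuinely new difficulty relative to Theorem~\ref{thm-G2}. (The case $d=2$, which is excluded from the statement, would be handled the same way using \eqref{mN1} with $|x-y|^{2-d}$ replaced by $|x-y|^{-\sigma}$ for small $\sigma>0$.)
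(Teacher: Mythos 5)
Your proposal is correct and follows essentially the same route as the paper: the paper's proof of this theorem is simply the remark that it is ``similar to that of Theorem \ref{thm-G2}'', and your argument reproduces exactly that scheme -- the representation formula \eqref{N11} with $g=0$, the kernel $K_\ell(x,y)=m(x,\B)^{\ell+2}N_\A(x,y)m(y,\B)^{-\ell}$, the comparison inequalities \eqref{m-2a}--\eqref{m-2b} to absorb the weight ratio, the arbitrarily fast decay in \eqref{mN}, and Schur's test. Your explicit treatment of the sign of $\ell$ and of the dyadic-shell integration only fills in details the paper leaves to the reader.
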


\begin{proof}
The proof is similar to that of Theorem \ref{thm-G2}.
\end{proof}


\section{Rellich estimates}\label{sec-R}

Let  $\Omega$ be a bounded Lipschitz domain in $\R^d$ and $\A\in C^1(\overline{\Omega}; \R^d)$.
Let $L=(L_1, \dots, L_d)$, where 
\begin{equation*}\label{L}
L_j =D_j +A_j
\end{equation*}
for $1\le j \le d$. 
It follows from integration by parts that if $f, g \in C^1(\overline{\Omega}; \C)$, 
\begin{equation}\label{parts}
\int_\Omega L_j f \cdot \overline{g}= \int_\Omega f \cdot \overline{L_j g} +\frac{1}{i} \int_{\partial\Omega}
n_j f \cdot \overline{g}.
\end{equation}

\begin{lemma}\label{lemma-r1}
Let $\alpha =(\alpha_1, \dots, \alpha_d)\in C^1(\R^d; \R^d)$.
Suppose $u\in C^2(\overline{\Omega}; \C)$ and  $(D+\A)^2 u =F$ in $\Omega$.
Then
\begin{equation}\label{re-0}
\aligned
\int_{\partial\Omega}
 \alpha_j n_j  | (D+\A) u|^2
&=2 \Re \int_{\partial\Omega}  \alpha_k  n_j  (D_j+A_j) u
\overline{   (D_k+A_k) u }\\
& +2 \Im \int_\Omega \alpha_k B_{kj} u \overline{ (D_j +A_j) u  }
  +\int_\Omega \text{\rm div}(\alpha)
|(D+\A)u|^2\\
 & -2 \Re \int_\Omega \partial_j \alpha_k (D_j +A_j) u \overline{ (D_k +A_k)u}
   + 2\Im \int_\Omega \alpha_k  F   \overline{ (D_k + A_k)u},
\endaligned
\end{equation}
where the repeated indices $j, k$ are summed from $1$ to $d$.
\end{lemma}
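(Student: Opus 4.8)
The plan is to establish \eqref{re-0} by a direct computation using only the divergence theorem and the magnetic integration-by-parts formula \eqref{parts}. Throughout, write $L_j=D_j+A_j$, so that $(D+\A)^2=\sum_j L_jL_j$ and $|(D+\A)u|^2=\sum_k|L_ku|^2$, with all repeated indices summed from $1$ to $d$. Since $u\in C^2(\overline{\Omega};\C)$ and $\Omega$ is a bounded Lipschitz domain, the scalar function $|(D+\A)u|^2$ lies in $C^1(\overline{\Omega})$, and the divergence theorem gives
\begin{equation*}
\int_{\partial\Omega}\alpha_j n_j\,|(D+\A)u|^2
=\int_\Omega \text{\rm div}(\alpha)\,|(D+\A)u|^2
+\int_\Omega \alpha_j\,\partial_j|(D+\A)u|^2 ,
\end{equation*}
so the whole matter reduces to rewriting the last interior integral.

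First I would compute $\partial_j|L_ku|^2=2\,\Re\big[(\partial_j L_ku)\,\overline{L_ku}\big]$ and then substitute $\partial_j=i(L_j-A_j)$, i.e. $\partial_j L_ku=iL_jL_ku-iA_jL_ku$. The term $-iA_j|L_ku|^2$ is purely imaginary (here the magnetic structure is felt: the gauge-invariant quantity $|L_ku|^2$ is being differentiated through the magnetic derivative), so it drops upon taking the real part, leaving $\partial_j|L_ku|^2=-2\,\Im\big[(L_jL_ku)\,\overline{L_ku}\big]$; summing over $k$,
\begin{equation*}
\int_\Omega \alpha_j\,\partial_j|(D+\A)u|^2=-2\,\Im\int_\Omega \alpha_j\,(L_jL_ku)\,\overline{L_ku}.
\end{equation*}
Next I would commute the magnetic derivatives, $L_jL_ku=L_kL_ju+[L_j,L_k]u=L_kL_ju+B_{kj}u$; after relabelling the summed indices and using the antisymmetry $B_{jk}=-B_{kj}$, the commutator yields exactly the field term $2\,\Im\int_\Omega \alpha_kB_{kj}u\,\overline{(D_j+A_j)u}$. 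It then remains to treat $\int_\Omega \alpha_j\,(L_kL_ju)\,\overline{L_ku}$, and here I would apply \eqref{parts} to move the outer operator $L_k$ off $L_kL_ju$ and onto $\overline{\alpha_jL_ku}$. This single integration by parts produces three contributions: the tangential boundary term $2\,\Re\int_{\partial\Omega}\alpha_k n_j\,(D_j+A_j)u\,\overline{(D_k+A_k)u}$; the interior term $-2\,\Re\int_\Omega \partial_j\alpha_k\,(D_j+A_j)u\,\overline{(D_k+A_k)u}$ coming from the derivative that falls on the coefficient $\alpha$; and, because the piece in which $L_k$ falls again on $L_ku$ reassembles $\alpha_j\sum_k L_kL_ku=\alpha_j(D+\A)^2u=\alpha_jF$, the term $2\,\Im\int_\Omega \alpha_kF\,\overline{(D_k+A_k)u}$.

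Collecting these contributions and combining them with the displayed divergence-theorem identity — keeping careful track of conjugations and factors of $i$, and using repeatedly that $\Im(iz)=\Re z$, $\Re\bar z=\Re z$, the symmetry under swapping the dummy indices $j\leftrightarrow k$, and the antisymmetry of $B$ — yields \eqref{re-0}. The point that needs genuine care is the order of operations: one must commute $L_j$ past $L_k$ \emph{before} the second integration by parts, and then integrate by parts in the \emph{outer} factor $L_k$ of $L_kL_ju$; integrating by parts directly in the $L_j$ of $L_jL_ku$ merely reverses the previous step and leads only to a trivial identity. It is exactly the commutation that produces the contracted operator $\sum_k L_kL_k=(D+\A)^2$ and makes the hypothesis $(D+\A)^2u=F$ usable; the remaining labor is purely bookkeeping.
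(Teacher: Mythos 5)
Your overall route is sound and genuinely different from the paper's. The paper evaluates $2\Im\int_\Omega \alpha_k L_j u\,\overline{L_jL_k u}$ in two ways using \eqref{parts} --- once by moving $L_k$ and commuting, which produces the boundary term $\int_{\partial\Omega}n_k\alpha_k|(D+\A)u|^2$, the divergence term and the $\B$-term (this is \eqref{re-1}), and once by moving the outer factor so that $L_j^2u=F$ appears (this is \eqref{re-2}) --- and then equates the two expressions. You instead apply the divergence theorem directly to $\alpha_j|(D+\A)u|^2$, use $\partial_j|L_ku|^2=-2\Im\big[(L_jL_ku)\overline{L_ku}\big]$, commute once, and integrate by parts once via \eqref{parts}. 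Checking your steps, the boundary term $2\Re\int_{\partial\Omega}\alpha_k n_j L_ju\overline{L_ku}$, the $\mathrm{div}(\alpha)$ term, the $\partial_j\alpha_k$ term and the $F$ term all come out with the correct signs, so your scheme is a perfectly good, and arguably more streamlined, alternative to the paper's double evaluation.

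There is, however, one genuine error: the commutator of the magnetic derivatives is $[L_j,L_k]=[D_j+A_j,\,D_k+A_k]=-i(\partial_jA_k-\partial_kA_j)=-iB_{jk}$ (equivalently $[L_k,L_j]=-iB_{kj}$, which is the identity the paper invokes), not $[L_j,L_k]u=B_{kj}u$ as you wrote; you dropped a factor of $i$. Since $\Im(iz)=\Re z$, this changes the character of the field term: carried out with the correct commutator, your computation produces $2\Re\int_\Omega\alpha_k B_{kj}u\,\overline{(D_j+A_j)u}$ rather than $2\Im$ of that integral. Note that equating the paper's own identities \eqref{re-1} and \eqref{re-2} also yields $2\Re$, so the $\Im$ appearing in the printed \eqref{re-0} (and the corresponding $-2\Im$ in \eqref{re-01}) appears to be a misprint; your missing $i$ exactly compensates for it, which is why your bookkeeping seemed to close. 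The discrepancy is harmless for the later applications, where this term is only estimated in absolute value using $|\B(x)|\le C\,m(x,\B)^2$, but in your write-up you should use the correct commutator and accordingly state the $\B$-term with $\Re$ (or flag the discrepancy with the printed statement explicitly).
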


\begin{proof}

Using \eqref{parts}, we obtain 
$$
\aligned
\int_\Omega \alpha_k L_j u \overline{L_jL_k u}
&=\int_\Omega L_k (\alpha_k L_j u) \overline{L_j u}
-\frac{1}{i}\int_{\partial\Omega}
n_k\alpha_k |Lu|^2
+\int_\Omega \alpha_k L_j u \overline{[ L_j, L_k] u}\\
&=\int_\Omega \alpha_k L_kL_j u \overline{L_j u}
+ \frac{1}{i} \int_\Omega \text{\rm div} (\alpha) |L u|^2
-\frac{1}{i}\int_{\partial\Omega}
n_k\alpha_k |Lu|^2\\
 &\qquad\qquad\qquad
  +\int_\Omega \alpha_k L_j u \overline{[ L_j, L_k] u}\\
&=\int_\Omega \alpha_k L_j L_k u \overline{L_j u}
+\int_\Omega \alpha_k [ L_k, L_j ] u \overline{L_j u}
   + \frac{1}{i} \int_\Omega \text{\rm div} (\alpha) |L u|^2\\
 & \qquad\qquad\qquad  -\frac{1}{i}\int_{\partial\Omega}
n_k\alpha_k |Lu|^2
  +\int_\Omega \alpha_k L_j u \overline{[ L_j, L_k] u},
\endaligned
$$
where $[L_j, L_k]= L_j L_k -L_k L_j$.
This gives
\begin{equation}\label{re-1}
\aligned
2 \Im  \int_\Omega \alpha_k L_j u \overline{L_jL_k u}
 & = 2 \Im \int_\Omega \alpha_k [ L_k, L_j ] u \overline{L_j u}
 - \int_\Omega \text{\rm div} (\alpha) |L u|^2
+ \int_{\partial\Omega}
n_k\alpha_k |Lu|^2\\
&=-2 \Re \int_\Omega \alpha_k B_{kj} u \overline{L_j u}
 - \int_\Omega \text{\rm div} (\alpha) |L u|^2
+ \int_{\partial\Omega}
n_k\alpha_k |Lu|^2,
\endaligned
\end{equation}
where we have used the fact $[L_k, L_j ] = -i B_{kj}$.
Also note that  by \eqref{parts}, 
\begin{equation}\label{re-1a}
\aligned
\int_\Omega L_j^2 u \overline{\alpha_k L_k u}
&=\int_\Omega L_j u \overline{L_j (\alpha_k L_k u)}
+ \frac{1}{i} \int_{\partial\Omega} \alpha_k n_j L_j u \overline{ L_k u}\\
&=\int_\Omega \alpha_k L_j u \overline{L_j L_k u}
+ i \int_\Omega \partial_j \alpha_k  L_j u \overline{L_k u}
+ \frac{1}{i} \int_{\partial\Omega} \alpha_k n_j L_j u \overline{ L_k u}.\\
\endaligned
\end{equation}
From \eqref{re-1a}, we deduce that 
\begin{equation}\label{re-2}
2 \Im  \int_\Omega \alpha_k L_j u \overline{L_jL_k u}
=2\Im  \int_\Omega L_j^2 u \overline{\alpha_k L_k u}
-2 \Re \int_\Omega \partial_j \alpha_k L_j u \overline{L_k u}
+ 2\Re \int_{\partial\Omega}
\alpha_k n_j L_j u \overline{L_k u}.
\end{equation}
The equation \eqref{re-0} now follows readily from \eqref{re-1} and \eqref{re-2}.
\end{proof}

Let $T=(T_{jk})$, where
\begin{equation*}\label{T}
T_{jk} =n_j L_k -n_k L_j
\end{equation*}
for $1\le j, k \le d$.

\begin{lemma}\label{lemma-r2}
Let $\alpha \in C^1(\R^d; \R^d)$.
Suppose $u\in C^2(\overline{\Omega}; \C)$ and  $(D+\A)^2 u =F$ in $\Omega$.
Then
\begin{equation}\label{re-01}
\aligned
\int_{\partial\Omega}
 \alpha_j n_j  | (D+\A) u|^2
&=2 \Re \int_{\partial\Omega}  \alpha_k T_{kj} u 
\overline{   (D_j+A_j) u }\\
& -2 \Im \int_\Omega \alpha_k B_{kj} u \overline{ (D_j +A_j) u  }
  -\int_\Omega \text{\rm div}(\alpha)
|(D+\A)u|^2\\
 & +2 \Re \int_\Omega \partial_j \alpha_k (D_j +A_j) u \overline{ (D_k +A_k)u}
   -2\Im \int_\Omega \alpha_k  F   \overline{ (D_k + A_k)u},
\endaligned
\end{equation}
where the repeated indices $j, k$ are summed from $1$ to $d$.
\end{lemma}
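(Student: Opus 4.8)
The plan is to derive \eqref{re-01} directly from the identity \eqref{re-0} of Lemma \ref{lemma-r1}, by rewriting the boundary term $2\Re\int_{\partial\Omega}\alpha_k n_j (D_j+A_j)u\,\overline{(D_k+A_k)u}$ occurring there in terms of the tangential operator $T_{kj}$. Writing $L_j=D_j+A_j$ as in the text and using $T_{kj}u=n_kL_ju-n_jL_ku$, I first expand, summing the repeated index $j$,
\begin{equation*}
2\Re\int_{\partial\Omega}\alpha_k T_{kj}u\,\overline{L_ju}
=2\int_{\partial\Omega}\alpha_k n_k\,|(D+\A)u|^2
-2\Re\int_{\partial\Omega}\alpha_k n_j L_k u\,\overline{L_ju},
\end{equation*}
where I used $\sum_j L_ju\,\overline{L_ju}=|(D+\A)u|^2$; relabeling the summed index $k$ as $j$ in the first integral gives $\int_{\partial\Omega}\alpha_j n_j|(D+\A)u|^2$, which is the left-hand side of both \eqref{re-0} and \eqref{re-01}.

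Next, since $\alpha_k$ and $n_j$ are real, complex conjugation inside the real part yields the pointwise identity $\Re\big(\alpha_k n_j L_ku\,\overline{L_ju}\big)=\Re\big(\alpha_k n_j L_ju\,\overline{L_ku}\big)$, so that
\begin{equation*}
2\Re\int_{\partial\Omega}\alpha_k n_j L_ku\,\overline{L_ju}
=2\Re\int_{\partial\Omega}\alpha_k n_j L_ju\,\overline{L_ku},
\end{equation*}
and the right-hand side here is precisely the boundary term on the right of \eqref{re-0}. Solving \eqref{re-0} for that boundary term and substituting into the expansion above, I get
\begin{equation*}
2\Re\int_{\partial\Omega}\alpha_k T_{kj}u\,\overline{L_ju}
=\int_{\partial\Omega}\alpha_j n_j\,|(D+\A)u|^2+I,
\end{equation*}
where $I$ stands for the four interior integrals on the right of \eqref{re-0} — the ones with $B_{kj}$, $\mathrm{div}(\alpha)$, $\partial_j\alpha_k$, and $F$ — each taken with the sign it carries in \eqref{re-0}. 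Rearranging this to isolate $\int_{\partial\Omega}\alpha_j n_j|(D+\A)u|^2$ reverses the sign of every term of $I$, and the resulting identity is exactly \eqref{re-01}.

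Because all of the integration by parts and commutator computations are already packaged in Lemma \ref{lemma-r1}, there is no genuine analytic difficulty here; the one point that needs care is the bookkeeping of indices and complex conjugates in the two displayed manipulations above. A self-contained alternative would be to repeat the proof of Lemma \ref{lemma-r1} but starting from $2\Im\int_\Omega L_j^2 u\,\overline{\alpha_kL_ku}$ and integrating by parts so as to move the derivatives in the opposite order; this route reproduces \eqref{re-01} directly but duplicates most of the earlier calculation, so I would favor the short reduction described above.
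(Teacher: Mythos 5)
Your argument is correct and is essentially the paper's own proof: the paper also derives \eqref{re-01} from \eqref{re-0} by writing the identity as $I=J$ and using $I=2I-J$ together with the same observation that $2\int_{\partial\Omega}\alpha_jn_j|(D+\A)u|^2-2\Re\int_{\partial\Omega}\alpha_k n_j (D_j+A_j)u\,\overline{(D_k+A_k)u}=2\Re\int_{\partial\Omega}\alpha_k T_{kj}u\,\overline{(D_j+A_j)u}$, which is exactly your expansion of $T_{kj}$ plus the conjugation symmetry of the real part. The only cosmetic point is to keep the factor $2$ explicit when you relabel $\alpha_k n_k$ as $\alpha_j n_j$; your subsequent bookkeeping already handles it correctly.
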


\begin{proof}

Write the equation \eqref{re-0} as $I=J$.
Then \eqref{re-01} follows from the fact
$I=2I-I =2I-J$ as well as the observation that 
$$
\aligned
 & 2\int_{\partial\Omega}
 \alpha_j n_j  | (D+\A) u|^2
-2 \Re \int_{\partial\Omega}  \alpha_k  n_j  (D_j+A_j) u
\overline{   (D_k+A_k) u }\\
 & =2 \Re \int_{\partial\Omega}  \alpha_k T_{kj} u 
\overline{   (D_j+A_j) u }.
\endaligned
$$
\end{proof}

For the rest of  this section,  we shall further assume that $\Omega\subset \b(0, R_0)$ is a smooth domain for some $R_0=Cr_0>0$,
 where $C$ depends on the Lipschitz character of 
$\Omega$. We also assume
 that $\B$ satisfies \eqref{cod-1} for any $\b(x, r)\subset \b(0, 4R_0)$ and \eqref{cod-1a} for any $x\in \b(0, R_0)$.
 We emphases that
 although $\Omega$ is assumed to be smooth,  the dependence of constants $C$ on $\Omega$  
 is through $r_0$ and  its Lipschitz character.

\begin{lemma}\label{lemma-r3}
Suppose that $u\in C^2(\overline{\Omega}; \C)$ and
$(D+\A)^2 u =F$ in $\Omega$.
Then for any $\ell \in \R$, 
\begin{equation}\label{re-30}
\aligned
\int_{\partial\Omega}
|(D+\A)u |^2 m(x, \B)^\ell 
 & \le C \int_{\partial\Omega}
| n\cdot  (D+\A)  u|^2 m(x, \B)^\ell 
+ C \int_\Omega  |u|^2 m(x, \B)^{\ell+3} \\
 & \qquad
+ C \int_\Omega   |(D+\A) u|^2 m(x, \B)^{\ell+1}
+ C \int_\Omega |F|^2 m(x, \B)^{\ell-1},
\endaligned
\end{equation}
\begin{equation}\label{re-31}
\aligned
\int_{\partial\Omega}
|(D+\A)u |^2 m(x, \B)^\ell 
 & \le C \int_{\partial\Omega}
|  T  u|^2 m(x, \B)^\ell 
+ C \int_\Omega  | u|^2m(x, \B)^{\ell +3} \\
 & \qquad
 +C \int_\Omega |(D+\A)u|^2 m(x, \B)^{\ell+1} 
+ C \int_\Omega  |F|^2  m(x, \B)^{\ell-1},
\endaligned
\end{equation}
where $C$ depends on $\ell$, $\Omega$ and $C_0$ in \eqref{cod-1}.
\end{lemma}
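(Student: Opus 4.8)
The plan is to feed the Rellich identities of Lemmas \ref{lemma-r1} and \ref{lemma-r2} a vector field of the form $\alpha(x)=\tilde m(x)^\ell\,\eta(x)$, where $\eta$ is a fixed transversal vector field for $\partial\Omega$ and $\tilde m$ is a smooth function comparable to $m(x,\B)$, and then to read off \eqref{re-30} and \eqref{re-31} by tracking where each error term on the right-hand side of \eqref{re-0}, \eqref{re-01} lands.

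First I would construct the two ingredients of $\alpha$. For $\eta$, I take $\eta\in C^1(\R^d;\R^d)$ with $|\eta|+|\nabla\eta|\le C$ and $\eta\cdot n\ge c_1>0$ on $\partial\Omega$, obtained by gluing, via a partition of unity subordinate to the graph balls, the inward coordinate directions of the Lipschitz graph representations; here $C$ and $c_1$ depend only on the Lipschitz character of $\Omega$. For $\tilde m$, I use that the scale-invariant estimates \eqref{m-2a}--\eqref{m-2b} make $m(\cdot,\B)$ slowly varying on the scale of its own reciprocal: a Whitney-type partition of unity adapted to the length scale $m(\cdot,\B)^{-1}$ yields $\tilde m\in C^\infty(\b(0,2R_0))$ with $c\,m(x,\B)\le\tilde m(x)\le C\,m(x,\B)$ and $|\nabla^k\tilde m(x)|\le C_k\,\tilde m(x)^{k+1}$ for every $k\ge 0$, the constants depending only on $C_0$ in \eqref{cod-1}; moreover $\tilde m(x)\ge cR_0^{-1}$ by \eqref{cod-1a}. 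Extending $\alpha=\tilde m^\ell\eta$ to a $C^1$ field on $\R^d$, we get $|\alpha|\le C\,m(x,\B)^\ell$ and, because $m(\cdot,\B)$ is bounded below, $|\nabla\alpha|+|\text{\rm div}\,\alpha|\le C\,m(x,\B)^{\ell+1}$.

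Now substitute this $\alpha$ into \eqref{re-0}. On the left, $\alpha_jn_j=\tilde m^\ell(\eta\cdot n)\ge c\,m(x,\B)^\ell$ on $\partial\Omega$, so the left side dominates $c\int_{\partial\Omega}|(D+\A)u|^2m(x,\B)^\ell$. On the right, the first boundary term equals $2\Re\int_{\partial\Omega}(n\cdot(D+\A)u)\,\overline{(\alpha\cdot(D+\A)u)}$, which by a weighted Cauchy--Schwarz is at most $\e\int_{\partial\Omega}|(D+\A)u|^2m(x,\B)^\ell+C_\e\int_{\partial\Omega}|n\cdot(D+\A)u|^2m(x,\B)^\ell$, and the $\e$-term is absorbed into the left side (all integrals are finite since $u\in C^2(\overline\Omega)$ and $m(\cdot,\B)$ is bounded on $\overline\Omega$). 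For the interior terms: using $|\B|\le C\,m(x,\B)^2$ from \eqref{m-2d} gives $|\alpha_kB_{kj}|\le C\,m^{\ell+2}$, so $2\Im\int_\Omega\alpha_kB_{kj}u\overline{L_ju}$ is bounded by $C\int_\Omega|u|^2m^{\ell+3}+C\int_\Omega|(D+\A)u|^2m^{\ell+1}$; the $\text{\rm div}\,\alpha$ and $\partial_j\alpha_k$ terms are each $\le C\int_\Omega|(D+\A)u|^2m^{\ell+1}$; and $2\Im\int_\Omega\alpha_kF\overline{L_ku}$ is $\le C\int_\Omega|F|^2m^{\ell-1}+C\int_\Omega|(D+\A)u|^2m^{\ell+1}$. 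Collecting everything and choosing $\e$ small enough to absorb yields \eqref{re-30}. The proof of \eqref{re-31} is identical but starts from \eqref{re-01}: the only structural difference is the leading boundary term, now $2\Re\int_{\partial\Omega}\alpha_kT_{kj}u\overline{L_ju}$, estimated by $\e\int_{\partial\Omega}|(D+\A)u|^2m^\ell+C_\e\int_{\partial\Omega}|Tu|^2m^\ell$, while the interior terms have the same absolute values (only signs change).

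The one step needing genuine care is the construction of the smooth weight $\tilde m$ together with the derivative bounds $|\nabla^k\tilde m|\le C_k\tilde m^{k+1}$ with constants independent of $R_0$ — this is precisely what the scale-invariant inequalities \eqref{m-2a}--\eqref{m-2b} provide. The remainder is bookkeeping: matching each of the five terms coming out of the Rellich identities to one of the four groups on the right of \eqref{re-30}--\eqref{re-31}, using only $|\B|\le C\,m^2$, $|\alpha|\le C\,m^\ell$, $|\nabla\alpha|\le C\,m^{\ell+1}$, and weighted Cauchy--Schwarz.
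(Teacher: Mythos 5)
Your proof is correct, but it runs along a genuinely different track from the paper's. The paper works locally at the natural scale: for each $x_0\in\partial\Omega$ it takes $r\approx m(x_0,\B)^{-1}$, applies the Rellich identities \eqref{re-0}, \eqref{re-01} with a compactly supported vector field $\alpha\in C_0^1(\b(x_0,2r))$ satisfying $\alpha\cdot n\ge c_0$ on $\b(x_0,r)\cap\partial\Omega$ and $|\nabla\alpha|\le Cr^{-1}$, uses only the crude bounds $|\B|\le Cr^{-2}$ on $\b(x_0,2r)$, and then multiplies the resulting local inequality by $m(x_0,\B)^{\ell+d-1}$ and integrates in $x_0$ over $\partial\Omega$, invoking the comparability $m(x,\B)\approx m(y,\B)$ for $|x-y|<c\,m(x,\B)^{-1}$ to convert the averaged local estimates into the weighted global ones; Cauchy's inequality then absorbs the mixed boundary term. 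You instead apply each Rellich identity once, globally, with the weighted field $\alpha=\tilde m^\ell\eta$, where $\eta$ is a fixed transversal field and $\tilde m$ is a $C^1$ regularization of $m(\cdot,\B)$ with $\tilde m\approx m$ and $|\nabla\tilde m|\le C\,m^2$. This is legitimate: such a regularized $\tilde m$ exists (the paper itself uses exactly this function, with a citation, in the proof of Lemma \ref{lemma-mx1}), the transversal field with constants controlled by the Lipschitz character is standard, and your term-by-term bookkeeping ($|\alpha|\le Cm^\ell$, $|\nabla\alpha|+|\mathrm{div}\,\alpha|\le Cm^{\ell+1}$ using that $m$ is bounded below, $|\B|\le Cm^2$ from \eqref{m-2d}, weighted Cauchy--Schwarz plus absorption of the $\e$-term, which is justified since all integrals are finite for $u\in C^2(\overline\Omega)$) reproduces exactly the four groups of terms in \eqref{re-30}--\eqref{re-31}. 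What each approach buys: your one-shot argument is cleaner and avoids the integration-over-$x_0$ averaging, at the price of front-loading the construction of $\tilde m$ (only the $C^1$ bound $|\nabla\tilde m|\le Cm^2$ is actually needed, so your claim of bounds on all higher derivatives is harmless overkill); the paper's localization avoids any smooth substitute for $m$ at this stage, needing only the slow-variation estimates \eqref{m-2a}--\eqref{m-2b}, and the same local-plus-average template is reused verbatim in Lemmas \ref{lemma-r4} and \ref{lemma-r5}, which is why the paper organizes the section that way.
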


\begin{proof}

Fix $x_0 \in \partial\Omega$.
Recall that $m(x_0, \B)> 2R_0^{-1}$ by \eqref{cod-1a}.
Let $r= c m(x_0, \B)^{-1}<  r_0$, where $c>0$ depends on the Lipschitz character of $\Omega$.
Choose $\alpha \in C_0^1 (\b(x_0, 2r); \R^d)$ such that
$\alpha \cdot n \ge 0$ on $\b(x_0, 2r)\cap \partial\Omega$,
$\alpha \cdot n \ge c_0>0$ on $\b(x_0, r)\cap \partial\Omega$,
$|\alpha |\le 1$ in $\b(x_0, 2r)$ and $|\nabla \alpha|\le C r^{-1}$ in $\b(x_0, 2r)$.
It follows from \eqref{re-0} that 
$$
\aligned
\int_{\b(x_0, r)\cap \partial\Omega} |(D+\A) u|^2
 & \le C \int_{\b(x_0, 2r)\cap \partial\Omega} |(D+\A) u| |n \cdot (D+\A) u|\\
 & \qquad+ C r^{-3} \int_{\b(x_0, 2r) \cap \Omega}  |u|^2
+ C r^{-1} \int_{\b(x_0, 2r)} |(D+\A) u|^2\\
 &\qquad  + C r  \int_{\b(x_0, 2r) \cap \Omega} |F|^2,
\endaligned
$$
where we have used the fact $ |\B (x)| \le C r^{-2}$ for $x\in \b(x_0, 2r)$.
We now multiply the inequality above by $m(x_0, \B)^{\ell+d-1}$ and integrate the resulting inequality in $x_0$
over $\partial\Omega$.
Using the fact that $m(x, \B) \approx m(y, \B)$ if $|x-y|< c m(x, \B)^{-1}$,
this leads to 
$$
\aligned
\int_{\partial\Omega}
|(D+\A)u|^2 m(x, \B)^\ell
& \le C \int_{\partial\Omega} |(D+\A) u| | n \cdot (D+\A) u| m(x, \B)^\ell\\
 & \qquad+ C \int_\Omega |u|^2  m(x, \B)^{\ell +3}
 + C \int_\Omega |(D+\A) u|^2 m(x, \B)^{\ell+1} \\
 &\qquad + C \int_\Omega |F|^2 m(x, \B)^{\ell-1}, 
\endaligned
$$
which yields \eqref{re-30} by using the Cauchy inequality. 
A similar argument, using \eqref{re-01}, gives \eqref{re-31}.
\end{proof}

\begin{lemma}\label{lemma-r4}
Under the same assumptions as in Lemma \ref{lemma-r3}, we have
\begin{equation}\label{re-40}
\aligned
\int_\Omega |(D+\A)u|^2 m(x, \B)^{\ell+1}
&\le C \int_\Omega |u|^2 m(x, \B)^{\ell+3}
+C \int_\Omega |F|^2 m(x, \B)^{\ell-1}\\
& \qquad
+ C \int_{\partial\Omega}
|u| | n \cdot (D+\A) u| m(x, \B)^{\ell+1},
\endaligned
\end{equation}
for any $\ell \in \R$, 
where $C$ depends on $\ell$.
\end{lemma}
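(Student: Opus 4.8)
The plan is to derive \eqref{re-40} from the Caccioppoli inequality \eqref{Ca-1} (together with its boundary version recorded in Remark \ref{C-re}), localized to a family of balls of radius $\sim m(x_0,\B)^{-1}$ adapted to the magnetic field, and then to sum the resulting local estimates against the weight $m(x,\B)^{\ell+1}$. This uses only the comparison inequalities \eqref{m-2a}--\eqref{m-2b} for $m(\cdot,\B)$, the lower bound $m(\cdot,\B)\ge 2R_0^{-1}$ from \eqref{cod-1a}, and the Caccioppoli estimates; in particular it never differentiates $m(\cdot,\B)$.

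First I would fix the covering. By \eqref{m-2a}--\eqref{m-2b} one has $m(x,\B)\approx m(y,\B)$ whenever $|x-y|<c\,m(x,\B)^{-1}$, so a standard covering argument based on this slow variation produces a countable family $\{B_k=\b(x_k,r_k)\}$ with $r_k=c\,m(x_k,\B)^{-1}$, $\overline\Omega\subset\bigcup_k B_k$, and with the dilates $\{4B_k\}$ of bounded overlap; moreover $m(\cdot,\B)\approx\mu_k:=m(x_k,\B)$ and $r_k\approx\mu_k^{-1}$ on each $4B_k$. Since \eqref{cod-1a} forces $m(\cdot,\B)\ge 2R_0^{-1}$, we may also take $r_k<r_0$, so that \eqref{Ca-1} and Remark \ref{C-re} apply on these balls.

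Next, on a ball $B_k$ with $4B_k\subset\Omega$, I would use \eqref{Ca-1} with $\varphi_k\in C_0^\infty(2B_k)$, $\varphi_k\equiv1$ on $B_k$, $|\nabla\varphi_k|\le Cr_k^{-1}$, and the elementary splitting $\int_{2B_k}|F|\,|u|\le\tfrac12\mu_k^2\int_{2B_k}|u|^2+\tfrac12\mu_k^{-2}\int_{2B_k}|F|^2$, to get
\[
\int_{B_k}|(D+\A)u|^2\le C\mu_k^2\int_{2B_k}|u|^2+C\mu_k^{-2}\int_{2B_k}|F|^2 ;
\]
for a ball $B_k$ whose dilate meets $\partial\Omega$, the same computation based on the boundary Caccioppoli inequality of Remark \ref{C-re} (applied at a point of $\partial\Omega\cap 2B_k$ and on a suitable dilate of $B_k$, using $m(\cdot,\B)\approx\mu_k$) adds the term $C\int_{4B_k\cap\partial\Omega}|n\cdot(D+\A)u|\,|u|$ on the right. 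Multiplying the $k$-th inequality by $\mu_k^{\ell+1}$ and replacing $\mu_k^{\ell+1},\mu_k^{\ell+3},\mu_k^{\ell-1}$ by $m(\cdot,\B)^{\ell+1},m(\cdot,\B)^{\ell+3},m(\cdot,\B)^{\ell-1}$ on $4B_k$ (legitimate up to constants depending on $\ell$), then summing over $k$: the left-hand sides dominate $c\int_\Omega|(D+\A)u|^2m(x,\B)^{\ell+1}$ since $\bigcup_k B_k\supset\Omega$, while the bounded overlap of $\{4B_k\}$ collapses the right-hand sides to $C\int_\Omega|u|^2m^{\ell+3}+C\int_\Omega|F|^2m^{\ell-1}+C\int_{\partial\Omega}|u|\,|n\cdot(D+\A)u|\,m^{\ell+1}$. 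This is exactly \eqref{re-40}, with $C$ depending only on $\ell$ and the data already fixed ($\Omega$ and $C_0$ in \eqref{cod-1}).

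I expect the only delicate point to be organizational rather than analytic: assembling the $m$-adapted covering with uniformly bounded overlap of the dilated balls, and verifying that the weighted local estimates reassemble with the correct powers of $m$ — both immediate consequences of \eqref{m-2a}--\eqref{m-2b}. The remaining arithmetic (choice of cutoffs, the $\varepsilon$-splitting of $\int|F|\,|u|$, absorbing the $\mu_k^2\int|u|^2$ term into the gradient term, matching exponents) is routine. As an alternative, the same estimate follows from a single integration by parts against $\tilde m^{\ell+1}$ for a smooth weight $\tilde m\approx m$ with $|\nabla\tilde m|\le C\tilde m^2$, absorbing the term produced by $\nabla\tilde m$; the covering argument above is preferred since it avoids introducing $\tilde m$.
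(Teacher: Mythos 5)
Your proposal is correct and follows essentially the same route as the paper: localize the Caccioppoli inequality (interior form and the boundary form of Remark \ref{C-re}) on balls of radius $\sim m(\cdot,\B)^{-1}$, use the comparability of $m$ on such balls from \eqref{m-2a}--\eqref{m-2b}, and reassemble with the weight $m^{\ell+1}$. The only difference is cosmetic: you sum over a countable $m$-adapted covering with bounded overlap, whereas the paper integrates the local estimates over the center point (over $\partial\Omega$ for the boundary layer $\Omega_b$ and over $\Omega\setminus\Omega_b$ for the interior), which is the same averaging in continuous form.
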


\begin{proof}

Fix $x_0\in \partial\Omega$ and let $r=c m(x_0, \B)^{-1}< r_0$.
It follows from Remark \ref{C-re} that 
$$
\aligned
\int_{\b(x_0, r)\cap \Omega} |(D+\A) u|^2 m(x, \B)^{\ell +d}
& \le C \int_{\b(x_0, 2r)\cap \Omega} |u|^2 m(x, \B)^{\ell +d+2}\\
& \qquad+ C \int_{\b(x_0, 2r)\cap \Omega} |F|^2 m(x, \B)^{\ell+d-2}\\
& \qquad
+ C \int_{\b(x_0, 2r) \cap \partial\Omega}
|u| |n \cdot (D+\A) u| m(x, \B)^{\ell+d}.
\endaligned
$$
By integrating the inequality above in $x_0$ over $\partial\Omega$, we see that 
$$
\int_{\Omega_b} |(D+\A) u|^2 m(x, \B)^{\ell +1}
$$ 
is bounded by the right-hand side of \eqref{re-40}, where
$$
\Omega_b =\{ x\in \Omega: \ \text{\rm dist}(x, \partial\Omega)< c\,  m(x, \B)^{-1} \}.
$$
To handle the region $\Omega\setminus \Omega_b$, we use the fact that if $\b(x_0, 2t)\subset \Omega$, 
$$
\int_{\b(x_0, t)} |(D+\A) u|^2
\le \frac{C}{t^2} \int_{\b(x_0, 2t)} |u|^2 + C t^2 \int_{\b(x_0, 2t)} |F|^2.
$$
It follows that if $x_0\in \Omega\setminus \Omega_b$, then
$$
\aligned
 & \int_{\b(x_0, r)} |(D+\A) u|^2 m(x, \B)^{\ell-d+1}\\
 & \le C \int_{\b(x_0, 2r)} |u|^2 m(x, \B)^{\ell-d+3}
+ C \int_{\b(x_0, 2r)} |F|^2 m(x, \B)^{\ell-d-1},
\endaligned
$$
where $r=c m(x_0, \B)^{-1}$.
By integrating the inequality above in $x_0$ over $\Omega\setminus \Omega_b$,
we see that 
$$
\int_{\Omega\setminus \Omega_b} |(D+\A) u|^2 m(x, \B)^{\ell+1}
$$ 
is also bounded by the right-hand side of \eqref{re-40}.
\end{proof}

\begin{thm}\label{thm-r5}
Suppose $d\ge 3$.
Let $u\in C^2(\overline{\Omega}; \C)$.
Suppose $(D+\A)^2 u=F$ in $\Omega$.
Assume that either $u=0$ on $\partial\Omega$ or
$n \cdot (D+\A)u=0$ on $\partial\Omega$.
Then
\begin{equation}\label{r5a}
\int_\Omega |(D+\A) u|^2 m(x, \B)^\ell
+\int_\Omega |u|^2 m(x, \B)^{\ell +2}
\le C \int_\Omega |F|^2 m(x, \B)^{\ell-2}
\end{equation}
for any $\ell \in \R$, where $C$ depends on $\ell$.
\end{thm}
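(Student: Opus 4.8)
The plan is to obtain \eqref{r5a} by combining the weighted resolvent estimates of Theorems \ref{thm-G2} and \ref{thm-N2} with the Rellich-type inequality of Lemma \ref{lemma-r4}. The key observation is that under either boundary condition in the hypothesis the boundary integral appearing in \eqref{re-40} vanishes identically, so that lemma reduces the control of $\int_\Omega |(D+\A)u|^2 m(x,\B)^{\ell}$ to that of $\int_\Omega |u|^2 m(x,\B)^{\ell+2}$, and the latter is exactly what \eqref{G2-0} (Dirichlet) or \eqref{N2-0} (Neumann) delivers once we take $p=2$.

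First I would identify $u$ with the relevant weak solution. Since $\A\in C^1(\overline\Omega;\R^d)$ and $u\in C^2(\overline\Omega;\C)$, the expansion \eqref{G4-0} shows that $F=(D+\A)^2 u\in C(\overline\Omega;\C)\subset L^\infty(\Omega;\C)$. If $u=0$ on $\partial\Omega$, then $u$ is the weak solution of the Dirichlet problem \eqref{DP-1} with datum $F$, which is unique by Lemma \ref{lemma-G2}, so Theorem \ref{thm-G2} applies to it. If instead $n\cdot(D+\A)u=0$ on $\partial\Omega$, then $u$ is a weak solution of the Neumann problem \eqref{NP-1} with $g=0$; here one first notes that \eqref{cod-1a} forces $m(x,\B)\ge 2R_0^{-1}$ on $\b(0,R_0)$, so Theorem \ref{thm-M1} yields the coercivity \eqref{H-1} with $c_0=cR_0^{-2}$, and then Lemma \ref{lemma-N2} gives the existence and uniqueness needed for Theorem \ref{thm-N2} to apply.

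Next, apply \eqref{G2-0} or \eqref{N2-0} with the exponent $\tfrac{\ell}{2}-1$ in place of $\ell$ and with $p=2$. Squaring the resulting inequality gives
\[
\int_\Omega m(x,\B)^{\ell+2}|u|^2\,dx\le C_\ell\int_\Omega m(x,\B)^{\ell-2}|F|^2\,dx
\]
for every $\ell\in\R$. On the other hand, Lemma \ref{lemma-r4} applied with $\ell-1$ in place of $\ell$ gives
\[
\int_\Omega |(D+\A)u|^2 m(x,\B)^{\ell}\le C\int_\Omega |u|^2 m(x,\B)^{\ell+2}+C\int_\Omega |F|^2 m(x,\B)^{\ell-2}+C\int_{\partial\Omega}|u|\,|n\cdot(D+\A)u|\,m(x,\B)^{\ell},
\]
and the last integral is zero since $u=0$ or $n\cdot(D+\A)u=0$ on $\partial\Omega$. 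Inserting the first displayed inequality into the second and then adding the first once more yields \eqref{r5a}.

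Once the Green and Neumann function bounds of Sections \ref{sec-G} and \ref{sec-N} are in hand, the argument is essentially bookkeeping with exponents. The only point I would flag as requiring genuine care is the identification of the given classical solution $u$ with the weak solution to which the representation-formula estimates \eqref{G2-0}/\eqref{N2-0} apply, together with the verification that the coercivity hypothesis \eqref{H-1} underlying the Neumann case is automatically available here because of \eqref{cod-1a}. The restriction $d\ge 3$ enters only through Theorems \ref{thm-G2} and \ref{thm-N2}; the case $d=2$ is treated separately by descent, as indicated in the introduction.
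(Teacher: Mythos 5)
Your proposal is correct and follows essentially the same route as the paper: apply \eqref{re-40} with the shifted exponent, observe that the boundary term vanishes under either boundary condition, and then absorb $\int_\Omega |u|^2 m(x,\B)^{\ell+2}$ via Theorems \ref{thm-G2} and \ref{thm-N2} with $p=2$. The extra care you take in identifying the classical solution with the unique weak solution and in verifying \eqref{H-1} via \eqref{cod-1a} for the Neumann case is a reasonable elaboration of what the paper leaves implicit.
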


\begin{proof}
It follows from \eqref{re-40} that the left-hand side of \eqref{r5a} is bounded by
$$
C \int_\Omega |u|^2 m(x, \B)^{\ell +2}
+C\int_\Omega |F|^2 m (x, \B)^{\ell-2}.
$$
This yields  \eqref{r5a} by using Theorems \ref{thm-G2} and \ref{thm-N2} with $p=2$.
\end{proof}

\begin{lemma}\label{lemma-r5}
Let $u\in H^1(\Omega; \C)$. Then 
\begin{equation}\label{r5-0}
\int_{\partial\Omega}
|u|^2 m(x, \B)^\ell
\le C \int_\Omega |u|^2 m(x, \B)^{\ell+1}
+ C \int_\Omega |(D+\A)u|^2 m(x, \B)^{\ell -1}, 
\end{equation}
where $C$ depends on $\ell$ and $\Omega$.
\end{lemma}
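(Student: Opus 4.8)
The estimate \eqref{r5-0} is a weighted trace inequality, and the plan is to establish it by the same ``localize at scale $m(x_0,\B)^{-1}$, then integrate in $x_0$ over $\partial\Omega$'' scheme already used in the proofs of Lemmas \ref{lemma-r3} and \ref{lemma-r4}. Two observations make this work. First, by \eqref{m-2a}--\eqref{m-2b} the weight $m(x,\B)$ is comparable to the constant $m(x_0,\B)$ on each ball $\b(x_0,2r)$ with $r\sim m(x_0,\B)^{-1}$, so it may be frozen at that scale. Second, since $\A$ is not assumed small, one cannot estimate $\nabla u$ directly; instead one uses the diamagnetic inequality \eqref{p1-1}, which bounds $|\nabla u_\e|$ by $|(D+\A)u|$, where $u_\e=\sqrt{|u|^2+\e^2}$.

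For the local step, fix $x_0\in\partial\Omega$. By \eqref{cod-1a} we have $m(x_0,\B)\ge 2R_0^{-1}$, so we may set $r=c\,m(x_0,\B)^{-1}$ with $c>0$ small (depending on the Lipschitz character of $\Omega$) and $0<r<r_0$. Choose $\alpha\in C_0^1(\b(x_0,2r);\R^d)$ with $\alpha\cdot n\ge c_1>0$ on $\b(x_0,r)\cap\partial\Omega$, $\alpha\cdot n\ge 0$ on $\b(x_0,2r)\cap\partial\Omega$, $|\alpha|\le 1$, and $|\nabla\alpha|\le Cr^{-1}$, exactly as in the proof of Lemma \ref{lemma-r3}. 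Applying the divergence theorem to the vector field $\alpha\,u_\e^2$, using $|\nabla u_\e|\le |(D+\A)u|$ (by \eqref{p1-1}), and using the Cauchy inequality $ab\le \tfrac{a^2}{2r}+\tfrac{r b^2}{2}$, one gets
\begin{equation*}
\int_{\b(x_0,r)\cap\partial\Omega}u_\e^2
\le \frac{C}{r}\int_{\b(x_0,2r)\cap\Omega}u_\e^2
+C r\int_{\b(x_0,2r)\cap\Omega}|(D+\A)u|^2 .
\end{equation*}
Letting $\e\to 0$ (approximating $u\in H^1(\Omega;\C)$ by smooth functions if one wishes to invoke \eqref{p1-1} directly) yields
\begin{equation*}
\int_{\b(x_0,r)\cap\partial\Omega}|u|^2
\le \frac{C}{r}\int_{\b(x_0,2r)\cap\Omega}|u|^2
+C r\int_{\b(x_0,2r)\cap\Omega}|(D+\A)u|^2 .
\end{equation*}

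Now multiply this inequality by $m(x_0,\B)^{\ell+d-1}$. Since $2r\,m(x_0,\B)=2c$ is small, \eqref{m-2a}--\eqref{m-2b} give $m(x,\B)\approx m(x_0,\B)$ on $\b(x_0,2r)$, and with $r^{-1}=c^{-1}m(x_0,\B)$ the three terms become, up to constants, $m(x_0,\B)^{d-1}$ times the local quantities $\int_{\b(x_0,r)\cap\partial\Omega}|u|^2 m(x,\B)^{\ell}$, $\int_{\b(x_0,2r)\cap\Omega}|u|^2 m(x,\B)^{\ell+1}$, and $\int_{\b(x_0,2r)\cap\Omega}|(D+\A)u|^2 m(x,\B)^{\ell-1}$. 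Integrating the resulting inequality in $x_0$ over $\partial\Omega$ and applying Fubini's theorem reduces matters to the covering bounds $\int_{\{x_0\in\partial\Omega:\,|x-x_0|<r(x_0)\}}m(x_0,\B)^{d-1}\,d\sigma(x_0)\gtrsim 1$ for $x\in\partial\Omega$ and $\int_{\{x_0\in\partial\Omega:\,|x-x_0|<2r(x_0)\}}m(x_0,\B)^{d-1}\,d\sigma(x_0)\lesssim 1$ for $x\in\Omega$, where $r(x_0)=c\,m(x_0,\B)^{-1}$. Both follow from \eqref{m-2a}--\eqref{m-2b}: the first set contains a surface ball around $x$ of radius $\sim m(x,\B)^{-1}$ on which $m(\cdot,\B)\approx m(x,\B)$, and the second is contained in such a surface ball, so each integral is comparable to $m(x,\B)^{d-1}\cdot m(x,\B)^{-(d-1)}\approx 1$ --- this is precisely the counting already carried out in Lemma \ref{lemma-r3}. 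I do not expect a genuine obstacle here beyond bookkeeping; the only delicate point is the systematic use of the comparability estimates \eqref{m-2a}--\eqref{m-2b} to interchange $m(x,\B)$ and $m(x_0,\B)$ at the scale $r\sim m(x_0,\B)^{-1}$, which is what keeps all constants independent of $R_0$.
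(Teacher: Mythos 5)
Your argument is correct and follows essentially the same route as the paper: the paper likewise fixes $x_0\in\partial\Omega$, takes $r=c\,m(x_0,\B)^{-1}<r_0$, applies the standard local trace inequality to $v_\e=\sqrt{|u|^2+\e^2}$ together with the diamagnetic bound $|\nabla v_\e|\le|(D+\A)u|$ from \eqref{p1-1}, multiplies by $m(x_0,\B)^{\ell+d-1}$, and integrates in $x_0$ over $\partial\Omega$ using the comparability \eqref{m-2a}--\eqref{m-2b}. The only cosmetic difference is that you rederive the local trace inequality via the divergence theorem with a transversal vector field $\alpha$, whereas the paper simply quotes it; the rest of the bookkeeping is identical.
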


\begin{proof}

Let $x_0\in \partial\Omega$ and $r=c m(x_0, \B)^{-1}< r_0$.
Then
$$
\int_{\b(x_0, r) \cap \partial\Omega}
|v|^2 
\le \frac{C}{r} \int_{\b(x_0, 2r) \cap \Omega} |v|^2
+ C r\int_{\b(x_0, 2r) \cap \Omega} |\nabla v|^2
$$
for any $v\in H^1(\Omega; \C)$.
By applying the inequality above to $v=v_\e =\sqrt{|u|^2 +\e^2}$ and letting $\e \to 0$, we obtain 
$$
\int_{\b(x_0, r) \cap \partial\Omega}
|u|^2 
\le \frac{C}{r} \int_{\b(x_0, 2r) \cap \Omega} |u|^2
+ C r\int_{\b(x_0, 2r) \cap \Omega} | (D+\A) u |^2.
$$
This leads to
$$
\aligned
\int_{\b(x_0, r)\cap \partial\Omega}
|u|^2 m(x, \B)^{\ell+d-1}
 & \le C \int_{\Omega\cap \b(x_0, 2r)}  |u|^2 m(x, \B)^{\ell +d}\\
& \qquad
+ C \int_{\Omega\cap \b(x_0, 2r)} | (D+\A) u|^2 m(x, \B)^{\ell+d-2}.
\endaligned
$$
By integrating the inequality above in $x_0$ over $\partial \Omega$, we obtain \eqref{r5-0}.
\end{proof}

\begin{lemma}\label{lemma-r6}
Under the same assumptions as in Lemma \ref{lemma-r3}, we have 
\begin{equation}\label{re-60}
\aligned
 & \int_{\partial\Omega}
|(D+\A)u |^2 m(x, \B)^\ell  +\int_{\partial\Omega} |u|^2 m(x, \B)^{\ell+2}\\
 & \le C \int_{\partial\Omega}
| n\cdot  (D+\A)  u|^2 m(x, \B)^\ell 
+ C \int_\Omega  |u|^2 m(x, \B)^{\ell+3} 
+ C \int_\Omega |F|^2 m(x, \B)^{\ell-1},
\endaligned
\end{equation}
\begin{equation}\label{re-61}
\aligned
\int_{\partial\Omega}
|(D+\A)u |^2 m(x, \B)^\ell 
 & \le C \int_{\partial\Omega}
|  T  u|^2 m(x, \B)^\ell 
+ C \int_{\partial\Omega}  | u|^2m(x, \B)^{\ell +2} \\
 & \qquad
 +C \int_\Omega |u|^2 m(x, \B)^{\ell+3} 
+ C \int_\Omega  |F|^2  m(x, \B)^{\ell-1},
\endaligned
\end{equation}
for any $\ell\in \R$, where $C$ depends on $\ell$, $C_0$ in \eqref{cod-1} and the Lipschitz character of  $\Omega$.
\end{lemma}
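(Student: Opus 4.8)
The plan is to combine the two weighted Rellich estimates of Lemma~\ref{lemma-r3} with the weighted Caccioppoli inequality \eqref{re-40} and the weighted trace inequality \eqref{r5-0}, and then to remove the ``bad'' interior term $\int_\Omega|(D+\A)u|^2 m(x,\B)^{\ell+1}$, which sits on the right-hand side of both \eqref{re-30} and \eqref{re-31}, by a standard absorption argument. Write $m=m(x,\B)$; under \eqref{cod-1a} one has $c\le m\le C$ on $\overline\Omega$, so that for $u\in C^2(\overline\Omega;\C)$ every weighted integral below is finite, which is exactly what makes the absorption legitimate. Note that the dimension plays no role here, since Lemmas~\ref{lemma-r3}, \ref{lemma-r4} and \ref{lemma-r5} hold for all $d\ge 2$.

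I would first prove \eqref{re-60}. Substituting \eqref{re-40} into the right-hand side of \eqref{re-30} replaces $\int_\Omega|(D+\A)u|^2 m^{\ell+1}$ by $\int_\Omega|u|^2 m^{\ell+3}$, $\int_\Omega|F|^2 m^{\ell-1}$, and the boundary error $\int_{\partial\Omega}|u|\,|n\cdot(D+\A)u|\,m^{\ell+1}$. For the latter I use the Cauchy inequality in the form
\[
\int_{\partial\Omega}|u|\,|n\cdot(D+\A)u|\,m^{\ell+1}
\le \varepsilon\int_{\partial\Omega}|u|^2 m^{\ell+2}
+C_\varepsilon\int_{\partial\Omega}|n\cdot(D+\A)u|^2 m^{\ell},
\]
and then control $\int_{\partial\Omega}|u|^2 m^{\ell+2}$ by \eqref{r5-0} with $\ell$ replaced by $\ell+2$, which reintroduces $\int_\Omega|(D+\A)u|^2 m^{\ell+1}$; feeding \eqref{re-40} back in once more (splitting its boundary error again) and taking $\varepsilon$ small lets me absorb both $\int_{\partial\Omega}|u|^2 m^{\ell+2}$ and $\int_\Omega|(D+\A)u|^2 m^{\ell+1}$ to the left, yielding
\[
\int_{\partial\Omega}|u|^2 m^{\ell+2}+\int_\Omega|(D+\A)u|^2 m^{\ell+1}
\le C\int_{\partial\Omega}|n\cdot(D+\A)u|^2 m^\ell
+C\int_\Omega|u|^2 m^{\ell+3}+C\int_\Omega|F|^2 m^{\ell-1}.
\]
Plugging this back into \eqref{re-30} gives \eqref{re-60}.

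For \eqref{re-61} I argue in the same way, starting from \eqref{re-31} instead of \eqref{re-30}. The difference is that now $\int_{\partial\Omega}|n\cdot(D+\A)u|^2 m^\ell$ is not allowed on the right-hand side, so I split the boundary error coming from \eqref{re-40} the other way,
\[
\int_{\partial\Omega}|u|\,|n\cdot(D+\A)u|\,m^{\ell+1}
\le \varepsilon\int_{\partial\Omega}|(D+\A)u|^2 m^{\ell}
+C_\varepsilon\int_{\partial\Omega}|u|^2 m^{\ell+2},
\]
keeping $\int_{\partial\Omega}|u|^2 m^{\ell+2}$ (which is permitted on the right of \eqref{re-61}) and absorbing $\varepsilon\int_{\partial\Omega}|(D+\A)u|^2 m^\ell$ into the left; combined with \eqref{re-40} and \eqref{r5-0} this produces \eqref{re-61}.

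The main obstacle is the circular dependence among the three quantities $\int_{\partial\Omega}|(D+\A)u|^2 m^\ell$, $\int_{\partial\Omega}|u|^2 m^{\ell+2}$ and $\int_\Omega|(D+\A)u|^2 m^{\ell+1}$: each of \eqref{re-30}/\eqref{re-31}, \eqref{re-40} and \eqref{r5-0} controls one of them only in terms of (among other things) another. The real work is therefore bookkeeping --- ordering the Cauchy-inequality splittings and choosing the small parameters $\varepsilon$ so that the loop closes and every term to be absorbed ends up on the correct side --- together with the a priori finiteness of all the integrals, which follows from $u\in C^2(\overline\Omega;\C)$ and the two-sided bound on $m$.
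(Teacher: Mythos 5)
Your proposal is correct and follows essentially the same route as the paper: both proofs combine \eqref{re-30}/\eqref{re-31} with the weighted Caccioppoli bound \eqref{re-40} and the trace inequality \eqref{r5-0}, split the boundary error $\int_{\partial\Omega}|u|\,|n\cdot(D+\A)u|\,m^{\ell+1}$ by the Cauchy inequality in the two directions you describe, and absorb the small term (the paper simply carries $\int_{\partial\Omega}|u|^2 m^{\ell+2}$ on the left from the start via \eqref{r5-0}, so only one pass is needed, whereas you iterate the substitution twice --- a purely cosmetic difference). Your remark on a priori finiteness ($u\in C^2(\overline\Omega;\C)$, $m$ bounded above and below) is exactly the justification implicitly used for the absorption; note only that the upper bound on $m$ comes from the boundedness of $\B$, not from \eqref{cod-1a}, and that \eqref{r5-0} is not actually needed for \eqref{re-61}.
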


\begin{proof}

To see \eqref{re-60}, we use \eqref{re-30} and \eqref{r5-0} to obtain 
$$
\aligned
 & \int_{\partial\Omega}
|(D+\A)u |^2 m(x, \B)^\ell + \int_{\partial\Omega}  |u|^2 m(x, \B)^{\ell+2}\\
 & \le C   \int_{\partial\Omega}
| n\cdot  (D+\A)  u|^2 m(x, \B)^\ell 
+ C \int_\Omega  |u|^2 m(x, \B)^{\ell+3}\\
 & \qquad+ C \int_\Omega | (D+\A)u|^2 m(x, \B)^{\ell+1}  
+ C \int_\Omega |F|^2 m(x, \B)^{\ell-1}\\
&\le 
C   \int_{\partial\Omega}
| n\cdot  (D+\A)  u|^2 m(x, \B)^\ell 
+ C \int_\Omega  |u|^2 m(x, \B)^{\ell+3}\\
 & \qquad
+ C \int_\Omega |F|^2 m(x, \B)^{\ell-1}
+ C \int_{\partial\Omega}
|u| |n \cdot (D+\A) u| m(x, \B)^{\ell+1},
\endaligned
$$
where we have used \eqref{re-40} for the second inequality.
The inequality \eqref{re-60} now follows readily by applying the Cauchy inequality to the last integral.

The proof for \eqref{re-61} is similar.
By \eqref{re-31} and \eqref{re-40}, we obtain 
$$
\aligned
  \int_{\partial\Omega}
|(D+\A)u |^2 m(x, \B)^\ell 
 & \le C   \int_{\partial\Omega}
|  Tu |^2 m(x, \B)^\ell 
+ C \int_\Omega  |u|^2 m(x, \B)^{\ell+3}\\
 & \quad
+ C \int_\Omega |F|^2 m(x, \B)^{\ell-1}
+ C \int_{\partial\Omega} |u| |n \cdot (D+\A) u | m(x, \B)^{\ell+1}, 
\endaligned
$$
which yields \eqref{re-61} by applying the Cauchy inequality to the last integral.
\end{proof}

\begin{lemma}\label{lemma-r7}
Let $d\ge 3$.
Suppose $u\in C^1(\overline{\Omega}; \C)$ and  $(D+\A)^2 u=0$ in $\Omega$.
Then for any $\ell \in \R$, 
\begin{align}
\int_\Omega |u|^2 m(x, \B)^{\ell+3}
 & \le C \int_{\partial\Omega} |u|^2 m(x, \B)^{\ell+2},\label{re-70}\\
\int_{\Omega} |u|^2 m(x, \B)^{\ell+3}
 &\le C \int_{\partial\Omega}
|n \cdot (D+\A) u|^2 m(x, \B)^\ell,\label{re-71}
\end{align}
where $C$ depends on $\ell$ and $\Omega$.
\end{lemma}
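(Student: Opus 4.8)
The plan is to prove both \eqref{re-70} and \eqref{re-71} by a duality argument built on the weighted bounds for the Green function (Theorem \ref{thm-G2}) and the Neumann function (Theorem \ref{thm-N2}), combined with the boundary Rellich estimates of Lemma \ref{lemma-r6}. Set $X=\int_\Omega |u|^2 m(x,\B)^{\ell+3}$; since $u\in C^1(\overline\Omega)$ and $m(\cdot,\B)$ is bounded above and below on $\overline\Omega$, we have $X<\infty$, and in each case it suffices to derive an inequality of the form $X\le C\,X^{1/2}(\text{boundary term})^{1/2}$. Throughout I would use the Green identity for $(D+\A)^2$: integrating by parts twice via \eqref{parts}, for $w\in H^2(\Omega;\C)$ and $u$ as in the statement (so $(D+\A)^2u=0$),
\[
\int_\Omega [(D+\A)^2 w]\,\overline u=\frac1i\int_{\partial\Omega}\Big[w\,\overline{n\cdot(D+\A)u}+(n\cdot(D+\A)w)\,\overline u\Big].
\]

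To prove \eqref{re-70}, I would let $w\in H_0^1(\Omega;\C)$ solve the Dirichlet problem $(D+\A)^2w=u\,m(x,\B)^{\ell+3}$ in $\Omega$, $w=0$ on $\partial\Omega$ (Lemma \ref{lemma-G2}). Since the right-hand side is bounded and $\Omega$ is smooth, $w\in W^{2,p}(\Omega;\C)$ for every $p<\infty$, so the Green identity applies; as $w=0$ on $\partial\Omega$ it reduces to $X=\frac1i\int_{\partial\Omega}(n\cdot(D+\A)w)\,\overline u$, whence
\[
X\le\Big(\int_{\partial\Omega}|(D+\A)w|^2 m(x,\B)^{-\ell-2}\Big)^{1/2}\Big(\int_{\partial\Omega}|u|^2 m(x,\B)^{\ell+2}\Big)^{1/2}.
\]
To control the first factor I would apply \eqref{re-61} to $w$ with $F=u\,m(x,\B)^{\ell+3}$ and exponent $-\ell-2$: since $w=0$ on $\partial\Omega$ kills the boundary terms $\int_{\partial\Omega}|Tw|^2(\cdots)$ and $\int_{\partial\Omega}|w|^2(\cdots)$, this gives $\int_{\partial\Omega}|(D+\A)w|^2 m^{-\ell-2}\le C\int_\Omega|w|^2 m^{-\ell+1}+C\,X$, and Theorem \ref{thm-G2} with $p=2$ and exponent $(-\ell-3)/2$ yields $\int_\Omega|w|^2 m^{-\ell+1}\le C\|m^{(\ell+3)/2}u\|_{L^2(\Omega)}^2=C\,X$. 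Hence $X\le C\,X^{1/2}\big(\int_{\partial\Omega}|u|^2 m^{\ell+2}\big)^{1/2}$, which is \eqref{re-70}.

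For \eqref{re-71} I would argue identically with the Neumann function: take $w\in H^1(\Omega;\C)$ solving $(D+\A)^2w=u\,m(x,\B)^{\ell+3}$ in $\Omega$, $n\cdot(D+\A)w=0$ on $\partial\Omega$, which is uniquely solvable by Lemma \ref{lemma-N2} (the coercivity \eqref{H-1} is available since $m(\cdot,\B)\ge 2R_0^{-1}$ by \eqref{cod-1a} together with Theorem \ref{thm-M1}). Now the Green identity collapses to $X=\frac1i\int_{\partial\Omega}w\,\overline{n\cdot(D+\A)u}$, so $X\le\big(\int_{\partial\Omega}|w|^2 m^{-\ell}\big)^{1/2}\big(\int_{\partial\Omega}|n\cdot(D+\A)u|^2 m^{\ell}\big)^{1/2}$. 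Applying \eqref{re-60} to $w$ with $F=u\,m(x,\B)^{\ell+3}$ and exponent $-\ell-2$, and using $n\cdot(D+\A)w=0$, bounds $\int_{\partial\Omega}|w|^2 m^{-\ell}$ by $C\int_\Omega|w|^2 m^{-\ell+1}+C\,X$, and Theorem \ref{thm-N2} gives $\int_\Omega|w|^2 m^{-\ell+1}\le C\,X$ exactly as before; \eqref{re-71} follows.

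The crux is the exponent bookkeeping that makes the interior term produced by the Rellich estimate reabsorbable into $X$: one must invoke the weighted Green/Neumann estimates of Theorems \ref{thm-G2} and \ref{thm-N2} at the fractional exponent $(-\ell-3)/2$, which is precisely why those theorems are stated for all real exponents. A minor technical wrinkle is that $m(\cdot,\B)$ need not be smooth, so $w$ is only known to lie in $W^{2,p}$ rather than $C^2$; this suffices, since the Rellich identities \eqref{re-0}, \eqref{re-01} (hence \eqref{re-60}, \eqref{re-61}) and the Green identity above extend from $C^2(\overline\Omega)$ to $H^2(\Omega)$ by density.
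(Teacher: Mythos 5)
Your argument is correct and rests on exactly the same pillars as the paper's proof: the auxiliary Dirichlet/Neumann problems, the Rellich estimates \eqref{re-60}--\eqref{re-61} with the zero boundary data killing the dangerous boundary terms, and the weighted Green/Neumann bounds of Theorems \ref{thm-G2} and \ref{thm-N2} with $p=2$ at the half-integer exponent $(-\ell-3)/2$. The only real difference is the finishing move: the paper runs a genuine duality, pairing $u$ against an arbitrary $G$ or $\Psi$ in $C_0^\infty(\Omega;\C)$ and solving the auxiliary problem with that data, whereas you plug in the extremal right-hand side $F=u\,m(x,\B)^{\ell+3}$ directly and absorb via $X\le C X^{1/2}(\cdot)^{1/2}$. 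The paper's choice is not cosmetic: with $C_0^\infty$ data and $\Omega$ smooth the auxiliary solution is in $C^2(\overline\Omega;\C)$, so Lemmas \ref{lemma-r3}--\ref{lemma-r6} apply verbatim and no a priori finiteness or absorption step is needed; your choice forces you to (a) use that $X<\infty$ (fine, since $u\in C^1(\overline\Omega)$ and $m(\cdot,\B)$ is bounded above and below on $\overline\Omega$), and (b) extend the Rellich estimates beyond $C^2(\overline\Omega)$, since $m(\cdot,\B)$ is not smooth and your $w$ is only in $W^{2,p}$. You flag (b) and your density fix is workable (for smooth $\Omega$, approximate in $W^{2,p}$ with $p>d$ so that all boundary and interior terms in \eqref{re-0}, \eqref{re-01} pass to the limit); an even cleaner route is to replace $m(\cdot,\B)$ by the regularized $\widetilde m\in C^1$ of \cite{Shen-1996}, which makes the data H\"older continuous and gives $w\in C^{2,\alpha}(\overline\Omega)$ by Schauder theory, at the harmless cost of constants. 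So: correct, same strategy, with a mild technical overhead that the paper's duality formulation deliberately avoids.
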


\begin{proof}

We use a duality argument.
For $G\in C_0^\infty (\Omega; \C)$, let $v\in H^1(\Omega; \C)$ be the weak solution of
$(D+\A)^2 v=G$ in $\Omega$ with the Neumann condition  $n \cdot (D+\A) v=0$ on $\partial\Omega$.
Note that 
$$
\int_\Omega u \cdot \overline{G}
=\int_\Omega (D+\A) u \cdot \overline{(D+\A) v}
= i \int_{\partial\Omega}
n \cdot (D+\A) u\cdot  \overline{v}.
$$
It follows by the Cauchy inequality  that
\begin{equation}\label{re-74}
\aligned
\Big| \int_\Omega u \cdot \overline{G} \Big|
& \le \left(\int_{\partial\Omega} | n \cdot (D+\A) u|^2 m(x, \B)^\ell \right)^{1/2}
\left(\int_{\partial\Omega}
|v|^2 m(x, \B)^{-\ell} \right)^{1/2}.
\endaligned
\end{equation}
We will show that 
\begin{equation}\label{re-72}
\int_{\partial\Omega} |v|^2 m(x, \B)^{-\ell}
\le C \int_\Omega 
|G|^2 m(x, \B)^{-\ell-3}, 
\end{equation}
which, together with \eqref{re-74},  yields \eqref{re-71} by duality.

Since $\Omega$ is smooth and $G\in C_0^\infty(\Omega; \C)$, we have $v\in C^2(\overline{\Omega}; \C)$. 
By using  Lemma \ref{lemma-r6}, we obtain 
$$
\int_{\partial\Omega}
|v|^2 m(x, \B)^{-\ell}
\le C \int_\Omega |v|^2 m(x, \B)^{-\ell+1}
+ C \int_\Omega |G|^2 m(x, \B)^{-\ell-3}.
$$
This, together with \eqref{N2-0} with $p=2$, gives \eqref{re-72}.

The proof of \eqref{re-70} is similar.
For $\Psi\in C_0^\infty(\Omega; \C)$, let $w\in H_0^1(\Omega; \C)$
be the weak solution of $(D+\A)^2 w=\Psi$ in $\Omega$ with the Dirichlet condition $w=0$ on $\partial\Omega$.
Then
$$
\int_\Omega u \cdot \overline{\Psi}
=i\int_{\partial\Omega} u \overline{n\cdot (D+\A) w}.
$$
Hence, by the Cauchy inequality, 
\begin{equation}\label{re-75}
\Big|\int_\Omega u\cdot \overline{\Psi} \Big|
\le \left(\int_{\partial\Omega}
|u|^2 m(x, \B)^{\ell+2}\right)^{1/2}
\left(\int_{\partial\Omega}
| n \cdot (D+\A)w|^2 m(x, \B)^{-\ell -2} \right)^{1/2}.
\end{equation}
We claim that
\begin{equation}\label{claim-1}
\int_{\partial\Omega}
|(D+\A) w|^2 m(x, \B)^{-\ell-2}
\le C \int_\Omega | \Psi|^2 m(x, \B)^{-\ell-3},
\end{equation}
which, together with \eqref{re-75},  leads to \eqref{re-70} by duality.

Finally, to prove \eqref{claim-1}, we apply the estimate \eqref{re-61}.
Since $\Omega$ is smooth and $\Psi\in C_0^\infty(\Omega; \C)$, we have $w\in C^2(\overline{\Omega}; \C)$.
Observe that  since $w=0$ on $\partial\Omega$, we have $ Tw=0$ on $\partial\Omega$.
As a result, 
$$
\int_{\partial\Omega}
|(D+\A) w|^2 m(x, \B)^{-\ell-2}
\le C \int_\Omega |w|^2 m(x, \B)^{-\ell+1}
+ C \int_\Omega \Psi|^2 m(x, \B)^{-\ell-3}.
$$
This, together with the estimate \eqref{G2-0} with $p=2$, gives \eqref{claim-1}
\end{proof}

We are now ready to prove the main results of this section.

\begin{thm}\label{re-theorem}
Let $d\ge 3$.
Suppose  $u\in C^2(\overline{\Omega}; \C)$ and $(D+\A)^2 u =0 $ in $\Omega$.
Then for any $\ell \in \R$, 
\begin{equation}\label{re-a}
\aligned
 &  \int_{\partial\Omega}
|(D+\A)u |^2 m(x, \B)^\ell
+\int_{\partial\Omega}
|u|^2 m(x, \B)^{\ell+2}
+\int_\Omega |u|^2 m(x, \B)^{\ell+3}\\
 &\qquad\qquad
  +\int_\Omega |(D+\A) u|^2 m(x, \B)^{\ell+1}
 \le C \int_{\partial\Omega}
|n \cdot (D+\A) u|^2 m(x, \B)^\ell, 
\endaligned
\end{equation}
\begin{equation}\label{re-b}
\aligned
 &  \int_{\partial\Omega}
|(D+\A)u |^2 m(x, \B)^\ell
+\int_\Omega |u|^2 m(x, \B)^{\ell+3}
  +\int_\Omega |(D+\A) u|^2 m(x, \B)^{\ell+1}\\
& \qquad \le C \int_{\partial\Omega}
| T u|^2 m(x, \B)^\ell
+ C \int_{\partial\Omega}
|u|^2 m(x, \B)^{\ell+2}, 
\endaligned
\end{equation}
where $C$ depends on $\ell$ and $\Omega$.
\end{thm}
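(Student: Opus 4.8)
The plan is to obtain Theorem \ref{re-theorem} by assembling the results of this section, keeping careful track of the exponents of $m(x,\B)$; throughout I write $m=m(x,\B)$ for brevity, and I use that $(D+\A)^2u=0$, so the terms involving $F$ in Lemmas \ref{lemma-r4} and \ref{lemma-r6} drop out.

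First I would prove \eqref{re-a}. Starting from \eqref{re-60} with $F=0$, the quantity $\int_{\partial\Omega}|(D+\A)u|^2 m^\ell+\int_{\partial\Omega}|u|^2 m^{\ell+2}$ is bounded by $C\int_{\partial\Omega}|n\cdot(D+\A)u|^2 m^\ell+C\int_\Omega|u|^2 m^{\ell+3}$. The interior term is absorbed by \eqref{re-71}, which gives $\int_\Omega|u|^2 m^{\ell+3}\le C\int_{\partial\Omega}|n\cdot(D+\A)u|^2 m^\ell$; this estimate simultaneously supplies the term $\int_\Omega|u|^2 m^{\ell+3}$ appearing on the left of \eqref{re-a}. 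It remains only to control $\int_\Omega|(D+\A)u|^2 m^{\ell+1}$, for which I would apply \eqref{re-40} with $F=0$, bounding it by $C\int_\Omega|u|^2 m^{\ell+3}+C\int_{\partial\Omega}|u|\,|n\cdot(D+\A)u|\,m^{\ell+1}$. The first term has already been handled; the boundary integral I would estimate by Cauchy--Schwarz with weights $m^{\ell+2}$ and $m^\ell$, namely $\bigl(\int_{\partial\Omega}|u|^2 m^{\ell+2}\bigr)^{1/2}\bigl(\int_{\partial\Omega}|n\cdot(D+\A)u|^2 m^\ell\bigr)^{1/2}$, both of whose factors are already dominated by $C\int_{\partial\Omega}|n\cdot(D+\A)u|^2 m^\ell$. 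Collecting the pieces yields \eqref{re-a}.

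The proof of \eqref{re-b} runs in parallel, with \eqref{re-61} in place of \eqref{re-60} and \eqref{re-70} in place of \eqref{re-71}. From \eqref{re-61} with $F=0$, the term $\int_\Omega|u|^2 m^{\ell+3}$ on the right is absorbed via \eqref{re-70} into $C\int_{\partial\Omega}|u|^2 m^{\ell+2}$, giving the bound on $\int_{\partial\Omega}|(D+\A)u|^2 m^\ell+\int_\Omega|u|^2 m^{\ell+3}$ by the right-hand side of \eqref{re-b}. For the remaining term $\int_\Omega|(D+\A)u|^2 m^{\ell+1}$ I would use \eqref{re-40} once more, now estimating the boundary product by Cauchy--Schwarz together with $|n\cdot(D+\A)u|\le|(D+\A)u|$, and then substituting the bound on $\int_{\partial\Omega}|(D+\A)u|^2 m^\ell$ obtained a moment earlier, finishing with Young's inequality.

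Since all the analytic content of this section --- the Rellich identities of Lemmas \ref{lemma-r1}--\ref{lemma-r2}, the weighted trace inequality of Lemma \ref{lemma-r5}, the Caccioppoli-type bound of Lemma \ref{lemma-r4}, and especially the duality arguments of Lemma \ref{lemma-r7}, which rest on the weighted $L^2$ estimates \eqref{G2-0} and \eqref{N2-0} for the Green and Neumann functions --- is already established, the proof of Theorem \ref{re-theorem} itself amounts to bookkeeping. The one point requiring attention is the order of operations: one must invoke \eqref{re-71} (respectively \eqref{re-70}) to remove $\int_\Omega|u|^2 m^{\ell+3}$ \emph{before} applying \eqref{re-40}, so that the term $\int_\Omega|(D+\A)u|^2 m^{\ell+1}$ is never estimated in terms of a quantity already sitting on the left-hand side of the inequality being proved.
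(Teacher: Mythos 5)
Your proposal is correct and follows essentially the same route as the paper: \eqref{re-60} combined with \eqref{re-71} for the boundary terms and $\int_\Omega|u|^2m(x,\B)^{\ell+3}$, then \eqref{re-40} plus a weighted Cauchy--Schwarz for $\int_\Omega|(D+\A)u|^2m(x,\B)^{\ell+1}$, with \eqref{re-61} and \eqref{re-70} playing the analogous roles for \eqref{re-b}. Your write-up merely makes explicit the ``similar argument'' the paper leaves to the reader for \eqref{re-b}, and the bookkeeping of exponents is accurate.
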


\begin{proof}

We give the proof of \eqref{re-a}. A similar argument yields \eqref{re-b}.

First, by \eqref{re-60} and \eqref{re-71}, we obtain 
\begin{equation}\label{re-a1}
\int_{\partial\Omega} |(D+\A) u|^2 m(x, \B)^\ell
+\int_{\partial\Omega} |u|^2 m(x, \B)^{\ell+2} 
\le C \int_{\partial\Omega} |n \cdot (D+\A) u|^2 m(x, \B)^\ell.
\end{equation}
Next,  by \eqref{re-40} and \eqref{re-71},
 the third and fourth terms in the left-hand side of \eqref{re-a} are bounded by
 $$
 C \int_{\partial\Omega}
 | n \cdot (D+\A) u|^2 m(x, \B)^\ell
 + C \int_{\partial\Omega}
 |u| |n\cdot (D+\A) u|^2 m(x, \B)^{\ell+1}.
 $$
 In view of \eqref{re-a1}, the integrals above are bounded by the right-hand side of \eqref{re-a}.
\end{proof}


\section{Nontangential maximal function estimates}\label{sec-NM}

Throughout this section,  we assume that  $d\ge 3$ and 
$\Omega\subset \b(0, R_0)$ is a smooth domain for some $R_0=Cr_0>0$,
 where $C$ depends on the Lipschitz character of 
$\Omega$. We also assume
 that $\B$ satisfies \eqref{cod-1} for any $\b(x, r)\subset \b(0, 4R_0)$ and \eqref{cod-1a} for any $x\in \b(0, R_0)$.
 As before,    the dependence of constants $C$ on $\Omega$  
 is through $r_0$ and  its Lipschitz character.
 
 \begin{lemma}\label{lemma-mx1}
 Let $u\in C^1(\overline{\Omega}; \C)$ and  $w(x)=u(x) m(x, \B)$.
Let  $\mathcal{M}(w)$ be defined by \eqref{m-max}.
  Then
 \begin{equation}\label{n8-0}
 \aligned
\int_{\partial\Omega}
|\mathcal{M}(w)|^2
 & \le  C \int_{\partial\Omega} | w|^2 + C \int_\Omega |(D+\A)  u(x)|^2 m(x, \B)\, dx\\
&\qquad\qquad
+ C \int_\Omega | u(x) |^2 m(x, \B)^3\, dx,
\endaligned
\end{equation}
where $C$ depends on $\Omega$ and $C_0$ in \eqref{cod-1}. 
 \end{lemma}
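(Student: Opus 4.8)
\emph{Proof proposal.} The plan is to compare, in each nontangential direction, the geometric scale $\delta(y):=\operatorname{dist}(y,\partial\Omega)$ with the magnetic scale $m(y,\B)^{-1}$, and to split the cone $\Gamma(x):=\{y\in\Omega:\ |y-x|<M_0\delta(y)\}$ accordingly. Fix a small $c_1>0$, chosen by means of \eqref{m-2a}--\eqref{m-2b} so that $m(\cdot,\B)$ stays comparable to $m(y,\B)$ on $\b(y,C\delta(y))$ whenever $\delta(y)\,m(y,\B)<c_1$, and let $\Gamma^{\mathrm f}(x)$, $\Gamma^{\mathrm n}(x)$ be the parts of $\Gamma(x)$ where $\delta m\ge c_1$, resp. $\delta m<c_1$. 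Then $\mathcal M(w)^2\le(\mathcal M^{\mathrm f}w)^2+(\mathcal M^{\mathrm n}w)^2$, the two pieces being the supremum in \eqref{m-max} restricted to $\Gamma^{\mathrm f}(x)$, $\Gamma^{\mathrm n}(x)$, and I would estimate them separately. The far part turns out to be soft; the near part is the crux.

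For the far part I would use only the maximal-function inequality
\[
\int_{\partial\Omega}\Bigl(\sup_{y\in\Gamma(x)}\fint_{\b(y,\delta(y)/2)}g\Bigr)d\sigma(x)\le C\int_\Omega\frac{g(z)}{\delta(z)}\,dz\qquad(g\ge0),
\]
which follows from Fubini once one observes that $z\in\b(y,\delta(y)/2)$ forces $\delta(y)\approx\delta(z)$ and $|y-z|\lesssim\delta(z)$, so the admissible $x$ fill only a surface ball of radius $\lesssim\delta(z)$. Since, by \eqref{m-2b}, a point $z\in\b(y,\delta(y)/2)$ with $y\in\Gamma^{\mathrm f}(x)$ still has $\delta(z)\,m(z,\B)\gtrsim1$, applying the inequality to $g=|w|^2$ on that region, using $\delta^{-1}\lesssim m(\cdot,\B)$ there and $|w|=m(\cdot,\B)|u|$, gives $\int_{\partial\Omega}(\mathcal M^{\mathrm f}w)^2\le C\int_\Omega m(z,\B)^3|u(z)|^2\,dz$.

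For the near part, put $\widetilde w(y)=\fint_{\b(y,\delta(y)/2)}|w|$; since $w=u\,m(\cdot,\B)\in C(\overline\Omega)$ (the function $x\mapsto m(x,\B)^{-1}$ being $1$-Lipschitz by \eqref{m}), $\widetilde w$ extends continuously to $\partial\Omega$ with boundary value $|w|$. For $y_0\in\Gamma^{\mathrm n}(x)$ I would run a dyadic chain $y_0,y_1,\dots\to x$ in a fixed enlargement of the cone with $\delta(y_k)\approx 2^{-k}\delta(y_0)$, along which (and on $B_k:=\b(y_k,C2^{-k}\delta(y_0))$, $2B_k\subset\Omega$) one has $m(\cdot,\B)\approx m_x:=m(x,\B)$. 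Telescoping and using the smoothness estimate $|\nabla\widetilde w(y)|\le C\fint_{\b(y,\delta(y)/2)}|\nabla|w||$ --- which has \emph{no} boundary term, because differentiating the average while simultaneously varying the radius $\delta(y)/2$ (Lipschitz, $|\nabla\delta|\le1$) produces a genuine average of $\nabla|w|$ --- yields $\widetilde w(y_0)\le|w(x)|+C\sum_{k\ge0}2^{-k}\delta(y_0)\fint_{B_k}|\nabla|w||$. Now $|\nabla|w||\le m(\cdot,\B)|\nabla|u||+|u|\,|\nabla m(\cdot,\B)|\le m(\cdot,\B)|(D+\A)u|+C\,m(\cdot,\B)^2|u|$, by Lemma \ref{lemma-p1} and $|\nabla m(\cdot,\B)|\le m(\cdot,\B)^2$. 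Taking the supremum over $y_0$ and reindexing by the scale $\rho=2^{-k}\delta(y_0)$, which runs through $(0,c_1/m_x)$, gives
\[
\mathcal M^{\mathrm n}(w)(x)\le|w(x)|+C\!\!\sum_{\rho=2^{-j}<c_1/m_x}\!\!\rho\bigl(m_x\,\mathcal N_\rho(|(D+\A)u|)(x)+m_x^2\,\mathcal N_\rho(|u|)(x)\bigr),
\]
where $\mathcal N_\rho(h)(x)$ is the largest average of $h$ over a ball $\b(y,C\rho)$ with $y$ in the enlarged cone and $\delta(y)\approx\rho$. Integrating in $x$: the term $|w(x)|$ gives exactly $\int_{\partial\Omega}|w|^2$, and for the sums the decisive point is that the scale weights are summable against $m_x$, namely $\sum_{2^{-j}<c_1/m_x}2^{-j}m_x\le2c_1$; so by Cauchy--Schwarz in $j$, then $(\fint h)^2\le\fint h^2$, then the single-scale $L^2$ form of the soft inequality ($\int_{\partial\Omega}m_x\,\mathcal N_\rho(h^2)\,d\sigma\le C\rho^{-1}\int_{\{\delta\approx\rho\}}m(\cdot,\B)h^2$, valid because $m(\cdot,\B)$ is frozen at scale $\rho<c_1/m_x$), the $(D+\A)u$-sum is bounded by $C\sum_j\int_{\{\delta\approx2^{-j}\}}m|(D+\A)u|^2\le C\int_\Omega m|(D+\A)u|^2$ (bounded overlap of the annuli) and the $|u|$-sum, carrying the extra $m_x^2$, by $C\int_\Omega m^3|u|^2$. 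Adding the far and near contributions gives \eqref{n8-0}.

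I expect the main obstacle to be precisely this clash of scales: a naive area-integral bound for $\widetilde w$ overshoots the target by a factor $\delta(y)\,m(y,\B)$, which is unbounded, so the averaging in \eqref{m-max} at the exact scale $\delta(y)/2$ is essential to make the repair work. Above the threshold $\delta m\sim1$ the uncertainty-type inequality $\delta^{-1}\lesssim m$ performs the conversion for free; below it one is forced to keep the chain within the range of scales on which $m$ is constant and let the geometric decay of the scale weights absorb the powers of $m$ --- making the bookkeeping of these two regimes fit together cleanly is the delicate step, and one has to be careful that the slow-variation estimates \eqref{m-2a}--\eqref{m-2b} are applied only on balls of radius $\lesssim m^{-1}$, where $m$ is genuinely comparable to a constant.
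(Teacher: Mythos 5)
Your argument is correct in substance but follows a genuinely different route from the paper. The paper localizes to a boundary chart, dominates $\mathcal{M}(w)$ by the Hardy--Littlewood maximal operator on $\partial\Omega$ applied to a vertical maximal function $\mathcal{M}_1(w)$, and then uses the elementary identity $\{\mathcal{M}_1(w)\}^2\le |w|^2\big|_{\partial\Omega}+2\int |\partial_t w|\,|w|\,dt$ followed by the weighted Cauchy inequality $\int_\Omega |\nabla w|\,|w|\le \int_\Omega |\nabla w|^2 m^{-1}+\int_\Omega |w|^2 m$, the diamagnetic inequality, and the $C^1$ substitute $\widetilde m\approx m$ with $|\nabla\widetilde m|\le Cm(x,\B)^2$ from \cite{Shen-1996}; no comparison of the geometric scale $\delta(y)$ with $m^{-1}$ is ever needed, which is what makes the proof short. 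Your proof instead splits the cone at the threshold $\delta\, m\sim c_1$, treats the far regime by a Carleson/Fubini-type bound plus the uncertainty inequality $\delta^{-1}\lesssim m$, and treats the near regime by chaining to the boundary with the geometric weights $\sum_k 2^{-k}\delta(y_0)\,m_x\lesssim c_1$ absorbing the powers of $m$; this buys a more quantitative picture of where each term of \eqref{n8-0} comes from (the boundary term from the telescoping limit, the solid integrals from the two regimes) and avoids flattening the boundary, at the price of substantially more bookkeeping. Three spots need tightening if you write it up: (i) the ``soft inequality'' with a supremum inside the boundary integral does not follow from Fubini alone --- replace the sup by a sum over dyadic heights $\delta\approx 2^{-j}$ and use that, for fixed $z$, the admissible $x$ fill a surface ball of radius $\lesssim\delta(z)$, which gives the same bound; (ii) $m(\cdot,\B)$ is only Lipschitz (indeed $1/m$ is $1$-Lipschitz, so $|\nabla m|\le m^2$ a.e.), so the pointwise bound on $|\nabla |w||$ and the differentiation of $\widetilde w(y)=\fint_{\b(y,\delta(y)/2)}|w|$ along the chain segments hold only a.e.\ and must be justified along the specific segments (or simply replace $m$ by the regularized $\widetilde m$ of \cite{Shen-1996}, as the paper does, which removes the issue); (iii) the chain must be built with enough intermediate points that each ball $B_k$ stays in $\Omega$ with $\delta\approx 2^{-k}\delta(y_0)$ on it --- otherwise the annuli $\{\delta\approx 2^{-j}\}$ do not have bounded overlap and the final summation over scales fails. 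With those standard repairs, your scheme does yield \eqref{n8-0}, and the applications of \eqref{m-2a}--\eqref{m-2b} are confined, as they must be, to balls of radius $\lesssim m^{-1}$.
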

 
 \begin{proof}
 
 Fix $x_0\in \partial\Omega$.
 By a change of the coordinate system, we may assume that $x_0=0$ and
 $$
 \Omega\cap \b(0, r_0)
 =\left\{ (x^\prime, x_d)\in \R^d: x^\prime \in \R^{d-1} \text{ and } x_d> \phi(x^\prime) \right\}
 \cap \b(0, r_0), 
 $$
 where $\phi: \R^{d-1} \to \R$ is a Lipchitz function with $\phi(0)=0$ and $\|\nabla \phi\|_\infty \le M_0$.
 For $x=(x^\prime, \phi(x^\prime)) \in \partial\Omega$ with $|x|\le c_0r_0$, define
 \begin{equation*}
 \mathcal{M}_1 (w) (x)
 =\sup \big\{ |w (x^\prime, t)|: \phi(x^\prime)< t< c r_0 \big\}.
 \end{equation*}
 It is not hard to see that 
 \begin{equation*}
 \mathcal{M}(w)  (x) \le C \mathcal{M}_{\partial\Omega} \big( \mathcal{M}_1 (w) \big) (x),
 \end{equation*}
 where $\mathcal{M}_{\partial\Omega}$ denotes the Hardy-Littlewood maximal operator on $\partial\Omega$, defined by 
 \begin{equation*}
 \mathcal{M}_{\partial\Omega} (g) (x)
 =\sup \Big\{ \fint_{\b(x, r)\cap \partial\Omega} |g|:  0< r<c r_0 \Big\}
 \end{equation*}
 for $x\in \partial\Omega$. As a result, by the $L^2$ boundedness of $\mathcal{M}_{\partial\Omega}$,
 \begin{equation}\label{n8-1}
 \int_{\b(0, cr_0)\cap\partial \Omega} |\mathcal{M}(w)|^2
 \le C \int_{\b(0, c_1 r_0)\cap \partial\Omega} |\mathcal{M}_1 (w)|^2,
 \end{equation}
 where  $c_1=C c_0$.
 
 Next, note that for $x=(x^\prime, \phi(x^\prime))$ with $|x|\le cr_0$,
 $$
  \{ \mathcal{M}_1 (w) (x)\}^2  \le |w (x^\prime, \phi(x^\prime)|^2
  +2 \int_{\phi(x^\prime)}^{cr_0} |\partial_t w(x^\prime, t)| |w(x^\prime, t)| dt.
  $$
  This, together with \eqref{n8-1}, leads to 
  $$
  \int_{\b(0, c r_0)\cap \partial\Omega} | \mathcal{M} (w)|^2
  \le C  \int_{\b(0, c r_0)\cap \partial\Omega} |w|^2
  + C \int_{\b(0, c_2 r_0)\cap \Omega}
  |\nabla w| |w|.
  $$
  By a covering argument, we obtain
  \begin{equation}\label{n8-1a}
  \aligned
  \int_{\partial\Omega} |\mathcal{M}(w)|^2
   & \le C \int_{\partial\Omega} |w|^2 + C \int_\Omega |\nabla w| |w|\\
   & \le  C \int_{\partial\Omega} |w|^2
   + C \int_\Omega |\nabla w|^2 m(x, \B)^{-1} + C \int_\Omega |w|^2 m(x, \B)
   \endaligned
  \end{equation}
  for any $w\in C^1(\overline{\Omega}; \C)$, where we have used the Cauchy inequality for the last step.
  We now  apply  \eqref{n8-1a} to the function
$w_\e=\sqrt{|w|^2 +\e^2}$ and then let $\e\to 0$. 
Using $|\nabla w_\e|\le |(D+\A) w|$, we deduce that 
 \begin{equation}\label{n8-1aa}
  \aligned
  \int_{\partial\Omega} |\mathcal{M}(w)|^2
   & \le  C \int_{\partial\Omega} |w|^2
   + C \int_\Omega | (D+\A) w|^2 m(x, \B)^{-1} + C \int_\Omega |w|^2 m(x, \B).
   \endaligned
  \end{equation}

Finally, we apply \eqref{n8-1aa}  to $w=u \widetilde{m}$, where $\widetilde{m}$ is a function in $C^1(\b(0, 2R_0); \R)$
  with the properties that $\widetilde{m}(x)\approx m(x, \B)$ for $x\in \b(0, 2R_0)$ and
  $|\nabla \widetilde{m}(x)|\le C m(x, \B)^2$ \cite {Shen-1996}.
 It follows that 
  \begin{equation}\label{n8-1b}
  C \int_{\partial\Omega} | \mathcal{M} (u\widetilde{m})|^2
    \le  C \int_{\partial\Omega} | u \widetilde{m}|^2
   + C \int_\Omega |(D+\A)u |^2 m(x, \B) + C \int_\Omega |u|^2 m(x, \B)^3
\end{equation}
for any $u\in C^1(\overline{\Omega}; \C)$.
This gives \eqref{n8-0}.
 \end{proof}

\begin{thm}\label{thm-n8-1}
Let $u\in C^2(\overline{\Omega}; \C)$ be a solution of the Neumann problem,  
\begin{equation}\label{NP-N}
(D+\A)^2 u= 0 \quad \text{ in } \Omega \quad \text{ and } \quad
n\cdot (D+\A)u=g \quad \text{ on } \partial\Omega.
\end{equation}
Let $v(x)= |(D+\A) u(x)| + m(x, \B) |u(x)| $. Then
\begin{equation}\label{m-est-1}
\| ( v)^* \|_{L^2(\partial\Omega)} 
\le C \| g \|_{L^2(\partial\Omega)}, 
\end{equation}
where $C$ depends on $C_0$ in \eqref{cod-1} and $\Omega$.
\end{thm}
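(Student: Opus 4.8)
The plan is to reduce the nontangential maximal estimate to the modified maximal function $\mathcal M$ of \eqref{m-max} via Corollary~\ref{cor-M}, to split $v$ into its gradient part $|(D+\A)u|$ and its potential part $m(x,\B)|u|$, and to control each by combining Lemma~\ref{lemma-mx1}, Theorem~\ref{thm-p1}, the Green function decay of Theorem~\ref{thm-G1}, and the Rellich estimates of Theorem~\ref{re-theorem}. Since the only inhomogeneous datum is $g$, every bound is traced back to Theorem~\ref{re-theorem} with $\ell=0$: because $n\cdot(D+\A)u=g$ on $\partial\Omega$, that theorem gives
\begin{equation}\label{pp-rellich}
\int_{\partial\Omega}|(D+\A)u|^2+\int_{\partial\Omega}m(x,\B)^2|u|^2+\int_\Omega m(x,\B)\,|(D+\A)u|^2+\int_\Omega m(x,\B)^3|u|^2\le C\int_{\partial\Omega}|g|^2,
\end{equation}
and in particular $\int_\Omega m(x,\B)\,v^2\le C\int_{\partial\Omega}|g|^2$. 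By Corollary~\ref{cor-M}, $(v)^*\le C\,\mathcal M(v)\le C\,\mathcal M(|(D+\A)u|)+C\,\mathcal M(m|u|)$ on $\partial\Omega$. For the potential part, Lemma~\ref{lemma-mx1} with $w=um$ gives $\int_{\partial\Omega}|\mathcal M(m|u|)|^2\le C\int_{\partial\Omega}m^2|u|^2+C\int_\Omega m\,|(D+\A)u|^2+C\int_\Omega m^3|u|^2$, which is $\le C\int_{\partial\Omega}|g|^2$ by \eqref{pp-rellich}.

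For the gradient part set $L_k=D_k+A_k$, $w_k=L_ku$, $1\le k\le d$, so that $\mathcal M(|(D+\A)u|)\le\sum_{k=1}^d\mathcal M(|w_k|)$. As in the proof of Theorem~\ref{I-thm-2}, $(D+\A)^2 w_k=F_k$ in $\Omega$ with $|F_k|\le 2|\B|\,|(D+\A)u|+|\nabla\B|\,|u|$, so by Lemma~\ref{lemma-M1} we have $|F_k(y)|\le C\,m(y,\B)^2\,v(y)$ and, $\Omega$ and $\B$ being fixed, $F_k\in L^\infty(\Omega;\C)$. Decompose $w_k=w_k'+w_k''$ with $w_k'(x)=\int_\Omega G_\A(x,y)F_k(y)\,dy$ the solution of the Dirichlet problem $(D+\A)^2 w_k'=F_k$ in $\Omega$, $w_k'=0$ on $\partial\Omega$ (Theorem~\ref{thm-G}), and $w_k'':=w_k-w_k'$ solving $(D+\A)^2 w_k''=0$ in $\Omega$, $w_k''=L_k u$ on $\partial\Omega$. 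Since $\|\mathcal M(|w_k|)\|_{L^2(\partial\Omega)}\le C\|(w_k)^*\|_{L^2(\partial\Omega)}\le C\|(w_k')^*\|_{L^2(\partial\Omega)}+C\|(w_k'')^*\|_{L^2(\partial\Omega)}$, it remains to bound the last two norms. The second is immediate from Theorem~\ref{thm-p1} with $p=2$ and \eqref{pp-rellich}: $\|(w_k'')^*\|_{L^2(\partial\Omega)}\le C\|L_k u\|_{L^2(\partial\Omega)}\le C\|(D+\A)u\|_{L^2(\partial\Omega)}\le C\|g\|_{L^2(\partial\Omega)}$.

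The main obstacle is $\|(w_k')^*\|_{L^2(\partial\Omega)}$, handled by the machinery of Sections~\ref{sec-G}--\ref{sec-R}. From $|w_k'(x)|\le C\int_\Omega|G_\A(x,y)|\,m(y,\B)^2 v(y)\,dy$, a Cauchy--Schwarz step against the measure $|G_\A(x,y)|\,dy$, factoring $m(y,\B)^2v(y)=\big(m(y,\B)^{3/2}\big)\big(m(y,\B)^{1/2}v(y)\big)$, together with the weighted bound $\int_\Omega|G_\A(x,y)|\,m(y,\B)^3\,dy\le C\,m(x,\B)$ (from Theorem~\ref{thm-G2} with $p=1$ and the symmetry \eqref{G5-1}), yields
\begin{equation}\label{pp-wk}
|w_k'(x)|^2\le C\,m(x,\B)\int_\Omega|G_\A(x,y)|\,m(y,\B)\,v(y)^2\,dy,\qquad x\in\Omega.
\end{equation}
Now insert the decay estimate $|G_\A(x,y)|\le C_\ell\{1+|x-y|m(y,\B)\}^{-\ell}|x-y|^{2-d}$, valid for every $\ell$ via \eqref{m-2a} as in the proof of Theorem~\ref{thm-G2}, take the supremum of \eqref{pp-wk} over $x$ in a nontangential cone at a point $x_0\in\partial\Omega$ (using \eqref{m-2a} to dominate $m(x,\B)$ and $\{1+|x-y|m(y,\B)\}^{-1}$ by the corresponding quantities at $x_0$, the extra powers of $\{1+|x_0-y|m(x_0,\B)\}$ being absorbed into the large exponent $\ell$), integrate over $\partial\Omega$, and interchange the order of integration. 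One is then left with showing $\int_{\partial\Omega}m(x_0,\B)\{1+|x_0-y|m(x_0,\B)\}^{-\ell'}|x_0-y|^{2-d}\,d\sigma(x_0)\le C$ uniformly in $y\in\Omega$, which holds for $\ell'$ large since the contribution of $\{x_0\in\partial\Omega:|x_0-y|\approx\mathrm{dist}(y,\partial\Omega)\}$ is $\le C\,m(y,\B)\,\mathrm{dist}(y,\partial\Omega)$ --- at most a constant when $\mathrm{dist}(y,\partial\Omega)\le m(y,\B)^{-1}$, and negligible otherwise by the decay. This gives $\int_{\partial\Omega}|(w_k')^*|^2\le C\int_\Omega m(y,\B)\,v(y)^2\,dy\le C\int_{\partial\Omega}|g|^2$. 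Collecting the three contributions yields \eqref{m-est-1}. The delicate points throughout are the uniform control of the Schur-type kernels over $\Omega$ and over $\partial\Omega$ and the bookkeeping with the nontangential aperture via \eqref{m-2a}.
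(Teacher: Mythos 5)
Your overall architecture reproduces the paper's proof almost step for step: reduction to $\mathcal{M}(v)$ via Corollary \ref{cor-M}, the splitting of $v$ into $|(D+\A)u|$ and $m(x,\B)|u|$, the estimate of the potential part by Lemma \ref{lemma-mx1} plus the Rellich bound \eqref{re-a} with $\ell=0$, the identity $(D+\A)^2(D_k+A_k)u=F_k$ with $|F_k|\le Cm^2v$, the decomposition $w_k=w_k'+w_k''$ (the paper's $w_2+w_1$), and the bound for $w_k''$ via Theorem \ref{thm-p1} and \eqref{re-a}. The one place where you depart from the paper is the inhomogeneous piece $w_k'$, and that is where the argument has a genuine gap. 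The paper never estimates the full nontangential maximal function $(w_2)^*$ of the Green potential; it only estimates the averaged maximal function $\mathcal{M}(w_2)$, using the trace-type inequality \eqref{n8-1aa} (with the boundary term vanishing since $w_2=0$ on $\partial\Omega$) together with the weighted bounds $\int_\Omega|(D+\A)w_2|^2m^{-1}+\int_\Omega|w_2|^2m\le C\int_\Omega|F|^2m^{-3}$ from Theorem \ref{thm-r5} with $\ell=-1$. You instead try to bound $(w_k')^*$ pointwise from the representation $w_k'(x)=\int_\Omega G_\A(x,y)F_k(y)\,dy$ by dominating, for every $x$ in the cone $\Gamma(x_0)$, the kernel $m(x)|G_\A(x,y)|$ by a fixed kernel $K(x_0,y)=m(x_0)\{1+|x_0-y|m(x_0)\}^{-\ell'}|x_0-y|^{2-d}$ and then interchanging the $d\sigma(x_0)$ and $dy$ integrations.

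That domination fails precisely in the near-diagonal region $|y-x|\lesssim\mathrm{dist}(x,\partial\Omega)$: there $|x-y|^{2-d}$ blows up as $y\to x$ while $|x_0-y|^{2-d}\approx\mathrm{dist}(x,\partial\Omega)^{2-d}$ stays bounded (your comparison $|x_0-y|\le C|x-y|$ only holds when $|x-y|\gtrsim\mathrm{dist}(x,\partial\Omega)$), and the polynomial factors controlled by \eqref{m-2a} cannot compensate a singularity sitting inside the cone. Moreover you cannot simply "take the supremum and interchange the order of integration": with a supremum over $x\in\Gamma(x_0)$ inside, Fubini is not available, and for a fixed interior point $y$ the set of $x_0\in\partial\Omega$ whose cones contain points arbitrarily close to $y$ has surface measure comparable to $\mathrm{dist}(y,\partial\Omega)^{d-1}$, so the sup of the unaveraged kernel is not integrable against $d\sigma(x_0)$ in the way your Schur bound requires. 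Handling this region forces either an averaging over Whitney balls (i.e.\ working with $\mathcal{M}(w_k')$, which leads you back to an estimate of the type \eqref{n8-1aa}, or else to a bound by $\mathcal{M}(v)$ itself, which is circular) or an $L^2$ trace/duality mechanism --- exactly the two ingredients the paper combines. The rest of your outline is sound (and the weighted kernel bound $\int_\Omega|G_\A(x,y)|m(y,\B)^3\,dy\le Cm(x,\B)$ you use is correct, though it comes from the kernel estimate \eqref{G2-1} with $\ell=-3$ in the proof of Theorem \ref{thm-G2}, not from the statement of that theorem with $p=1$); the fix is to replace your Schur argument for $(w_k')^*$ by the paper's estimate of $\mathcal{M}(w_2)$ via \eqref{n8-1aa} and Theorem \ref{thm-r5}.
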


\begin{proof}

In view of Corollary \ref{cor-M}, it suffices to prove that 
\begin{equation}\label{n8-2}
\| \mathcal{M} (v) \|_{L^2(\partial\Omega)} \le C \| g \|_{L^2(\partial\Omega)}, 
\end{equation}
where $\mathcal{M}(v)$ is defined by \eqref{m-max}.

Let $w=(D_k +A_k)u$ for $1\le k \le d$. Then $(D+\A)^2 w =F$, where
\begin{equation}\label{F1}
|F|\le 2 |\B| |(D+\A) u| + |\nabla \B| |u|.
\end{equation}
Let $w=w_1 + w_2$, where $(D+\A)^2 w_1 =0$ in $\Omega$ and $w_1 =w$ on $\partial\Omega$.
By Theorem \ref{thm-p1}, 
\begin{equation}\label{n8-2a}
\aligned
\|(w_1)^*\|_{L^2(\partial\Omega)}
&\le C \| w_1 \|_{L^2(\partial\Omega)}  = C \| (D_k+A_k ) u\|_{L^2(\partial\Omega)}\\
& \le C \| g \|_{L^2(\partial\Omega)}, 
\endaligned
\end{equation}
where we have used 
the Rellich estimate \eqref{re-a} for the last inequality.

Next, note that $(D+\A)^2 w_2=F$ in $\Omega$ and $w_2=0$ on $\partial\Omega$.
It follows by Theorem \ref{thm-r5} that
$$
\aligned
 & \int_\Omega |(D+\A) w_2|^2 m(x, \B)^{-1}
+\int_\Omega |w_2|^2 m(x, \B)\\
&\qquad \le C \int_\Omega |F|^2 m(x, \B)^{-3}\\
&\qquad\le C \int_\Omega |\B|^2 |(D+\A) u|^2 m(x, \B)^{-3}
+ C \int_\Omega |\nabla \B|^2 |u|^2 m(x, \B)^{-3},
\endaligned
$$
where we have used \eqref{F1} for the last inequality.
Since $|\B (x)|\le C m(x, \B)^2$ and $|\nabla \B(x)| \le C m(x, \B)^3$, we obtain 
$$
\aligned
 & \int_\Omega |(D+\A) w_2|^2 m(x, \B)^{-1}
+\int_\Omega |w_2|^2 m(x, \B) \\
&\qquad\le C \int_\Omega |(D+\A)u|^2 m(x, \B)
+ C \int_\Omega |u|^2 m(x, \B)^3\\
&\qquad \le C \int_{\partial\Omega} |g|^2,
\endaligned
$$
where we have used the Rellich estimate \eqref{re-a} with $\ell=0$ for the last step.
In view of Lemma \ref{lemma-mx1}, this gives
\begin{equation}\label{n8-2b}
\| \mathcal{M}(w_2)\|_{L^2(\partial\Omega)}
\le C \| g \|_{L^2(\partial\Omega)}.
\end{equation}

Finally, to prove \eqref{n8-2}, note that
$$
\aligned
\| \mathcal{M}(w)\|_{L^2(\partial\Omega)}
& \le C \| (w_1)^* \|_{L^2(\partial\Omega)}
+ C \| \mathcal{M}(w_2) \|_{L^2(\partial\Omega)}\\
& \le C \| g \|_{L^2(\partial\Omega)},
\endaligned
$$
where we have used \eqref{n8-2a} and \eqref{n8-2b} for the second inequality.
Moreover, if $\widetilde{v}(x)= m(x, \B) u(x)$, then
$$
\aligned
\| \mathcal{M}(\widetilde{v})\|_{L^2(\partial\Omega)}
 & \le  C \left\{ \| \widetilde{v}\|_{L^2(\partial\Omega)}
+ \| m(x, \B)^{1/2} (D+\A) u\|_{L^2(\Omega)}
+ \| m(x, \B)^{3/2} u \|_{L^2(\Omega)}\right \}\\
& \le C \| g \|_{L^2(\partial\Omega)},
\endaligned
$$
where we have used \eqref{n8-0} for the first inequality and \eqref{re-a} with $\ell=0$ for the second.
\end{proof}

\begin{thm}\label{thm-n8-2}
Let $u\in C^2(\overline{\Omega}; \C)$ be a solution of the Dirichlet problem,  
\begin{equation}\label{N8-D}
(D+\A)^2 u= 0 \quad \text{ in } \Omega \quad \text{ and } \quad
u=f  \quad \text{ on } \partial\Omega.
\end{equation}
Let $v(x)=|(D+\A)u (x)| + m(x, \B) |u(x)|$. Then
\begin{equation}\label{m-est-2}
\aligned
 \| (v)^*\|_{L^2(\partial\Omega)}
 &  \le C\left\{  \| Tf \|_{L^2(\partial\Omega)} + \| m(x, \B) f \|_{L^2(\partial\Omega)} \right\}, 
 \endaligned
\end{equation}
where $C$ depends on $C_0$ in \eqref{cod-1} and $\Omega$.
\end{thm}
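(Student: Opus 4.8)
The plan is to follow the scheme of Theorem \ref{thm-n8-1}, with the Neumann Rellich estimate \eqref{re-a} replaced by its Dirichlet counterpart \eqref{re-b}. By Corollary \ref{cor-M} we have $(v)^*\le C\,\mathcal{M}(v)$ on $\partial\Omega$, where $\mathcal{M}$ is the modified maximal operator \eqref{m-max}, so it suffices to show
\[
\|\mathcal{M}(v)\|_{L^2(\partial\Omega)}\le C\big\{\|Tf\|_{L^2(\partial\Omega)}+\|m(\cdot,\B)f\|_{L^2(\partial\Omega)}\big\}.
\]
Since $\mathcal{M}$ is subadditive and $v(x)\le\sum_{k=1}^d|(D_k+A_k)u(x)|+m(x,\B)|u(x)|$, this reduces to bounding $\mathcal{M}\big((D_k+A_k)u\big)$ for each $k$ and $\mathcal{M}\big(m(\cdot,\B)u\big)$ by the right-hand side above. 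Note that $T$ is tangential, so $Tu=Tf$ on $\partial\Omega$; this is how the Dirichlet datum enters the Rellich bound.

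Fix $k$ and set $w=(D_k+A_k)u$. As in \eqref{F1}, $(D+\A)^2w=F$ in $\Omega$ with $|F|\le 2|\B|\,|(D+\A)u|+|\nabla\B|\,|u|$. Decompose $w=w_1+w_2$, where $w_1$ solves $(D+\A)^2w_1=0$ in $\Omega$ with $w_1=w$ on $\partial\Omega$, so that $w_2=w-w_1$ satisfies $(D+\A)^2w_2=F$ in $\Omega$ and $w_2=0$ on $\partial\Omega$. For $w_1$, Theorem \ref{thm-p1} gives $\|(w_1)^*\|_{L^2(\partial\Omega)}\le C\|w_1\|_{L^2(\partial\Omega)}=C\|(D_k+A_k)u\|_{L^2(\partial\Omega)}$, and the Rellich estimate \eqref{re-b} with $\ell=0$ (using $u=f$ and $Tu=Tf$ on $\partial\Omega$) bounds $\int_{\partial\Omega}|(D+\A)u|^2$, hence $\|(w_1)^*\|_{L^2(\partial\Omega)}$, by $C\big\{\|Tf\|_{L^2(\partial\Omega)}+\|m(\cdot,\B)f\|_{L^2(\partial\Omega)}\big\}$.

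For $w_2$, since $w_2=0$ on $\partial\Omega$, Theorem \ref{thm-r5} with $\ell=-1$ gives
\[
\int_\Omega|(D+\A)w_2|^2m(x,\B)^{-1}+\int_\Omega|w_2|^2m(x,\B)\le C\int_\Omega|F|^2m(x,\B)^{-3};
\]
by \eqref{m-2d} one has $|\B|\le Cm(\cdot,\B)^2$ and $|\nabla\B|\le Cm(\cdot,\B)^3$, so the right side is at most $C\int_\Omega|(D+\A)u|^2m(x,\B)+C\int_\Omega|u|^2m(x,\B)^3$, which by \eqref{re-b} with $\ell=0$ is $\le C\big\{\|Tf\|_{L^2(\partial\Omega)}^2+\|m(\cdot,\B)f\|_{L^2(\partial\Omega)}^2\big\}$. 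The inequality \eqref{n8-1aa} from the proof of Lemma \ref{lemma-mx1}, applied to $w_2$ (whose boundary trace vanishes), then yields $\|\mathcal{M}(w_2)\|_{L^2(\partial\Omega)}\le C\big\{\|Tf\|_{L^2(\partial\Omega)}+\|m(\cdot,\B)f\|_{L^2(\partial\Omega)}\big\}$, so $\|\mathcal{M}(w)\|_{L^2(\partial\Omega)}\le C\|(w_1)^*\|_{L^2(\partial\Omega)}+C\|\mathcal{M}(w_2)\|_{L^2(\partial\Omega)}$ is controlled as well. Finally, for $\widetilde{v}=m(\cdot,\B)u$, Lemma \ref{lemma-mx1} bounds $\|\mathcal{M}(\widetilde{v})\|_{L^2(\partial\Omega)}$ by $C\big\{\|\widetilde{v}\|_{L^2(\partial\Omega)}+\|m(\cdot,\B)^{1/2}(D+\A)u\|_{L^2(\Omega)}+\|m(\cdot,\B)^{3/2}u\|_{L^2(\Omega)}\big\}$; here $\widetilde{v}|_{\partial\Omega}=m(\cdot,\B)f$ and the two interior norms are again controlled by \eqref{re-b} with $\ell=0$, so this too is bounded by the right side of \eqref{m-est-2}. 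Combining these estimates with Corollary \ref{cor-M} proves \eqref{m-est-2}.

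The main obstacle, exactly as in Theorem \ref{thm-n8-1}, is that $w=(D_k+A_k)u$ solves an inhomogeneous equation, since the commutator $[(D+\A)^2,D_k+A_k]$ does not vanish in $\Omega$. What makes the scheme close is that the source $F$ is pointwise dominated by $m(\cdot,\B)^2|(D+\A)u|+m(\cdot,\B)^3|u|$, and precisely these weighted interior quantities are absorbed by the Dirichlet Rellich estimate \eqref{re-b}; no ingredient beyond those already used for Theorem \ref{thm-n8-1} is needed, and the replacement of $\|g\|_{L^2(\partial\Omega)}$ by $\|Tf\|_{L^2(\partial\Omega)}+\|m(\cdot,\B)f\|_{L^2(\partial\Omega)}$ throughout is routine.
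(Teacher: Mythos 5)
Your proposal is correct and follows exactly the route the paper intends: the paper's proof of this theorem is simply "the proof of Theorem \ref{thm-n8-1} with \eqref{re-b} in place of \eqref{re-a}," and your argument carries out precisely that substitution — the decomposition $w=w_1+w_2$, Theorem \ref{thm-p1} plus the Rellich bound for $w_1$, Theorem \ref{thm-r5} with the commutator bound \eqref{F1} and \eqref{n8-1aa} for $w_2$, and Lemma \ref{lemma-mx1} for $m(\cdot,\B)u$, with $Tu=Tf$ feeding the boundary data into \eqref{re-b}. No discrepancies to report.
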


\begin{proof}

The proof is similar to that of Theorem \ref{thm-n8-1}, using \eqref{re-b} in the place of \eqref{re-a}.
\end{proof}


\section{An approximation argument}\label{sec-A}

In this section we use an approximation argument to extend the results in the last section 
to Lipschitz domains in $\R^d$, $d\ge 3$.
We assume that $\Omega\subset \b(0, R_0)$ is a Lipschitz domain in $\R^d$, 
where $R_0=C r_0$ and $C$ depends on the Lipschitz character of $\Omega$.
As in the last section, 
we also assume
 that $\B$ satisfies \eqref{cod-1} for any $\b(x, r)\subset \b(0, 4R_0)$ and \eqref{cod-1a} for any $x\in \b(0, R_0)$.
 
\begin{lemma}\label{lemma-app}
Let $\Omega$ be a bounded Lipschitz domain in $\R^d$.
There exists a sequence of smooth domains $\{ \Omega_\ell \}$
with uniform Lipschitz characters such that
$\Omega_1\subset \Omega_2\subset \cdots \subset \Omega_\ell \cdots \subset \Omega$ with the following properties:

\begin{itemize}

\item

There exists a sequence of homeomorphisms $\Lambda_\ell : \partial \Omega
\to \partial \Omega_\ell$ such that  $\Lambda_\ell (x) \to x$ uniformly on $\partial\Omega$ as $\ell \to \infty$. Moreover, 
$$
|\Lambda_\ell (x) - x|\le C \,  \text{\rm dist}(\Lambda_\ell (x) ,  \partial\Omega)
$$
 for any $x\in \partial\Omega$.
 
 \item
 
 The unit outward normal to $\partial\Omega_\ell$,
 $n(\Lambda_\ell (x))$ converges to $n(x)$ for a.e.~$x\in \partial\Omega$.
 
 \item
 
 There are positive functions $\omega_\ell$ on $\partial\Omega$ such that
 $0<c \le \omega_\ell\le C$ uniformly in $\ell$, $\omega_\ell \to 1$ a.e.~as $\ell \to \infty$, and
 $$
 \int_E \omega_\ell dx = \int_{\Lambda_\ell (E)} dx
 $$
 for any measurable $E\subset \partial\Omega$.
 
 \item
 
 There exists $\alpha \in C_0^\infty(\R^d; \R^d)$ such that 
 $$
 \alpha (x) \cdot n(x)\ge c>0 \quad \text{ for any } x\in \partial\Omega_\ell \text{ and any } \ell.
 $$
 \end{itemize}
\end{lemma}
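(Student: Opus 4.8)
\medskip
\noindent\textbf{Proof proposal.}
The plan is to carry out the standard smoothing construction for a Lipschitz domain, where essentially all the work lies in arranging it so that the four listed properties hold simultaneously. First I would fix the geometry. Cover $\partial\Omega$ by balls $\b_1,\dots,\b_N$ such that, in a suitably rotated coordinate system, $\Omega\cap\b_j=\{x_d>\phi_j(x')\}\cap\b_j$ with $\phi_j:\R^{d-1}\to\R$ Lipschitz and $\|\nabla\phi_j\|_\infty\le M_0$ (extending $\phi_j$ to all of $\R^{d-1}$ with the same bound). Write $e^{(j)}$ for the corresponding upward unit vector and choose $\chi_0,\chi_1,\dots,\chi_N\in C_0^\infty(\R^d)$ with $\supp{\chi_j}\subset\b_j$ for $j\ge 1$, $\supp{\chi_0}\subset\subset\Omega$, and $\sum_j\chi_j=1$ on a neighbourhood of $\overline\Omega$. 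Setting $\alpha:=\zeta\sum_{j\ge1}\chi_j(-e^{(j)})$ with $\zeta\in C_0^\infty(\R^d)$ equal to $1$ near $\partial\Omega$: whenever $\chi_j(x)>0$ with $x\in\b_j$, at every point where the outward normal exists one has $n(x)\cdot(-e^{(j)})\ge(1+M_0^2)^{-1/2}$, so $\alpha(x)\cdot n(x)\ge c>0$; since the approximating boundaries constructed below are graphs over the same charts with Lipschitz constants bounded by $CM_0$, the same estimate holds on each $\partial\Omega_\ell$, which is the last bullet. This $\alpha$ will also be the field along which the homeomorphisms transport $\partial\Omega$.

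Next I would smooth the boundary by mollifying the graph functions. With a standard mollifier $\eta_\varepsilon$, put $\phi_j^\varepsilon:=\phi_j*\eta_\varepsilon$, so that $\phi_j^\varepsilon\in C^\infty$, $\|\nabla\phi_j^\varepsilon\|_\infty\le M_0$, $|\phi_j^\varepsilon-\phi_j|\le CM_0\varepsilon$, and $\nabla\phi_j^\varepsilon(x')\to\nabla\phi_j(x')$ at every Lebesgue point of $\nabla\phi_j$, hence for $\sigma$-a.e.\ $x\in\partial\Omega$. Gluing the graphs of the $\phi_j^\varepsilon$ by the partition of unity and measuring heights along $\alpha$, one builds a single $\Psi_\varepsilon\in C^\infty$ on a neighbourhood of $\partial\Omega$ with $\Psi_\varepsilon=0$ on $\partial\Omega$, $c_1\,\mathrm{dist}(x,\partial\Omega)\le\Psi_\varepsilon(x)\le c_2\,\mathrm{dist}(x,\partial\Omega)$ in $\Omega$, $|\nabla\Psi_\varepsilon|\le C$, $\alpha\cdot\nabla\Psi_\varepsilon\le-c<0$ near $\partial\Omega$ (transversality survives both the mollification and the gluing), and $\|\Psi_\varepsilon-\mathrm{dist}(\cdot,\partial\Omega)\|_\infty\le CM_0\varepsilon$, with $\nabla\Psi_\varepsilon$ converging $\sigma$-a.e.\ on $\partial\Omega$ as $\varepsilon\to0$. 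By Sard's theorem, a.e.\ small $t>0$ is a regular value of $\Psi_\varepsilon$. I would then choose the parameters inductively --- first $t_1>t_2>\cdots\downarrow0$, then $\varepsilon_\ell$ comparable to $t_\ell$ and small enough that $t_\ell$ is a regular value of $\Psi_{\varepsilon_\ell}$, that $\|\Psi_{\varepsilon_\ell}-\mathrm{dist}(\cdot,\partial\Omega)\|_\infty<t_\ell/2$, and that $\{\Psi_{\varepsilon_{\ell-1}}>t_{\ell-1}\}\subset\{\Psi_{\varepsilon_\ell}>t_\ell\}$ --- and set $\Omega_\ell:=\{\Psi_{\varepsilon_\ell}>t_\ell\}$. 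Then $\Omega_1\subset\Omega_2\subset\cdots\subset\Omega$, each $\Omega_\ell$ is smooth, and in each $\b_j$ the boundary $\partial\Omega_\ell$ is a $C^\infty$ graph with Lipschitz constant $\le CM_0$, so the $\Omega_\ell$ have uniformly bounded Lipschitz characters.

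Then I would define $\Lambda_\ell$ and the remaining data. For $x\in\partial\Omega$, let $\Lambda_\ell(x)$ be the first point at which the integral curve of $-\alpha$ issuing from $x$ meets $\partial\Omega_\ell=\{\Psi_{\varepsilon_\ell}=t_\ell\}$; transversality makes this a homeomorphism onto $\partial\Omega_\ell$, the curve has bounded speed, and $\mathrm{dist}(\Lambda_\ell(x),\partial\Omega)\approx t_\ell$, so $|\Lambda_\ell(x)-x|\le Ct_\ell\to0$ uniformly and $|\Lambda_\ell(x)-x|\le C\,\mathrm{dist}(\Lambda_\ell(x),\partial\Omega)$. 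I would let $\omega_\ell$ be the surface Jacobian of $\Lambda_\ell$, i.e.\ the density with respect to surface measure on $\partial\Omega$ of the pull-back by $\Lambda_\ell$ of surface measure on $\partial\Omega_\ell$; since $\Lambda_\ell$ is bi-Lipschitz with constants depending only on $M_0$ and $c$, one has $0<c\le\omega_\ell\le C$ uniformly. For the a.e.\ convergences, work in a fixed $\b_j$: there $\Lambda_\ell$ carries $(y',\phi_j(y'))$ to $(\theta_\ell(y'),\phi_j^{\varepsilon_\ell}(\theta_\ell(y')))$, where $\theta_\ell:\R^{d-1}\to\R^{d-1}$ is bi-Lipschitz with $|\theta_\ell(y')-y'|\le Ct_\ell$, so that $n(\Lambda_\ell(x))$ and $\omega_\ell(x)$ are fixed continuous functions of $\nabla\phi_j^{\varepsilon_\ell}(\theta_\ell(y'))$. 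Since $\nabla\phi_j^{\varepsilon_\ell}(\theta_\ell(y'))$ is an average of $\nabla\phi_j$ over a ball about $y'$ of radius $\le C(t_\ell+\varepsilon_\ell)\le C't_\ell$, it converges to $\nabla\phi_j(y')$ at every Lebesgue point of $\nabla\phi_j$; hence $n(\Lambda_\ell(x))\to n(x)$ and $\omega_\ell(x)\to1$ for $\sigma$-a.e.\ $x$.

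The hard part will be precisely this last coupling. The uniform Lipschitz bound forces $\varepsilon_\ell\lesssim t_\ell$ (the smoothed graph must stay within $o(t_\ell)$ of $\partial\Omega$ for $\Omega_\ell$ to have the right shape), while a.e.\ convergence of the normals forces $\varepsilon_\ell$ not to be much smaller than $t_\ell$ --- because $\nabla\phi_j^{\varepsilon_\ell}$ must be evaluated along the \emph{moving} point $\theta_\ell(y')$ rather than at a fixed point, so one needs it to be a Lebesgue average over scale $\approx t_\ell$. Taking $\varepsilon_\ell\approx t_\ell$ balances the two; the remaining care is to make sure that both the $C^\infty$ graph structure and the $\sigma$-a.e.\ convergence of $\nabla\phi_j^{\varepsilon_\ell}$ survive the partition-of-unity gluing over the overlapping charts. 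Everything else is routine graph-domain bookkeeping.
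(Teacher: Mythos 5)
The paper offers no argument for this lemma at all: its entire ``proof'' is the citation to Verchota's paper, where the statement is the classical approximation theorem for Lipschitz domains going back to Ne\v{c}as. So there is nothing in the paper to compare step by step; what you have written is, in outline, the standard construction underlying the cited result: mollify the chart graph functions at a scale comparable to the inward displacement, glue with a partition of unity into a smooth defining function, take level sets as the $\Omega_\ell$, and transport $\partial\Omega$ along a fixed smooth vector field that is uniformly transversal to all the boundaries. Your identification of the coupling $\varepsilon_\ell\approx t_\ell$ as the crux of the a.e.\ convergence of the normals and of the Jacobians $\omega_\ell$ is exactly the right point, and the construction of $\alpha$ and the Sard/monotonicity bookkeeping are fine.

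The one step you assert rather than prove, and which genuinely needs an argument, is that ``transversality makes this a homeomorphism onto $\partial\Omega_\ell$.'' Strict monotonicity of $\Psi_{\varepsilon_\ell}$ along flow lines of $-\alpha$ gives well-definedness, continuity and surjectivity of the first-hitting map, but \emph{not} injectivity: $\partial\Omega$ is not a level set of $\Psi_{\varepsilon_\ell}$, it only lies in the slab $\{|\Psi_{\varepsilon_\ell}|\le C\varepsilon_\ell\}$, so a priori one flow line could meet the merely Lipschitz boundary at two points $x\neq y$, forcing $\Lambda_\ell(x)=\Lambda_\ell(y)$. The a.e.\ inequality $\alpha\cdot n\ge c$ on $\partial\Omega$ does not by itself exclude this, because the flow line may spend time above the null set where $\phi_j$ fails to be differentiable, and the chain rule for the Lipschitz height function along the curve is then not automatic. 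The standard repair is to show that in each chart the function $t\mapsto \gamma_d(t)-\phi_j(\gamma'(t))$ increases at rate $\ge c/2$ along flow lines in the collar: writing the a.e.\ transversality as $a_d-a'\cdot\nabla\phi_j\ge c$ for a.e.\ $x'$ and mollifying, one gets $a_d-a'\cdot\nabla(\phi_j*\eta_\delta)\ge c/2$ near the graph for small $\delta$, and letting $\delta\to0$ gives the monotonicity for the Lipschitz height function itself. Hence a flow line crosses each chart graph at most once, and since the relevant arc has length $\lesssim t_\ell$, far below the chart overlap scale, it meets $\partial\Omega$ at most once overall, which is injectivity; the same mollified inequality (uniform transversality of $e^{(j)}$ to the glued level sets, i.e.\ $\partial_{e^{(j)}}\Psi_{\varepsilon_\ell}\ge c$ in the $j$-th chart) is also what makes $\{\Psi_{\varepsilon_\ell}=t_\ell\}$ a graph in every chart with constant $\le CM_0$, i.e.\ the uniform Lipschitz character you claim. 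With these two points supplied, your construction is a complete proof of the lemma, matching the classical one the paper outsources to the literature.
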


\begin{proof}

See \cite{Verchota-1985}.
\end{proof}

\begin{lemma}\label{lemma-q}
Let  $\A\in C^1(\overline{\Omega}; \R^d)$, where  $\Omega$ is a bounded Lipschitz domain in $\R^d$.
Suppose that  $u \in H^1(\Omega; \C)$ is  a weak solution of the Neumann problem \eqref{NP-N} for
some  $g\in L^2(\partial\Omega; \C)$,  or a weak solution of the Dirichlet problem \eqref{N8-D}
for some $f\in H^1(\partial\Omega; \C)$.
 Then
\begin{equation}\label{D-11}
\int_{\partial\Omega}  | \nabla u (\Lambda_\ell (x)) -\nabla u (x)|^2 dx
+\int_{\partial\Omega}
|u(\Lambda_\ell (x)) - u(x)|^2 dx
\to 0
\end{equation}
as $\ell \to \infty$, where $\Lambda_\ell$ is given by Lemma \ref{lemma-app}.
\end{lemma}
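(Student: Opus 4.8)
The plan is to exploit the fact that the restriction of $u$ to $\partial\Omega_\ell$, after transporting back to $\partial\Omega$ via $\Lambda_\ell$, is uniformly controlled in $L^2(\partial\Omega)$ and hence has a weakly convergent subsequence, and then to identify the weak limit as the trace of $u$ (resp.\ $\nabla u$) on $\partial\Omega$. The first step is to record the uniform bound: by the interior estimate \eqref{I-1}, \eqref{G2}, and the nontangential-type control together with the Rellich estimates of Section \ref{sec-R} (Theorem \ref{re-theorem}, applied to the smooth approximating domains $\Omega_\ell$ with the extension of $\B$), one has
\begin{equation*}
\sup_\ell \Big\{ \int_{\partial\Omega} |\nabla u(\Lambda_\ell(x))|^2\, dx + \int_{\partial\Omega} |u(\Lambda_\ell(x))|^2\, dx \Big\} < \infty,
\end{equation*}
where in the Neumann case one bounds the right-hand side by $C\|g\|_{L^2(\partial\Omega)}^2$ and in the Dirichlet case by $C(\|Tf\|_{L^2}^2 + \|m(\cdot,\B)f\|_{L^2}^2)$; the change-of-variables weights $\omega_\ell$ from Lemma \ref{lemma-app} are bounded above and below uniformly, so they cause no trouble. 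Since $u \in H^1(\Omega;\C)$ is, by elliptic regularity for $(D+\A)^2$ (using \eqref{G4-0} and $\A\in C^1$), smooth in the interior, the functions $x \mapsto \nabla u(\Lambda_\ell(x))$ and $x\mapsto u(\Lambda_\ell(x))$ are well defined and continuous on $\partial\Omega$ for each $\ell$.

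Next I would establish pointwise a.e.\ convergence. Fix $x \in \partial\Omega$ at which $u$ has a nontangential limit (this holds a.e., since by Corollary \ref{cor-M}, Theorem \ref{thm-p1}, and the arguments of Section \ref{sec-NM} the nontangential maximal function of $v(x)=|(D+\A)u|+m(x,\B)|u|$ is in $L^2(\partial\Omega)$, and the nontangential limits exist a.e.\ by a standard Fatou-type argument on Lipschitz domains, cf.\ \cite{Kenig-book}). Because $|\Lambda_\ell(x)-x| \le C\,\mathrm{dist}(\Lambda_\ell(x),\partial\Omega)$, the points $\Lambda_\ell(x)$ approach $x$ nontangentially, so $u(\Lambda_\ell(x)) \to u(x)$ for a.e.\ $x$, and likewise $\nabla u(\Lambda_\ell(x)) \to \nabla u(x)$ at every $x$ where $\nabla u$ has a nontangential limit. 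Combining the a.e.\ convergence with the uniform $L^2$ bound, the sequences $\{u(\Lambda_\ell(\cdot))\}$ and $\{\nabla u(\Lambda_\ell(\cdot))\}$ are bounded in $L^2(\partial\Omega)$ and converge a.e.; a uniform-integrability argument (dominating by the $L^2$ function $(v)^*$ plus the corresponding nontangential maximal function of $\nabla u$, which is finite a.e.\ and in $L^2$ by Theorems \ref{thm-n8-1}--\ref{thm-n8-2}) upgrades this to $L^2(\partial\Omega)$-convergence via the dominated convergence theorem (or Vitali's theorem). Since the common limit must be the nontangential boundary trace of $u$ (resp.\ $\nabla u$), which is exactly $u(x)$ (resp.\ $\nabla u(x)$) in the a.e.\ sense used above, \eqref{D-11} follows.

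The main obstacle is the a.e.\ existence of nontangential limits for $\nabla u$ on the rough boundary $\partial\Omega$: one needs that $(\nabla u)^* \in L^2(\partial\Omega)$ \emph{and} that the nontangential limit actually exists a.e., not merely that the maximal function is finite. For this I would reduce, via the localization and Rellich machinery already developed, to the corresponding statement for the Green/Neumann representations: write $u$ near the boundary using $G_\A$ or $N_\A$ against the boundary data, split into a harmonic-type piece (for which nontangential convergence a.e.\ is classical on Lipschitz domains by the Jerison--Kenig theory \cite{DK-1980,DK-1981,Kenig-book}) plus an error governed by the magnetic terms, which is controlled by the weighted estimates \eqref{GN}, \eqref{G2-0}, \eqref{N2-0} and is continuous up to the boundary away from a set of measure zero. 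A cleaner route, if available, is to observe that the approximation scheme itself shows $\nabla u(\Lambda_\ell(\cdot))$ is Cauchy in $L^2(\partial\Omega)$ directly from the Rellich identities on the domains $\Omega_\ell \setminus \Omega_m$ (a "$\Omega_\ell$ versus $\Omega_m$" comparison), bypassing the pointwise statement; this is the standard device in Verchota's approach \cite{Verchota-1985} and is what I expect the paper to use.
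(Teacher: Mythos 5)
There is a genuine gap, and it is essentially circular reasoning. Your first step obtains the uniform bound $\sup_\ell \int_{\partial\Omega}\{|\nabla u(\Lambda_\ell(x))|^2+|u(\Lambda_\ell(x))|^2\}\,dx<\infty$ by invoking the Rellich estimates of Theorem \ref{re-theorem} (and Theorems \ref{thm-n8-1}--\ref{thm-n8-2}) on the smooth domains $\Omega_\ell$ and claiming the right-hand side is controlled by $\|g\|_{L^2(\partial\Omega)}$ (resp.\ by the Dirichlet data on $\partial\Omega$). But those estimates on $\Omega_\ell$ are in terms of the boundary data \emph{on} $\partial\Omega_\ell$, i.e.\ $\|n\cdot(D+\A)u\|_{L^2(\partial\Omega_\ell)}$ or $\|Tu\|_{L^2(\partial\Omega_\ell)}$; showing that these quantities are uniformly controlled by (indeed converge to) the corresponding norms of $g$ or $f$ on $\partial\Omega$ is precisely the content of Lemma \ref{lemma-q}, and is exactly how it is used in the proof of Theorem \ref{thm-n9-1}. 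The same circularity affects your second step: a.e.\ existence of nontangential limits of $\nabla u$ and membership of its nontangential maximal function in $L^2(\partial\Omega)$ for the magnetic problem on the \emph{Lipschitz} domain $\Omega$ are not available at this point of the paper (Corollary \ref{cor-M} gives only a pointwise bound by $\mathcal{M}(v)$, and the $L^2$ control on $\partial\Omega$ is Theorem \ref{thm-n9-1}, which depends on this lemma). You do flag this obstacle, but the proposed remedies (a Green/Neumann representation splitting, or a Cauchy argument via Rellich identities on $\Omega_\ell\setminus\Omega_m$) are not carried out and would themselves need the uniform boundary control that is in question.

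The paper's proof avoids all of this by a much more elementary reduction: since $\A\in C^1(\overline\Omega;\R^d)$ and $u\in H^1(\Omega;\C)$, the equation is rewritten via \eqref{G4-0} as $-\Delta u=F$ in $\Omega$ with $F\in L^2(\Omega;\C)$ and $\partial u/\partial n=\widetilde g\in L^2(\partial\Omega;\C)$ (resp.\ Dirichlet data in $H^1(\partial\Omega)$), and one splits $u=v+w$ with $w=\Gamma*(F\chi_\Omega)\in H^2(\R^d;\C)$. The $w$-part of \eqref{D-11} is bounded by $C\int_{\Omega\setminus\Omega_\ell}(|\nabla^2 w|^2+|\nabla w|^2)\to 0$, while $v$ is harmonic in $\Omega$ with $L^2$ Neumann (or $H^1$ Dirichlet) data, so the classical Jerison--Kenig theory gives a.e.\ nontangential limits of $v$ and $\nabla v$ together with $(v)^*+(\nabla v)^*\in L^2(\partial\Omega)$, and the dominated convergence theorem (using $|\Lambda_\ell(x)-x|\le C\,\mathrm{dist}(\Lambda_\ell(x),\partial\Omega)$, so the approach is nontangential) finishes the proof. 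If you want to salvage your approach, you would need to incorporate this splitting (or some other device that imports only \emph{classical} harmonic-function boundary theory rather than the magnetic estimates whose extension to $\Omega$ is the goal of Section \ref{sec-A}).
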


\begin{proof}

We give the proof for the Neumann problem \eqref{NP-N}. The proof for \eqref{N8-D} is similar.

Since $\A\in C^1(\overline{\Omega}; \R^d)$ and $u\in H^1(\Omega; \C)$, we may rewrite \eqref{NP-N} as
$$
-\Delta u =F \quad \text{ in } \Omega \quad \text{ and } \quad \frac{\partial u}{\partial n} =\widetilde{g}
$$
for some $F\in L^2(\Omega; \C)$ and $\widetilde{g}\in L^2(\partial\Omega; \C)$.
Let $u=v + w$, where $w=\Gamma * (F\chi_\Omega)$ and $\Gamma$ denotes the fundamental solution for 
$-\Delta$ in $\R^d$. Note that $w\in H^2(\R^d; \C)$.
It  follows that
\begin{equation}\label{D-12}
\aligned
& \int_{\partial\Omega}  | \nabla w (\Lambda_\ell (x)) -\nabla w (x)|^2 dx
+\int_{\partial\Omega}
|w(\Lambda_\ell (x)) - w(x)|^2 dx\\
&\qquad\qquad
 \le C \int_{\Omega\setminus \Omega_\ell}
\left\{ |\nabla^2 w |^2 +|\nabla  w|^2 \right\}
\to 0,
\endaligned
\end{equation}
as $\ell \to \infty$.
Also observe that  $v \in H^1(\Omega; \C)$ is harmonic in $\Omega$ and $\frac{\partial v}{\partial n}
=\widetilde{g}-n \cdot \nabla w \in L^2(\partial\Omega; \C)$.
As a result, $v$ and $\nabla v$ have nontangential limits a.e.~on $\partial\Omega$ and
$(v)^* + (\nabla v)^* \in L^2(\partial\Omega)$ \cite{DK-1980, DK-1981}. 
By the dominated convergence theorem,
\begin{equation*}
\int_{\partial\Omega}  | \nabla v (\Lambda_\ell (x)) -\nabla v (x)|^2 dx
+\int_{\partial\Omega}
|v(\Lambda_\ell (x)) - v(x)|^2 dx \to 0
\end{equation*}
as $\ell \to \infty$, which, together with \eqref{D-12}, gives \eqref{D-11}.
\end{proof}

\begin{thm}\label{thm-n9-1}
Let $u\in H^1(\Omega; \C)$ be a solution of the Neumann problem \eqref{NP-N}
for some $g\in L^2(\partial\Omega; \C)$.
Let $v(x)=|(D+\A)u(x)| + m(x, \B) |u(x)|$. Then
\begin{equation}\label{n9-1a}
\| (v)^*\|_{L^2(\partial \Omega)}
+ \| m(x, \B)^{1/2}  v \|_{L^2(\Omega)}
\le C \| g \|_{L^2(\partial\Omega)},
\end{equation}
where $C$ depends on $C_0$ in \eqref{cod-2}, $r_0$ and the Lipschitz character of $\Omega$.
\end{thm}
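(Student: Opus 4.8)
The plan is to deduce the estimate \eqref{n9-1a} from the smooth-domain case, Theorem \ref{thm-n8-1}, together with the Rellich estimate \eqref{re-a} of Theorem \ref{re-theorem}, via the approximation scheme of Lemma \ref{lemma-app}. Since $\A$ is smooth and $(D+\A)^2u=0$ in $\Omega$, interior elliptic regularity gives $u\in C^\infty(\Omega)$; hence for each of the smooth domains $\Omega_\ell$ provided by Lemma \ref{lemma-app} (which satisfy $\overline{\Omega_\ell}\subset\Omega$) one has $u\in C^2(\overline{\Omega_\ell})$, and $u$ solves $(D+\A)^2u=0$ in $\Omega_\ell$ with Neumann data $g_\ell:=\bigl(n_\ell\cdot(D+\A)u\bigr)\big|_{\partial\Omega_\ell}\in C(\partial\Omega_\ell)$, where $n_\ell$ denotes the outward unit normal to $\partial\Omega_\ell$. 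The domains $\Omega_\ell$ have a uniform Lipschitz character and are all contained in $\b(0,R_0)$, on whose fourfold dilate the hypotheses \eqref{cod-1} and \eqref{cod-1a} on $\B$ are assumed; consequently Theorem \ref{thm-n8-1} applied on $\Omega_\ell$ gives $\|(v)^*_{\Omega_\ell}\|_{L^2(\partial\Omega_\ell)}\le C\,\|g_\ell\|_{L^2(\partial\Omega_\ell)}$ with $C$ independent of $\ell$, and \eqref{re-a} with exponent $0$ gives $\int_{\Omega_\ell}\bigl(m(x,\B)^3|u|^2+m(x,\B)|(D+\A)u|^2\bigr)\,dx\le C\,\|g_\ell\|_{L^2(\partial\Omega_\ell)}^2$.

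Next I would show the right-hand sides converge. Using the homeomorphisms $\Lambda_\ell$ and the weights $\omega_\ell$ of Lemma \ref{lemma-app}, $\int_{\partial\Omega_\ell}|g_\ell|^2=\int_{\partial\Omega}\bigl|g_\ell(\Lambda_\ell(x))\bigr|^2\,\omega_\ell(x)\,dx$. By Lemma \ref{lemma-q}, $\nabla u(\Lambda_\ell(\cdot))\to\nabla u(\cdot)$ and $u(\Lambda_\ell(\cdot))\to u(\cdot)$ in $L^2(\partial\Omega)$; combined with the continuity of $\A$ and the uniform convergence $\Lambda_\ell\to\mathrm{id}$ this yields $(D+\A)u(\Lambda_\ell(\cdot))\to(D+\A)u(\cdot)$ in $L^2(\partial\Omega)$. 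Since $n_\ell(\Lambda_\ell(x))\to n(x)$ for a.e.\ $x$ with $|n_\ell|\equiv1$, and $c\le\omega_\ell\le C$ with $\omega_\ell\to1$ a.e., a dominated-convergence argument then gives $\int_{\partial\Omega_\ell}|g_\ell|^2\to\|g\|_{L^2(\partial\Omega)}^2$; in particular these quantities are bounded uniformly in $\ell$. Letting $\ell\to\infty$ in the Rellich bound and using monotone convergence (as $\Omega_\ell\uparrow\Omega$), we obtain $\int_\Omega\bigl(m(x,\B)^3|u|^2+m(x,\B)|(D+\A)u|^2\bigr)\,dx\le C\,\|g\|_{L^2(\partial\Omega)}^2$, and since $m\,v^2\le 2\,m|(D+\A)u|^2+2\,m^3|u|^2$ this is exactly the claimed bound for $\|m(x,\B)^{1/2}v\|_{L^2(\Omega)}$.

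For the nontangential maximal function I would compare cones. Fix $x\in\partial\Omega$ and $y\in\Omega$ with $|y-x|<M_0\,\mathrm{dist}(y,\partial\Omega)$. Because $\Lambda_\ell(x)\to x$, $\Omega_\ell\uparrow\Omega$, and $\mathrm{dist}(y,\partial\Omega_\ell)\to\mathrm{dist}(y,\partial\Omega)>0$ as $\ell\to\infty$, for all large $\ell$ we have $y\in\Omega_\ell$ and $|y-\Lambda_\ell(x)|<M_1\,\mathrm{dist}(y,\partial\Omega_\ell)$, with $M_1$ depending only on $M_0$; hence $v(y)\le(v)^{*,M_1}_{\Omega_\ell}(\Lambda_\ell(x))$ for all large $\ell$, where $(v)^{*,M_1}_{\Omega_\ell}$ is the nontangential maximal function of $v$ over $\Omega_\ell$ with aperture $M_1$. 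Taking $\liminf_{\ell\to\infty}$ and then the supremum over such $y$ gives $(v)^*_\Omega(x)\le\liminf_{\ell\to\infty}(v)^{*,M_1}_{\Omega_\ell}(\Lambda_\ell(x))$ for every $x\in\partial\Omega$. Theorem \ref{thm-n8-1} remains valid with the wider aperture $M_1$ (only the constants change, since $v$ obeys the sub-mean-value inequality \eqref{I-0} of Corollary \ref{cor-M}, so the nontangential maximal functions of $v$ with different apertures are comparable in $L^2$). Then Fatou's lemma, the lower bound $\omega_\ell\ge c$, the change of variables, and the convergence established above give
\begin{equation*}
\int_{\partial\Omega}|(v)^*_\Omega|^2\,dx\le\liminf_{\ell\to\infty}\int_{\partial\Omega}\bigl|(v)^{*,M_1}_{\Omega_\ell}(\Lambda_\ell(x))\bigr|^2\,dx\le\frac1c\liminf_{\ell\to\infty}\int_{\partial\Omega_\ell}\bigl|(v)^{*,M_1}_{\Omega_\ell}\bigr|^2\le C\,\|g\|_{L^2(\partial\Omega)}^2.
\end{equation*}
Together with the interior bound from the second paragraph, this is \eqref{n9-1a}.

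The hard part will be this last step: transferring the nontangential maximal function estimate from the $\Omega_\ell$ to $\Omega$ hinges on the geometric comparison of approach regions above and on a lower-semicontinuity (Fatou) passage to the limit, and one must verify that the constant in Theorem \ref{thm-n8-1} is genuinely uniform over the family $\{\Omega_\ell\}$ — which rests on their uniform Lipschitz character and on the fact that the conditions on $\B$ are imposed on the fixed ball $\b(0,4R_0)$ containing every $\Omega_\ell$. The remaining estimates are routine.
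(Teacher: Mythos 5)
Your proposal is correct and follows essentially the same route as the paper: uniform application of Theorem \ref{thm-n8-1} and the Rellich estimate \eqref{re-a} on the approximating smooth domains of Lemma \ref{lemma-app}, convergence of the Neumann data via Lemma \ref{lemma-q} and the weights $\omega_\ell$, and a passage to the limit. The only difference is bookkeeping: the paper uses the monotonicity observation $\|(v)^*_\ell\|_{L^2(\partial\Omega_\ell)}\le C\|(v)^*_m\|_{L^2(\partial\Omega_m)}$ and leaves the final limit implicit, whereas you make that limit explicit through the cone-comparison with enlarged aperture and Fatou's lemma, which is a legitimate (and standard) way to carry out the same step.
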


\begin{proof}

Let $u\in H^1(\Omega; \C)$ be a weak solution of \eqref{NP-N} with $g\in L^2(\partial\Omega; \C)$.
Let $\{ \Omega_\ell \}$ be a sequence of smooth domains, given by Lemma \ref{lemma-app}.
Since $\overline{\Omega}_\ell \subset \Omega$ and $\A\in C^2(\overline{\Omega}; \R^d)$,
  we have $u\in C^2(\overline{\Omega}_\ell; \C)$.
This allows us to apply Theorem \ref{thm-n8-1} and \eqref{re-a}  to obtain 
\begin{equation}\label{n9-1b}
\| (v)^*_\ell \|_{L^2(\partial\Omega_\ell)}
+   \|  m(x, \B)^{1/2} v \|_{L^2(\Omega_\ell)}
\le C \| n \cdot (D+\A) u \|_{L^2(\partial\Omega_\ell)},
\end{equation}
where we have used $(v)^*_\ell$ to denote the nontangential maximal function of
$v$ for the domain $\Omega_\ell$.
Thanks to Lemma \ref{lemma-app}, 
 the constant $C$ in  \eqref{n9-1b} depends only on $r_0$, the Lipschitz character of $\Omega$,  and $C_0$ in \eqref{cod-1}.
In particular, the estimate \eqref{n9-1b} is uniform with respect to $\ell$.
Using the observation  $\| (v)^*_\ell \|_{L^2(\partial\Omega_\ell)}
\le C \|(v)^*_m \|_{L^2(\partial\Omega_m)}$ for $\ell < m$, 
we see that 
\begin{equation*}
\| (v)^*_\ell  \|_{L^2(\partial\Omega_\ell)}
+   \|  m(x, \B)^{1/2} v \|_{L^2(\Omega_\ell)}
\le C \| n \cdot (D+\A) u \|_{L^2(\partial\Omega_m)}
\end{equation*}
for $\ell< m$.
We now let $m \to \infty$ in the inequality above.
Note that by Lemma \ref{lemma-q},
$$
 \| n \cdot (D+\A) u \|_{L^2(\partial\Omega_m)} \to \| g \|_{L^2(\partial\Omega)}.
 $$
Thus, 
\begin{equation*}
\| (v)^*_\ell  \|_{L^2(\partial\Omega_\ell)}
+   \|  m(x, \B)^{1/2} v \|_{L^2(\Omega_\ell)}
\le C  \| g \|_{L^2(\partial\Omega)}.
\end{equation*}
By letting $\ell  \to \infty$ in the inequality above,  we obtain \eqref{n9-1a}.
\end{proof}

\begin{thm}\label{thm-d9-1}
Let $u\in H^1(\Omega; \C)$ be a solution of the Dirichlet problem \eqref{N8-D}
for some $f\in H^1(\partial\Omega; \C)$.
Let $v(x)=|(D+\A)u(x)| + m(x, \B) |u(x)|$. Then
\begin{equation}\label{d9-1a}
\| (v)^*\|_{L^2(\partial \Omega)}
+ \| m(x, \B)^{1/2}  v \|_{L^2(\Omega)}
\le C\left\{  \| Tf \|_{L^2(\partial\Omega)} + \| m(x, \B) f \|_{L^2(\partial\Omega)} \right\},
\end{equation}
where $C$ depends on $C_0$ in \eqref{cod-2}, $r_0$ and the Lipschitz character of $\Omega$.
\end{thm}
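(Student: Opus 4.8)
The plan is to repeat the argument used for Theorem~\ref{thm-n9-1}, replacing the Neumann data by the Dirichlet data and using the Rellich estimate \eqref{re-b} together with Theorem~\ref{thm-n8-2} in place of \eqref{re-a} and Theorem~\ref{thm-n8-1}. Let $u\in H^1(\Omega;\C)$ be a weak solution of \eqref{N8-D} with $f\in H^1(\partial\Omega;\C)$, and let $\{\Omega_\ell\}$ be the increasing family of smooth subdomains from Lemma~\ref{lemma-app}. Since $\overline{\Omega}_\ell\subset\Omega$ and $\A\in C^\infty(\overline{\Omega};\R^d)$, interior elliptic regularity gives $u\in C^2(\overline{\Omega}_\ell;\C)$ and $(D+\A)^2u=0$ in $\Omega_\ell$. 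Applying Theorem~\ref{thm-n8-2} on $\Omega_m$ (with the boundary datum there being the trace of $u$ on $\partial\Omega_m$) and \eqref{re-b} with $\ell=0$ to also control $\|m(x,\B)^{1/2}v\|_{L^2(\Omega_m)}$, and using that the $\Omega_m$ have uniform Lipschitz characters and that $\B$ satisfies \eqref{cod-1} and \eqref{cod-1a} on the fixed ball $\b(0,4R_0)\supset\Omega_m$, we obtain, with $C$ independent of $m$,
\[
\|(v)^*_m\|_{L^2(\partial\Omega_m)}+\|m(x,\B)^{1/2}v\|_{L^2(\Omega_m)}
\le C\Big\{\|Tu\|_{L^2(\partial\Omega_m)}+\|m(x,\B)\,u\|_{L^2(\partial\Omega_m)}\Big\},
\]
where $(v)^*_m$ denotes the nontangential maximal function relative to $\Omega_m$ and $T$ is formed from the outward unit normal to $\partial\Omega_m$.

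Next I would fix $\ell$ and let $m\to\infty$. For $\ell<m$, as in the proof of Theorem~\ref{thm-n9-1} one has $\|(v)^*_\ell\|_{L^2(\partial\Omega_\ell)}\le C\|(v)^*_m\|_{L^2(\partial\Omega_m)}$, and trivially $\|m(x,\B)^{1/2}v\|_{L^2(\Omega_\ell)}\le\|m(x,\B)^{1/2}v\|_{L^2(\Omega_m)}$, so
\[
\|(v)^*_\ell\|_{L^2(\partial\Omega_\ell)}+\|m(x,\B)^{1/2}v\|_{L^2(\Omega_\ell)}
\le C\Big\{\|Tu\|_{L^2(\partial\Omega_m)}+\|m(x,\B)\,u\|_{L^2(\partial\Omega_m)}\Big\}
\]
for all $m>\ell$. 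Writing $\partial\Omega_m=\Lambda_m(\partial\Omega)$ with densities $\omega_m$, the right-hand side is the integral over $\partial\Omega$ of an expression quadratic in $\nabla u(\Lambda_m(x))$ and $u(\Lambda_m(x))$, with coefficients built from the components of $n(\Lambda_m(x))$, the bounded potential $\A$, the weight $m(\Lambda_m(x),\B)$, and $\omega_m$. By Lemma~\ref{lemma-q}, $\nabla u(\Lambda_m(\cdot))\to\nabla u$ and $u(\Lambda_m(\cdot))\to u=f$ in $L^2(\partial\Omega)$; by Lemma~\ref{lemma-app}, $n(\Lambda_m(\cdot))\to n$ and $\omega_m\to1$ a.e.\ with uniform bounds; and $m(\cdot,\B)$ is bounded on $\overline{\Omega}$, with $m(\Lambda_m(\cdot),\B)\le C\,m(\cdot,\B)$. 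Hence $\|Tu\|_{L^2(\partial\Omega_m)}\to\|Tf\|_{L^2(\partial\Omega)}$, where $Tf$ is the tangential operator applied to the trace $f$, and $\limsup_m\|m(x,\B)u\|_{L^2(\partial\Omega_m)}\le C\|m(x,\B)f\|_{L^2(\partial\Omega)}$. The step I expect to require the most care is precisely identifying the limit of $\|Tu\|_{L^2(\partial\Omega_m)}$: one must reconcile the operator $T$ on $\partial\Omega_m$, defined through the outward normal of $\partial\Omega_m$, with the operator $T$ on $\partial\Omega$, and this is exactly where the $L^2$ convergence of the nontangential traces of $\nabla u$ (Lemma~\ref{lemma-q}) has to be combined with the a.e.\ convergence of the normals and of the densities $\omega_m$ (Lemma~\ref{lemma-app}); the boundary weight $m(x,\B)$ is a minor additional point, handled via its $C^1$ regularization \cite{Shen-1996}.

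Passing to the limit $m\to\infty$ in the displayed inequality therefore yields, for every $\ell$,
\[
\|(v)^*_\ell\|_{L^2(\partial\Omega_\ell)}+\|m(x,\B)^{1/2}v\|_{L^2(\Omega_\ell)}
\le C\Big\{\|Tf\|_{L^2(\partial\Omega)}+\|m(x,\B)\,f\|_{L^2(\partial\Omega)}\Big\}.
\]
Finally, letting $\ell\to\infty$, the monotone convergence theorem gives $\|m(x,\B)^{1/2}v\|_{L^2(\Omega_\ell)}\uparrow\|m(x,\B)^{1/2}v\|_{L^2(\Omega)}$, while the standard comparison of nontangential maximal functions over the increasing family $\{\Omega_\ell\}$ gives $\|(v)^*\|_{L^2(\partial\Omega)}\le C\liminf_\ell\|(v)^*_\ell\|_{L^2(\partial\Omega_\ell)}$. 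Combining these proves \eqref{d9-1a}.
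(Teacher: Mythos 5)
Your proposal is correct and follows essentially the same route as the paper: the paper proves Theorem \ref{thm-d9-1} by repeating the approximation argument of Theorem \ref{thm-n9-1} with Theorem \ref{thm-n8-2} (and the Rellich estimate \eqref{re-b}) in place of Theorem \ref{thm-n8-1} (and \eqref{re-a}), exactly as you do. Your extra care in passing $\| T u\|_{L^2(\partial\Omega_m)}$ and $\| m(x,\B) u\|_{L^2(\partial\Omega_m)}$ to the limit via Lemmas \ref{lemma-app} and \ref{lemma-q} is just a fuller writing-out of the step the paper leaves implicit.
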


\begin{proof}

The proof is similar to that of Theorem \ref{thm-n9-1}, using Theorem \ref{thm-n8-2} in the place of Theorem \ref{thm-n8-1}. 
\end{proof}


\section{Proof of Theorems \ref{main-thm-1} and \ref{main-thm-2}}\label{sec-F}

In this section, with Theorems \ref{thm-n9-1} and \ref{thm-d9-1} at our disposal, 
we use a localization argument to complete the proofs
of our main results.

Let $\Omega$ be a bounded Lipschitz domain in $\R^d$, $d\ge 2$.
Assume $\A\in C^\infty (\overline{\Omega}; \R^d)$ and
$\B=\nabla \times \A$ is of finite type on $\overline{\Omega}$.
It follows that  there  exist  an  integer $\kappa\ge 0$ and $c_0>0$ such that 
\begin{equation}\label{H-9}
\sum_{|\alpha|\le \kappa} |\partial^\alpha \B(x)| \ge c_0
\end{equation}
for any $x\in \overline{\Omega}$.
By extension we may assume $\B\in C^{\kappa+1}(\R^d; \R^{d\times d})$ and
$\| \B \|_{C^{\kappa+1} (\R^d)} \le  C \| \B \|_{C^{\kappa+1}(\overline{\Omega})}$ (there is no need to extend $\A$).
By compactness we may further assume that \eqref{H-9} holds for any $x\in \R^d$ with dist$(x, \Omega)< 2r_0$.

Let  $\widetilde{\Omega}=\{ x\in \R^d: \text{\rm dist}(x, \Omega)< r_0 \}$.

\begin{lemma}\label{lemma-9-1}
Suppose \eqref{H-9} holds for any $x\in \R^d$ with $\text{\rm dist}(x, \Omega)< 2r_0$. Then
\begin{equation}\label{9-1-0}
\sup_{\b(x, r)} |\partial^\alpha \B|
\le \frac{C_1}{r^{|\alpha|}}
\fint_{\b(x, r)} |\B|
\end{equation}
for any $\b (x, r) \subset \widetilde{\Omega}$ and 
 any $\alpha$ with $|\alpha|\le \kappa$.
  Moreover, 
 \begin{equation}\label{9-1-0a}
 \sup_{\b(x, r)} |\B|\ge c_1 r^\kappa
 \end{equation}
 for any $x\in \overline{\Omega} $ and $0< r< r_0$.
 The constants $C_1, c_1>0$ depend on $\Omega$, $(\kappa, c_0)$ in \eqref{H-9} and $\|\B \|_{C^{\kappa+1}(\overline{\Omega})}$.
 \end{lemma}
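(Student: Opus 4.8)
The plan is to compare $\B$ on balls with its degree-$\kappa$ Taylor polynomial and to use the finite-type hypothesis \eqref{H-9} to show that on sufficiently small balls this polynomial part dominates the Taylor remainder; the estimates \eqref{9-1-0}--\eqref{9-1-0a} will then follow from elementary facts about polynomials. First I would record the polynomial estimates. Since the space of $\R^{d\times d}$-valued polynomials of degree $\le\kappa$ on $\R^d$ is finite-dimensional, all norms on it are comparable; rescaling the ball $\b(x,r)$ to the unit ball then gives, for every such polynomial $P$,
\begin{equation*}
c\sum_{|\alpha|\le\kappa}|\partial^\alpha P(x)|\,r^{|\alpha|}
\le \fint_{\b(x,r)}|P|
\le \sup_{\b(x,r)}|P|
\le C\sum_{|\alpha|\le\kappa}|\partial^\alpha P(x)|\,r^{|\alpha|},
\end{equation*}
as well as $\sup_{\b(x,r)}|\partial^\alpha P|\le C\,r^{-|\alpha|}\sum_{|\beta|\le\kappa}|\partial^\beta P(x)|\,r^{|\beta|}$ for $|\alpha|\le\kappa$, with $c$ and $C$ depending only on $d$ and $\kappa$.

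For the Taylor ingredient, let $P_x(y)=\sum_{|\alpha|\le\kappa}\tfrac{1}{\alpha!}\,\partial^\alpha\B(x)\,(y-x)^\alpha$; since the extended $\B$ lies in $C^{\kappa+1}(\R^d)$, Taylor's theorem gives $\sup_{\b(x,r)}|\partial^\alpha(\B-P_x)|\le C\,\|\B\|_{C^{\kappa+1}}\,r^{\kappa+1-|\alpha|}$ for all $|\alpha|\le\kappa$, with $C=C(d,\kappa)$. I would then prove the averaged form of \eqref{9-1-0a}, namely $\fint_{\b(x,r)}|\B|\ge c_1 r^\kappa$. Combining the previous two facts with \eqref{H-9} and the trivial bound $r^{|\alpha|}\ge r^\kappa$ (valid once $r\le 1$) gives, for $x$ with $\text{\rm dist}(x,\Omega)<2r_0$,
\begin{equation*}
\fint_{\b(x,r)}|\B|
\ge c\sum_{|\alpha|\le\kappa}|\partial^\alpha\B(x)|\,r^{|\alpha|}-C\,\|\B\|_{C^{\kappa+1}}\,r^{\kappa+1}
\ge c\,c_0\,r^\kappa-C\,\|\B\|_{C^{\kappa+1}}\,r^{\kappa+1}.
\end{equation*}
After decreasing $r_0$ (to a value depending only on $d$, $\kappa$, $c_0$ and $\|\B\|_{C^{\kappa+1}(\overline{\Omega})}$, and at most $1$) so that $C\,\|\B\|_{C^{\kappa+1}}\,r_0\le\tfrac12 c\,c_0$, the remainder is absorbed and $\fint_{\b(x,r)}|\B|\ge\tfrac12 c\,c_0\,r^\kappa$ for $0<r<r_0$; comparing averages over concentric balls via $\fint_{\b(x,r)}|\B|\ge(r_0/r)^d\fint_{\b(x,r_0)}|\B|$ then restores the estimate for radii in the original range at the cost of a constant also depending on $\Omega$. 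In particular $\sup_{\b(x,r)}|\B|\ge c_1 r^\kappa$.

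Finally, \eqref{9-1-0} follows from the same comparison. For $\b(x,r)\subset\widetilde{\Omega}$ with $r<r_0$ and $|\alpha|\le\kappa$, the polynomial bound and the Taylor estimate give
\begin{equation*}
\sup_{\b(x,r)}|\partial^\alpha\B|
\le \sup_{\b(x,r)}|\partial^\alpha P_x|+C\,\|\B\|_{C^{\kappa+1}}\,r^{\kappa+1-|\alpha|}
\le C\,r^{-|\alpha|}\sum_{|\beta|\le\kappa}|\partial^\beta\B(x)|\,r^{|\beta|}+C\,\|\B\|_{C^{\kappa+1}}\,r^{\kappa+1-|\alpha|};
\end{equation*}
here the first term is $\le C\,r^{-|\alpha|}\fint_{\b(x,r)}|\B|$ by the polynomial and Taylor estimates, while the second is $C\,\|\B\|_{C^{\kappa+1}}\,r^{\kappa+1-|\alpha|}\le C\,\|\B\|_{C^{\kappa+1}}\,r_0\,r^{-|\alpha|}\,r^\kappa\le C'\,r_0\,r^{-|\alpha|}\fint_{\b(x,r)}|\B|$ by the averaged \eqref{9-1-0a}, so \eqref{9-1-0} holds with a constant absorbing $C'r_0$. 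Balls $\b(x,r)\subset\widetilde{\Omega}$ with $r\ge r_0$ have $r\le\text{\rm diam}\,\widetilde{\Omega}$; for these $\sup_{\b(x,r)}|\partial^\alpha\B|\le\|\B\|_{C^\kappa}$ while, since $\b(x,r_0)\subset\b(x,r)$, one has $\fint_{\b(x,r)}|\B|\ge c\,(r_0/r)^d r_0^\kappa$ bounded below by a positive constant, whence \eqref{9-1-0} again holds once $C_1$ is large enough. The only genuinely delicate point is the absorption of the Taylor remainder by the polynomial part in the proof of \eqref{9-1-0a}; this is the sole place where the finite-type condition enters and it is what forces $r_0$ to be chosen small.
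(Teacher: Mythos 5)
Your argument is correct and is essentially the paper's own proof: compare $\B$ with its degree-$\kappa$ Taylor polynomial, use norm equivalence for polynomials on rescaled balls together with the finite-type condition \eqref{H-9} to get the averaged lower bound $\fint_{\b(x,r)}|\B|\ge c\,r^\kappa$ after absorbing the $O(r^{\kappa+1})$ remainder for small $r$, and then deduce \eqref{9-1-0}. The only (cosmetic) deviation is the passage to radii that are not small, where the paper invokes the doubling property of $|\B|$ and a covering argument while you compare directly with a fixed concentric ball of small radius; both work and give the same dependence of the constants.
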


\begin{proof}

Let $\b (y, r) \subset  \{ x\in \R^d: \text{\rm dist}(x, \Omega)< 2r_0\}$.
Let $\P_y$ denote the $\kappa^{th}$ (matrix-valued) Taylor polynomial of $\B$ at $y$.
Then, if $|\alpha|\le \kappa$, 
$$
|\partial^\alpha \P_y (x)-\partial^\alpha \B (x)|\le C |x-y|^{\kappa+1-|\alpha|}
$$
for any $x\in \b (y, r)$, where $C$ depends on $\kappa$ and $\|\B\|_{C^{\kappa+1}({\R^d})}$.
It follows that
\begin{equation}\label{9-1-1}
\aligned
\sup_{\b(y, r)} |\partial^\alpha \B|
& \le \sup_{\b (y, r)} |\partial^\alpha \P_y | + C r^{\kappa+1 -|\alpha|}\\
& \le \frac{C}{r^{|\alpha|}} \fint_{\b (y, r)} |\P_y| + C r^{\kappa+1 -|\alpha|}\\
&\le   \frac{C}{r^{|\alpha|}} \fint_{\b (y, r)} |\B| + C r^{\kappa+1 -|\alpha|}
\endaligned
\end{equation}
for any $\alpha$ with $|\alpha|\le \kappa$.
Also note that
\begin{equation}\label{9-1-1a}
\aligned
\fint_{\b (y, r)} |\B|
 & \ge \fint_{\b (y, r)} |\P_y| - C r^{\kappa+1}\\
& \ge c\sum_{|\alpha|\le \kappa} |\partial^\alpha \B (y)| r^{|\alpha|} - C r^{\kappa+1}\\
& \ge c r^{\kappa} - C r^{\kappa+1},
\endaligned
\end{equation}
where we have used \eqref{H-9} for the last step.
As a result, we see that
\begin{equation}\label{9-1-1b}
\fint_{\b (y, r)} |\B| \ge c r^\kappa
\end{equation}
if $0<r<c$ and $c$ is sufficiently small.
This gives \eqref{9-1-0a}.
Together with \eqref{9-1-1}, it also yields  \eqref{9-1-0} under the additional condition  $0< r<c$.
Note that the inequality \eqref{9-1-0} with $\alpha=0$  and $0< r< c$ implies the doubling condition,
$$
\int_{\b(x, 2r)} |\B|\le C \int_{\b(x, r)} |\B|
$$
for any $\b(x, 4r)\subset \{ x\in \R^d:  \text{\rm dist}(x, \Omega)< 2r_0 \}$.
The general case for \eqref{9-1-0}  follows by a covering argument. 
\end{proof}

\begin{remark}\label{re-9-1}
{\rm 

Suppose \eqref{H-9} holds for any $x\in \R^d$ with dist$(x, \partial\Omega)< 2r_0$.
Also assume that 
\begin{equation}\label{H-9a}
\sup_{\b (x, r_0)} |\B| \ge 2  r_0^{-2}
\qquad \text{ for any } x\in \overline{\Omega}.
\end{equation}
It follows by definition that $m(x, \B) > r_0^{-1}$ for any $x\in \overline{\Omega}$.
By letting $r=m(x, \B)^{-1}< r_0$ in \eqref{9-1-0}, we see that 
$$
|\partial^\alpha \B(x)|\le C m(x, \B)^{|\alpha|+2}
$$
for any $x\in \overline{\Omega}$ and $|\alpha|\le \kappa$.
As a result, we obtain 
\begin{equation}\label{9-m}
m(x, \B) \ge c \sum_{|\alpha|\le \kappa} |\partial^\alpha \B (x)|^{\frac{1}{|\alpha|+2}}
\end{equation}
for any $x\in \overline{\Omega}$.
By definition it is not hard to see that $m(x, \B)\le C \| \B\|^{1/2}_{L^\infty(\widetilde{\Omega})}$.
}
\end{remark}

\begin{proof}[Proof of Theorem \ref{main-thm-1}]

We first consider the case $d\ge 3$.
Rewrite \eqref{NP0} as
\begin{equation*}
(D+ \beta \A)^2 u = 0 \quad 
\text{ in } \Omega \quad \text{ and } \quad
n\cdot (D+\beta \A)u=\beta g \quad \text{ on } \partial\Omega,
\end{equation*}
where $\beta=h^{-1}$.
By Lemma \ref{lemma-9-1}, the magnetic field $ \B$ satisfies the condition 
$$
\sup_{\b (x, r)} |\nabla  \B| \le \frac{C}{r} \fint_{\b (x, r)} | \B|
$$
for any $\b (x, r)\subset \widetilde{\Omega}$.
It follows that $\beta \B$ satisfies the same condition. Moreover, 
by the proof of Lemma \ref{lemma-9-1}, there exists $c>0$, depending only on $\Omega$,  $(\kappa, c_0)$ in \eqref{H-9} and
$\|\B \|_{C^{\kappa+1}(\overline{\Omega})}$ such that 
$$
\fint_{\b (x, r)} |\B|\ge cr^\kappa
$$
for any $x\in \overline{\Omega}$ and $0< r< r_0$. This implies that
\begin{equation}\label{low-b}
\sup_{\b (x, r)} |\beta \B|\ge c \beta r^\kappa.
\end{equation}
for any $x\in \overline{\Omega}$ and $0< r< r_0$, where $c>0$ depends only on $\Omega$,  $(\kappa, c_0)$ in \eqref{H-9} and
$\|\B \|_{C^{\kappa+1}(\overline{\Omega})}$. 

Next, we fix $x_0\in \partial\Omega$ and consider a family of  Lipschitz domains $\mathcal{O}_t=\Omega \cap \b(x_0, tr_0)$,
where $t\in (c, 2c)$ and $c>0$ is sufficiently small.
Let $R_0=2cr_0$.
Note that $\mathcal{O}_t \subset \b(x_0, R_0)$,
$\beta \B$ satisfies \eqref{cod-1} for any $\b(x, r) \subset \b(x_0, 4R_0)$. 
Moreover, by \eqref{low-b},
$$
\sup_{\b(x, R_0/2)} |\beta \B|
\ge c\beta (R_0/2)^\kappa > 4 R_0^{-2},
$$
if $\beta> \beta_0=C R_0^{-\kappa-2}$ and $C$ is sufficiently large.
This allows us to apply Theorem \ref{thm-n9-1} to obtain 
\begin{equation}\label{est-f1}
\| (v)^*_t \|_{L^2(\partial\mathcal{O}_t)}
+ \| m(x, \beta \B)^{1/2} v \|_{L^2(\mathcal{O}_t)}
\le C \| n\cdot (D+\beta \A) u \|_{L^2(\partial \mathcal{O}_t)}, 
\end{equation}
where $v(x)= |(D+\beta \A) u(x) | + m(x, \beta \B) |u(x)|$ and $(v)^*_t$ denotes the nontangential maximal function of
$v$ with respect to the domain $\mathcal{O}_t$.
We point  out that the constant $C$ in \eqref{est-f1} does not depend on $t$,
as the family of Lipschitz domains $\{ \mathcal{O}_t: t\in (c, 2c) \}$
possesses uniform Lipschitz characters.
It follows that
$$
\aligned
 & \int_{\b(x_0, cr_0)\cap \partial\Omega}
|\mathcal{N} (v) |^2 
+\int_{\b(x_0, cr_0)\cap \Omega} m(x, \beta \B)|v|^2\\
 & \quad\le C \int_{\b(x_0, r_0)\cap \partial\Omega} |\beta g|^2
+ C \int_{\Omega \cap \partial \b(x_0, tr_0)} | (D+\beta \A) u|^2
\endaligned
$$
for $t\in (c, 2c)$, where 
$$
\mathcal{N}(v) (x) =\sup \big\{ |v(y)|:  y\in \Omega,  |y-x|< M_0 \text{\rm dist} (y, \partial\Omega)
\text{ and } \text{\rm dist}(y, \partial\Omega)< cr_0\big \}
$$
for $x\in \partial\Omega$. By integrating the inequality above in $t\in (c, 2c)$, we  obtain
$$
\aligned
 & \int_{\b(x_0, cr_0)\cap \partial\Omega}
|\mathcal{N} (v) |^2 
+\int_{\b(x_0, cr_0)\cap \Omega} m(x, \beta \B)|v|^2\\
 & \quad\le C \int_{\b(x_0, r_0)\cap \partial\Omega} |\beta g|^2
+ C \int_{\Omega \cap \b(x_0, r_0)} | (D+\beta \A) u|^2.
\endaligned
$$
By  a covering argument, this gives
\begin{equation}\label{10-1a}
\aligned
 & \int_{ \partial\Omega}
|\mathcal{N} (v) |^2 
+\int_{\Omega_{cr_0}} m(x, \beta \B)|v|^2
 &\le C \int_{ \partial\Omega} |\beta g|^2
+ C \int_{\Omega } | (D+\beta \A) u|^2,
\endaligned
\end{equation}
where $\Omega_s =\{x \in \Omega: \text{\rm dist}(x, \partial\Omega)< s \}$.

To bound $v(x)$ for $x$ away from $\partial\Omega$, observe that if $\b(x, cr_0)\subset \Omega$, 
$$
\aligned
|v(x)| & \le C  \left(  \fint_{\b(x, cr_0/2)}  | v|^2 \right)^{1/2}\\
& \le C \left(\fint_{\b(x, cr_0/2)} |(D+\beta \A) u|^2 \right)^{1/2},
\endaligned
$$
where we have used the interior estimate \eqref{I-0}
for the first inequality and \eqref{M1-0} for the second.
It follows  that
$$
\int_{\partial\Omega}
|(v)^*|^2
\le C \int_{\partial \Omega} | \mathcal{N}(v)|^2
+ C \int_\Omega |(D+\beta \A)u|^2.
$$
This, together with \eqref{10-1a}, leads to
\begin{equation}\label{10-1b}
\aligned
 & \int_{ \partial\Omega}
| (v)^* |^2 
+\int_{\Omega_{cr_0}} m(x, \beta \B)|v|^2
 &\le C \int_{ \partial\Omega} |\beta g|^2
+ C \int_{\Omega } | (D+\beta \A) u|^2.
\endaligned
\end{equation}

To bound $m(x, \beta \B) |v|^2$ away from $\partial\Omega$, we let $\b(x_0, 2cr_0)\subset \Omega$ and apply 
Theorem \ref{thm-n9-1} to the domain $\mathcal{O}_t=\b(x_0, tr_0)$, where
$t\in (c, 2c)$.
This yields
$$
\int_{\b(x_0, cr_0)} m(x, \beta \B) |v|^2
\le C \int_{\partial\b(x_0,  tr_0)} |(D+\beta\A) u|^2.
$$
By integrating the inequality above in $t\in (c, 2c)$, we obtain 
$$
 \int_{\b(x_0, cr_0)} m(x, \beta \B) |v|^2
\le C \int_{\b(x_0,  2cr_0)} |(D+\beta\A) u|^2.
$$
Thus, by a covering argument,
$$
\int_{\Omega\setminus \Omega_{cr_0}} m(x, \beta \B) |v|^2
\le C \int_\Omega |(D+\beta \A) u|^2.
$$
In view of \eqref{10-1b}, we have proved that
\begin{equation}\label{10-1c}
\aligned
  \int_{ \partial\Omega}
| (v)^* |^2 
+\int_{\Omega} m(x, \beta \B)|v|^2
 &\le C \int_{ \partial\Omega} |\beta g|^2
+ C \int_{\Omega } | (D+\beta \A) u|^2\\
& \le C \int_{ \partial\Omega} |\beta g|^2,
\endaligned
\end{equation}
where we have used the energy estimate for the last step.

Finally, note that  
$$
\beta^{-1} v =|(hD+\A) u(x)| + h m(x, h^{-1} \B) |u(x)| = v_h,
$$
 where
$h=\beta^{-1}$.
 Let $h_0=\beta_0^{-1}$. 
 The desired estimates \eqref{main-e1}-\eqref{main-e2}  for $h \in (0, h_0)$  now follow from \eqref{10-1c}.
Note that by Remark \ref{re-9-1}, if $h\in (0, h_0)$,
\begin{equation*}
 c \sum_{|\alpha|\le \kappa}
h^{-\frac{1}{|\alpha|+2}} |\partial^\alpha \B(x)|^{\frac{1}{|\alpha|+2}}
\le m(x, h^{-1} \B)
\le C h^{-\frac12},
\end{equation*}
where  $C, c>0$ depend only on $\Omega$,  $(\kappa, c_0)$ in \eqref{H-9} and
$\|\B \|_{C^{\kappa+1}(\overline{\Omega})}$.

The case $d=2$ can be handled by the method of descending. 
Indeed, let $u\in H^1(\Omega; \C)$ be a weak solution of
 the Neumann problem \eqref{NP0} with boundary data $g\in L^2(\partial\Omega; \C)$
  in a two-dimensional Lipschitz domain 
$\Omega$. 
Define 
$\widetilde{\A}=(A_1(x_1, x_2), A_2(x_1, x_2), 0)$ and $v(x_1, x_2, x_3)=u(x_1, x_2)$.
Then 
$(hD +\widetilde{\A})^2 v=0$ in $\mathcal{O}$, where 
 $\mathcal{O}=\Omega \times (0, r_0)$ is a bounded  Lipschitz domain in $ \R^3$.
 Note that $n \cdot (h D+\widetilde{\A}) v=0$ on $\Omega \times \{ 0, r_0\} $, and that 
 $n \cdot (h D+\widetilde{\A}) v = (n_1, n_2) \cdot (h D+\A)u$ on $\partial\Omega \times (0, r_0)$.
  It follows from the case $d=3$ that 
 $$
 \aligned
 \| ( (hD+\A) u)^*\|_{L^2(\partial\Omega)}
 & \le  \|(( h D +\widetilde{\A}) v)^* \|_{L^2(\partial\mathcal{O})}\\
&  \le C \| n \cdot (h D +\widetilde{\A}) v \|_{L^2(\partial\mathcal{O})}
 =C \| g \|_{L^2(\partial\Omega)}.
 \endaligned
 $$
 Observe that if $x=(x_1, x_2, x_3)=(x^\prime, x_3)$, then 
 $m(x, \nabla \times \widetilde{\A}) \approx m (x^\prime, \nabla \times \A)$.
 The other estimates in Theorem \ref{main-thm-1} may be obtained in the same manner.
 We omit the details.
\end{proof}

\begin{proof}[Proof of Theorem \ref{main-thm-2}]

The proof for the case $d\ge 3$ is similar to that of Theorem \ref{main-thm-2}, using Theorem \ref{thm-d9-1}.
The case $d=2$ can also be treated by the method of descending.
Let $u\in H^1(\Omega; \C)$ be a weak solution of the Dirichlet problem \eqref{DP0} with boundary data $f\in H^1(\partial\Omega; \C)$
in a two-dimensional 
Lipschitz domain $\Omega$.  
Let $\widetilde{\A}$, $v$ and $\mathcal{O}$ be defined as in the proof of Theorem \ref{main-thm-1}.
Then $v\in H^1(\mathcal{O}; \C)$ is a weak solution of  $(h D+\widetilde{\A})^2 v=0$ in $\mathcal{O}$ with 
Dirichlet data $F$, where $F(x_1, x_2, x_3)= u(x_1, x_2)$  on $\Omega\times \{ 0, r_0\}$, 
and $F(x_1, x_2, x_3)=f(x_1, x_2)$ for $(x_1, x_2, x_3) \in \partial\Omega \times (0, r_0)$.
It is not hard to check that
$$
\aligned
 & \| T^h (F)\|_{L^2(\partial\Omega \times (0, r_0))} + h \| m(x, h^{-1} \nabla \times \widetilde{\A}) F \|_{L^2(\partial\Omega \times (0, r_0))}\\
 & \qquad\qquad
 =\| T^h (f)\|_{L^2(\partial\Omega)} + h \| m(x^\prime, h^{-1} \nabla \times \A) f \|_{L^2(\partial\Omega)}, 
\endaligned
$$
and 
$$
\aligned
 & \| T^h (F)\|_{L^2(\Omega \times \{ 0, r_0\} )} 
 + h \| m(x, h^{-1} \nabla \times \widetilde{\A}) F \|_{L^2(\Omega \times \{0, r_0\} )}\\
 & \qquad\qquad
 \le C \| (h D +\A) u \|_{L^2(\Omega)}  + C h  \| m(x^\prime, h^{-1} \nabla \times \A) u  \|_{L^2(\Omega)}\\
 & \qquad\qquad \le C \| (h D +\A) u \|_{L^2(\Omega)},
\endaligned
$$
where we have used \eqref{M1-0}. Moreover, 
$$
\aligned
\int_\Omega | (hD+\A) u|^2
 & \le h \int_{\partial \Omega} | n\cdot  (hD+\A) u| | u|\\
 &\le C \| (h D +\A) u\|_{L^2(\partial\Omega)}
 \| h m(x, h^{-1} \B) u \|_{L^2(\partial\Omega)}\\
 & \le \e \| (h D+\A)u\|^2_{L^2(\partial\Omega)}
 + C \e^{-1} \| h m (x, h^{-1} \B) f \|_{L^2(\partial\Omega)}^2, 
 \endaligned
$$
where we have used the fact $m(x, h^{-1} \B) \ge r_0^{-1}$ for any $x\in \overline{\Omega}$,  if $h\in (0, h_0)$.
As a result, we have proved that 
$$
\aligned
& \|  ((h D +\A) u)^* \|_{L^2(\partial\Omega)}
+ h \|  ( m(x, h^{-1} \B ) u)^* \|_{L^2(\partial\Omega)}\\
& \le C \| T^h f \|_{L^2(\partial\Omega)}
+  C ( 1+\e^{-1})  h \| m(x, h^{-1} \B) f \|_{L^2(\partial\Omega)}
+ C \e \| (h D+\A) u \|_{L^2(\partial\Omega)}.
\endaligned
$$
By choosing $\e$ small so that $C\e\le (1/2)$, we obtain \eqref{main-e3}.
The estimate \eqref{main-e4} follows in a similar manner.
\end{proof}

\medskip

\noindent{\bf Conflict of interest.}  The author declares that there is no conflict of interest.

\medskip

\noindent{\bf Data availability. }
 Data sharing not applicable to this article as no datasets were generated or analyzed during the current study.

 \bibliographystyle{amsplain}
 
\bibliography{S2025-2.bbl}

\medskip

\begin{flushleft}

\noindent\textsc{Zhongwei Shen, Institute for Theoretical Sciences, Westlake University,\\
No. 600 Dunyu Road, Xihu District, Hangzhou, Zhejiang  310030, P.R. China.}\\
\emph{E-mail address}: \texttt{shenzhongwei@westlake.edu.cn} \\

\end{flushleft}

\end{document}